\newcommand{\G}{\mathcal{G}}
\newcommand{\C}{\mathbb{C}}
\newcommand{\CC}{\widehat{\mathbb{C}}}
\renewcommand{\H}{\mathbb{H}}
\newcommand{\R}{\mathbb{R}}
\newcommand{\g}{\gamma}
\newcommand{\sgn}{\textnormal{sgn}}
\newtheorem{thm}{Theorem}[section]
\newtheorem{prop}[thm]{Proposition}
\newtheorem{lem}[thm]{Lemma}
\newtheorem{cor}[thm]{Corollary}
\newtheorem{df}[thm]{Definition}
\theoremstyle{definition}
\newtheorem{remark}[thm]{Remark}
\newtheorem{definition}[thm]{Definition}
\newtheorem{example}[thm]{Example}
\global\long\def\ii{\mathfrak{i}}
\global\long\def\ee{\mathrm{e}}
\newcommand{\abs}[1]{\left\lvert #1 \right \rvert}
\newcommand{\mc}[1]{\mathcal{#1}}
\newcommand{\m}[1]{\mathbb{#1}}
\renewcommand\Re{\operatorname{Re}}
\renewcommand\Im{\operatorname{Im}}
\def\PSL{\operatorname{PSL}}
\def\SLE{\operatorname{SLE}}
\def\mob{\mathrm{M\ddot ob}}
\def\a{\alpha}
\def\b{\beta}
\def\g{\gamma}
\def\G{\Gamma}
\def\t{\theta}
\def\l{\lambda}
\def\k{\kappa}
\def\O{\Omega}
\def\vare{\varepsilon}
\def\HH{{\mathbb H}}
\def\Chat{\widehat{\m{C}}}
\def\dd{\mathrm{d}}
\newcommand{\ad}[1]{\overline{#1}}
 \newcommand{\splus}{{\scriptstyle +}}
 \newcommand{\sminus}{{\scriptstyle -}}
 \def \1{\mathbf{1}}
\def\Id{\operatorname{Id}}
 \newcommand{\weld}{\varphi}
 \def\S{\Sigma}
 \newcommand{\marking}{\omega}
\newcommand{\cover}{\Pi}
\begin{document}

\title{Piecewise geodesic Jordan curves II: Loewner energy, projective structures, and accessory parameters}
\author{Mario Bonk\thanks{\protect\url{mbonk@math.ucla.edu} University of California, Los Angeles, USA} \qquad
Janne Junnila\thanks{\protect\url{janne@junnila.me} KTH Royal Institute of Technology, Stockholm, Sweden} \qquad Steffen Rohde\thanks{\protect\url{rohde@math.washington.edu} 
University of Washington, USA}
\qquad Yilin Wang\thanks{\protect\url{yilin.wang@math.ethz.ch} ETH Z\"urich, Switzerland}} \maketitle

\begin{abstract}
In this paper, we consider Jordan curves on the Riemann sphere passing through $n \ge 3$ given points. We show that in each relative isotopy class of such curves, there exists a unique curve that minimizes the Loewner energy. These curves have the property that each arc between two consecutive points is a hyperbolic geodesic in the domain bounded by the other arcs.
This geodesic property lets us define a complex projective structure whose holonomy lies in $\PSL(2,\R)$. We show that the quadratic differential comparing this projective structure to the trivial projective structure on the sphere has simple poles whose residues (accessory parameters) are given by the Wirtinger derivatives of the minimal Loewner energy. This is reminiscent of Polyakov's conjecture for Fuchsian projective structures, proven by Takhtajan and Zograf. Finally, we show that the projective structures we obtain are related to Fuchsian projective structures through $\pi$-grafting.
\end{abstract}

\section{Introduction}

\subsection{Main results}
Let $n \ge 3$ be an integer. We call a Jordan curve $\gamma $ on the Riemann sphere $\Chat= \m C \cup \{\infty\}$ through distinct points $z_1,...,z_n$  \emph{piecewise geodesic} if
$\gamma\smallsetminus\{z_1,...,z_n\}=\gamma_1\cup \gamma_2\cup \ldots\cup \gamma_n$ where $\gamma_j$ is a hyperbolic geodesic in 
$\Chat\smallsetminus (\gamma\smallsetminus \gamma_j)$ for all
$j=1,\ldots,n$. %
This class of Jordan curves was introduced and investigated from geometric function theoretic viewpoints in \cite{MRW1} (see also \cite{W1,RW,peltola_wang,bonk2021canonical}). Specifically, the conformal welding of these curves was characterized, the role of smoothness was elucidated, and the local expressions of the uniformization map of its complement were explicitly computed. 

In the present paper, which is self-contained, we investigate these curves from the Loewner theory perspectives and their relation to (complex) projective structures on the $n$-punctured sphere. 
Throughout this article, we call a function or a Jordan curve \emph{smooth} if it is once continuously differentiable.  Smooth piecewise geodesic curves arise naturally as minimizers of a certain functional related to Loewner theory, namely Loewner energy (see also \cite{Mesikepp_min,W_opt} for closely related energy minimizing problems). In Section \ref{sec:preliminaries}, we review the basic properties of Loewner energy and show (Lemma~\ref{lem:loop_geodesic_minimizer}) that every relative isotopy class of curves through points $z_1,...,z_n$ contains at least one Loewner energy minimizer. Furthermore, each minimizer is piecewise geodesic. We also describe in Section \ref{sec:Teichmuller} that these isotopy classes are in bijection with the Teichm\"uller space $T_{0,n}$ of the $n$-punctured sphere.

The first main result (Theorem \ref{thm:uniqueness}) of this paper is the {\it uniqueness} of the smooth piecewise geodesic curve in its relative isotopy class. Hence, the unique smooth piecewise geodesic curve coincides with the energy minimizer. The proof relies on potential theory and does not refer to the Loewner theory. 
In other words, we obtain a parametrization of $T_{0,n}$ by the family of smooth piecewise geodesic Jordan curves.
The welding homeomorphism associated with a Jordan curve is a homeomorphism of  $\widehat{\m R} = \m R \cup \{\infty\}$.
It was known that the welding homeomorphisms of piecewise geodesic curves are precisely the piecewise M\"obius weldings (see \cite{MRW1} and Section \ref{sub:geodesic}), hence Theorem \ref{thm:uniqueness} also yields a simple parametrization of
$T_{0,n}$ by smooth piecewise M\"obius weldings as described in Section \ref{sec:C1_lambda}. Theorem \ref{thm:differentiability}, the main result of Section \ref{sec:C1_lambda}, shows that this parametrization provides a diffeomorphism between the space of smooth piecewise M\"obius maps %
and $T_{0,n}$.

In Section \ref{sec:accessory}, we associate a projective structure $Z_\g$ on the punctured sphere with each smooth piecewise geodesic curve $\g$. The projective charts are given by the uniformizing conformal maps in the complement of the curve, which can be extended across each arc $\g_j$ thanks to the piecewise geodesic property. 
Theorem~\ref{thm:uniqueness} then implies that we obtain a well-defined section of projective structures over the Teichm\"uller space $T_{0,n}$ given by the unique smooth piecewise geodesic curve in each relative isotopy class.

Furthermore, we observe that the holonomy of $Z_\g$ around each puncture is parabolic (Theorem~\ref{thm:Z_gamma}) and that the quadratic differential comparing $Z_\g$ to the trivial projective structure induced from $\Chat$  
has simple poles. The main result of Section \ref{sec:accessory} is Theorem \ref{thm:Jordan_accessory}, identifying the residues of the poles at the punctures $z_k$ 
as the Wirtinger derivatives $\partial_{z_k}$ of the %
Loewner energy of the smooth piecewise 
geodesic curve through the punctures. %
This result is very similar to the relation between the accessory parameters of the Fuchsian projective structure and the classical Liouville action, first conjectured by Polyakov and proved by Takhtajan and Zograf in \cite[Thm.\,1]{TZ1}. Our proof relies on the explicit computation of the conformal map of geodesic pairs from \cite{MRW1} (recalled in Section \ref{sub:pair-welding}) and the differentiability results of Section \ref{sec:C1_lambda}.

The final Section \ref{sec:grafting} studies the relation between the Fuchsian projective structures on an $n$-punctured sphere and the projective structures $Z_\g$ described above. We show that they are related by a $\pi$-angle grafting along hyperbolic geodesics connecting the punctures in the $n$-punctured sphere (Proposition~\ref{prop:central_geod}). An immediate corollary of our main results (see Corollary \ref{cor:bijection}) shows that the $\pi$-angle grafting induces a diffeomorphism of $T_{0,n}$.
This corollary is similar to the results \cite{scannell2002grafting,McMullen_complex} for closed surfaces and general measured laminations, showing that the grafting map gives a self-homeomorphim of the Teichm\"uller space. We also mention that for closed surfaces, Goldman \cite{goldman1987projective} showed that every projective structure with $\PSL(2,\R)$-holonomy can be obtained by grafting the Fuchsian projective structure along suitable multiple $\pi$-graftings.

\subsection{Comments and discussions}
 Although we will not use these links for our results, let us end this introduction by mentioning, on a heuristic level, some connections to Schramm--Loewner evolution (SLE) and the analytic Langlands correspondence.

Recall that the $\SLE_\kappa$ loop measure $\mu_\k$, where $0<\k\le 4,$ is a one-parameter family of measures on Jordan curves invariant under M\"obius transformations of the sphere \cite{zhan2020sleloop,Werner_loop,Benoist_loop}. Its distribution is governed by the Loewner energy $I^L$. With well-chosen neighborhoods, 
$$\frac{\mu_\kappa \{\text{Loops in an $\vare$-neighborhood of } \g\}}{\mu_\kappa \{\text{Loops in an $\vare$-neighborhood of } S^1\}} \xrightarrow[]{\vare\to 0} \exp \left(\frac{c(\k)}{24} I^L(\g)\right),$$
where $c(\k) = (3\k - 8) (6-\k)/(2\k) \sim_{\k \to 0} -24/\k$. See \cite{carfagnini2023onsager} for the precise statement.
Thus, heuristically, as $\k \to 0$ (the semiclassical limit), SLE$_\k$ loops concentrate around energy-minimizers.  The piecewise geodesic curves we study can be viewed as $\SLE_{0+}$-loops conditioned to pass through the given points in the given isotopy class. 
We mention that the classical Liouville action discussed above is also related to the semiclassical limit of Liouville conformal field theory, a theory of random surfaces, see, e.g., \cite{LRV}.

A common feature of the projective structures we consider (Fuchsian projective structure and $Z_\g$) is that they all have $\PSL(2,\R)$-holonomy. Such projective structures are also related to the real opers forming one side of the analytic Langlands correspondence \cite{gaiotto2024quantum,Etingof,Teschner18}. 
In fact, given a projective structure, its developing map $h$ can be recovered from the Schwarzian derivative by taking the ratio of two independent solutions of the differential equation $\partial_{zz} u + t(z) u = 0$ where $t(z) = \mathcal{S}[h](z)/2$, having at most double order poles.
The corresponding differential operators $\partial_{zz} + t$ are called \emph{real opers} if the resulting developing map has holonomies in  $\PSL(2,\R)$.

In this context, it was conjectured in \cite[Appx.\,A.1]{gaiotto2024quantum} that real opers are classified topologically by the real locus of their developing map (\emph{singularity lines} in the language of \cite{gaiotto2024quantum}). 
Our result on the existence and uniqueness of smooth piecewise geodesic curves confirms this conjecture in the case of the $n$-punctured sphere, when the real locus is assumed to be a smooth Jordan curve passing through the punctures. %

As mentioned in \cite{MRW1}, without the smoothness assumption, the Jordan curve can have logarithmic spirals at the punctures. We can expect uniqueness only after the spiraling rates at the vertices have been specified. This explains why, in order to fully characterize the real opers with a given topological data of the curve passing through the points, one needs to specify the double poles of $t$ which determine the spiralling rate.
However, we think that our uniqueness proof, which studies the real locus of the projective structure in a very direct manner, can be adapted to show the uniqueness of other projective structures with $\PSL(2,\R)$-holonomy as well, and we intend to continue studying this question in the future.

\section{Preliminaries}\label{sec:preliminaries}
\subsection{Teichm\"uller space of $n$-punctured sphere} \label{sec:Teichmuller}

Recall that two Jordan curves $\gamma^0, \gamma^1$ are {\it isotopic relative to $z_1,...,z_n \in \CC$} if there is a continuous family of Jordan curves $\gamma^t$ through $z_1,...,z_n$ 
beginning with $\gamma^0$ and ending with $\gamma^1.$
We write 
$\mc L(z_1, \ldots, z_n; \g)$ for the isotopy
class of Jordan curves containing (a representative curve) $\g$ which goes through $z_1, \ldots, z_n$ in this order. 

In Section \ref{sub:geodesic} we will show that each isotopy class of curves contains a minimizer of the Loewner energy, and in the following section that the minimizer is unique. 
In the present section, we briefly discuss the role of the Teichm\"uller space $T_{0,n}$ of the $n-$punctured sphere %
and describe how each isotopy class of Jordan curves through $n$ given points can be viewed as a point in $T_{0,n}$. %

To fix a definition of $T_{0,n}$, we fix the base Riemann surface 
$$\S_0 = \Chat\smallsetminus\{-(n-3),\ldots, -2,-1,0,1,\infty\}$$ 
and think of the extended real line $\widehat\R$ as a 
Jordan curve
on the sphere through the $n$ punctures $-(n-3),-(n-2),\ldots,1,\infty$ of $\S_0.$
A {\it marked $n$-punctured sphere} is a pair %
$(\S,\marking)$ where $\S = \CC \smallsetminus \{z_1, \ldots, z_{n-3}, 0, 1, \infty\}$ and $\marking:\S_0\to \S$ a quasiconformal homeomorphism sending punctures to punctures (more precisely, sending $-(n-3)$ to $z_1$,..., and $0,1,\infty$ to $0,1,\infty$ in this order). Two marked surfaces $(\S,\marking)$ and $(\widehat \S, \widehat \marking)$ are {\it equivalent} if the 
quasiconformal homeomorphism $\widehat \marking\circ \marking^{-1}:\S \to \widehat \S$ is isotopic to a conformal map while fixing the marked points, hence isotopic to the identity map. Then $T_{0,n}$ is the set of equivalence classes of marked
$n$-punctured spheres.

Now we turn our attention to Jordan curves passing through $n$ points  $z_1,...,z_n$ in this order. By composing with the M\"obius transformation that sends $z_{n-2}, z_{n-1}$ and $z_n$ to $0,1,\infty$, 
we may restrict our attention to Jordan curves $\gamma$ passing through points $z_1,...,z_{n-3},0,1,\infty$
(in this order). As will become clear later, we are only interested in quasicircles, slightly simplifying the following presentation.
Given a quasicircle $\gamma$ through $z_1,...,z_{n-3},0,1,\infty,$ there is a quasiconformal homeomorphism $\marking$ of $\Chat$
mapping $\widehat\R$ to $\gamma$ and
$-(n-3),\ldots, -2,-1,0,1,\infty$ to $z_1,...,z_{n-3},0,1,\infty$ (in this order).
Any two such quasiconformal maps $\marking,\widehat \marking$ are isotopic, and we therefore obtain a map
$$F: \{((z_1,...,z_{n-3},0,1,\infty),\gamma)\}\to T_{0,n}$$
by setting $F\big((z_1,...,z_{n-3},0,1,\infty),\gamma \big) = (\S,\marking)$ with $\S=\Chat\smallsetminus\{z_1,...,z_{n-3},0,1,\infty\}.$
Furthermore, if $\gamma$ and $\widehat \gamma$ are isotopic quasicircles through the same set of points, and if $\marking$ and $\widehat \marking$ are quasiconformal 
homeomorphisms mapping $\widehat\R$ to $\gamma$ and $\widehat \gamma$ as above, then $\marking$ and $\widehat \marking$ are again isotopic so that the map $F$
descends to a map from the set of isotopy classes $\{\mc L(z_1,...,z_{n-3},0,1,\infty; \gamma )\}$ to $T_{0,n}$.

Conversely, every point of $T_{0,n}$ is represented by some quasiconformal map $\marking:\S_0\to \Chat \smallsetminus \{z_1,...,z_{n-3},0,1,\infty\}$ fixing $0, 1, \infty$, and the points $z_1,...,z_{n-3}$ are 
independent of the choice of the representative $\marking$.  %
Moreover, the isotopy class of the Jordan curve $\marking(\widehat\R)$ is independent of the choice of $\marking$, so that every point of $T_{0,n}$ corresponds to a unique
point $\mc L(z_1,...,z_{n-3},0,1,\infty;\gamma)$. We summarize our discussion as follows.

\begin{prop}
The space of relative isotopy classes of quasicircles marked by $n$ ordered points $z_1,...,z_{n-3},0,1,\infty$ 
can be naturally identified with $T_{0,n}$ as above.
\end{prop}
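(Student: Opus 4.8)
The plan is to show that the map $F$ constructed in the discussion above is a well-defined bijection; the asserted identification is then simply $F$. Viewing $F$ as defined on pairs $(z_1,\ldots,z_{n-3})$ together with an isotopy class $\mc L(z_1,\ldots,z_{n-3},0,1,\infty;\gamma)$, there are four things to verify: independence of the auxiliary homeomorphism $\marking$, independence of the representative curve $\gamma$, surjectivity, and injectivity. \emph{Independence of $\marking$.} If $\marking,\hat\marking$ are two quasiconformal homeomorphisms of $\Chat$ carrying $\widehat\R$ to $\gamma$ and the standard marked points $-(n-3),\ldots,-1,0,1,\infty$ to $z_1,\ldots,z_{n-3},0,1,\infty$ in order, then $g:=\hat\marking^{-1}\circ\marking$ is an orientation-preserving self-homeomorphism of $\Chat$ fixing $\widehat\R$ setwise and fixing the $n$ marked points. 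Its restriction to $\widehat\R$ is an orientation-preserving circle homeomorphism fixing those $n$ points, hence isotopic to the identity through such circle homeomorphisms; coning this isotopy into the two Jordan domains complementary to $\widehat\R$ (Alexander's trick) makes $g$ isotopic, rel the marked points, to a homeomorphism that is the identity on $\widehat\R$, and one further Alexander's trick on each of the two closed discs bounded by $\widehat\R$ makes that homeomorphism isotopic rel the marked points to $\mathrm{id}_{\Chat}$. Since $\hat\marking\circ\marking^{-1}$ is conjugate to $g^{-1}$, it too is isotopic to the identity, so $\marking$ and $\hat\marking$ determine the same point of $T_{0,n}$.

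\emph{Independence of $\gamma$ and surjectivity.} If $\gamma^t$ is an isotopy of Jordan curves through the marked points, the isotopy extension theorem provides an ambient isotopy $\Phi_t$ of $\Chat$ with $\Phi_0=\mathrm{id}$, $\Phi_t(\gamma^0)=\gamma^t$, fixing the marked points throughout; then $\Phi_1\circ\marking$ is an admissible homeomorphism for $\gamma^1$ whenever $\marking$ is one for $\gamma^0$, and $t\mapsto\Phi_t\circ\marking$ exhibits the two as isotopic, so $F(\gamma^0)=F(\gamma^1)$ and $F$ descends to isotopy classes. For surjectivity, take $(\S,\marking)\in T_{0,n}$; since finite sets are quasiconformally removable, the marking extends to a quasiconformal homeomorphism $\tilde\marking$ of $\Chat$ sending the standard marked points to $z_1,\ldots,z_{n-3},0,1,\infty$, and $\gamma:=\tilde\marking(\widehat\R)$ is then a quasicircle through these points in this order whose isotopy class $F$ sends to $(\S,[\marking])$.

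\emph{Injectivity.} Suppose $F$ sends two isotopy classes to equivalent marked surfaces $(\S,\marking)$ and $(\hat\S,\hat\marking)$. A biholomorphism between two such punctured spheres extends across the punctures to a M\"obius transformation, and one fixing $0,1,\infty$ is the identity; hence $\S=\hat\S$, i.e. the configurations $z_1,\ldots,z_{n-3}$ coincide, and $\hat\marking\circ\marking^{-1}$ is isotopic to $\mathrm{id}$ through homeomorphisms $h_t$ fixing the marked points. Since the two curves are $\marking(\widehat\R)$ and $\hat\marking(\widehat\R)=h_1(\marking(\widehat\R))$, the family $t\mapsto h_t(\marking(\widehat\R))$ is an isotopy of Jordan curves through the marked points joining them, so the two isotopy classes agree.

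\emph{Main obstacle.} The substance here is purely topological bookkeeping: one must ensure that the various isotopy notions in play --- of circle homeomorphisms fixing $n$ points, of self-homeomorphisms of $\Chat$ relative to the marked points, and of Jordan curves through those points --- are mutually compatible, and that the topological ambient isotopies above suffice even though $T_{0,n}$ is defined through \emph{quasiconformal} markings (which is harmless, since its equivalence relation only asks for topological isotopy to a conformal map). The one genuinely non-formal ingredient is the rigidity fact that a conformal automorphism between two of these punctured spheres fixing $0,1,\infty$ is the identity; this is exactly what makes the configuration $z_1,\ldots,z_{n-3}$ an invariant of the point of $T_{0,n}$, and it is needed for both well-definedness and injectivity.
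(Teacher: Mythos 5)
Your proposal is correct and takes essentially the same route as the paper, whose ``proof'' is simply the discussion preceding the proposition: you verify the same points (independence of the marking homeomorphism, independence of the representative quasicircle, surjectivity, and injectivity) that the paper asserts without argument, merely supplying the standard topological and quasiconformal details (Alexander's trick, isotopy extension, removability of punctures, and M\"obius rigidity of the normalization at $0,1,\infty$).
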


The marking $\marking$ also provides a marking of the elements of the fundamental group $\pi_1(\S)$. We write $\eta_1 \in \pi_1 (\S_0)$ for the loop starting from $\ii \in \CC$ and going around $-(n-3)$ by crossing first the interval $(-\infty, -(n-3))$ then $(-(n-3), -(n-2))$ before going back to $\ii$; $\eta_2$ for the loop from $\ii$ and going around $-(n-2)$ by crossing first $(-(n-3),-(n-2))$ then $(-(n-2), -(n-1))$ before going back to $\ii$, etc.
The family $\{\eta_1, \ldots, \eta_n\}$ is a generator of $\pi_1 = \pi_1 (\S_0)$ with the only relation $\eta_1 \eta_2 \cdots \eta_n = \Id$.

\begin{figure}[H]
\centering
  \includegraphics[width=0.9\linewidth]{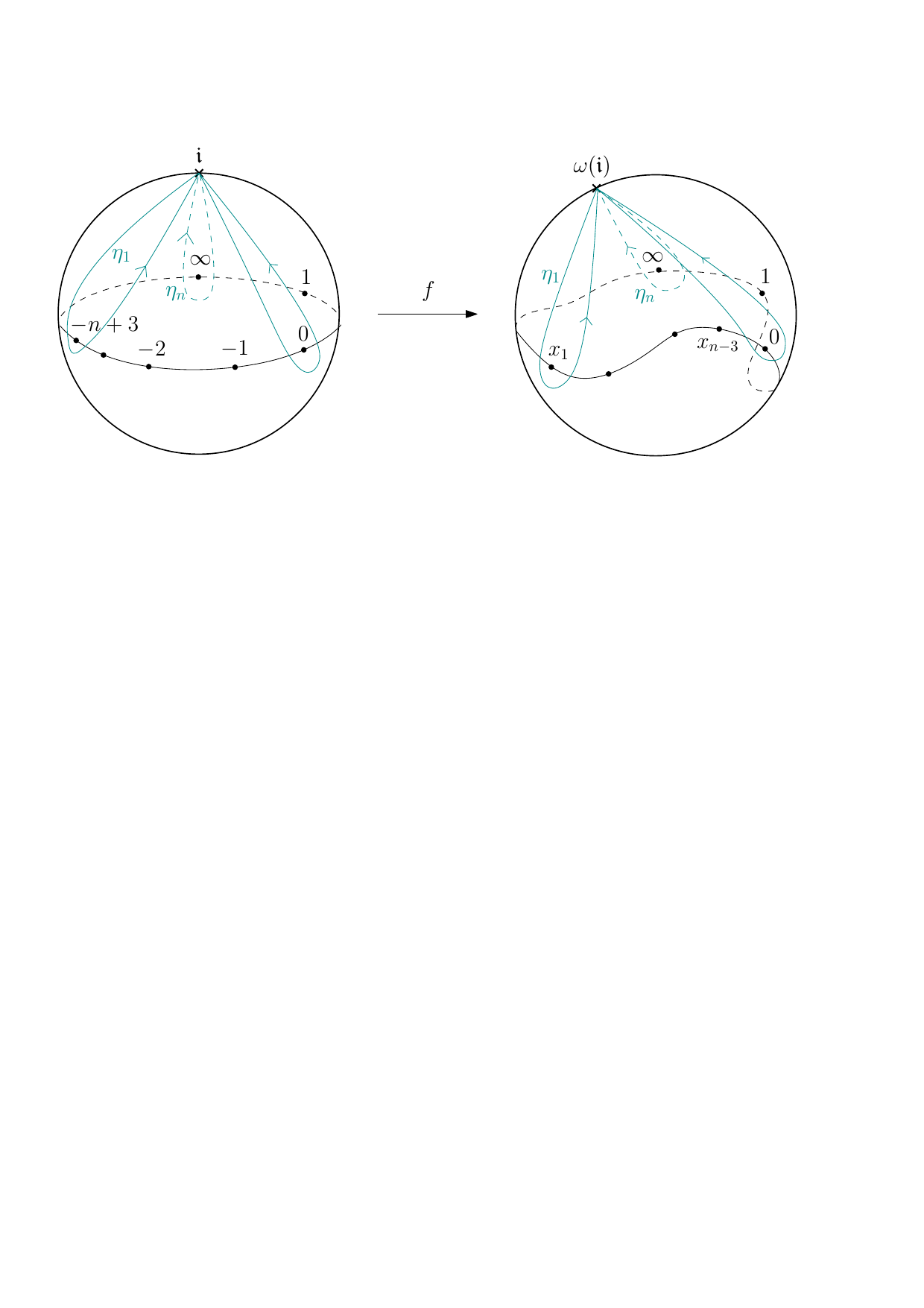}
  \caption{\label{fig:fundamental} The marking of $\pi_1$ and a set of generators.}
\end{figure}

\subsection{Marked projective structure and Schwarzian}

The space of marked projective structures on a surface fibers over the Teichm\"uller space. We refer the readers to the survey paper by Dumas \cite{Dumas_projective} for the general theory and describe here the relevant notions for the $n$-punctured sphere. 

Recall that the group $$\PSL(2,\C) = \left \{ A
= \begin{psmallmatrix} a & b \\ c & d
\end{psmallmatrix} \colon a,b,c,d \in \C, \, ad - bc = 1\right\}_{/A\sim - A}$$
acts on $\Chat = \C \m P^1$ by %
M\"obius transformations $z \mapsto (az +b)/(cz+d)$.  
And the subgroup $$\mob (\widehat\R) = \PSL(2,\m R) =  \left \{ A
= \begin{psmallmatrix}  a & b \\ c & d
\end{psmallmatrix} \colon a,b,c,d \in \R, \, ad - bc = 1\right\}_{/A\sim - A}$$
acts on $\m H = \{z \in \CC \colon \Im (z) > 0\}$, $\widehat \R$, and $\m H^* = \{z \in \CC \colon \Im (z) < 0\}$ in a similar way.

A {\it complex projective structure} $Z$ (or {\it projective structure} 
for short) on $\S$ is a maximal atlas of projective local charts $(U, \varphi_U)_{U \in I}$, meaning that the coordinate changes $\varphi_V \circ \varphi_U^{-1}$ belong to $\PSL(2,\C)$, see Example \ref{ex:projective structures} below.

A {\it marked projective structure} on the $n$-punctured sphere is a triple $(\S,Z,\marking)$,  where $(\S,\marking) \in T_{0,n}$ and such that the projective structure $Z$ on $\S$ induces the complex structure on $\S$. %
We write $\mc P(\S,\marking)$ for the set of projective structures over $(\S,\marking) \in T_{0,n}$.

Each marked projective structure $Z$ defines a developing map and a holonomy map as follows.
Here and in the sequel, let $\widetilde \S$ be the universal cover of $\S$ and $\cover : \widetilde \S \to \S$ a covering map. Each projective structure on $\S$ lifts to a projective structure $\widetilde Z$ on $\widetilde \S$.
A {\it developing map} of $Z$ is a holomorphic immersion $h: \widetilde \S \to \Chat$ such that restricting to a small open set gives a local projective chart of $\widetilde Z$. 
Two developing maps $h_1, h_2$ are related by $h_1 = A \circ h_2$ on $\widetilde \S$ for some $A \in \PSL(2, \C)$.

The group of deck transformations consists of biholomorphic functions $\a: \widetilde \S \to \widetilde \S$ such that $\cover \circ \a  = \cover$. This group is isomorphic to $\pi_1 (\S)$ via the map sending $\a$ to the loop $\cover ([\a(x), x])$, where $x \in \widetilde \S$ is a lift of $\marking(\ii)$ and $[\a(x),x]$ is any continuous path in $\widetilde \S$ connecting $x$ and $\a(x)$. Since $\pi_1 (\S)$ is further identified with the fundamental group $\pi_1$ of $\S_0$ through the marking $\marking$, we use $\pi_1$ to also denote the group of deck transformations.

Since $h \circ \a$ is also a developing map, there is $A_\a \in \PSL(2,\C)$ such that
$$A_\a \circ h= h \circ \a, \qquad \forall \a \in \pi_1.$$
The group homomorphism $\rho: \a \mapsto A_\a$, $\pi_1 \to \PSL(2,\m C)$ is called the {\it holonomy representation} of $\pi_1$ associated with the projective structure $Z$.
Post-composing $h$ by $A \in \PSL(2,\C)$ conjugates the holonomy representation:
\begin{equation}\label{eq:conjugate_A}
    (h, \rho) \mapsto (A \circ h, \rho^A), \qquad \text{where } \rho^A : \a \mapsto A \circ A_\a \circ A^{-1}.
\end{equation}
Summarizing, the projective structure $Z$ uniquely determines the development-holonomy pair $(h,\rho)$ up to the conjugation \eqref{eq:conjugate_A}.  Conversely, $h$ determines $Z$ as restrictions of $h \circ \cover^{-1}$  to small neighborhood in $\S$ are the projective local charts.

\begin{example}\label{ex:projective structures}
\begin{itemize}
    \item {\it Trivial projective structure} on $\S = \Chat\smallsetminus\{z_1,...,z_{n-3},0,1,\infty\}$: The developing map $h_0 = \cover$, and $\rho_0$ is the trivial representation $\pi_1 \to \{\Id\}$. Hence the projective local charts are given by the identity map. 
    \item {\it Fuchsian projective structure} $Z_F$ on $\S$.  The developing map $h_{F}$ is taken to be a uniformizing map $\widetilde \S \to \m H$. The holonomy representation $\rho_F$ takes values in $\PSL(2,\R)$. 
    Moreover, $\rho_F(\eta_k)$ is parabolic (recall that $\{\eta_k\}_{k = 1,\ldots n}$ is the family of generators of $\pi_1$ defined above). The local charts are $h_F \circ \cover^{-1}$ restricted to small enough open sets. Any other choice of the biholomorphic map $\widetilde \S \to \m H$ is obtained from $h_F$ by post-composing an element of $\PSL(2,\R)$, which results in the same projective structure.
\end{itemize}
\end{example}

The set $\mc P(\S,\marking)$ can be parametrized using Schwarzian derivatives which compare a projective structure to the trivial projective structure: If $Z \in \mc P(\S,\marking)$ has the development-holonomy pair $(h, \rho)$, then $h \circ h_0^{-1} = h\circ \cover^{-1}$ is a multi-valued function from $\S \subset \CC \to \CC$.
In fact, for a small open set of $\S$, two choices of lift $\cover^{-1}$ and  $\widehat \cover^{-1}$ are related by $\widehat \cover^{-1} = \a \circ \cover^{-1}$ for some $\a \in \pi_1 : \widetilde \S \to \widetilde \S$. And 
\begin{equation}\label{eq:compare_Z}
h \circ \widehat \cover^{-1}  = h \circ \a \circ \cover^{-1} = \rho (\a) \circ h \circ \cover^{-1}.
\end{equation}
Recall that the {\it Schwarzian derivative} of a holomorphic immersion, defined as
$$\mc S [\varphi] = \frac{\varphi'''}{\varphi'} - \frac{3}{2}\left(\frac{\varphi''}{\varphi'}\right)^2, $$
satisfies
$\mc S[A] \equiv 0$ for all $A \in \PSL(2,\C)$
and the chain rule
\begin{equation}\label{eq:chain_S}
    \mc S[\varphi\circ \psi] = \mc S[\varphi] \circ \psi (\psi')^2 + \mc S[\psi].
\end{equation}

Informally, the Schwarzian derivative measures the local deviation of a holomorphic immersion from an M\"obius transformation.
In particular, \eqref{eq:compare_Z} shows that although $h \circ h_0^{-1}$ is multi-valued, its Schwarzian derivative is well-defined on $\S$:
$$\mc S[h \circ \widehat \cover^{-1}] = \mc S[h \circ \cover^{-1}].$$

Any other developing map of $Z$ is obtained from the transformation \eqref{eq:conjugate_A}, which gives the same Schwarzian derivative $S [h \circ \cover^{-1}]$. %
\begin{lem}\label{lem:quadratic_diff}
Each projective structure $Z \in \mc P(\S,\marking)$ uniquely defines a quadratic differential $q = \mc S[h \circ \cover^{-1}] \, \dd z^2$ on $\S$, where $h$ is any developing map of $Z$. If $Z_1$ and $Z_2 \in \mc P(\S,\marking)$ define the same quadratic differential, then %
$Z_1=Z_2.$
\end{lem}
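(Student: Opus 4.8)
The plan is to prove the two assertions in turn. For the first --- that $q = \mc S[h\circ\cover^{-1}]\,\dd z^2$ is a well-defined holomorphic quadratic differential on $\S$ not depending on the chosen developing map $h$ --- I would essentially reorganize the discussion preceding the statement into a proof; the only ingredients are $\mc S[A]\equiv 0$ for $A\in\PSL(2,\C)$ and the chain rule \eqref{eq:chain_S}. For the second --- that the projective structure is recovered from its quadratic differential --- the key observation is that the Schwarzian of a transition map between a chart of $Z_1$ and a chart of $Z_2$ is the difference of their (equal) Schwarzians, hence vanishes, which forces the transition to be M\"obius.

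\emph{Well-definedness.} A developing map $h\colon\tilde\S\to\Chat$ is a holomorphic immersion. Fixing a point of $\S$, a small open neighborhood $U$, and a lift $\cover^{-1}\colon U\to\tilde\S$, the composition $h\circ\cover^{-1}$ is a holomorphic immersion on $U$, so $\mc S[h\circ\cover^{-1}]$ is a holomorphic function on $U$. For a second lift $\hat\cover^{-1}$ over $U$ we have $h\circ\hat\cover^{-1}=\rho(\a)\circ(h\circ\cover^{-1})$ for some $\a\in\pi_1$ by \eqref{eq:compare_Z}, and \eqref{eq:chain_S} combined with $\mc S[\rho(\a)]\equiv0$ shows $\mc S[h\circ\hat\cover^{-1}]=\mc S[h\circ\cover^{-1}]$; hence these locally defined functions agree on overlaps and patch to a single holomorphic function $f$ on $\S$. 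Replacing $h$ by $A\circ h$ as in \eqref{eq:conjugate_A} does not change $f$, by the same computation. Finally, since $\infty\notin\S$, the surface $\S$ is a plane domain carrying the single coordinate $z$, so $q=f\,\dd z^2$ is a holomorphic quadratic differential on $\S$, and it depends only on $Z$.

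\emph{Injectivity.} Suppose $Z_1,Z_2\in\mc P(\S,\marking)$ have the same quadratic differential; by the previous paragraph this means $\mc S[h_1\circ\cover^{-1}]=\mc S[h_2\circ\cover^{-1}]=:f$ as holomorphic functions on $\S$, where $h_i$ is a developing map of $Z_i$. I would show that the charts of $Z_2$ obtained by restricting branches of $h_2\circ\cover^{-1}$ to small disks $U\subseteq\S$ (small enough that the branch is injective) are all compatible with $Z_1$. Fix such a $\psi$ on $U$ and a chart $\varphi$ of $Z_1$ on $U$ of the same type (shrinking $U$ if needed so that $\varphi$ is injective too). Both have Schwarzian $f|_U$ with respect to $z$, so applying \eqref{eq:chain_S} to $\varphi=(\varphi\circ\psi^{-1})\circ\psi$ yields $\mc S[\varphi\circ\psi^{-1}]\circ\psi\cdot(\psi')^2=\mc S[\varphi]-\mc S[\psi]=0$; as $\psi'$ is nowhere zero, $\mc S[\varphi\circ\psi^{-1}]\equiv0$ on the connected open set $\psi(U)\subseteq\Chat$, so $\varphi\circ\psi^{-1}$ is the restriction of an element of $\PSL(2,\C)$, i.e.\ $\psi$ and $\varphi$ are compatible. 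Since such charts $\psi$ cover $\S$ and generate the maximal atlas defining $Z_2$, this gives $Z_2\subseteq Z_1$ as atlases, and hence $Z_1=Z_2$ by maximality of $Z_2$.

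\emph{Main obstacle.} There is no serious obstacle here; the proof is essentially bookkeeping around the two elementary properties of the Schwarzian already recorded in the excerpt. The two points that require a little care are the distinction between the merely locally defined composition $h\circ\cover^{-1}$ and its globally defined Schwarzian, and the use --- always on \emph{connected} domains --- of the classical fact that a holomorphic immersion with identically vanishing Schwarzian on a connected plane domain is the restriction of a single element of $\PSL(2,\C)$ (equivalently, uniqueness for the Schwarzian differential equation up to the $\PSL(2,\C)$-action), which I would quote rather than reprove. The punctures of $\S$ cause no extra difficulty, since the whole argument is carried out on small disks contained in $\S$.
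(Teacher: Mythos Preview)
Your proof is correct and uses the same ingredients as the paper---vanishing of the Schwarzian on M\"obius maps and the chain rule~\eqref{eq:chain_S}---but organizes them slightly differently in two places.

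For the first assertion, you observe that $\infty\notin\S$ so that $z$ is a global coordinate and $q=f\,\dd z^2$ is trivially a quadratic differential. The paper instead verifies the transformation law $A^*q=\mc S[h\circ\cover^{-1}\circ A]\,\dd z^2$ under a M\"obius change $A$ (equation~\eqref{eq:quadratic_differential}); this is not needed for the lemma as stated, but is used later when different normalizations of the punctures are compared. For the second assertion, the paper lifts to the simply connected $\tilde\S$ and concludes in one stroke that $h_2=A\circ h_1$ globally for a single $A\in\PSL(2,\C)$, whereas you work locally on small disks and invoke maximality of the atlases; both routes are valid, and the paper's is a bit shorter because connectedness of $\tilde\S$ does the patching for free.
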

\begin{proof}
The function $q$ is a quadratic differential since if $A \in \PSL(2,\C)$ and $\S = A (\S_1)$, then the corresponding quadratic differential $q_1$ on $\S_1$ transforms like
\begin{equation}\label{eq:quadratic_differential}
q_1 = \mc S[h \circ \cover^{-1} \circ A] \,\dd z^2 = q(A) (A')^2 \dd z^2 = A^* q
\end{equation}
by the chain rule \eqref{eq:chain_S}.

For the second claim, let $h_j$ be a developing map of $Z_j$, then the assumption $\mc S[h_1 \circ \cover^{-1}] = \mc S[h_2 \circ \cover^{-1}]$ shows that there exists $A \in \PSL(2,\C)$ such that $h_2 = A \circ h_1$. Therefore, the restrictions of $h_1$ and $h_2$ to small open sets are compatible projective charts and $Z_1 = Z_2$.
\end{proof}

\subsection{Loewner energy of curves and loops}

In this section, we recall the definition and basic facts about Loewner energy.

We say that $\g$ is a {\it simple curve} in $(D;a,b)$, if $D$ is a simply connected domain, $a, b$ are two distinct prime ends of  $D$, and $\g$ has a continuous and injective parametrization $(0,T) \to D$ such that $\g(t) \to a$ as $t \to 0$ and $\g(t) \to c \in D \cup \{b\}$ as $t \to T$. If $c = b$ then we say $\g$ is a {\it (simple) chord} in $(D;a,b)$.

Simple curves in $(\m H; 0,\infty)$ can be encoded into a chordal driving function as follows (this procedure is called the {\it Loewner transform}). 
We first parameterize the curve by the half-plane capacity. More precisely, $\g$ is continuously parametrized by $[0,T]$, where $T \in (0, \infty]$ with $\g(0) = 0$, 
such that
the unique conformal map $g_t$ from $\HH \smallsetminus \gamma[0,t]$ onto $\HH$ with the expansion at infinity $g_t (z)=  z + o(1)$ satisfies
\begin{equation}\label{eq:hydro}
g_t (z)= z + \frac{2t}{ z} + o\left( \frac{1}{z}\right), \qquad \forall t \in (0,T).
\end{equation}

The coefficient $2t$ is the {\it half-plane capacity} of $\g[0,t]$, and $2T$ is the {\it total capacity} of $\g$.
The map $g_t$ can be extended by continuity to the boundary point $\gamma(t)$ and that the real-valued function $W(t) := g_t ( \gamma(t))$ is continuous with $W(0) = 0$ (i.e., $W \in C^0[0,T]$). This function $W$ is called the {\it driving function} of $\g$. The Loewner transform satisfies the following properties:

\begin{itemize}
    \item (\emph{Additivity}) %
    Fix $s > 0$, the driving function generating $\left(\ad{g_s(\g[s,t])} - W(s)\right)_{t \in [0,T-s]}$ is $ t \mapsto W(s+t) - W(s)$.
\item (\emph{Scaling}) Fix $\l >0$, the driving function generating the scaled and capacity-reparameterized curve $\left(\l \g(\l^{-2}t)\right)_{t \in [0,\l^2 T]}$ is  $t\mapsto \l W (\l^{-2}t)$.
\end{itemize}

\begin{df} \label{df:chordal_LE}
Let $\g$ be a simple curve in $(D;a,b)$. Let $\phi$ be a conformal map from $D$ onto $\H$ 
with $\phi(a) = 0$, $\phi(b) = \infty$, $T$ be the half-plane capacity of $\phi(\g)$.
 The {\it chordal Loewner energy} of $\g$ in  $(D;a,b)$
 is 
$$ I_{D;a,b} (\g) := I_{\m H;0,\infty} (\phi (\g) ) : =  \frac{1}{2} \int_0^T \dot{W}(t)^2 \,\dd t \in [0,\infty] $$
if the chordal driving function $W$  of $\phi (\g)$ is absolutely continuous, and $\infty$ otherwise. 
\end{df}

If $\g$ makes it all the way to $b$ (namely, if $\g$ is a chord) and if $I_{D;a,b} (\g) < \infty$ then $T = \infty$ (namely, $\g$ has infinite total capacity). 

Note that the definition does not depend on the conformal map $\phi$ chosen since, from the scaling property of the driving function, we have that for all $ \l >0$,
$I_{\H, 0, \infty}(\g) = I_{\H, 0, \infty}( \l \g)$.
\begin{remark}\label{rem:min_energy}
    The Loewner energy of a chord attains its minimum $0$ if and only if the driving function is constantly $0$, namely $\phi (\g)$ is $\ii\R_+$ which is equivalent to the curve $\g$ being the {\it hyperbolic geodesic} of $D$ from $a$ to $b$. We note that a hyperbolic geodesic is an analytic curve. 
\end{remark}

The additivity of the Loewner transform immediately implies the following lemma.

\begin{lem}[Additivity of Loewner energy]\label{lem:add_energy}
For any continuous parametrization $\g: [0,1] \to D$ with $\g(0) = a$ and $\g (1) = b$,
and for any $t < 1$, we have
$$I_{D;a,b}(\g[0,1]) = I_{D;a,b}(\g[0,t]) + I_{D \smallsetminus \g[0,t];\g(t),b}(\g[t,1]).$$
\end{lem}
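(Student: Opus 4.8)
The plan is to reduce the additivity of Loewner energy to the additivity property of the Loewner transform already recorded in the bulleted list above. First I would fix a conformal map $\phi\colon D \to \H$ with $\phi(a)=0$, $\phi(b)=\infty$, and parametrize $\phi(\g)$ by half-plane capacity on $[0,T_{\mathrm{total}}]$, letting $W$ be its driving function. Let $s$ be the capacity-time corresponding to the point $\g(t)$, i.e.\ $\phi(\g[0,t])$ has half-plane capacity $2s$. The definition of chordal Loewner energy in Definition~\ref{df:chordal_LE} gives
$$I_{D;a,b}(\g[0,1]) = \frac{1}{2}\int_0^{T_{\mathrm{total}}} \dot W(u)^2\,\dd u = \frac{1}{2}\int_0^{s}\dot W(u)^2\,\dd u + \frac{1}{2}\int_s^{T_{\mathrm{total}}}\dot W(u)^2\,\dd u,$$
valid (as an identity in $[0,\infty]$) regardless of whether $W$ is absolutely continuous, since absolute continuity on $[0,T_{\mathrm{total}}]$ is equivalent to absolute continuity on both $[0,s]$ and $[s,T_{\mathrm{total}}]$, and each summand is finite precisely when the corresponding restriction of $W$ is absolutely continuous.

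Next I would identify the two pieces. The first integral is by definition $I_{\H;0,\infty}(\phi(\g[0,t])) = I_{D;a,b}(\g[0,t])$. For the second, I invoke the additivity property of the Loewner transform: the curve $\ad{g_s(\phi(\g[s,\cdot]))} - W(s)$ is generated by the driving function $u \mapsto W(s+u) - W(s)$ on $[0, T_{\mathrm{total}}-s]$, and its energy is $\tfrac12\int_0^{T_{\mathrm{total}}-s}\dot W(s+u)^2\,\dd u = \tfrac12\int_s^{T_{\mathrm{total}}}\dot W(u)^2\,\dd u$. Now $g_s$ is a conformal map from $\H \smallsetminus \phi(\g[0,t])$ onto $\H$ sending $\phi(\g(t))$ to $W(s)$ and $\infty$ to $\infty$; composing with the translation by $-W(s)$ and then with $\phi$ on the source, the map $z \mapsto g_s(\phi(z)) - W(s)$ is a conformal map from $D\smallsetminus\g[0,t]$ onto $\H$ sending $\g(t)$ to $0$ and $b$ to $\infty$. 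By the conformal invariance built into Definition~\ref{df:chordal_LE} (the energy is computed via \emph{any} such uniformizing map), the second integral equals $I_{D\smallsetminus\g[0,t];\g(t),b}(\g[t,1])$, which finishes the proof.

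The only genuine points requiring care — none of them deep — are: (i) checking that the capacity parametrization of $\phi(\g)$ is compatible with the decomposition, i.e.\ that $\phi(\g[0,t])$ and $\phi(\g[t,1])$ correspond to the sub-curves $\phi(\g)[0,s]$ and $\phi(\g)[s,T_{\mathrm{total}}]$ under reparametrization, which is immediate since reparametrizing a curve does not change the energy nor the trace; (ii) confirming that $g_s$ extends continuously to $\phi(\g(t))$ so the composed map is well-defined up to the boundary prime end, which is the same boundary-extension fact used to define the driving function in the first place; and (iii) handling the case where $\g$ does not reach $b$, where $T_{\mathrm{total}}<\infty$ and all statements hold verbatim with finite integration limits. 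I expect the main (very mild) obstacle to be purely bookkeeping: cleanly matching the abstract ``driving function of the shifted curve'' in the additivity bullet with the conformally-transported picture in $D\smallsetminus\g[0,t]$, so that the conformal-invariance clause of Definition~\ref{df:chordal_LE} applies on the nose.
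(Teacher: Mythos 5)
Your argument is correct and is precisely the one the paper intends: the lemma is stated there with only the remark that it follows immediately from the additivity of the Loewner transform, and your proof fills in exactly that reduction (splitting the energy integral at the capacity time of $\g(t)$ and using $z\mapsto g_s(\phi(z))-W(s)$ as the uniformizing map of the slit domain). The bookkeeping points you flag, including the absolute-continuity case analysis, are handled correctly.
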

We note that although $\g$ is not necessarily parametrized by capacity,  $I_{D;a,b}(\g[0,1])$ is understood as in Definition~\ref{df:chordal_LE} where we start with parametrizing $\phi(\g)$ by capacity. Thus $I_{D;a,b}(\g[0,1])$ is trivially invariant under increasing reparametrization. 
It was also shown in \cite{W1} that the chordal Loewner energy does not depend on the orientation of the curve. Therefore, we may view the curves as being unoriented.

Let us next recall the definition of Loewner energy for Jordan curves, introduced in \cite{RW}.
Let $\g: [0,1] \to \Chat : = \m C \cup \{\infty\}$ be a continuously parametrized Jordan curve with the marked point $\g (0) = \g(1)$. 
For every $\vare>0$, $\g [\vare, 1]$ is a chord connecting $\g(\vare)$ to $\g (1)$ in the simply connected domain $\Chat \smallsetminus \g[0, \vare]$.

\begin{df}
The {\it Loewner energy} of $\g$ rooted at $\g(0)$ is 
$$I^L(\g, \g(0)): = \lim_{\vare \to 0} I_{\Chat \smallsetminus \g[0, \vare]; \g(\vare), \g(0)} (\g[\vare, 1]).$$
\end{df}

The third and fourth authors proved the following result.
\begin{thm}[\!\! \cite{RW}] \label{thm:intro_root_invariance}
The Loewner energy of the rooted Jordan curve does not depend on the root chosen.
\end{thm}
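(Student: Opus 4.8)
\emph{Reduction to a two-point comparison.} The plan is to show that for any Jordan curve $\g$ and any two distinct points $a,b$ on it, the energies rooted at $a$ and at $b$ agree. Parametrize $\g$ so that it decomposes into two arcs, $\alpha$ from $a$ to $b$ and $\beta$ from $b$ to $a$. Rooting at $a$ and traversing $\alpha$ first, I would chop off the initial stub $\g[0,\gve]\subset\alpha$, apply additivity of the chordal energy (Lemma~\ref{lem:add_energy}) to split the remaining chord at $b$, and let $\gve\to0$; since the domain of the second piece stabilizes to $\Chat\smallsetminus\alpha$, this yields
$$I^L(\g,a)=R_a(\alpha)+I_{\Chat\smallsetminus\alpha;\,b,a}(\beta),\qquad\text{where }R_a(\alpha):=\lim_{\gve\to0}I_{\Chat\smallsetminus\alpha[0,\gve];\,\alpha(\gve),\,a}(\alpha[\gve,b]),$$
the ``rooted stub contribution'' $R_a(\alpha)$ depending only on the arc $\alpha$ and the choice of $a$ as root. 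Using that the chordal Loewner energy is invariant under reversing the orientation of a chord, the second term equals $I_{\Chat\smallsetminus\alpha;\,a,b}(\beta)$; carrying out the same decomposition with the loop traversed in the opposite direction produces the \emph{identical} middle term but with $R_a(\alpha)$ replaced by $R_b(\alpha)$, the stub contribution of $\alpha$ based at its other endpoint. Hence
$$I^L(\g,a)-I^L(\g,b)=R_a(\alpha)-R_b(\alpha),$$
so the theorem is equivalent to the assertion that the stub contribution of an arc is independent of which endpoint is chosen as root, as an identity in $[0,\infty]$ (which also disposes of the infinite-energy case).

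\emph{Analysing the stub contribution.} First a convenient reformulation: one more application of additivity together with Remark~\ref{rem:min_energy} gives $R_a(\alpha)=I^L(\alpha\cup g,\,a)$, where $g$ is the hyperbolic geodesic of $\Chat\smallsetminus\alpha$ joining $a$ and $b$; thus $R_a(\alpha)=R_b(\alpha)$ is precisely root-invariance for the special ``arc plus geodesic completion'' loop $\alpha\cup g$ between its two junction points, reducing the general case to this much smaller family. To identify $R_a(\alpha)$ directly, I would map $\Chat\smallsetminus\alpha[0,\gve]$ to $\H$ sending $\alpha(\gve)\mapsto0$ and $a\mapsto\infty$; as the stub $\alpha[0,\gve]$ collapses to the point $a$, these normalized maps should converge, after an explicit renormalization, to the conformal map of the limiting ``doubled'' picture in which the vanishing slit near $a$ has been opened up symmetrically. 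The conclusion I expect is that $R_a(\alpha)$ is the chordal Loewner energy of $\alpha$ computed in a symmetrized domain attached to $\alpha$ at $a$ (a Carathéodory limit of the domains $\Chat\smallsetminus\alpha[0,\gve]$); since the symmetrization treats $\alpha$ evenly, this energy is manifestly unchanged under swapping the two endpoints, giving $R_a(\alpha)=R_b(\alpha)$.

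\emph{Main obstacle and alternatives.} The hard part is making the degenerating-conformal-map limit rigorous: one must produce uniform control — kernel/Carathéodory convergence of $\Chat\smallsetminus\alpha[0,\gve]$ together with equicontinuity of the half-plane-capacity parametrizations and of the associated driving functions — as the removed stub shrinks to a single boundary point, so that $I_{\Chat\smallsetminus\alpha[0,\gve];\,\alpha(\gve),\,a}(\alpha[\gve,b])$ genuinely converges to the energy in the limiting doubled domain and the limit does not depend on how the stub is chosen. This is where essentially all the analytic work lies, and it is also the reason the statement is a theorem rather than a formality. A less computational route would be to first prove that $R_a(\alpha)$ varies continuously as the root moves along $\alpha$, establish $R_a(\alpha)=R_b(\alpha)$ on the dense class of arcs that are locally analytic at an endpoint (or, through the reformulation above, prove root-invariance of $\alpha\cup g$ by instead rooting on the analytic geodesic arc $g$, where the stub contribution is tractable), and then conclude by density and continuity. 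Either way, once the identity $R_a(\alpha)=R_b(\alpha)$ is in hand, the theorem follows immediately from the displayed decomposition.
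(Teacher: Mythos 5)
The paper does not prove this statement itself --- it is imported from \cite{RW} --- so I can only measure your proposal against the argument there and against what would be needed for a self-contained proof. Your reduction is sound as far as it goes: the decomposition $I^L(\g,a)=R_a(\alpha)+I_{\Chat\smallsetminus\alpha;\,b,a}(\beta)$ follows from Lemma~\ref{lem:add_energy} (the second summand is independent of $\gve$, and $R_a(\alpha)$ is a monotone limit as $\gve\downarrow 0$, hence well defined in $[0,\infty]$); chordal reversibility from \cite{W1} matches the middle terms; the identity $R_a(\alpha)=I^L(\alpha\cup g,a)$ via Remark~\ref{rem:min_energy} is correct and a nice observation; and the orientation bookkeeping closes up (chaining $E(a,o)=E(b,-o)$ over three roots yields both root and orientation invariance). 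So the theorem is indeed equivalent to the identity $R_a(\alpha)=R_b(\alpha)$.

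The gap is that you then assert, rather than prove, the one statement carrying all the content. Your proposed identification of $R_a(\alpha)$ as a chordal energy ``in a symmetrized domain attached to $\alpha$ at $a$, a Carath\'eodory limit of $\Chat\smallsetminus\alpha[0,\gve]$'' would not deliver the needed symmetry even if the limit were made rigorous: the degeneration is performed at the single endpoint $a$ (the domains increase to $\Chat\smallsetminus\{a\}$, and any renormalization or blow-up takes place at $a$), so whatever limiting object emerges is built asymmetrically from the germ of $\alpha$ at $a$ together with the rest of the arc; the endpoint $b$ never enters on an equal footing, and ``the symmetrization treats $\alpha$ evenly, so the energy is manifestly unchanged under swapping the endpoints'' is a non sequitur. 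The fallback route (continuity of $R_a(\alpha)$ as the root varies, plus a dense class of arcs analytic at an endpoint) is likewise only named, and both ingredients are of the same order of difficulty as the theorem itself. For comparison, \cite{RW} proves the statement by a direct and fairly long Loewner-theoretic argument controlling how the driving function changes when the root is moved, while the now-standard conceptual proof goes through the identity of Theorem~\ref{thm:intro_equiv_energy_WP}, whose right-hand side makes no reference to a root; a ``manifestly symmetric expression for $R_a(\alpha)$'' of the kind you postulate would essentially amount to reproving one of these. As written, the proposal is a correct reduction together with an accurate diagnosis of where the difficulty lies, but not a proof.
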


Hence, the Loewner energy is a quantity on the family of unrooted Jordan curves that is invariant under M\"obius transformations. The root independence is further explained by the following equivalent expression of the Loewner energy involving only two conformal maps.

\begin{thm}[{\cite[Thm.\,1.3]{W2}}]\label{thm:intro_equiv_energy_WP}
Let $\g$ be a bounded Jordan curve. Let $\O$ and $\O^*$ denote respectively the bounded and unbounded components of $\C \smallsetminus \g$. Let $f$ be a conformal map from $\m D$ onto $\O$ and $g$ a conformal map from $\m D^*$ onto $\O^*$ fixing $\infty$,
we have the identity
       \begin{equation} \label{eq_disk_energy}
   I^L(\gamma) =  \mc D_{\m D} (\log \abs{f'}) + \mc D_{\m D^*} (\log \abs{g'})+ 4 \log \abs{f'(0)} - 4 \log \abs{g'(\infty)},
 \end{equation}
 where $g'(\infty):=\lim_{z\to \infty} g'(z)$ and $\mc D_D (u) = \int_D \abs{\nabla u}^2 /\pi \, |\dd z|^2$ denotes the Dirichlet energy  of $u$ on domain $D$.
\end{thm}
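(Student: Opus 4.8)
The plan is to derive \eqref{eq_disk_energy} from the root definition of $I^L$ together with the Loewner differential equation, after first recasting the right-hand side. Since $\log\abs{f'} = \Re(\log f')$ with $\log f'$ holomorphic on $\mathbb{D}$, we have $\abs{\nabla\log\abs{f'}}^2 = \abs{(\log f')'}^2 = \abs{f''/f'}^2$, so $\mathcal{D}_{\mathbb{D}}(\log\abs{f'}) = \frac1\pi\int_{\mathbb{D}}\abs{f''/f'}^2\,dA$ and likewise $\mathcal{D}_{\mathbb{D}^*}(\log\abs{g'}) = \frac1\pi\int_{\mathbb{D}^*}\abs{g''/g'}^2\,dA$; thus the right-hand side of \eqref{eq_disk_energy} is a normalization of the Takhtajan--Teo universal Liouville action $\mathbf{S}_1(\gamma)$. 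A direct computation --- the conformal anomaly underlying that functional, using the conformal invariance of the Dirichlet integral in dimension two and the harmonicity of $\log\abs{f'}$ --- shows that the right-hand side depends only on $\gamma$, not on the chosen conformal maps $f$, $g$, so one may fix convenient normalizations. I would first establish \eqref{eq_disk_energy} for analytic Jordan curves $\gamma$, where the conformal maps extend analytically across $\gamma$, and recover the general Weil--Petersson case at the end by approximation.

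The heart of the argument is a chordal version of the identity. Fix a root $p = \gamma(0) = \gamma(1)$; by the root definition, $I^L(\gamma, p) = \lim_{\varepsilon\to 0} I_{\Chat\smallsetminus\gamma[0,\varepsilon];\gamma(\varepsilon),p}(\gamma[\varepsilon,1])$, and by conformal invariance (Definition~\ref{df:chordal_LE}) each term equals $I_{\mathbb{H};0,\infty}(\eta^\varepsilon)$ for a chord $\eta^\varepsilon$ in $(\mathbb{H};0,\infty)$. For a chord $\eta$ in $(\mathbb{H};0,\infty)$ I would prove an identity expressing $I_{\mathbb{H};0,\infty}(\eta)$ through the conformal maps uniformizing the two complementary domains of $\mathbb{H}\smallsetminus\eta$, with explicit correction terms accounting for the endpoints and the normalization, by differentiating along the Loewner flow: parametrize $\eta$ by half-plane capacity, let $g_t$ be the Loewner maps of $\eta[0,t]$ with driving function $W_t$, and differentiate the candidate identity in $t$. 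The time-derivative of the energy side is $\tfrac12\dot W_t^2$ by Definition~\ref{df:chordal_LE}; for the geometric side one writes the Dirichlet integrals via Green's identity (using harmonicity of the $\log$-derivatives), computes how the uniformizing maps transform under conjugation by $g_t$, and substitutes the Loewner ODEs for $g_t, g_t', g_t''$ at the tip $W_t$ and at the endpoints, whereupon the bookkeeping again collapses to $\tfrac12\dot W_t^2$. Integrating in $t$ and matching the constant against the geodesic chord $\ii\R_+$ --- for which both sides vanish by Remark~\ref{rem:min_energy} --- yields the chordal identity.

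With the chordal identity available, I would let $\varepsilon\to 0$ in $I^L(\gamma,p) = \lim_\varepsilon I_{\mathbb{H};0,\infty}(\eta^\varepsilon)$. The two complementary domains of $\eta^\varepsilon$ are conformally sub-domains of $\Omega$ and $\Omega^*$ that converge in the Carath\'eodory sense to $\Omega$ and $\Omega^*$ themselves, so the geometric side converges to $\mathcal{D}_{\mathbb{D}}(\log\abs{f'}) + \mathcal{D}_{\mathbb{D}^*}(\log\abs{g'})$; meanwhile the correction terms --- carrying the scaling that compares the normalizations used in the chordal step with the maps $f$, $g$, which is precisely where the conformal scales $f'(0)$ of $\Omega$ and $g'(\infty)$ of $\Omega^*$ enter --- converge to $4\log\abs{f'(0)} - 4\log\abs{g'(\infty)}$. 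This establishes \eqref{eq_disk_energy} for analytic $\gamma$. For a general curve with $I^L(\gamma) < \infty$ one concludes by approximating $\gamma$ by analytic Jordan curves and invoking the continuity of both sides on the Weil--Petersson Teichm\"uller space; when $I^L(\gamma) = \infty$ the right-hand side is infinite as well, so the identity holds in $[0,\infty]$. As a byproduct, the independence of the right-hand side from $p$ reproves Theorem~\ref{thm:intro_root_invariance} here.

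The main obstacle is the Loewner-flow computation for the chordal identity --- identifying $\tfrac{d}{dt}$ of the geometric side with $\tfrac12\dot W_t^2$. This needs precise control of how the uniformizing maps of the two fixed complementary domains evolve under conjugation by $g_t$, careful treatment of the Green's-identity boundary contributions near the growing tip and near the endpoints, and a justification of differentiating under the integral sign; the analyticity hypothesis is what keeps all of this tractable. A secondary, more routine but still nontrivial difficulty is the concluding approximation, which rests on the density of smooth curves in the Weil--Petersson class and the continuity of both sides of \eqref{eq_disk_energy} there.
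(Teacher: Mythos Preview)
This theorem is not proved in the present paper: it is quoted verbatim from \cite{W2} (note the citation in the theorem header) and used only as background input. There is therefore no ``paper's own proof'' to compare your proposal against.

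That said, your outline is broadly in the spirit of the argument in the cited reference: one first proves a half-plane (chordal) identity expressing $I_{\mathbb{H};0,\infty}(\eta)$ in terms of Dirichlet energies of the log-derivatives of the two uniformizing maps of $\mathbb{H}\smallsetminus\eta$, and then passes to the loop version. Where your sketch is thinnest is exactly where you flag it: the time-derivative computation along the Loewner flow. Writing ``the bookkeeping again collapses to $\tfrac12\dot W_t^2$'' hides the real content; one has to identify precisely which boundary terms survive after Green's identity, track the behaviour of the uniformizing maps of the \emph{fixed} complementary domains under conjugation by $g_t$ (these maps are not themselves Loewner maps), and justify the exchange of $\partial_t$ with the area integrals near the tip. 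Likewise, the sentence ``the correction terms \ldots converge to $4\log|f'(0)|-4\log|g'(\infty)|$'' is an assertion rather than an argument: you have not said what those terms are, so their limit cannot yet be evaluated. If you want to turn this into a proof, you should state the chordal identity explicitly (with its boundary/normalization terms written out) before attempting either the $t$-differentiation or the $\varepsilon\to0$ limit.
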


This result also identifies the family of Jordan curves with finite energy with the family of {\it Weil--Petersson quasicircles} introduced in \cite{TT06} where the right-hand side of \eqref{eq_disk_energy} is called \emph{universal Liouville action}.
This family of Jordan curves has been the topic of very active research thanks to a large number of equivalent descriptions \cite{Bishop,shen13,W3,cui00,johansson2021strong,johansson2023coulomb,epstein_yilin}.

\subsection{Energy minimizers and the geodesic property}\label{sub:geodesic}

In this section, we recall the basic properties of energy-minimizing chords and loops. These results are mostly contained in \cite{MRW1}.

\begin{lem}[{\cite[Prop.\,3.1]{W1}}] \label{lem:8ln} Let $\t \in (0,\pi)$.
The Loewner energy of any chord in $(\m H; 0, \infty)$ passing through $z = \ee^{\ii \t}$ is greater than or equal to $-8 \log \sin (\t)$. The minimum is attained by a unique chord $\g^z$. %
\end{lem}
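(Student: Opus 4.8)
The plan is to reduce the assertion, via additivity of the Loewner energy, to a minimization among curves ending at the interior point $z=e^{\ii\theta}$, to identify the extremal curve with a geodesic pair, and finally to evaluate its energy.

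\emph{Reduction.} Given a chord $\gamma$ in $(\mathbb{H};0,\infty)$ through $z=e^{\ii\theta}$, split it at $z$ as $\gamma=\gamma^-\cup\gamma^+$, where $\gamma^-$ is the arc from $0$ to $z$ and $\gamma^+$ the arc from $z$ to $\infty$. By Lemma~\ref{lem:add_energy},
\[
I_{\mathbb{H};0,\infty}(\gamma)=I_{\mathbb{H};0,\infty}(\gamma^-)+I_{\mathbb{H}\smallsetminus\gamma^-;\,z,\infty}(\gamma^+)\ \ge\ I_{\mathbb{H};0,\infty}(\gamma^-),
\]
with equality exactly when $\gamma^+$ is the hyperbolic geodesic of $\mathbb{H}\smallsetminus\gamma^-$ from $z$ to $\infty$ (Remark~\ref{rem:min_energy}). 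Since any simple curve from $0$ to $z$ extends to such a chord by appending this geodesic, at no additional energy cost, the problem is equivalent to showing that the infimum of $I_{\mathbb{H};0,\infty}(\eta)$ over simple curves $\eta$ from $0$ to $z$ equals $-8\log\sin\theta$ and is attained by a unique $\eta^\ast$; the extremal chord is then $\gamma^z=\eta^\ast$ completed by the corresponding geodesic.

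\emph{Existence, the extremal curve, uniqueness.} Existence of a minimizing chord $\gamma^z$ follows from lower semicontinuity of the Loewner energy together with compactness of the family of chords of bounded energy through $z$ (both standard, via Cauchy--Schwarz control of the driving function). Any minimizing chord $\gamma^z=\gamma^-\cup\gamma^+$ must be a \emph{geodesic pair}: completing $\gamma^-$ by the hyperbolic geodesic of $\mathbb{H}\smallsetminus\gamma^-$ from $z$ to $\infty$ gives a competitor with energy $I_{\mathbb{H};0,\infty}(\gamma^-)\le I_{\mathbb{H};0,\infty}(\gamma^z)$, so minimality forces $I_{\mathbb{H};0,\infty}(\gamma^z)=I_{\mathbb{H};0,\infty}(\gamma^-)$, whence $\gamma^+$ equals that geodesic; reversing the orientation (the chordal energy is orientation-independent) shows likewise that $\gamma^-$ is the hyperbolic geodesic of $\mathbb{H}\smallsetminus\gamma^+$ from $0$ to $z$. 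The geodesic pair with prescribed anchors $0,\infty$ and vertex $z$ is unique and admits explicit uniformizing conformal maps; this is exactly the geometry analysed in \cite{MRW1} (recalled in Section~\ref{sub:pair-welding}), and uniqueness of $\gamma^z$ follows.

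\emph{The value, and the main obstacle.} From the explicit conformal map of the geodesic pair one writes down its chordal driving function $W$ on $[0,t_0]$ and computes $\tfrac12\int_0^{t_0}\dot W_t^2\,dt=-8\log\sin\theta$; combined with the reduction this gives $I_{\mathbb{H};0,\infty}(\gamma)\ge-8\log\sin\theta$ for every chord through $z$, with equality precisely for $\gamma^z$. The main obstacle is this identification of the extremal curve and the evaluation of its energy: one genuinely has to solve the minimization, because the obvious competitor --- the hyperbolic geodesic arc of $\mathbb{H}$ from $0$ to $z$, completed by a geodesic --- is \emph{not} optimal, a direct calculation giving it the strictly larger energy $-9\log\sin\theta$. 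If one prefers the variational route to the geodesic pair (solving the Euler--Lagrange equation for $\tfrac12\int\dot W^2$ subject to the Loewner flow $\dot V_t=2/(V_t-W_t)$, $V_0=z$, $V_{t_0}=W_{t_0}$), the delicate analytic point is the behaviour of $W$ as $t\to t_0$: one must rule out concentration of a minimizing sequence near the absorption time of $z$ --- equivalently, show the minimizer is a $C^1$ curve meeting $z$ transversally --- so that the boundary terms in the integration by parts are under control.
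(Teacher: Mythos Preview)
The paper does not prove this lemma; it simply cites \cite[Prop.\,3.1]{W1}. The original proof there is a direct variational computation in the Loewner flow: writing $V_t=g_t(z)$, one shows that the quantity $-8\log\sin\bigl(\arg(V_t-W_t)\bigr)+\tfrac12\int_0^t\dot W_s^2\,ds$ is nondecreasing in $t$, with equality characterising the optimal driving function, and reads off both the bound and the unique minimizer from this monotonicity. Your approach is different and more geometric --- reduce by additivity, identify the minimizer as a geodesic pair, then quote the explicit description of geodesic pairs --- and it can be made to work, but as written it has two gaps.

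First, your uniqueness step is incorrect as stated. Geodesic pairs in $(\mathbb{H};0,\infty;z)$ are \emph{not} unique: as recalled in the paper immediately before Lemma~\ref{lem:geodesic_property_pair}, there is a one-parameter family of them, all but one exhibiting a logarithmic spiral at $z$. The missing ingredient is exactly the one you flag at the end but do not supply: the spiralling geodesic pairs have infinite energy, so any \emph{finite-energy} geodesic pair must be the unique $C^1$ one. Without this you cannot conclude uniqueness of $\gamma^z$, nor can you pin down which geodesic pair to feed into the energy computation.

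Second, the value $-8\log\sin\theta$ is asserted, not derived. Saying ``from the explicit conformal map one writes down its chordal driving function and computes'' is precisely the content of \cite[Prop.\,3.1]{W1}; if you are going to reprove the lemma you need to actually carry out this step (or replace it by the monotonicity argument above). Your side remark that the na\"ive competitor has energy $-9\log\sin\theta$ is also unsubstantiated and should either be checked or dropped.
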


The energy minimizer turns out to be the unique $C^1$ smooth chord with the following geodesic property. It will be an essential building block in studying the Jordan curves with the geodesic property.

\begin{df} Let  $z \in D$ be an interior point of $D$.
We define a {\it geodesic pair} in $(D;a,b;z)$ to be a chord $\g$ in  $(D;a,b)$ such that $\g = \g_1 \cup \g_2 \cup \{z\}$, $\g_1$ is the geodesic in $(D \smallsetminus \g_2; a, z)$ and $\g_2$ is the geodesic in $(D \smallsetminus \g_1; z, b)$.
\end{df}

In the companion paper \cite{MRW1}, we classified all geodesic pairs in $(\m H; 0,\infty; z)$. In particular, \cite[Thm.\,3.9]{MRW1} shows that there is a unique $C^1$ smooth geodesic pair in $(\m H; 0,\infty; z)$; %
\cite[Cor.\,3.10]{MRW1} shows that if a geodesic pair is not $C^1$, then it has a logarithmic spiral near $z$
which has necessarily infinite energy.

\begin{lem}\label{lem:geodesic_property_pair}
The energy minimizing chord $\g^z$ is a geodesic pair in $(\m H; 0,\infty; z)$ and is $C^{1,1-\vare}$ for all $\vare > 0$. 
\end{lem}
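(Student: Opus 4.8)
The plan is to combine the variational characterization of $\g^z$ from Lemma~\ref{lem:8ln} with the additivity of Loewner energy (Lemma~\ref{lem:add_energy}) to establish the geodesic-pair property, and then to invoke the classification of geodesic pairs from \cite{MRW1} to upgrade smoothness/regularity. First I would show that $\g^z$ is a geodesic pair in $(\m H; 0,\infty; z)$. Write $\g^z = \g_1 \cup \g_2 \cup \{z\}$ where $\g_1$ is the sub-arc from $0$ to $z$ and $\g_2$ the sub-arc from $z$ to $\infty$. By Lemma~\ref{lem:add_energy},
\begin{equation*}
I_{\m H;0,\infty}(\g^z) = I_{\m H;0,\infty}(\g_1) + I_{\m H \smallsetminus \g_1; z, \infty}(\g_2).
\end{equation*}
If $\g_2$ were not the hyperbolic geodesic of $\m H \smallsetminus \g_1$ from $z$ to $\infty$, then by Remark~\ref{rem:min_energy} we could replace $\g_2$ by that geodesic, strictly decreasing the second term while keeping the first term and the passage through $z$ fixed; this contradicts minimality of $\g^z$ among chords through $z$. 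Hence $\g_2$ is the geodesic in $(\m H \smallsetminus \g_1; z, \infty)$. Applying the same argument with the roles of the two arcs exchanged (using additivity from the other endpoint, which is legitimate since chordal Loewner energy is reversal-invariant by \cite{W1}), we get that $\g_1$ is the geodesic in $(\m H \smallsetminus \g_2; 0, z)$. Thus $\g^z$ is a geodesic pair in $(\m H;0,\infty;z)$.

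Next I would pin down regularity. By \cite[Cor.\,3.10]{MRW1}, a geodesic pair in $(\m H;0,\infty;z)$ that is not $C^1$ has a logarithmic spiral near $z$ and therefore infinite Loewner energy. But $\g^z$ has finite energy (namely $-8\log\sin\t < \infty$, by Lemma~\ref{lem:8ln}), so $\g^z$ must be $C^1$. By \cite[Thm.\,3.9]{MRW1} there is a unique $C^1$ geodesic pair in $(\m H;0,\infty;z)$, so $\g^z$ is that curve, and in particular it inherits whatever regularity \cite{MRW1} establishes for it — the $C^{1,1-\vare}$ estimate for every $\vare > 0$. (If one prefers a self-contained argument for the Hölder exponent rather than quoting \cite{MRW1}: each arc $\g_j$ is a hyperbolic geodesic, hence analytic in the open arc away from its endpoints, and analytic up to the boundary point $0$ or $\infty$; the only potential loss of smoothness is at $z$, where the two analytic arcs meet. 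The $C^1$ matching at $z$ forces the tangent directions to agree, and a local analysis of the two conformal maps near $z$ — each geodesic is the preimage of a vertical half-line, so near $z$ it looks like the image of a straight segment under a conformal map with a boundary singularity of the complementary arc — yields modulus-of-continuity estimates for the tangent vector that are Hölder of every order $< 1$ but generally not Lipschitz, because the complementary arc creates a mild corner-type distortion.)

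The main obstacle I anticipate is the first step — justifying that the arc-swap in the additivity argument is genuinely energy-decreasing and admissible. Two points need care. Replacing $\g_2$ by the geodesic of $\m H\smallsetminus\g_1$ produces a new curve $\tilde\g = \g_1 \cup \tilde\g_2 \cup \{z\}$; one must check this is still a simple chord in $(\m H;0,\infty)$ passing through $z$ — i.e. that the geodesic from $z$ to $\infty$ in $\m H \smallsetminus \g_1$ does not re-touch $\g_1$, which holds because a hyperbolic geodesic in a simply connected domain is a simple arc contained in the open domain except at its two defining prime ends. Second, one must confirm that the energy of the new curve is indeed given by the same additive decomposition with the second term now equal to zero; this is exactly the content of Lemma~\ref{lem:add_energy} applied to $\tilde\g$, so there is no circularity. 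Once these are in place the contradiction with Lemma~\ref{lem:8ln}'s minimality is immediate, and the rest follows by quoting \cite{MRW1} as above.
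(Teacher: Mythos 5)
Your first half (additivity plus the arc-swap to force each sub-arc to be a hyperbolic geodesic of the complement of the other) is exactly the paper's argument, including the reversal-invariance remark needed to run the swap from the other endpoint. Where you diverge is the regularity. The paper does not go through the $C^1$/spiral dichotomy at all: it reads off from \cite{W1} that the driving function of $\g^z$ is explicitly known and is $C^{1,1/2}$, and then invokes the main theorem of \cite{Wong} (driving function in $C^{1,1/2}$ implies the trace is weakly $C^{1,1}$, hence $C^{1,1-\vare}$). Your route --- finite energy ($-8\log\sin\t$) plus \cite[Cor.\,3.10]{MRW1} rules out the spiral, so $\g^z$ is the unique $C^1$ geodesic pair of \cite[Thm.\,3.9]{MRW1} --- is valid as far as establishing $C^1$, but your primary justification for the H\"older exponent, namely that $\g^z$ ``inherits whatever regularity \cite{MRW1} establishes,'' is a soft spot: the paper only attributes uniqueness and the spiral dichotomy to those results, not a $C^{1,1-\vare}$ estimate, so you should not lean on them for the exponent. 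Your parenthetical fallback is the right fix and is in fact the alternative the authors themselves point to in the remark following the corollary: the arcs are analytic away from the vertex and up to $0$ and $\infty$ (Schwarz reflection across the analytic boundary), and at the vertex the explicit expansion $\mathfrak f_\theta(w) = w/(2\ii) - \tfrac12\cos(\theta)\,w^2\log w + O(w^2)$ of Lemma~\ref{lem:ftheta-asymptotics} gives a $w^2\log w$ correction, which is precisely $C^{1,1-\vare}$ for every $\vare>0$ but not $C^{1,1}$. If you make that local computation explicit rather than gesturing at ``corner-type distortion,'' your proof is complete; the trade-off versus the paper's route is that you avoid the Loewner driving function and Wong's theorem entirely, at the cost of needing the explicit uniformizing map of the geodesic pair.
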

\begin{proof} 
We write $\g^z = \g_1 \cup \g_2 \cup \{z\}$ such that $\g_1$ has the end points $a$ and $z$ and $\g_2$ has the end points $z$ and $b$. Lemma~\ref{lem:add_energy} shows that 
 $$I_{D;a,b}(\g^z)= I_{D;a,b}(\g_1) + I_{D\smallsetminus \g_1;z,b}(\g_2).
 $$
 Since $\g^z$ is minimizing the energy among all chords passing through $z$, $I_{D\smallsetminus \g_1;z,b}(\g_2) = 0$. Otherwise, replacing $\g_2$ by the geodesic in $(D \smallsetminus \g_1; z, b)$ decreases the energy. 
 Similarly, considering $\g$ as a chord in $(D;b,a)$ shows that $\g_1$ is the hyperbolic geodesic in  $(D \smallsetminus \g_2; z, a)$.
 
 The regularity of $\g^z$ follows from the fact that the driving function of $\g^z$ is $C^{1,1/2}$ regular. In fact, the driving function of $\g^z$ is studied in \cite{W1}, see e.g. Equation (3.2) there. The main theorem of \cite{Wong} shows that $\g^z$ is weakly $C^{1,1}$ (which implies that $\g^z$ is $C^{1,1-\vare}$) regular.
 \end{proof}

\begin{cor}
The energy minimizer $\g^z$ is the unique $C^1$ smooth geodesic pair in $(\m H; 0, \infty; z)$.
\end{cor}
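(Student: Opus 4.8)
The corollary is a direct consequence of Lemma~\ref{lem:geodesic_property_pair} together with the classification of geodesic pairs carried out in the companion paper \cite{MRW1}, so the plan is short. For the \emph{existence} half, I would simply note that Lemma~\ref{lem:geodesic_property_pair} already does the work: the energy minimizer $\g^z$ is a geodesic pair in $(\m H;0,\infty;z)$, and it is $C^{1,1-\vare}$ for every $\vare>0$, hence in particular a $C^1$ smooth geodesic pair. (After a scaling $z \mapsto \l z$, which leaves the Loewner energy unchanged, one may assume $|z|=1$ so that Lemma~\ref{lem:8ln} applies verbatim and $\g^z$ is well defined.)

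For the \emph{uniqueness} half, I would invoke \cite[Thm.\,3.9]{MRW1}, which asserts that $(\m H;0,\infty;z)$ contains exactly one $C^1$ smooth geodesic pair. Since $\g^z$ is such a curve by the previous step, it must be that unique curve, which is precisely the statement of the corollary. If one prefers to stay closer to the Loewner-energy framework, the same conclusion can be reached via \cite[Cor.\,3.10]{MRW1}: a geodesic pair that fails to be $C^1$ has a logarithmic spiral at $z$ and therefore infinite Loewner energy, so among geodesic pairs only the $C^1$ ones can coincide with the finite-energy minimizer $\g^z$; combined with the uniqueness of the minimizer from Lemma~\ref{lem:8ln} this again pins down $\g^z$.

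I do not expect any genuine obstacle here: all the substance has already been packaged into Lemma~\ref{lem:geodesic_property_pair} and into the results of \cite{MRW1}. The only mildly delicate input is the regularity assertion inside Lemma~\ref{lem:geodesic_property_pair} — namely that the explicit driving function of $\g^z$ from \cite{W1} is $C^{1,1/2}$ and hence, by the boundary-regularity theorem of \cite{Wong}, that $\g^z$ is $C^{1,1-\vare}$ — but this we may take as given. Thus the proof amounts to assembling "a $C^1$ smooth geodesic pair exists, namely $\g^z$" and "there is at most one such", with the latter supplied by \cite[Thm.\,3.9]{MRW1}.
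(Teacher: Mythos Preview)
Your proposal is correct and matches the paper's approach exactly: the corollary is stated in the paper without proof because it follows immediately from combining Lemma~\ref{lem:geodesic_property_pair} (existence of a $C^1$ geodesic pair, namely $\g^z$) with \cite[Thm.\,3.9]{MRW1} (uniqueness of the $C^1$ smooth geodesic pair), precisely as you outline. Your alternative route via \cite[Cor.\,3.10]{MRW1} and the uniqueness of the minimizer in Lemma~\ref{lem:8ln} is also valid and is implicitly available from the discussion preceding the corollary.
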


\begin{remark}
    The regularity of a $C^1$ geodesic pair at its vertex can also be seen from Lemma~\ref{lem:ftheta-asymptotics} below.
\end{remark}

Let us now move to the loop case.
A Jordan curve $\gamma\subset \Chat$ through distinct points $z_1,...,z_n$
is called {\it piecewise geodesic} if
$\gamma\smallsetminus\{z_1,...,z_n\}=\gamma_1\cup \gamma_2\cup \ldots\cup \gamma_n$ where $\gamma_j$ is a hyperbolic geodesic in 
$(\Chat\smallsetminus \gamma)\cup \gamma_j$ for all
$j=1,\ldots,n$. We also say that $\g$ has the {\it geodesic property}. Such curves arise naturally as Loewner energy minimizers:

\begin{lem}\label{lem:loop_geodesic_minimizer}
There exists at least one minimizer of the Loewner energy in each isotopy class $\mc{L}(z_1, z_2,\cdots, z_n; \tau)$.  Any minimizer has the geodesic property and is $C^{1,1-\vare}$ regular for all $\vare > 0$. 
\end{lem}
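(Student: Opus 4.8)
The plan is to prove the statement in three stages: existence of a minimizer, the geodesic property of any minimizer, and the regularity of any minimizer.

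\textbf{Existence.} First I would establish a lower bound and a compactness/lower-semicontinuity argument. Fix the isotopy class $\mc L(z_1,\ldots,z_n;\tau)$. After applying a M\"obius transformation we may assume $z_{n-2},z_{n-1},z_n=0,1,\infty$, so that every curve in the class is a Jordan curve through these points. Since we seek the infimum of $I^L$, we may restrict to curves of finite energy, which by Theorem~\ref{thm:intro_equiv_energy_WP} are Weil--Petersson quasicircles, described by the pair of conformal maps $f:\D\to\Omega$, $g:\D^*\to\Omega^*$. The key point is to take a minimizing sequence $\g^{(k)}$ with $I^L(\g^{(k)})\to m:=\inf I^L$, normalize the conformal maps appropriately using the three fixed points $0,1,\infty$, and extract a subsequence along which $f^{(k)},g^{(k)}$ converge locally uniformly (by a normal families argument, using that the Dirichlet energies $\mc D_\D(\log|f'|)$, $\mc D_{\D^*}(\log|g'|)$ stay bounded). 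The limit maps yield a limiting Jordan curve $\g$; one checks the marked points are hit in the correct order and that $\g$ lies in the same isotopy class (this uses that finite-energy curves form a connected family, or a direct argument that the convergence is strong enough to preserve the isotopy class — this is the delicate point). Lower semicontinuity of the right-hand side of \eqref{eq_disk_energy} under this convergence (Dirichlet energy is lower semicontinuous, the boundary terms $\log|f'(0)|$, $\log|g'(\infty)|$ pass to the limit) then gives $I^L(\g)\le m$, so $\g$ is a minimizer.

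\textbf{Geodesic property.} This is the soft part and mirrors the proof of Lemma~\ref{lem:geodesic_property_pair}. Let $\g$ be any minimizer and write $\g\smallsetminus\{z_1,\ldots,z_n\}=\g_1\cup\cdots\cup\g_n$ with $\g_j$ the arc between $z_j$ and $z_{j+1}$. Pick a root, say $\g(0)=z_n$, and use the additivity of Loewner energy (Lemma~\ref{lem:add_energy}) applied repeatedly along the loop to decompose $I^L(\g,z_n)$ as a sum of chordal energies, one for each arc $\g_j$ viewed as a chord in the complement of the union of the other arcs. Fix an index $j$; holding the other arcs fixed and replacing $\g_j$ by the hyperbolic geodesic in $(\Chat\smallsetminus(\g\smallsetminus\g_j))$ from $z_j$ to $z_{j+1}$ produces a curve in the same isotopy class (the geodesic is homotopic rel endpoints to $\g_j$ within that domain) whose energy is no larger, by Remark~\ref{rem:min_energy}; by minimality it must be equal, which forces the chordal energy of $\g_j$ in that domain to be zero, i.e.\ $\g_j$ is already that geodesic. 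Since $j$ was arbitrary, $\g$ has the geodesic property. (One must check the decomposition is valid regardless of the root and that each arc has finite chordal energy, which follows from $I^L(\g)<\infty$ and additivity.)

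\textbf{Regularity.} Here I would reduce to the geodesic pair case already handled in \cite{MRW1} and Lemma~\ref{lem:geodesic_property_pair}. Near a given puncture $z_j$, the curve $\g$ looks, after a conformal map of a neighborhood in $\Chat\smallsetminus(\g\smallsetminus(\g_{j-1}\cup\g_j))$ sending $z_j$ to an interior point, like a geodesic pair at its vertex; more precisely, $\g_{j-1}\cup\g_j$ is a geodesic pair in the simply connected domain $\Chat\smallsetminus(\g\smallsetminus(\g_{j-1}\cup\g_j))$ with interior marked point $z_j$. By \cite[Thm.\,3.9, Cor.\,3.10]{MRW1}, such a geodesic pair is either $C^1$ at $z_j$ or spirals logarithmically, and in the latter case it has infinite energy; since $\g$ has finite energy, it must be $C^1$ at each $z_j$. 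Away from the punctures each arc is a hyperbolic geodesic, hence real-analytic. For the sharp $C^{1,1-\vare}$ statement, I would invoke the same mechanism as in Lemma~\ref{lem:geodesic_property_pair}: the local behavior at each vertex is governed by the explicit driving-function asymptotics of the model geodesic pair (as in \cite{W1}, cf.\ Lemma~\ref{lem:ftheta-asymptotics}) together with the regularity theory of \cite{Wong}, giving weak $C^{1,1}$ and hence $C^{1,1-\vare}$ for all $\vare>0$.

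\textbf{Main obstacle.} I expect the genuinely delicate step to be in the existence argument: showing that the limit curve $\g$ of a minimizing sequence stays in the prescribed isotopy class rather than degenerating (e.g.\ an arc pinching or two marked points colliding in the limit). Controlling this requires using the finiteness of the energy to rule out degeneration — for instance, bounding below the conformal moduli of the arcs, or using that $I^L\to\infty$ as the curve approaches the boundary of the isotopy class — and making precise the sense in which Carath\'eodory-type convergence of the complementary domains preserves the relative isotopy type. Everything else is either a direct appeal to additivity and the uniqueness of geodesics as zero-energy chords, or a citation to the geodesic-pair analysis of \cite{MRW1} and the regularity results of \cite{W1,Wong}.
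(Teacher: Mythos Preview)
Your proposal is essentially correct and close in spirit to the paper's proof, with one mis-statement and one structural difference. The mis-statement: repeated additivity rooted at $z_n$ does \emph{not} give a sum in which each $\g_j$ sits in the complement of \emph{all} other arcs --- in that telescoping sum, $\g_j$ sees only the arcs traversed before it. What rescues your replacement argument is root-invariance (Theorem~\ref{thm:intro_root_invariance}): for each fixed $j$, re-root at $z_{j+1}$ so that $\g_j$ becomes the \emph{last} arc, and then additivity does place it in the complement of the rest; you mention root-independence only in passing, but it is the essential step here, not a side remark. The structural difference: rather than treating arcs one by one, the paper observes directly that for each $k$ the pair $\g_{k-1}\cup\g_k\cup\{z_k\}$ is an energy-minimizing chord through $z_k$ in the complement of the remaining arcs, and then a single application of Lemma~\ref{lem:geodesic_property_pair} yields \emph{both} the geodesic-pair property and the $C^{1,1-\vare}$ regularity at $z_k$. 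Your route reaches the same conclusion in two passes --- first the arc-by-arc geodesic property, then separately observing that adjacent arcs form a geodesic pair, ruling out spirals via finite energy and \cite{MRW1} to get $C^1$, and finally invoking the driving-function analysis for $C^{1,1-\vare}$ --- which works but is slightly longer, since you must recover the energy-minimizing property of the pair rather than having it from the start. For existence, both you and the paper defer to \cite[Prop.\,2.13]{RW} and assert that the argument adapts to a fixed isotopy class; the paper does not elaborate further on the isotopy-preservation issue you correctly flag as the delicate point.
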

\begin{proof} 
In \cite[Prop.\,2.13]{RW}, it was shown that there exists at least one minimizer of the Loewner energy passing through a collection of distinct points $z_1, z_2,\cdots, z_n$ (without the constraint on the isotopy class). 
The same proof can be easily modified to show that each isotopy class $\mc{L}(z_1, z_2,\cdots, z_n;\tau)$ 
 contains at least one energy minimizer by taking an energy-minimizing sequence within the isotopy class and passing it to a subsequential limit. 
For the geodesic property, it follows from the root-invariance (Theorem~\ref{thm:intro_root_invariance}, Theorem~\ref{thm:intro_equiv_energy_WP}) and the additivity Lemma~\ref{lem:add_energy}  that 
$\g_{k-1} \cup \g_{k} \cup \{z_k\}$ has to be an energy minimizing chord through $z_k$ in $\big((\Chat\smallsetminus \gamma)\cup  \g_{k-1} \cup \g_{k} \cup \{z_k\}; z_{k-1}, z_{k+1}\big)$ for every $k$. Lemma~\ref{lem:geodesic_property_pair} then shows that it is a geodesic pair with $C^{1,1-\vare}$ regularity which implies the geodesic property of $\g$ with the same regularity.
\end{proof}

\bigskip 

\begin{remark}
One may wonder if the Jordan curve with the geodesic property is a concatenation of hyperbolic geodesics in the $n$-punctured sphere connecting the punctures. Lemma 4.2 of \cite{MRW1} shows that they are very different. In fact, if $\g$ %
is a Jordan curve with the geodesic property, and if each arc $\g_k$ is a 
hyperbolic geodesic in  $\Sigma: = \Chat \smallsetminus \{z_1, \ldots, z_n\}$ between $z_k$ and $z_{k+1}$, then $\g$ is a circle. In Section~\ref{sec:grafting}, we show that the curve with the geodesic property and the curve formed by hyperbolic geodesics in the $n$-punctured sphere are related by a $\pi$-angle grafting.
\end{remark}

\bigskip 

The geodesic property can easily be characterized in terms of conformal welding.

More precisely, let $\g$ %
be a Jordan curve through points $z_1,\ldots,z_n$ (in this order),  %
let $\O$ be the connected component of $\Chat \smallsetminus \g$ where $(z_1, \ldots, z_n)$ goes around $\O$ counterclockwise, and denote $\O^*$ the other connected component.
Let $f$ and $g$ be respectively a conformal map from $\m H$ onto $\O$ and from $\m H^*$ onto $\O^*$. Then the {\it welding homeomorphism} of $\g$ is given by $\weld := g^{-1} \circ f|_{\widehat \R}.$ It is determined by $\g$ only up to pre- and post-composition with M\"obius automorphisms in $\PSL(2,\m R)$. Let $x_k=f^{-1}(z_k)$ and $y_k=g^{-1}(z_k)=\weld(x_k).$ A homeomorphism $\weld$ of $\widehat \R$ is called {\it piecewise M\"obius} if $\widehat \R$ can be decomposed into finitely many intervals such that on each interval, $\weld$ is (the restriction of) a M\"obius automorphisms.

\begin{thm}[{\cite[Cor.\,4.1 and Lem.\,4.12]{MRW1}}]\label{thm:piecewise_mob}
A Jordan curve $\g$ has the geodesic property if and only if the welding homeomorphism $w$ is piecewise M\"obius. %
Moreover, $\g$ is $C^1$ smooth if and only if the welding $\weld$ is $C^1$ smooth.
\end{thm}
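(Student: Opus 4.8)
The plan is to establish both equivalences by working with the conformal welding in the half-plane model. For the first claim, I would use the classification of geodesic pairs from \cite{MRW1} recalled above. Recall (Section~\ref{sub:geodesic}) that a Jordan curve $\g$ has the geodesic property if and only if, for each $k$, the subarc $\g_{k-1}\cup\g_k\cup\{z_k\}$ is a geodesic pair in $\big((\Chat\smallsetminus\g)\cup\g_{k-1}\cup\g_k\cup\{z_k\};z_{k-1},z_{k+1}\big)$. The key is to transfer this to the welding picture via $f\colon\m H\to\O$ and $g\colon\m H^*\to\O^*$. First I would show that the geodesic pair condition at $z_k$, pulled back through $f$ and $g$, becomes a statement purely about the welding map $\weld=g^{-1}\circ f$ on a neighborhood of $x_k$ in $\widehat\R$: namely that the arc $\g_k$ between $z_k$ and $z_{k+1}$ being a geodesic in the appropriate slit domain is equivalent to $\weld$ restricted to the interval $(x_k,x_{k+1})$ being the restriction of a \Mo\ transformation. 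Indeed, if $\g_k$ is the hyperbolic geodesic of $D_k:=(\Chat\smallsetminus\g)\cup\g_k$ from $z_k$ to $z_{k+1}$, then uniformizing $\O$ and $\O^*$ separately and using that the two sides of $\g_k$ are boundary arcs of the same geodesic forces the transition map across that arc to be a fractional linear map; conversely a piecewise \Mo\ welding produces, on each interval, a curve that coincides (after the two one-sided conformal extensions) with an analytic arc whose Schwarzian vanishes, which is the defining property of a hyperbolic geodesic. This is essentially the content of \cite[Cor.\,4.1 and Lem.\,4.12]{MRW1}, so I would quote those results, isolating the local statement ``$\g_k$ geodesic $\iff$ $\weld|_{(x_k,x_{k+1})}$ \Mo'' as the engine.

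For the second claim, the equivalence ``$\g$ is $C^1$ $\iff$ $\weld$ is $C^1$'', the plan is to argue locally away from the marked points and then at the marked points separately. Away from the $z_j$, the arc $\g$ is an analytic curve (being a piece of a hyperbolic geodesic), hence the one-sided conformal maps $f$ and $g$ extend analytically across the corresponding boundary intervals by Schwarz reflection, so $\weld=g^{-1}\circ f$ is real-analytic there; in particular smoothness of $\g$ and of $\weld$ away from marked points is automatic and not the issue. The content is entirely at the vertices $z_k$. There I would compare the tangent directions of $\g_{k-1}$ and $\g_k$ at $z_k$ on the curve side with the left and right derivatives of $\weld$ at $x_k$. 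Using the explicit conformal maps for geodesic pairs from \cite{MRW1} (the asymptotics of $f_\t$ near the vertex, cf.\ Lemma~\ref{lem:ftheta-asymptotics} referenced in the excerpt), one sees that $\g$ has a well-defined tangent at $z_k$ (i.e.\ the incoming and outgoing geodesic arcs meet at angle $\pi$) precisely when the \Mo\ pieces of $\weld$ on $(x_{k-1},x_k)$ and $(x_k,x_{k+1})$ match to first order at $x_k$, i.e.\ $\weld$ is $C^1$ at $x_k$ with nonzero derivative. The nonvanishing and finiteness of $\weld'(x_k)$ is where the geodesic-pair asymptotics enter: a mismatch in the \Mo\ pieces corresponds exactly to the logarithmic spiraling described in \cite[Cor.\,3.10]{MRW1}, which destroys the $C^1$ property of $\g$.

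The main obstacle I anticipate is making the local dictionary between the geodesic condition on $\g_k$ and the \Mo\ condition on $\weld|_{(x_k,x_{k+1})}$ fully rigorous, rather than heuristic: one must be careful that ``$\g_k$ is a geodesic in $D_k$'' refers to the hyperbolic metric of a domain that itself depends on all of $\g$, and translating this into a clean statement about $\weld$ requires simultaneously controlling the conformal maps $f$, $g$ and their boundary behavior near a whole interval, not just a point. The cleanest route is probably to reflect: the curve $\g_k$ is a geodesic of $D_k$ iff, writing $\psi_k$ for a conformal map $D_k\to\m H$ sending $\g_k$ to $\m R$, the compositions $\psi_k\circ f$ and $\psi_k\circ g$ are \Mo\ on the relevant half-line, and then $\weld$ on $(x_k,x_{k+1})$ is a composition of three \Mo\ maps, hence \Mo. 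Since the quantitative statements underlying all of this — the classification of geodesic pairs and their vertex asymptotics — are exactly the results of the companion paper \cite{MRW1} being cited, the honest proof is mostly an assembly of those results, and the theorem is correctly attributed to \cite[Cor.\,4.1 and Lem.\,4.12]{MRW1}.
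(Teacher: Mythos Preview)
The paper does not prove this theorem at all: it is stated as a direct citation of \cite[Cor.\,4.1 and Lem.\,4.12]{MRW1}, with no argument given. So there is no ``paper's own proof'' to compare against, and you correctly recognize this in your final paragraph.

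Your sketch is nonetheless a reasonable outline of the argument one finds in the companion paper. The reflection route you isolate at the end is indeed the clean one for the first equivalence: if $\weld|_{(x_k,x_{k+1})}=\alpha\in\PSL(2,\R)$, then $f$ and $g\circ\alpha$ agree on $(x_k,x_{k+1})$, so they glue to a conformal map of $\H\cup\H^*\cup(x_k,x_{k+1})$ onto $D_k=(\Chat\smallsetminus\g)\cup\g_k$ carrying the real interval (a hyperbolic geodesic) to $\g_k$; and conversely, uniformizing $D_k$ shows $\weld$ is a composition of two elements of $\PSL(2,\R)$ on that interval. For the $C^1$ equivalence your idea is right in spirit, but your phrasing conflates two phenomena: in the piecewise geodesic setting the alternative to $C^1$ at a vertex is not an ``angle $\neq\pi$'' but the logarithmic spiral of \cite[Cor.\,3.10]{MRW1}, and the correspondence with $C^1$ of $\weld$ at $x_k$ goes through the vertex asymptotics of the geodesic-pair maps (as in Lemma~\ref{lem:ftheta-asymptotics}) rather than a bare tangent-matching argument. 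Since all of this is precisely what the cited results establish, your conclusion that the honest proof is an assembly of those results is the correct takeaway.
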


\subsection{Geodesic pairs and conformal welding}\label{sub:pair-welding}

In this section, we study the conformal welding of geodesic pairs.
These auxiliary results will be used in Sections \ref{sec:C1_lambda} and \ref{sec:accessory} to pinpoint the local behavior of the welding maps of piecewise geodesic loops.

Let us consider $\theta \in (0,\pi)$ and the geodesic pair $\gamma$ in $(\m D; \ee^{-\ii \theta}, \ee^{\ii \theta}; 0)$.
We write $\gamma = \gamma_1 \cup \gamma_2 \cup \{0\}$ where $\gamma_1$ connects $\ee^{\ii \theta}$ to $0$ and $\gamma_2$ connects $\ee^{-\ii \theta}$ to $0$.
As shown in \cite{MRW1}, the map
\[h_\theta(z) = G_{\pi/2 - \theta}(\ii z) = \frac{1}{2}\left(\ii z + \frac{1}{\ii z}\right) - \ii \cos(\theta) \log(\ii z)\]
maps the two connected components of $\m D \smallsetminus \gamma$ to upper and lower half-planes.
Here the branch of $\log(\ii z)$ is chosen so that %
the imaginary part is $\pi/2$ for $z \in (0,1)$ and the branch cut is along $\gamma_1$.
See Figure~\ref{fig:G_maps} for an illustration.
   
\begin{figure}
\centering
  \includegraphics[width=0.8\linewidth]{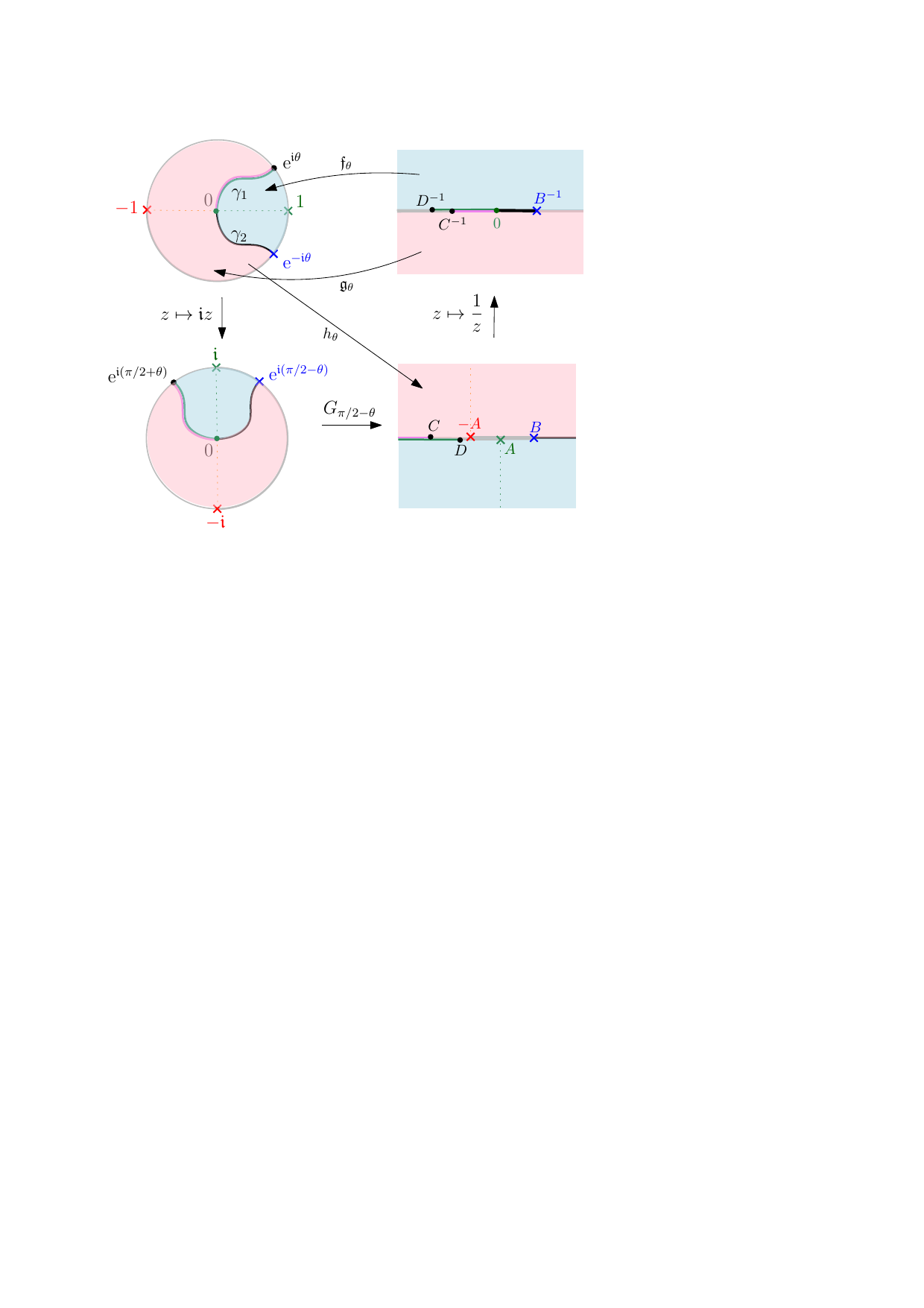}
  \caption{\label{fig:G_maps} Illustration of the uniformizing conformal maps associated with a standard geodesic pair in $(\m D; \ee^{\ii \theta}, \ee^{-\ii \theta}; 0)$. It was computed in  \cite{MRW1} (see Figure 1 there) that $A = (\pi/2) \cos (\theta)$, $B = \sin (\theta) + (\pi /2 -\theta) \cos (\theta)$, $C = - 2A - B$, $D = 2A - B$.}
\end{figure}

The map $h_\theta$ is analytic in $\m D \smallsetminus \gamma_1$ and satisfies $h_\theta(0) = \infty$, $h_\theta(1) = -h_\theta(-1) = A := (\pi/2) \cos(\theta)$ and $h(\ee^{-\ii\theta}) = B := \sin(\theta) + (\pi/2 - \theta)\cos(\theta)$.
The point $\ee^{\ii\theta}$ gets mapped to either $C = -2A-B$ or $D = 2A-B$, depending on the side of the approach.

Finally, we define the conformal maps $\mathfrak f_\theta(z) = h_\theta^{-1}(1/z)$ (for $z \in \H$) and $\mathfrak g_\theta(z) = h_\theta^{-1}(1/z)$ (for $z \in \H^*$).
They both map $0$ to $0$ and extend continuously to the boundary.

\begin{lem}\label{lem:pair-welding-homeo}
Let $\varphi_\theta \colon [1/D,1/B] \to [1/C, 1/B]$ be the welding homeomorphism $\varphi_\theta(x) = (\mathfrak g_\theta^{-1} \circ \mathfrak f_\theta)(x)$ for the geodesic pair.
Then
\[\varphi_\theta(x) = \begin{cases}
    \dfrac{x}{(C-D)x + 1}, & \text{if } x \in [1/D,0], \\
    x, & \text{if } x \in [0,1/B].
\end{cases}\]
Moreover, the jump 
\begin{equation}\label{eq:jump_Def}
\lambda[\varphi_\theta](0) := \varphi_{\theta}''/\varphi_{\theta}'(0\splus) - \varphi_{\theta}''/\varphi_{\theta}'(0\sminus)
\end{equation}
of the pre-Schwarzian
$\varphi_{\theta}''/\varphi_{\theta}'$ 
at $0$ equals $-4\pi \cos(\theta)$.
\end{lem}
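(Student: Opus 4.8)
The plan is to compute $\varphi_\theta$ directly from its definition $\varphi_\theta = \mathfrak g_\theta^{-1}\circ\mathfrak f_\theta$ and then read off the jump of the pre-Schwarzian. First I would recall that $\mathfrak f_\theta(z) = h_\theta^{-1}(1/z)$ on $\H$ and $\mathfrak g_\theta(z) = h_\theta^{-1}(1/z)$ on $\H^*$, so that on the boundary $\varphi_\theta(x) = \mathfrak g_\theta^{-1}(\mathfrak f_\theta(x))$. For a boundary point $x$, $\mathfrak f_\theta(x)$ is a point on $\gamma$ or on $\partial\m D$; tracing through the uniformizing map $h_\theta$, the image point equals $h_\theta^{-1}(1/x)$ where $1/x$ is a real number on the boundary of the upper half-plane. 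Then $\mathfrak g_\theta^{-1}$ of that point is the value $w$ with $h_\theta(\cdot)$ evaluated from the \emph{lower} side equal to $1/w$. Thus $\varphi_\theta(x) = 1/\big(h_\theta^{\sminus}(h_\theta^{-1,\splus}(1/x))\big)$, i.e. $\varphi_\theta$ is conjugate by $x\mapsto 1/x$ to the map that compares the two branches of $h_\theta$ across the slit $\gamma_1$. Since $\gamma_1$ is a hyperbolic geodesic, $h_\theta$ maps a neighborhood of $\gamma_1\smallsetminus\{0\}$ to two half-planes glued along a real segment, and the branch-comparison there is the identity on one piece; the only nontrivial behavior is across the arc of $\partial\m D$ between $e^{\ii\theta}$ and $e^{-\ii\theta}$ not meeting $\gamma$, which corresponds to $x\in[1/D,0]$. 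The key computational input is the explicit logarithmic term in $h_\theta(z)=\tfrac12(\ii z + 1/(\ii z)) - \ii\cos(\theta)\log(\ii z)$: going around $z=0$ changes $\log(\ii z)$ by $2\pi\ii$, hence changes $h_\theta$ by the additive constant $-\ii\cos(\theta)\cdot 2\pi\ii = 2\pi\cos(\theta)$. This is precisely why $C = D - 4\pi\cos(\theta)$ (since $C-D = -4\pi\cos\theta$ from the values $C=-2A-B$, $D=2A-B$, $A=(\pi/2)\cos\theta$), and it is the source of the constant $-4\pi\cos(\theta)$ in the statement.

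Next I would verify the piecewise formula. On $[0,1/B]$: the values $h_\theta(1)=A$, $h_\theta(e^{-\ii\theta})=B$ correspond under $x\mapsto 1/x$ to $[1/B, 1/A]$ ... more carefully, the segment of $\partial\m D$ from $1$ to $e^{-\ii\theta}$ bounding $\gamma_2$ maps under $h_\theta$ to $[A,B]$, and both $\mathfrak f_\theta$ and $\mathfrak g_\theta$ send the relevant real interval to this same boundary arc \emph{in a compatible way} because $h_\theta$ has no monodromy there (the branch cut is along $\gamma_1$, not here). Hence $\varphi_\theta(x)=x$ on $[0,1/B]$. On $[1/D,0]$: here $\mathfrak f_\theta$ hits $\gamma_1$, where crossing to the $\H^*$-side picks up the monodromy. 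Writing $h_\theta^{\splus} = h_\theta$ and $h_\theta^{\sminus} = h_\theta + 2\pi\cos(\theta)$ as the two branches across $\gamma_1$, and using $w = 1/h_\theta^{\sminus}(p)$, $x = 1/h_\theta^{\splus}(p)$, eliminating $p$ gives $1/w = 1/x + 2\pi\cos(\theta)\cdot(\text{??})$ — one has to be a little careful: actually $h_\theta^{\sminus}(p) = h_\theta^{\splus}(p) + (C-D)$ when $p$ ranges over $\gamma_1$, giving $1/w = 1/x + (C - D)$, i.e. $w = x/((C-D)x+1)$, which is the claimed Möbius formula. I should double-check the orientation/sign of the monodromy jump (whether it is $C-D$ or $D-C$) against the stated endpoint values $\varphi_\theta(1/D)=1/C$, $\varphi_\theta(1/B)=1/B$; with $C-D = -4\pi\cos\theta$ this indeed sends $1/D\mapsto (1/D)/((C-D)/D + 1) = 1/(C-D+D) = 1/C$, consistent.

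Finally, the jump of the pre-Schwarzian at $0$. For the Möbius map $M(x) = x/((C-D)x+1)$ on the left side, $M''/M'(x) = -2(C-D)/((C-D)x+1)$, so $M''/M'(0\sminus) = -2(C-D)$; for the identity on the right side, $\varphi_\theta''/\varphi_\theta'(0\splus) = 0$. Hence $\lambda[\varphi_\theta](0) = 0 - (-2(C-D)) = 2(C-D) = 2\cdot(-4\pi\cos\theta)/2$... I need to be careful about the factor: $C - D = -2A - B - (2A - B) = -4A = -4\cdot(\pi/2)\cos\theta = -2\pi\cos\theta$. Then $2(C-D) = -4\pi\cos(\theta)$, matching the statement exactly. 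So the computation closes. The main obstacle is not the algebra but getting the \emph{branch bookkeeping} right: one must identify precisely which boundary arc of $\m D$ corresponds to $x<0$ versus $x>0$, on which arc the logarithmic monodromy of $h_\theta$ is active, and the exact sign with which the jump $C - D$ enters the welding — essentially a careful reading of Figure~\ref{fig:G_maps} and the branch-cut convention for $\log(\ii z)$ fixed above the statement. Once the orientations are pinned down, everything follows from the single observation that $\log(\ii z)$ jumps by $2\pi\ii$ around $0$.
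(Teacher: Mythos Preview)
Your approach is essentially the same as the paper's: compute $\varphi_\theta$ as the conjugate by $x\mapsto 1/x$ of the welding for $h_\theta^{-1}$, then read off the pre-Schwarzian jump directly. The paper simply cites \cite{MRW1} for the fact that the welding of $h_\theta^{-1}$ is the translation $\phi(x)=x+(D-C)$ on one piece and the identity on the other, then writes $\varphi_\theta(x)=1/\phi^{-1}(1/x)$ and computes $\varphi_\theta''/\varphi_\theta'$ exactly as you do; you instead derive that translation from the $2\pi\ii$-monodromy of $\log(\ii z)$ in the explicit formula for $h_\theta$, which is a perfectly good and more self-contained route to the same formula.

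Two small points of bookkeeping: your first pass at $C-D$ is off by a factor of $2$ (you write $C-D=-4\pi\cos\theta$ before correcting to $C-D=-4A=-2\pi\cos\theta$ in the last paragraph), and the sign of the monodromy jump ($C-D$ versus $D-C$) is something you flag but do not fully resolve. Both are harmless since you check the endpoint condition $\varphi_\theta(1/D)=1/C$ and recompute $C-D$ correctly at the end, and the final value $\lambda[\varphi_\theta](0)=2(C-D)=-8A=-4\pi\cos\theta$ matches the paper exactly.
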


\begin{proof}
    It was computed in \cite{MRW1} that the welding homeomorphism $\phi$ for $h_\theta^{-1}$ (i.e. without composing with $1/z$, which also swaps upper and lower half-planes) is given by $\phi(x) = x + D-C$ when $x < C$ and $\phi(x) = x$ when $x > B$.
    The first claim follows since we have $\varphi_\theta(x) = 1/\phi^{-1}(1/x)$.
    We note that the pre-Schwarzian of $\varphi_{\theta}$ is
    \[\dfrac{\varphi_\theta''(x)}{\varphi_\theta'(x)} = \begin{cases} \dfrac{-2(C-D)}{(C-D)x+1} & \text{when } x \in [1/D,0), \\ 0 & \text{when } x \in (0,1/B].\end{cases}\]
    In particular,\[\lambda[\varphi_\theta](0) = 2(C-D) = -8A = -4\pi \cos(\theta).\qedhere\]
\end{proof}

We will also need precise asymptotics of $\mathfrak f_\theta$ and $\mathfrak g_\theta$ at $0$.

\begin{lem}\label{lem:ftheta-asymptotics}
We have the following asymptotic expansions as $z \to 0$:
\begin{align*}
\mathfrak f_\theta(z) & = \frac{1}{2\ii}z - \frac{1}{2} \cos(\theta) z^2 \log(z) + O(z^2),\\
\mathfrak f_\theta'(z) & = \frac{1}{2\ii} - \cos(\theta) z \log(z) + O(z), \\
\partial_\theta \mathfrak f_\theta(z) & = \frac{1}{2} \sin(\theta) z^2 \log(z) + O(z^2).
\end{align*}
The same asymptotics hold with $\mathfrak f_\theta$ replaced by $\mathfrak g_\theta$.
\end{lem}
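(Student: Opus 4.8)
The plan is to compute everything explicitly from the closed-form expression
\[
h_\theta(z) = \frac{1}{2}\left(\ii z + \frac{1}{\ii z}\right) - \ii \cos(\theta) \log(\ii z),
\]
and then transport the asymptotics through the inversion $\mathfrak f_\theta(z) = h_\theta^{-1}(1/z)$. The key observation is that as $w \to \infty$ along the relevant sheet, $h_\theta(z) = w$ forces $z \to 0$, and the dominant balance in $h_\theta$ is $\tfrac{1}{2\ii z} \sim w$, i.e. $z \sim \tfrac{1}{2\ii w}$. So I would set $w = 1/z'$ (with $z'$ the target variable in the statement, the ``$z$'' of the lemma) and solve $h_\theta(z) = 1/z'$ perturbatively for $z$ as a function of $z'$ near $0$.

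First I would write $h_\theta(z) = \frac{1}{2\ii z}\left(1 - z^2 - 2\cos(\theta)\, \ii z \log(\ii z)\right)$ — wait, more carefully: $h_\theta(z) = \frac{1}{2\ii z} + \frac{\ii z}{2} - \ii\cos(\theta)\log(\ii z)$. Setting this equal to $1/z'$ and isolating, $\frac{1}{2\ii z} = \frac{1}{z'} - \frac{\ii z}{2} + \ii \cos(\theta)\log(\ii z)$, hence
\[
z = \frac{1}{2\ii}\left(\frac{1}{z'} + \ii\cos(\theta)\log(\ii z) - \frac{\ii z}{2}\right)^{-1}
= \frac{z'}{2\ii}\left(1 + \ii\cos(\theta)\, z'\log(\ii z) - \tfrac{\ii}{2} z' z\right)^{-1}.
\]
The plan is to bootstrap: the leading order is $z = \tfrac{1}{2\ii} z' + O(z'^2\log z')$; substituting this back into the $\log(\ii z)$ term gives $\log(\ii z) = \log(z'/2) + O(z'\log z') = \log z' + O(1)$, and then expanding the geometric series $(1+u)^{-1} = 1 - u + O(u^2)$ with $u = \ii\cos(\theta) z'\log z' + O(z'^2\log z')$ yields
\[
z = \mathfrak f_\theta(z') = \frac{1}{2\ii} z' - \frac{1}{2}\cos(\theta)\, z'^2 \log z' + O(z'^2),
\]
which is the first line. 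For $\mathfrak f_\theta'$ I would differentiate the functional equation $h_\theta(\mathfrak f_\theta(z')) = 1/z'$ to get $h_\theta'(\mathfrak f_\theta(z'))\,\mathfrak f_\theta'(z') = -1/z'^2$, compute $h_\theta'(z) = -\tfrac{1}{2\ii z^2} + \tfrac{\ii}{2} - \tfrac{\ii\cos\theta}{z} = -\tfrac{1}{2\ii z^2}(1 + O(z\log z))$ — so $h_\theta'(\mathfrak f_\theta(z')) = -\tfrac{1}{2\ii}(2\ii/z')^2(1 + \ldots) = \tfrac{2}{z'^2}(1+\ldots)$ — hmm, I must track the $z\log z$ correction carefully: $h_\theta'(z) = -\tfrac{1}{2\ii z^2} - \tfrac{\ii\cos\theta}{z} + \tfrac{\ii}{2}$, and with $z = \tfrac{1}{2\ii}z'(1+O(z'\log z'))$ one gets $h_\theta'(\mathfrak f_\theta(z')) = -\tfrac{1}{2\ii}\cdot\tfrac{(2\ii)^2}{z'^2}(1 + 2\cos\theta\,\ii z\log(\ii z) + \ldots) + \tfrac{-\ii\cos\theta\cdot 2\ii}{z'} + \ldots$. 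Solving $\mathfrak f_\theta'(z') = -1/(z'^2 h_\theta'(\mathfrak f_\theta(z')))$ and expanding gives the claimed $\mathfrak f_\theta'(z') = \tfrac{1}{2\ii} - \cos(\theta) z'\log z' + O(z')$ — consistent with formally differentiating the first line. For $\partial_\theta \mathfrak f_\theta$ I would differentiate $h_\theta(\mathfrak f_\theta(z')) = 1/z'$ in $\theta$: $(\partial_\theta h_\theta)(\mathfrak f_\theta(z')) + h_\theta'(\mathfrak f_\theta(z'))\,\partial_\theta\mathfrak f_\theta(z') = 0$, where $\partial_\theta h_\theta(z) = \ii\sin(\theta)\log(\ii z)$; then $\partial_\theta\mathfrak f_\theta(z') = -(\partial_\theta h_\theta)(\mathfrak f_\theta(z'))/h_\theta'(\mathfrak f_\theta(z')) = -\ii\sin\theta\log(\ii z) / \big(-\tfrac{1}{2\ii z^2}(1+\ldots)\big) = 2\ii^2 z^2 \sin\theta\,\log(\ii z)\cdot(-1)(1+\ldots)$, and with $z \sim \tfrac{1}{2\ii}z'$, $z^2 \sim -\tfrac{1}{4}z'^2$, $\log(\ii z)\sim\log z'$, this produces $\tfrac{1}{2}\sin(\theta) z'^2\log z' + O(z'^2)$, the third line. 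Finally, since $\mathfrak g_\theta$ is defined by literally the same formula $h_\theta^{-1}(1/z)$ (on $\H^*$ instead of $\H$) and the branch conventions are symmetric across $\gamma_1$, the identical computation applies; I would remark that the error terms are uniform for $\theta$ in compact subsets of $(0,\pi)$.

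I expect the main obstacle to be \emph{bookkeeping the branch of $\log$ and the error terms} rather than any conceptual difficulty: one must verify that the iteration for $z$ converges in a genuine punctured neighborhood of $0$ (not just formally), that $\log(\ii z) = \log z + O(1)$ with the correct branch on each sheet, and that the $O(z'^2)$ (resp. $O(z')$) remainders are honest — in particular that no intermediate term of size $z'^2 (\log z')^2$ survives. To control this cleanly I would phrase the bootstrap via a contraction-mapping argument on the space of functions $z(z')$ with $|z(z') - \tfrac{1}{2\ii}z'| \le \tfrac12 |z'|$ on a small disc, using that $h_\theta$ is univalent near $0$ on the relevant sheet (which is exactly what makes $\mathfrak f_\theta$, $\mathfrak g_\theta$ well-defined conformal maps), so that $\mathfrak f_\theta = h_\theta^{-1}\circ(1/\,\cdot\,)$ is genuinely analytic there and the asymptotic expansions are just its Taylor-plus-log expansion obtained by repeated substitution.
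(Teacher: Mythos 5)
Your proposal is correct and follows essentially the same route as the paper: both solve the relation $h_\theta(\mathfrak f_\theta(z))=1/z$ perturbatively by bootstrapping from the leading balance $\mathfrak f_\theta(z)\sim z/(2\ii)$, obtain $\mathfrak f_\theta'$ from the inverse-function relation, and get $\partial_\theta\mathfrak f_\theta$ by differentiating the functional equation in $\theta$. The only cosmetic difference is that the paper writes the equation as $z=1/h_\theta(w)$ and expands that rational expression directly, while you invert $h_\theta(z)=1/z'$ and expand a geometric series; the error bookkeeping (in particular that $z'^2(\log z')^2$ terms enter only at order $z'^3(\log z')^2$) is handled the same way.
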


\begin{proof}
    Let us write $\mathfrak f_\theta(z) = w$.
    By definition, we have
    \[z = \mathfrak f_\theta^{-1}(w) = \frac{1}{h_\theta(w)} = \frac{2\ii w}{1 + 2 \cos(\theta) w \log(\ii w) - w^2}.\]
    Note that since $w \to 0$ as $z \to 0$, this implies that $|z| \ge |w|$ for $z$ small enough, so that
    \[w = O(z).\]
    We next write the above equation as
    \[z = 2 \ii w - \frac{4 \ii \cos(\theta) w^2 \log(\ii w) - 2 \ii w^3}{1 + 2 \cos(\theta) w \log(\ii w) - w^2},\]
    implying that
    \[w = \frac{z}{2\ii} + \frac{2 \cos(\theta) w^2 \log(\ii w) - w^3}{1 + 2 \cos(\theta) w \log(\ii w) - w^2} = \frac{z}{2 \ii} + 2 \cos(\theta) w^2 \log(\ii w) + O(w^2).\]
    In particular, $w = O(z)$ now implies that $w = z/(2\ii) + O(z^2 \log(z)) = z/(2 \ii) + o(z)$, so that $w^2 = -z^2/4 + O(z^3 \log(z))$
    and $\log(\ii w) = \log(z/2 + o(z)) = \log(|z/2||1 + o(1)|) + O(1) = \log(|z|) + O(1) = \log(z) + O(1).$ This gives us the first asymptotics in the statement,
    \[\mathfrak f_\theta(z) = w = \frac{z}{2 \ii} - \frac{1}{2} \cos(\theta) z^2 \log(z) + O(z^2).\]
    Next, we note that
    \[(\mathfrak f_\theta^{-1})'(w) = \frac{2 \ii - 4 \ii \cos(\theta) w + 2 \ii w^2}{(1 + 2 \cos(\theta) w \log(\ii w) - w^2)^2},\]
    so that
    \[\mathfrak f_\theta'(z) = \frac{1}{(\mathfrak f_\theta^{-1})'(w)} = \frac{1 + 4 \cos(\theta) w \log(\ii w) + O(w)}{2 \ii + O(w)}.\]
    Applying the asymptotics for $w$ obtained above, we get
    \[\mathfrak f_\theta'(z) = \frac{1}{2\ii} - \cos(\theta) z \log(z) + O(z)\]
    as wanted.
    Finally, note that differentiating the identity $\mathfrak f_\theta(\mathfrak f_\theta^{-1}(w)) = w$ with respect to $\theta$ on both sides gives us
    \[\partial_\theta \mathfrak f_\theta(\mathfrak f_\theta^{-1}(w)) + \mathfrak f_\theta'(\mathfrak f_\theta^{-1}(w)) \partial_\theta [\mathfrak f_\theta^{-1}](w) = 0,\]
    so that
    \[\partial_\theta \mathfrak f_\theta(z) = - \mathfrak f_\theta'(z) \partial_\theta [\mathfrak f_\theta^{-1}](w) = - \mathfrak f_\theta'(z) \left(\frac{4\ii \sin(\theta) w^2 \log(\ii w)}{(1 + 2 \cos(\theta) w \log(\ii w) - w^2)^2}\right).\]
    Again, applying the already obtained asymptotics for $w$ gives us
    \[\partial_\theta \mathfrak f_\theta(z) = \frac{1}{2} \sin(\theta) z^2 \log(z) + O(z^2).\]
    The same proof goes through verbatim for $\mathfrak g_\theta$ as well.
\end{proof}

\section{Uniqueness of piecewise geodesic $C^1$ Jordan curves}\label{sec:uniqueness}

In this section, we prove that every isotopy class $\mc L(z_1, \dots, z_{n}; \tau)$ contains a unique piecewise geodesic $C^1$ smooth Jordan curve.
It follows in particular that the minimizer of Loewner energy obtained in Lemma~\ref{lem:loop_geodesic_minimizer} is unique.

\begin{thm}\label{thm:uniqueness}
Let $\gamma, \widehat{\gamma} \in \mc L(z_1,\dots,z_n;\tau)$ be two isotopic $C^1$ smooth Jordan curves. If $\gamma$ and $\widehat{\gamma}$ are piecewise geodesic, then $\gamma = \widehat{\gamma}$.
\end{thm}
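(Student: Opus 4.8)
The plan is to sidestep the Loewner-theoretic route entirely (for instance a strict-convexity argument for the energy, which one is tempted to try first) and instead prove this by a maximum-principle comparison, after first recasting the geodesic property in purely potential-theoretic terms.

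\emph{Step 1 (a potential-theoretic reformulation of the geodesic property).} Fix $\gamma\in\mc L(z_1,\dots,z_n;\tau)$, let $\Omega,\Omega^*$ be the two Jordan domains bounded by $\gamma=\gamma_1\cup\dots\cup\gamma_n$, and for each $j$ set $D_j:=(\Chat\smallsetminus\gamma)\cup\gamma_j=\Omega\sqcup\mathring\gamma_j\sqcup\Omega^*$, a slit sphere. Let $\Theta_j\colon D_j\to\H$ be a Riemann map sending the prime ends $z_j,z_{j+1}$ to $0,\infty$. Since the hyperbolic metric is conformally invariant, $\gamma_j$ is the hyperbolic geodesic of $D_j$ from $z_j$ to $z_{j+1}$ if and only if $\Theta_j(\gamma_j)=\ii\R_{>0}$, i.e.\ the harmonic function $m_j:=\tfrac1\pi\arg\Theta_j$ on $D_j$ — which is the harmonic measure of one bank of the slit $\gamma\smallsetminus\gamma_j$ — is identically $\tfrac12$ on $\gamma_j$. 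Because $m_j$ is harmonic on all of $D_j$, hence across the open arc $\gamma_j$, this is equivalent to asking that the one-sided normal derivatives of $\omega(\,\cdot\,,\gamma_j,\Omega)$ and of $\omega(\,\cdot\,,\gamma_j,\Omega^*)$ coincide along $\gamma_j$ (so that $\tfrac12\omega(\,\cdot\,,\gamma_j,\Omega)$ on $\Omega$ and $1-\tfrac12\omega(\,\cdot\,,\gamma_j,\Omega^*)$ on $\Omega^*$ glue to a harmonic function on $D_j$); equivalently again, the uniformizing map $f\colon\H\to\Omega$ extends holomorphically across $f^{-1}(\gamma_j)$ by Schwarz reflection in the analytic arc $\gamma_j$, and likewise for $g\colon\H^*\to\Omega^*$. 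Finally, by the local study of geodesic pairs in \cite{MRW1} recalled in Section~\ref{sub:pair-welding}, the hypothesis that $\gamma$ be $C^1$ at each $z_k$ pins down the unique non-spiralling local behaviour of $\Theta_{j-1},\Theta_j$ at that puncture; this is the only place the $C^1$ assumption will enter, and it is genuinely needed (spiralling piecewise geodesic curves exist and destroy uniqueness).

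\emph{Step 2 (comparing two distinct curves).} Suppose $\gamma\ne\hat\gamma$ are as in the statement, with complementary domains $\Omega,\Omega^*$ and $\hat\Omega,\hat\Omega^*$. Both pass through $z_1,\dots,z_n$ and, being $C^1$, share their tangent lines there; away from the punctures each $\gamma_j$ and $\hat\gamma_i$ is a real-analytic arc, so $\gamma$ and $\hat\gamma$ meet at finitely many points and $\gamma\cup\hat\gamma$ bounds at least one ``bigon'' $B$: a Jordan domain bounded by a subarc $\alpha\subset\gamma_j$ and a subarc $\hat\alpha\subset\hat\gamma_i$ meeting only at their endpoints $p,q$, where each of $p,q$ is either some $z_k$ (then $\alpha,\hat\alpha$ are tangent there and $B$ has a cusp) or an ordinary crossing point. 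After relabelling, $B$ lies on a definite side of $\gamma$ and of $\hat\gamma$; on $B$ consider the single-valued holomorphic functions $\phi:=\tfrac1\pi\log\Theta_j$ and $\hat\phi:=\tfrac1\pi\log\hat\Theta_i$, both mapping $B$ into the strip $\{0<\Im<1\}$. By Step~1, $\Im\phi\equiv\tfrac12$ on $\alpha$ and $\Im\hat\phi\equiv\tfrac12$ on $\hat\alpha$, and each of $\phi,\hat\phi$ continues holomorphically across $\mathring\alpha$ and $\mathring{\hat\alpha}$. Form $H:=\phi-\hat\phi$, holomorphic on a neighbourhood of $\overline B\smallsetminus\{p,q\}$: a direct bookkeeping of boundary values (using that $\tfrac12$ is the value of $\Im\phi$ on $\alpha$ and of $\Im\hat\phi$ on $\hat\alpha$, together with the ranges of the two harmonic measures) shows $\Im H$ has a strict sign on $B\cup\mathring\alpha\cup\mathring{\hat\alpha}$ and vanishes on $\partial B$ exactly at $p,q$. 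The key point to prove is then that, feeding the Schwarz-reflection identities across $\gamma_j$ and across $\hat\gamma_i$ into repeated analytic continuation (equivalently, a Schwarz-type maximum principle together with Hopf's lemma along $\partial B\smallsetminus\{p,q\}$), $H$ must be constant — hence $\alpha=\hat\alpha$, contradicting $\gamma\ne\hat\gamma$.

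\emph{Expected main obstacle.} I expect the heart of the argument to be two-fold. First, choosing the right auxiliary function and checking, across the combinatorial types of bigon, that the geodesic conditions on $\alpha$ and on $\hat\alpha$ genuinely combine into an overdetermined (Cauchy) boundary condition: the delicacy is that the geodesic condition on $\gamma_j$ links $\Omega$ with $\Omega^*$, and one must verify that the two sides of $B$ align correctly with $\Omega,\Omega^*$ and with $\hat\Omega,\hat\Omega^*$. Second, and more seriously, the behaviour at the corners $p,q$: when $p=z_k$ the $C^1$ matching forces $B$ to have zero interior angle there, so no interior-cone/ball condition holds and the usual Hopf and boundary-Harnack statements must be replaced by a hands-on local analysis at the puncture, using precisely the non-spiralling normalization of Step~1. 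Controlling this cusp is, I believe, the technical crux, and it is exactly the step in which the $C^1$ hypothesis is indispensable.
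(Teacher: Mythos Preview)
Your Step~1 is correct and is essentially the paper's own reformulation. The genuine gap is in Step~2: the bigon comparison does not produce an overdetermined boundary problem, and the tools you invoke cannot close it. Even when the signs line up so that $\Im H>0$ on $B\cup\mathring\alpha\cup\mathring{\hat\alpha}$ and $\Im H=0$ at $p,q$, this is a perfectly consistent Dirichlet configuration for a harmonic function; the maximum principle merely confirms $\Im H>0$ on $B$. Schwarz reflection does not apply to $H$: only $\phi$ satisfies $\Im\phi=\tfrac12$ on $\alpha$, whereas $\hat\phi$ has no distinguished boundary behaviour there (its level-$\tfrac12$ set is $\hat\gamma_i$, not $\gamma_j$), so $H=\phi-\hat\phi$ reflects across neither arc. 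Hopf's lemma yields $\partial_n\Im H(p)\neq 0$, but there is no second, independent constraint on that normal derivative to contradict. (Incidentally, a bigon may lie in $\Omega\cap\hat\Omega$ rather than $\Omega\cap\hat\Omega^*$, in which case $\Im H$ changes sign along $\partial B$, so even the ``strict sign'' claim requires a more careful choice of bigon.)

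The structural reason the local argument stalls is that the geodesic condition on $\gamma_j$ is a statement about the entire complementary slit $\gamma\smallsetminus\gamma_j$, and likewise for $\hat\gamma_i$; a single bigon sees only fragments of these two different global slits, so $\Im\phi$ and $\Im\hat\phi$ are not comparable enough to force a contradiction. The paper makes the comparison global by lifting both curves to the universal cover $\m D\to\Sigma=\Chat\smallsetminus\{z_1,\dots,z_n\}$. There the lifts $\ell_\alpha$ of the arcs of $\gamma$ form a tree, and for each $\alpha$ one builds a harmonic ``height'' $h_\alpha$ on all of $\m D$, integer-valued on the arcs and recording signed tree-distance from $\ell_\alpha$. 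Setting $\tilde\rho|_{\hat\ell_\alpha}=|h_\alpha|$ on each lifted arc of $\hat\gamma$ and extending harmonically gives a deck-invariant function $\rho$ on $\Sigma$. The $C^1$ hypothesis is used exactly once, to show $\rho$ is bounded (this is precisely your finiteness-of-intersections observation). A subharmonicity check --- which feeds in the geodesic property of \emph{both} curves simultaneously, via the auxiliary $|h_\alpha-\hat h_\alpha|$ on $\hat U_\alpha$ --- forces $\rho$ to be constant, and a strict inequality then rules out any nonzero constant unless $\gamma=\hat\gamma$. No Hopf-type corner analysis at the cusps is needed.
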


Roughly speaking, our proof is based on constructing for each boundary arc $\gamma_k$ a suitable `distance' or `potential' function $h_k$ on the sphere and then using it to measure how far the corresponding arc $\widehat \gamma_k$ of $\widehat \gamma$ lies from $\gamma_k$, with the goal being to show that $h_k|_{\widehat \gamma_k} = 0$.
As the hyperbolic geodesic $\gamma_k$ consists of exactly those points which get mapped to the real axis when the domain $(\CC \smallsetminus \gamma) \cup \gamma_k$ is mapped conformally to the strip $\{z \in \C : \Im(z) \in (-1,1)\}$ and the endpoints of $\g_k$ mapped to $\pm \infty$, a natural candidate for $h_k$ is the imaginary part of this mapping.
When trying to implement such a strategy, one quickly notices that $\widehat \gamma_k$ may get further away from $\gamma_k$ by intersecting other arcs of $\gamma$. It is, therefore, natural to shift our viewpoint from the sphere to the universal cover of $\Sigma$.
For this purpose, let us fix some notation.

We let $%
\cover \colon \m D \to \Sigma = \Chat \smallsetminus \{z_1, \ldots, z_n\}$ be a holomorphic universal covering. The set $\cover^{-1}(\gamma)$ is the union of countably many arcs $\ell_\alpha \subset \m D$, each of them is a lift of  $\gamma_k$ for some $k \in \{1,\dots,n\}$. Here, $\alpha \in I$ runs over some abstract countable index set $I$. We denote the two connected components of $\Chat \smallsetminus \g$ as $\O_+$ and $\O_-$.

Each $\ell_\alpha$ in turn will be a hyperbolic geodesic in a domain $U_\alpha \subset \m D$, corresponding to a lift of $\Omega_+ \cup \Omega_- \cup \cover(\ell_\alpha)$. Moreover, each $U_\alpha$ naturally splits into subdomains $U_\alpha^+$ and $U_\alpha^-$ so that $\cover(U_\alpha^{\pm}) = \Omega_\pm$. %
Note that any pair of domains $U_\alpha^+$ and $U_\beta^+$ (resp. $U_\alpha^-$ and $U_\beta^-$) are equivalent modulo deck transformations, and the domains $\{U_\alpha^\pm : \alpha \in I\}$ form a tiling of $\m D$.

Let next $H \colon [0,1] \times \Sigma \to \Sigma$ be an isotopy of $\Sigma$ such that $H(0,\gamma) = \gamma$ and $H(1,\gamma) = \widehat{\gamma}$. 
Then $H$ admits a lift $\widetilde{H} \colon [0,1] \times \m D \to \m D$ to the universal cover of $\Sigma$ such that $\widetilde{H}(t,\cdot)|_{\partial \m D} = \Id$ for all $t \in [0,1]$, see \cite[V.1.4]{lehto2012univalent}. 
We use $\widetilde H(1, \cdot)$ to define the arcs and domains: $\widehat \ell_\alpha = \widetilde{H}(1,\ell_\alpha)$, $\widehat U_\alpha = \widetilde{H}(1,U_\alpha)$ and $\widehat U_\alpha^{\pm} = \widetilde{H}(1,U_\alpha^{\pm})$ corresponding to the curve $\widehat \gamma$. Note in particular that $\ell_\alpha$ and $\widehat \ell_\alpha$ share the same endpoints.

We will next define, for each fixed $\alpha \in I$, a potential function $h_\alpha$ as hinted above. 
We start by constructing a rooted tree $\mc T_\alpha$ on $I,$ see Figure \ref{fig:covering1}. 
The root $\alpha$ is connected by an edge to each of the $2n-2$ labels $\mu$ that correspond to boundary arcs $\ell_\mu$ of $U_\alpha.$ Two distinct labels $\nu, \mu \in I\setminus\{\alpha\}$ are connected by an edge
if either $U_\nu^+ = U_\mu^+$ or $U_\nu^- = U_\mu^-$, and if furthermore
$\ell_\nu$ separates $\ell_\mu$ and $\ell_\alpha$ in $\m D$ (or if $\ell_\mu$ separates $\ell_\nu$ and $\ell_\alpha$). In other words, there is an edge between $\nu$ and $\mu$ if $\ell_\nu$ and $\ell_\mu$ are both part of the boundary of a common tile, and if one is further away from $\ell_\alpha$ than the other.
We let $d (\cdot, \cdot)$ denote the graph distance in this tree and also let
\[\sigma(\alpha,\nu) = \begin{cases} 0, & \text{if } \alpha = \nu \\ 1, & \text{if the path from } \alpha \text{ to } \nu \text{ goes through some } \mu \neq \alpha \text{ with } \ell_\mu \subset \partial U_\alpha^+ \\
-1, & \text{if the path from } \alpha \text{ to } \nu \text{ goes through some } \mu \neq \alpha \text{ with } \ell_\mu \subset \partial U_\alpha^-.
\end{cases}\]
Finally, we let $h_\alpha \colon \m D \to \R$ be the function which is harmonic outside of $\cover^{-1}(\gamma)$ and which on any given arc $\ell_\nu$ ($\nu \in I$) takes the constant value $\sigma(\alpha,\nu) d(\alpha, \nu)$.

\begin{figure}
    \centering
    \includegraphics[width=.95\linewidth]{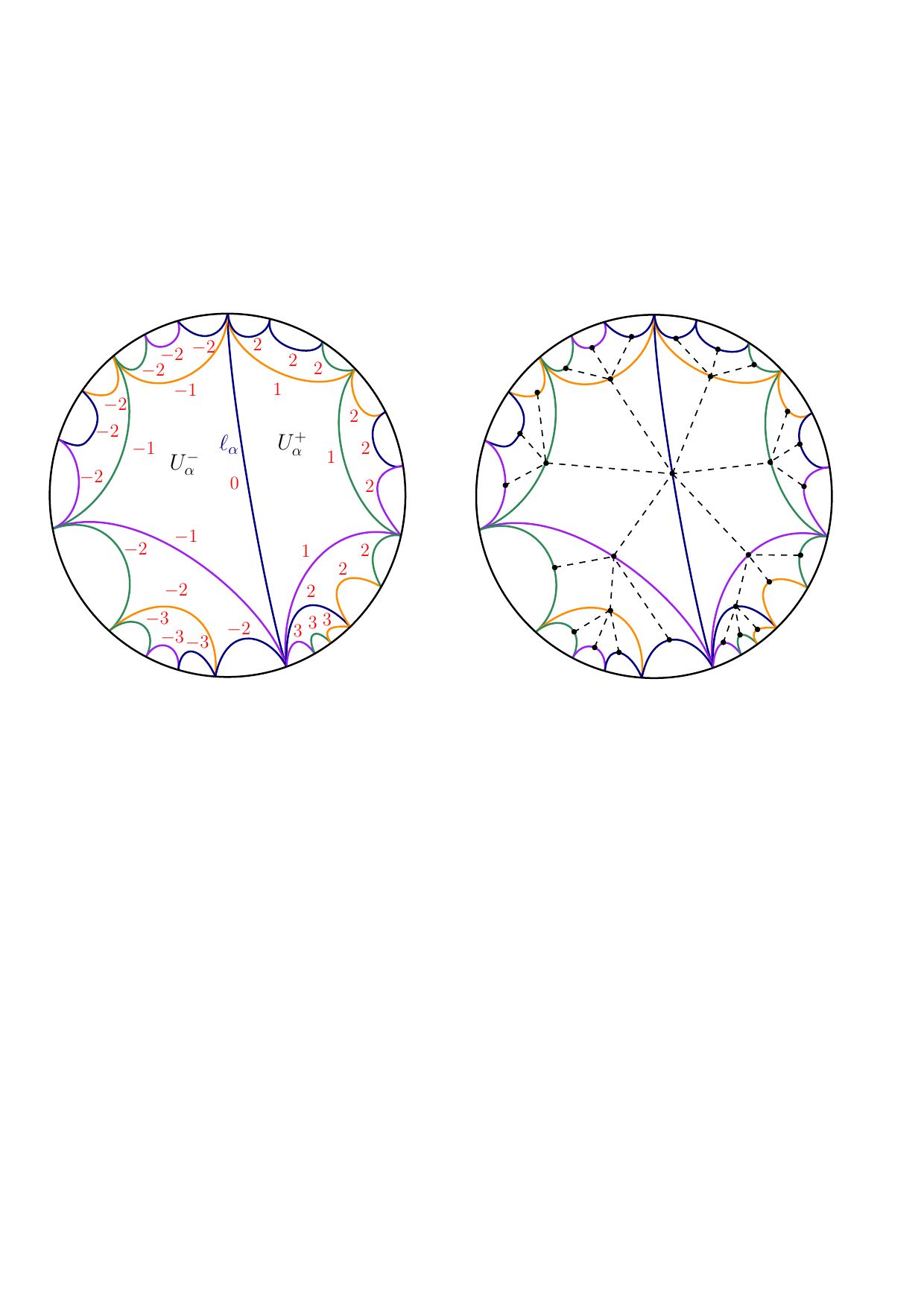}
    \caption{The covering space picture. Left: The values of $h_\alpha$ on the arcs $\ell_\beta$ are written in red. Arcs with the same color are equivalent under the deck transformations. Right: The rooted tree $\mc T_\a$ is drawn with dashed lines.}
    \label{fig:covering1}
\end{figure}

As stated in the beginning, we aim to show that $h_\alpha$ vanishes on $\widehat{\ell}_\alpha$. We begin by showing that it is bounded.

\begin{lem}\label{lem:rho_bounded}
There exists a constant $C > 0$ such that $|h_\alpha(z)| \le C$ for all $\alpha \in I$ and $z \in \widehat \ell_\alpha$.
\end{lem}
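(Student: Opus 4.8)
The plan is to first reduce to finitely many arcs using the deck group, then turn the boundedness into a combinatorial statement about how far $\hat\ell_\alpha$ can travel in the tree $\mc T$, and finally bound that travel by tracking $\hat\ell_\alpha = \tilde H(1,\ell_\alpha)$ through the isotopy. For the reduction, write $\Gamma\cong\pi_1(\Sigma)$ for the deck group of $\cover$. If $g\in\Gamma$ carries $\ell_\alpha$ to $\ell_{\alpha'}$, then $g$ permutes the tiling $\{U_\nu^{\pm}\}$ and hence induces an automorphism of $\mc T$ preserving the root data, $d(\alpha',g\nu)=d(\alpha,\nu)$ and $\sigma(\alpha',g\nu)=\sigma(\alpha,\nu)$, so that $h_{\alpha'}\circ g=h_\alpha$. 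Moreover, since $H(1,\cdot)$ is homotopic to the identity map of $\Sigma$ and the lift $\tilde H(1,\cdot)$ is normalized to equal the identity on $\partial\m D$, it commutes with every $g\in\Gamma$ (the map $g^{-1}\circ\tilde H(1,\cdot)^{-1}\circ g\circ\tilde H(1,\cdot)$ is a deck transformation fixing $\partial\m D$ pointwise, hence the identity), so $g(\hat\ell_\alpha)=\hat\ell_{\alpha'}$. Therefore $\sup_{\hat\ell_{\alpha'}}|h_{\alpha'}|=\sup_{\hat\ell_\alpha}|h_\alpha|$, and since $\cover^{-1}(\gamma)$ consists of exactly $n$ orbits of arcs under $\Gamma$ (one for each $\gamma_k$), it suffices to bound $\sup_{\hat\ell_\alpha}|h_\alpha|$ for one chosen lift of each $\gamma_k$.

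Next I would invoke the maximum principle to reduce to a combinatorial estimate. For any tile $U_\nu^{\pm}$, every arc $\ell_\mu$ in its boundary satisfies $d(\alpha,\mu)\le d(\alpha,\nu)+1$, so the constant boundary values $\sigma(\alpha,\mu)d(\alpha,\mu)$ of $h_\alpha$ on $\partial U_\nu^{\pm}\cap\cover^{-1}(\gamma)$ are at most $d(\alpha,\nu)+1$ in absolute value; the remaining boundary points lie on $\partial\m D$ and are preimages of the punctures, which form a set of zero harmonic measure for $U_\nu^{\pm}$ since each is a corner of the Jordan domain $\cover(U_\nu^{\pm})\in\{\Omega_+,\Omega_-\}$. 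Hence $|h_\alpha|\le d(\alpha,\nu)+1$ on $\overline{U_\nu^{\pm}}$, and since $\hat\ell_\alpha\cap\m D$ is covered by the closures of the tiles it meets, $\sup_{\hat\ell_\alpha}|h_\alpha|\le D_\alpha+1$ with $D_\alpha:=\sup\{d(\alpha,\nu):\overline{U_\nu^{\pm}}\cap\hat\ell_\alpha\ne\emptyset\}$. It now remains only to show $D_\alpha<\infty$ for each of the finitely many base arcs.

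To bound $D_\alpha$, I would follow each point $z\in\ell_\alpha$ along its trajectory $t\mapsto\tilde H(t,z)$, which ends at $\tilde H(1,z)\in\hat\ell_\alpha$, and count how many distinct tiles it crosses. For $z$ in the part of $\ell_\alpha$ lying over the compact thick part of $\Sigma$, the isotopy is uniformly continuous and distinct arcs $\ell_\mu$ stay uniformly apart in the hyperbolic metric (because the fixed $C^1$ Jordan curve $\gamma$ has positive normal injectivity radius there); subdividing $[0,1]$ into finitely many intervals, each sub-trajectory lies in a hyperbolic ball meeting at most one $\ell_\mu$, hence in the closure of at most two tiles, and concatenating bounds the tile displacement of $z$ by a constant independent of $z$. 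For $z$ near an ideal endpoint $p$ of $\ell_\alpha$ (a parabolic fixed point over a puncture $z_k$), I would pass to an upper half-plane model with $p=\infty$: the $C^1$ property of $\gamma$ and $\hat\gamma$ at $z_k$ confines all of $\gamma$ and $\hat\gamma$ near $z_k$ to arbitrarily narrow cones about fixed directions, so their lifts entering a small horodisk at $p$ lie in thin vertical half-strips, and since $\tilde H(1,\cdot)$ fixes $p$ and commutes with the parabolic generator of $\Gamma$ at $p$, $\hat\ell_\alpha$ enters the horodisk confined to boundedly many of the tiles adjacent to $\ell_\alpha$ there. Combining the two regimes yields $D_\alpha<\infty$.

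The main obstacle is precisely the cusp analysis in the last step. Near a puncture the hyperbolic injectivity radius degenerates, infinitely many arcs $\ell_\mu$ accumulate at each parabolic fixed point, and two $C^1$ curves may cross each other infinitely often near a point of tangency, so the clean ``subdivide and count tiles'' argument fails there; one must genuinely use the $C^1$ regularity at the punctures together with the equivariance of $\tilde H(1,\cdot)$ under the parabolic deck generator to confine $\hat\ell_\alpha$ to finitely many tiles near its ideal endpoints.
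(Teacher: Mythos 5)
Your overall architecture is sound, and at the crux your argument coincides with the paper's: the whole content of the lemma is that $\hat\ell_\alpha$ can only meet boundedly many tiles, the only danger is near the two ideal endpoints, and there the $C^1$ regularity of $\gamma$ and $\hat\gamma$ at the punctures saves the day. The paper phrases this downstairs in one line --- since both curves have tangent lines at $z_{k+1}$, the arc $\hat\gamma_k$ cannot hit \emph{both} $\gamma_k$ and $\gamma_{k+1}$ infinitely often as it approaches $z_{k+1}$, so $\hat\ell_\alpha$ cannot alternate between lifts of the two arcs and descend indefinitely in the tree near $p$ --- which is exactly the narrow-cone/vertical-strip picture you sketch in the horoball. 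Your reduction to finitely many orbits via deck equivariance and the maximum-principle bound $|h_\alpha|\le d(\alpha,\nu)+1$ on $\overline{U_\nu^{\pm}}$ are both correct and are implicit in the paper's ``the bound obtained does not depend on the particular lift.''

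The genuine problem is your treatment of the part of $\hat\ell_\alpha$ away from its ideal endpoints. First, the argument as stated has a hole: you subdivide $[0,1]$ so that each sub-trajectory $\tilde H([t_i,t_{i+1}],z)$ has small hyperbolic diameter and claim it therefore meets at most one arc $\ell_\mu$; but nothing prevents the trajectory of a point starting over the thick part from dipping into a horoball at intermediate times, and deep in a horoball infinitely many lifts of $\gamma_{k}$ and $\gamma_{k+1}$ accumulate within any fixed hyperbolic distance of a point, so ``small hyperbolic ball meets at most one arc'' fails precisely where you would need it. Second, and more importantly, tracking the isotopy is the wrong tool here: the quantity $\sup_{\hat\ell_\alpha}|h_\alpha|$ depends only on $\hat\ell_\alpha$ as a subset of $\m D$ (the isotopy is only needed elsewhere to identify \emph{which} lift $\hat\ell_\alpha$ is). Once small neighborhoods of the two ideal endpoints are removed, the remainder of $\hat\ell_\alpha$ is a compact subset of $\m D$, and $\cover^{-1}(\gamma)$ is a locally finite family of arcs in $\m D$ (near any interior point, a small ball maps homeomorphically to a ball in $\Sigma$ meeting the Jordan curve $\gamma$ in a single arc), so this compact piece meets only finitely many arcs and hence finitely many tiles --- no uniform continuity of $H$ and no injectivity-radius estimate is needed. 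Replacing your second regime by this observation, and fleshing out the cusp sketch (which you correctly identify as the real obstacle), yields the paper's proof.
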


\begin{proof}
Note that since $\gamma$ and $\widehat \gamma$ are $C^1$ smooth arcs, $\widehat \gamma_k$ cannot hit both $\gamma_k$ and $\gamma_{k+1}$ (resp. $\gamma_{k-1}$) infinitely many times as it approaches $z_{k+1}$ (resp. $z_k$).
It follows that if $\widehat \ell_\alpha$ is a lift of $\widehat \gamma_k$, it intersects at most finitely many domains $U_\beta$ and hence $|h_\alpha|$ is bounded on $\widehat \ell_\alpha$. The bound obtained 
only depends on $k.$
\end{proof}

Let us next define the auxiliary function $\widetilde \rho \colon \m D \to \R$ by first setting %
$\widetilde \rho|_{\widehat{\ell}_\alpha} = |h_\alpha|$ on every arc $\widehat \ell_{\alpha}$ and then extending harmonically inside each tile $\widehat U_\alpha^\pm$.
Note that $\widetilde \rho$ is invariant under the deck transformations of the covering $\cover$, and hence there exists a continuous and bounded function $\rho \colon \Sigma \to \R$ such that $\widetilde \rho = \rho \circ \cover$.
The following could be considered the main lemma of our proof as it implies that $\rho$ has to be a constant.

\begin{lem}\label{lem:rho_subharmonic}
The function $\rho$ is subharmonic.
\end{lem}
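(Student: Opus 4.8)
The plan is to verify subharmonicity of $\rho$ directly via the sub-mean-value property at every point of $\Sigma$, distinguishing two cases according to whether the point lies on $\hat\gamma$ or in one of the open domains $\Omega_\pm$. Equivalently, I will check that $\tilde\rho$ satisfies the sub-mean-value inequality on $\m D$ at every point, since $\tilde\rho = \rho\circ\cover$ and $\cover$ is a holomorphic covering (so subharmonicity descends). Away from $\cover^{-1}(\gamma)$, i.e.\ in the interior of each tile $\hat U_\alpha^\pm$, the function $\tilde\rho$ is harmonic by construction, so there is nothing to prove there; the only work is at points lying on some arc $\hat\ell_\alpha$.

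So fix $z_0 \in \hat\ell_\alpha$, not an endpoint. In a small disk around $z_0$ we have two tiles meeting along $\hat\ell_\alpha$: one contained in some $\hat U_\alpha^+$ (or a tile equivalent to it) and one in some $\hat U_\beta^-$. On each side $\tilde\rho$ is the harmonic extension of $|h_\bullet|$ restricted to the relevant boundary arcs. The key point is that on $\hat\ell_\alpha$ the boundary value is $|h_\alpha|\big|_{\hat\ell_\alpha}$, which by construction equals $|\sigma(\alpha,\alpha)\,d(\alpha,\alpha)| = 0$ — wait, more carefully: $h_\alpha$ takes the value $\sigma(\alpha,\nu)d(\alpha,\nu)$ on $\ell_\nu$, and on the arc $\hat\ell_\alpha$ itself $\tilde\rho$ is set equal to $|h_\alpha|$, so I must track what $h_\alpha$ does along the (curved) arc $\hat\ell_\alpha$, which is \emph{not} one of the $\ell_\nu$. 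The mechanism is this: crossing from one tile to an adjacent tile changes the relevant potential label, but the two potentials $h_\alpha$ and $h_\mu$ for adjacent labels $\alpha,\mu$ differ by a function that is harmonic off $\cover^{-1}(\gamma)$ and, because $d(\alpha,\nu)$ and $d(\mu,\nu)$ differ by exactly $\pm 1$ with a consistent sign on each side of $\ell_\mu\cap\partial U_\alpha^{\pm}$, one has $h_\alpha + h_\mu' \equiv \text{const}$ type relations; what matters for the mean-value inequality is a \emph{one-sided comparison}: on the $+$-side of $\hat\ell_\alpha$, $\tilde\rho$ is dominated by the harmonic function $h_\alpha$ shifted appropriately, and on the $-$-side by $-h_\alpha$, and both of these harmonic comparison functions agree with $\tilde\rho = |h_\alpha|$ along $\hat\ell_\alpha$ precisely where $h_\alpha$ has a constant sign, while at a sign change of $h_\alpha$ along $\hat\ell_\alpha$ the absolute value $|h_\alpha|$ is a \emph{maximum} of two harmonic functions and hence subharmonic.

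Concretely, the argument I expect to run is: let $u^+$ be the harmonic function on the union of $+$-tiles adjacent to $z_0$ whose boundary values on each constituent arc $\ell_\nu$ (and on $\hat\ell_\alpha$) are those of $h_\alpha$, and similarly $u^-$ for the $-$-side; then on $\hat\ell_\alpha$ one has $u^+ = u^- = h_\alpha$, so $u := u^+$ on the $+$ side and $u^-$ on the $-$ side is continuous across $\hat\ell_\alpha$, and in fact $u = h_\alpha$ throughout a neighborhood by uniqueness of harmonic extension, hence harmonic. Meanwhile $\tilde\rho = |h_\alpha|$ on the arcs and $\tilde\rho$ is the harmonic extension into each tile; comparing, $\tilde\rho \ge |u| = |h_\alpha|$ on boundaries forces $\tilde\rho \ge |h_\alpha|$ on the tiles by the maximum principle — no, the inequality I want is $\tilde\rho \le$ something subharmonic. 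The cleaner route, which I will take, is: show $\tilde\rho = \max(h_\alpha, -h_\alpha)$ restricted suitably, or more robustly, show $\tilde\rho$ is the harmonic extension of its own boundary values which equal $|h_\alpha|$ on $\cover^{-1}(\gamma)\cap(\text{neighborhood})$, and then use that the harmonic extension of $|$harmonic$|$ boundary data dominates $|$harmonic extension$|$, together with continuity, to conclude the sub-mean-value inequality at $z_0$ by writing $\tilde\rho(z_0) = |h_\alpha(z_0)|$ and comparing with the average of $\tilde\rho$ on a small circle, which is $\ge$ the average of $|h_\alpha| = |$the harmonic function$|$ $\ge |$average of $h_\alpha| = |h_\alpha(z_0)| = \tilde\rho(z_0)$.

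The main obstacle — and the step deserving genuine care — is bookkeeping the signs $\sigma(\alpha,\nu)$ and the increments of $d(\alpha,\cdot)$ as the arc $\hat\ell_\alpha$ threads through finitely many tiles $U_\beta$ (finitely many by Lemma~\ref{lem:rho_bounded}), and verifying that at each such crossing the local picture is exactly "two harmonic functions whose $\max$ is being taken," so that $|h_\alpha|$ along $\hat\ell_\alpha$ is genuinely a subharmonic (not merely continuous) boundary datum, and that the harmonic-in-each-tile extension $\tilde\rho$ therefore satisfies the sub-mean-value property globally. A secondary check is that the endpoints $z_1,\dots,z_n$ (the punctures) cause no trouble: $\rho$ is bounded near each puncture, and bounded subharmonic functions extend subharmonically across isolated punctures, so subharmonicity on $\Sigma$ upgrades to subharmonicity on $\Chat$, which then with boundedness forces $\rho$ constant — this last consequence is what will be used in the sequel, though the lemma statement only asserts subharmonicity, so I will stop once the sub-mean-value inequality is established at every point of $\Sigma$.
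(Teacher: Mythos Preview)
Your overall framework is right: reduce to the sub-mean-value inequality at points $z_0\in\hat\ell_\alpha$, and find a subharmonic comparison function $u$ with $u(z_0)=\tilde\rho(z_0)$ and $u\le\tilde\rho$ nearby. The gap is in your choice of $u$. You take $u=|h_\alpha|$ and argue that the circle average of $\tilde\rho$ dominates that of $|h_\alpha|$. For that you would need $\tilde\rho\ge|h_\alpha|$ on (say) the tile $\hat U_\alpha^+$, which by the maximum principle reduces to checking $|h_\beta|\ge|h_\alpha|$ on each neighboring arc $\hat\ell_\beta\subset\partial\hat U_\alpha^+$. That inequality is \emph{false} in general: if $\hat\ell_\beta$ meets a tile $U_\mu$ with $\mu$ a descendant of $\beta$ in $\mc T_\alpha$, then there $|h_\alpha|$ is larger than $|h_\beta|$ by roughly $1$. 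So the step ``average of $\tilde\rho\ \ge$ average of $|h_\alpha|$'' is not justified, and your sketch does not supply the missing ingredient.

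The paper's fix is to shift by the analogous potential built from the $\hat\ell$-arcs: set $u=|h_\alpha-\hat h_\alpha|$ on $\hat U_\alpha$. On $\hat\ell_\alpha$ one has $\hat h_\alpha=0$, so $u=|h_\alpha|=\tilde\rho$ there; on a neighboring arc $\hat\ell_\beta\subset\partial\hat U_\alpha^\pm$ one has $\hat h_\alpha=\pm1$, so $u=|h_\alpha\mp1|$, and the needed boundary inequality becomes $|h_\beta|\ge|h_\alpha\mp1|$, which is a separate lemma (Lemma~\ref{lem:h_shift_inequality}) proved by a direct tree-distance case check. Subharmonicity of $u$ inside $\hat U_\alpha$ then follows because $\hat h_\alpha$ is harmonic in $\hat U_\alpha$ while $|h_\alpha|$ is subharmonic in $\m D$ (Lemma~\ref{lem:halpha_subharmonic}), so near any $w\in\hat U_\alpha$ one can write $u$ as either $|h_\alpha|\pm\hat h_\alpha$ or $|h_\alpha-\hat h_\alpha|$ with the right sign structure. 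The missing idea in your proposal is precisely this $\hat h_\alpha$-shift; without it the comparison with $\tilde\rho$ on the neighboring arcs does not close.
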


Before proving the lemma, let us state and prove a couple of auxiliary results. We begin with the following elementary criterion for subharmonicity.

\begin{lem}\label{lem:subharmonicity_criterion}
Suppose that the upper-semicontinuous function $f \colon \m D \to \R$ satisfies the following property: For every $z \in \m D$, there exists a subharmonic function $u$ defined in a neighborhood of $z$ such that $u \le f$ and $u(z) = f(z)$. Then $f$ is subharmonic.
\end{lem}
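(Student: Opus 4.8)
The plan is to reduce the statement to the classical comparison (sub–mean value) characterization of subharmonicity and then to squeeze out the conclusion by a connectedness argument. First I would record that, for an upper semicontinuous $f$ — which is automatic in our situation, since the function $\rho$ to which the lemma is applied is continuous — being subharmonic is equivalent to the following: for every closed disk $\ad{B} \subset \m D$ and every $h$ that is continuous on $\ad{B}$, harmonic in the interior $B$, and satisfies $f \le h$ on $\partial B$, one has $f \le h$ throughout $B$. So it suffices to establish this comparison property from the hypothesis.

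To do so I would argue by contradiction. Fix $B$ and $h$ as above, put $g := f - h$ on $\ad{B}$, and suppose $M := \max_{\ad{B}} g > 0$; the maximum is attained because $g$ is upper semicontinuous on the compact set $\ad{B}$, and since $g \le 0$ on $\partial B$ it is attained at some interior point $z_0 \in B$. Now invoke the hypothesis at $z_0$: there is a subharmonic function $u$ defined near $z_0$ with $u \le f$ and $u(z_0) = f(z_0)$. Pick $\eps > 0$ so small that $B(z_0,\eps)$ lies in $B$ and in the domain of $u$. On $B(z_0,\eps)$ the function $v := u - h$ is subharmonic, satisfies $v \le f - h = g \le M$, and has $v(z_0) = g(z_0) = M$; hence by the strong maximum principle for subharmonic functions $v \equiv M$ on $B(z_0,\eps)$. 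This gives $u \equiv h + M$ near $z_0$, so $f \ge u = h + M$ there, i.e.\ $g \equiv M$ on $B(z_0,\eps)$.

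The previous step shows that the set $\{\, z \in B : g(z) = M \,\}$ is open. It is also relatively closed in $B$, because $g$ is upper semicontinuous and $g \le M$ on $B$, so this set equals $B \cap \{\, g \ge M \,\}$. Since it is nonempty and $B$ is connected, $g \equiv M$ on all of $B$. Letting $z \to w$ for $w \in \partial B$ and using continuity of $h$ together with upper semicontinuity of $f$ then yields $f(w) \ge M + h(w) > h(w)$, contradicting $f \le h$ on $\partial B$. Hence $g \le 0$ on $B$; as $B$ and $h$ were arbitrary, this is exactly the comparison property, so $f$ is subharmonic.

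I do not expect a serious obstacle here: the argument is a standard application of the maximum principle for subharmonic functions together with connectedness of the disk. The only point demanding a little care is the bookkeeping around upper semicontinuity — the stated hypothesis does not by itself force $f$ to be upper semicontinuous, so one should keep this (or outright continuity) as a standing assumption, which is costless in the application since the lemma is used only for the continuous function $\rho$.
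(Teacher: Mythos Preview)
Your argument is correct, but it is considerably more elaborate than what the paper does. The paper's proof is a single line: given $z\in\m D$, pick the subharmonic $u$ from the hypothesis and observe that for all sufficiently small $r>0$ (so that the closed disk lies in the domain of $u$),
\[
f(z)=u(z)\le \frac{1}{2\pi}\int_0^{2\pi} u(z+re^{i\t})\,d\t \le \frac{1}{2\pi}\int_0^{2\pi} f(z+re^{i\t})\,d\t,
\]
so $f$ satisfies the local sub-mean value inequality at every point, which is the definition of subharmonicity (upper semicontinuity being, as you note, supplied by continuity of $\rho$ in the application). Your route via the harmonic-comparison characterization, the strong maximum principle, and a connectedness argument reaches the same conclusion but with substantially more machinery; it gains nothing in generality, since the sub-mean value inequality already characterizes subharmonicity for upper semicontinuous functions. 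Both proofs ultimately rest on the same observation that $u$ touches $f$ from below at $z$, but the paper exploits this in one step.
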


\begin{proof}
It is straightforward to check that the sub-mean value property of $f$ at $z$ follows from that of $u$.
\end{proof}

A second auxiliary result we will use is the subharmonicity of $|h_\alpha|$ itself.

\begin{lem}\label{lem:halpha_subharmonic}
The function $h_\alpha$ is harmonic in $U_\alpha$ and $|h_\alpha|$ is subharmonic in $\m D$.
\end{lem}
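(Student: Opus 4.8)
Both parts will follow by reflecting across the analytic arc that bounds the relevant domain.

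\textbf{Harmonicity in $U_\alpha$.} The plan is as follows. Since $\ell_\alpha$ is a hyperbolic geodesic in the simply connected domain $U_\alpha$, there is an anticonformal involution $R\colon U_\alpha\to U_\alpha$ fixing $\ell_\alpha$ pointwise and interchanging $U_\alpha^+$ and $U_\alpha^-$. I will first show $h_\alpha\circ R=-h_\alpha$ on $U_\alpha$. The function $g:=h_\alpha+h_\alpha\circ R$ is bounded and harmonic in $U_\alpha^+$ (each summand is, $R$ being anticonformal) and continuous up to $\partial U_\alpha^+\cap\m D$; it vanishes on $\ell_\alpha$, and on every other boundary arc $\ell_\mu$ of $U_\alpha^+$ one has $h_\alpha\equiv 1=\sigma(\alpha,\mu)d(\alpha,\mu)$ while $R(\ell_\mu)$ bounds $U_\alpha^-$ and hence $h_\alpha\circ R\equiv h_\alpha|_{R(\ell_\mu)}\equiv-1$. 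Thus $g$ has boundary values $0$ a.e.\ on $\partial U_\alpha^+$, so $g\equiv0$ on $U_\alpha^+$ by the maximum principle, and then $g\equiv0$ on $U_\alpha$ by the $R$-symmetry of $g$ and continuity. Consequently $h_\alpha|_{U_\alpha^-}=-h_\alpha\circ R$ is exactly the odd Schwarz reflection of $h_\alpha|_{U_\alpha^+}$ across $\ell_\alpha$, so by the Schwarz reflection principle for harmonic functions $h_\alpha$ extends harmonically across $\ell_\alpha$; i.e.\ $h_\alpha$ is harmonic on $U_\alpha^+\cup\ell_\alpha\cup U_\alpha^-=U_\alpha$.

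\textbf{Subharmonicity of $|h_\alpha|$.} Off $\cover^{-1}(\gamma)$ the function $|h_\alpha|$ is the modulus of a harmonic function, hence subharmonic, and by the first part the same holds on the open set $U_\alpha\supset\ell_\alpha$. So it remains to verify the sub-mean value property at an interior point $p$ of an arc $\ell_\nu$ with $\nu\neq\alpha$. There $c_\nu:=h_\alpha|_{\ell_\nu}=\sigma(\alpha,\nu)d(\alpha,\nu)\neq0$, so by continuity $h_\alpha$ has constant sign $\sgn(c_\nu)$ near $p$ and $|h_\alpha|=\sgn(c_\nu)h_\alpha$ there; I may assume $c_\nu>0$ (hence $\sigma(\alpha,\nu)=+1$), the other case being symmetric. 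Let $R_\nu\colon U_\nu\to U_\nu$ be the anticonformal involution fixing the geodesic $\ell_\nu$ and swapping $U_\nu^{\pm}$, and put $\widetilde h:=h_\alpha+h_\alpha\circ R_\nu$. The key claim is $\widetilde h\ge 2c_\nu$ on $U_\nu$. Indeed $\widetilde h$ is harmonic and bounded on each of $U_\nu^{\pm}$, continuous on $U_\nu$, equals $2c_\nu$ on $\ell_\nu$, and on any other boundary arc $\ell_\mu$ of $U_\nu$ equals $h_\alpha|_{\ell_\mu}+h_\alpha|_{R_\nu(\ell_\mu)}$; since $R_\nu$ interchanges the two tiles $U_\nu^{+},U_\nu^{-}$ adjacent to $\ell_\nu$, the arcs $\ell_\mu$ and $R_\nu(\ell_\mu)$ bound one each of these tiles, and for the one lying on the side of $\ell_\nu$ \emph{not} containing $\ell_\alpha$ every $\mc T$-path from $\alpha$ must pass through $\nu$, forcing $d(\alpha,\cdot)=c_\nu+1$, while the other satisfies $d(\alpha,\cdot)\ge c_\nu-1$ by the triangle inequality; both carry the sign $+1$ (they share a tile with $\ell_\nu$, which lies on the $U_\alpha^+$ side), so the sum of their values is $\ge 2c_\nu$, and the minimum principle for bounded harmonic functions gives $\widetilde h\ge 2c_\nu$ on $U_\nu^{\pm}$. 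Finally, let $u$ be the odd Schwarz reflection of $h_\alpha-c_\nu$ (restricted to $U_\nu^{+}$) across $\ell_\nu$, plus $c_\nu$: it is harmonic in a neighborhood of $p$, satisfies $u(p)=c_\nu=|h_\alpha(p)|$, equals $h_\alpha=|h_\alpha|$ on the $U_\nu^{+}$ side, and equals $2c_\nu-h_\alpha\circ R_\nu\le h_\alpha=|h_\alpha|$ on the $U_\nu^{-}$ side by the claim. By the subharmonicity criterion (Lemma~\ref{lem:subharmonicity_criterion}), $|h_\alpha|$ is subharmonic at $p$, which finishes the proof.

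\textbf{Main obstacle.} I expect the delicate step to be the estimate $\widetilde h\ge 2c_\nu$. It requires pinning down the values of $h_\alpha$ on \emph{all} arcs bounding $U_\nu$ via the distance identities $d(\alpha,\cdot)=c_\nu+1$ on the far side of $\ell_\nu$ and $d(\alpha,\cdot)\ge c_\nu-1$ in general, combined with the fact that $R_\nu$ swaps the two sides of $\ell_\nu$; and it requires applying the minimum principle rigorously on the tiles $U_\nu^{\pm}$, which are unbounded for the hyperbolic metric and carry the finitely many lifts of $z_1,\dots,z_n$ on $\partial\m D$ as ideal boundary points. For the latter one uses that $h_\alpha$ is bounded on each individual tile—being the Poisson integral of its finitely many finite boundary arc values—so that these ideal points do not affect the boundary comparison. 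The other ingredients (existence of the anticonformal reflections, the Schwarz reflection principle, and Lemma~\ref{lem:subharmonicity_criterion}) are standard.
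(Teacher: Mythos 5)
Your proof is correct, and it reaches the same two milestones as the paper (harmonicity across $\ell_\alpha$, then verification of the criterion of Lemma~\ref{lem:subharmonicity_criterion} at interior points of the other arcs), but the implementation is genuinely different: where the paper identifies $h_\alpha|_{U_\alpha}$ outright with the imaginary part of the strip uniformization of $U_\alpha$ and, at an arc $\ell_\beta$, writes down an explicit harmonic comparison function on $U_\beta$ with prescribed boundary values $d(\alpha,\beta)\pm 1$ (leaning on the relation $|h_\beta|\ge|h_\alpha\mp1|$ of Lemma~\ref{lem:h_shift_inequality}), you instead exploit the anticonformal reflection $R_\nu$ fixing the geodesic and prove the symmetrized bound $h_\alpha+h_\alpha\circ R_\nu\ge 2c_\nu$, from which the harmonic minorant is produced by odd Schwarz reflection. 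Both routes ultimately rest on the same two ingredients — the geodesic property of $\ell_\nu$ in $U_\nu$ and the combinatorial facts that neighbors of $\nu$ on the far side of $\ell_\nu$ from $\ell_\alpha$ have $h_\alpha$-value exactly $c_\nu+1$ while those on the near side have value at least $c_\nu-1$ — and you re-derive these facts correctly (every path in $\mc T$ from $\alpha$ to a far-side arc must pass through $\nu$, since $\ell_\nu$ separates $\m D$ and no tile meets both sides except the two adjacent to $\ell_\nu$). Your reflection-based minorant is in fact tighter than the paper's, as it agrees with $|h_\alpha|$ on the whole near-side tile. Two small points to tidy up: (i) $R_\nu(\ell_\mu)$ need not be a single arc of the tiling, so the boundary estimate for $\widetilde h$ should be stated pointwise — for $w\in\partial U_\nu^{+}\smallsetminus\ell_\nu$ the point $R_\nu(w)$ lies on \emph{some} arc of $\partial U_\nu^{-}\smallsetminus\ell_\nu$, and exactly one of $w,R_\nu(w)$ lies on the far side, which is all the estimate needs; (ii) the near-side neighbor of $\nu$ can be $\ell_\alpha$ itself, where $h_\alpha=0$ rather than carrying sign $+1$, but then $c_\nu=d(\alpha,\nu)=1$ and the contribution is $0=c_\nu-1$, so the bound $\widetilde h\ge 2c_\nu$ survives; this edge case should be absorbed into the "$\ge c_\nu-1$" clause rather than the sign claim. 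Your handling of the ideal boundary points (finitely many prime ends per tile, hence harmonic measure zero, so bounded harmonic comparison on each tile is legitimate) is exactly the regularity point that the paper leaves implicit.
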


\begin{proof}[Proof of Lemma~\ref{lem:halpha_subharmonic}]
The harmonicity of $h_\alpha$ in $U_\alpha$ follows from the assumption that $\ell_\alpha$ is a hyperbolic geodesic in $U_\alpha$:
Indeed, $h_\alpha$ coincides with the imaginary part of a conformal map from $U_\alpha$ to the strip $\{z \in \C : \Im(z) \in (-1,1)\}$ mapping the two boundary components $\partial U_\alpha^\pm \smallsetminus \ell_\alpha$ to the horizontal lines $\Im(w) = \pm 1$, respectively.
Clearly, in the strip, the hyperbolic geodesic between $-\infty$ and $+\infty$ is the real line $\Im(w) = 0$.

To show that $|h_\alpha|$ is subharmonic in $\m D$, we will verify the condition of Lemma~\ref{lem:subharmonicity_criterion}.

The function $|h_\alpha|$ is clearly continuous. 
As $h_\alpha$ is harmonic and has a constant sign in each tile $U_\beta^{\pm}$, it is enough to verify the subharmonicity at $z \in \ell_\beta$ for some $\beta \in I\setminus \{\alpha\}$.
Moreover, by symmetry, we may assume that $\sigma(\alpha,\beta) = 1$.
Consider the harmonic function $u$ in the domain $U_\beta$ which takes value $d(\alpha,\beta) + 1$ on every boundary arc $\ell_\nu \subset \partial U_\beta$ such that %
$d(\alpha,\nu)=d(\alpha,\beta) + 1$,
and $d(\alpha,\beta) - 1$ on every other boundary arc.
Clearly $u$ equals $|h_\alpha|$ on $\ell_\beta$ as well as on the boundary arcs $\ell_\nu$ for which %
$d(\alpha,\nu)=d(\alpha,\beta) + 1$.
If $\mu$ is the parent in $\mc T_\alpha$ of $\beta$ we also have $u(w) = |h_\alpha(z)|$ for $w \in \ell_\mu$, while for $w$ on the rest of the boundary arcs we have $u(z) = |h_\alpha(z)| - 1$.
Thus, $u \le |h_\alpha|$ on the boundary of $U_\beta$ and hence also in $U_\beta$.
\end{proof}

The final result we need before proving Lemma~\ref{lem:rho_subharmonic} is the following inequality.

\begin{lem}\label{lem:h_shift_inequality}
We have $|h_\beta| \ge |h_\alpha \mp 1|$ whenever $\ell_\beta \subset \partial U_\alpha^\pm \smallsetminus \ell_\alpha$.
Moreover, the inequality is strict inside $U_\alpha^{\pm}$.
\end{lem}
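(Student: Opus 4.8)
The plan is to reduce the assertion to the maximum principle applied one tile at a time, after analysing how the signed graph distance recorded by $h_\alpha$ changes when one re-roots $\mc T$ from $\alpha$ to the neighbouring label $\beta$. By the symmetry interchanging $\Omega_+$ and $\Omega_-$ (which negates $\sigma$, hence negates every potential function), it suffices to treat the case $\ell_\beta \subset \partial U_\alpha^+ \smallsetminus \ell_\alpha$; we must then prove $|h_\beta| \ge |h_\alpha - 1|$ on $\m D$, with strict inequality on $U_\alpha^+$. I would begin by fixing the geometry: since $\ell_\beta \subset \partial U_\alpha^+$, the component of $\cover^{-1}(\Omega_+)$ bordering $\ell_\beta$ is $U_\alpha^+$ itself, so $U_\beta^+ = U_\alpha^+$. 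The crosscuts $\ell_\alpha$ and $\ell_\beta$ are disjoint and both border $U_\alpha^+$, hence together they split $\m D$ into three pieces, $V_\alpha^- \sqcup \ell_\alpha \sqcup W \sqcup \ell_\beta \sqcup V_\beta^-$, where $V_\alpha^-$ (resp.\ $V_\beta^-$) denotes the side of $\ell_\alpha$ (resp.\ $\ell_\beta$) not meeting $U_\alpha^+$, and $W$ is the piece containing $U_\alpha^+$; each piece is a union of tiles.

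Next I would compute the constant values $h_\alpha(\ell_\nu) = \sigma(\alpha,\nu) d(\alpha,\nu)$ and $h_\beta(\ell_\nu) = \sigma(\beta,\nu) d(\beta,\nu)$ on every arc $\ell_\nu$. Because $\ell_\alpha$ and $\ell_\beta$ are neighbours in $\mc T$, re-rooting changes all distances by one: $d(\beta,\nu) = d(\alpha,\nu) - 1$ when $\ell_\nu \subset \overline{V_\beta^-}$ and $d(\beta,\nu) = d(\alpha,\nu) + 1$ otherwise. For the signs: whenever $\ell_\nu \subset \overline{V_\beta^-}$ one has $\sigma(\alpha,\nu) = 1$ (reaching $\ell_\nu$ from $\ell_\alpha$ one first enters $U_\alpha^+$) and $\sigma(\beta,\nu) = -1$ for $\nu \neq \beta$ (from $\ell_\beta$ one first enters $U_\beta^-$, the tile on the $V_\beta^-$ side, not $U_\beta^+ = U_\alpha^+$); for every other $\nu \neq \beta$ one has $\sigma(\beta,\nu) = 1$ (from $\ell_\beta$ one first enters $U_\beta^+ = U_\alpha^+$). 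Substituting, and using $|h_\alpha(\ell_\nu)| \le d(\alpha,\nu)$, one finds that $h_\beta + h_\alpha - 1$ vanishes on every arc contained in $\overline{V_\beta^-}$, while $h_\beta - h_\alpha + 1 \ge 2$ and $h_\beta + h_\alpha - 1 \ge 0$ on every other arc.

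Now set $F_1 = h_\beta - h_\alpha + 1$ and $F_2 = h_\beta + h_\alpha - 1$, so that $|h_\beta|^2 - |h_\alpha-1|^2 = F_1 F_2$ and it suffices to show $F_1 F_2 \ge 0$ on $\m D$. Both $F_i$ are continuous on $\m D$, and on each tile they are harmonic and bounded (a tile is bordered by finitely many arcs, each carrying a constant value, so the boundary values stay bounded even near the ideal vertices). Since $F_2$ vanishes on every arc bordering a tile inside $V_\beta^-$, the maximum principle applied tilewise gives $F_2 \equiv 0$ on $V_\beta^-$, where therefore $F_1 F_2 \equiv 0$. On the complement $\m D \smallsetminus V_\beta^-$, every arc bordering a tile there carries $F_1 \ge 0$ and $F_2 \ge 0$ (the arcs other than $\ell_\alpha, \ell_\beta$ give $F_1 \ge 2$, $F_2 \ge 0$, while one computes directly $F_1(\ell_\alpha) = 2$, $F_2(\ell_\alpha) = 0$, $F_1(\ell_\beta) = F_2(\ell_\beta) = 0$), so $F_1, F_2 \ge 0$ there by the maximum principle, hence $F_1 F_2 \ge 0$. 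This gives $|h_\beta| \ge |h_\alpha - 1|$ on all of $\m D$. For strictness on $U_\alpha^+ \subset W$: on $\partial U_\alpha^+$ the values are $F_1 = F_2 = 0$ on $\ell_\beta$, $F_1 = 2$, $F_2 = 0$ on $\ell_\alpha$, and $F_1 = F_2 = 2$ on each of the remaining $n - 2 \ge 1$ arcs; since $F_1, F_2 \ge 0$ on $\partial U_\alpha^+$ and each is positive on some boundary arc, the strong maximum principle gives $F_1, F_2 > 0$ in $U_\alpha^+$, so $F_1 F_2 > 0$ there.

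I expect the main difficulty to lie in the combinatorial bookkeeping of the second paragraph — correctly tracking how $\sigma$ and $d$ behave under re-rooting, which rests on the geometric identity $U_\beta^+ = U_\alpha^+$ and on $\mc T$ being a tree (no shortcuts). A minor but genuine technical point is that $h_\alpha$ is harmonic only off $\cover^{-1}(\gamma)$ and unbounded on $\m D$, so the maximum principle has to be invoked tile by tile, relying on each $F_i$ being bounded and harmonic on each individual tile, rather than globally on $\m D$.
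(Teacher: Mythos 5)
Your overall strategy coincides with the paper's: reduce the inequality to a check on the arcs $\ell_\nu$ and then propagate it into the tiles by harmonicity and the maximum principle (the paper compresses the arc-by-arc step into ``case-by-case checking yields\dots''). Your factorization $|h_\beta|^2-|h_\alpha-1|^2=F_1F_2$ with $F_1=h_\beta-h_\alpha+1$ and $F_2=h_\beta+h_\alpha-1$ is a nice touch, since $F_1,F_2$ are honestly harmonic in every tile, whereas working directly with $|h_\beta|$ and $|h_\alpha-1|$ requires checking that these do not change sign inside any tile.

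There is, however, a concrete error in your re-rooting formula on the region $W$. Adjacency in $\mc T$ means ``bounding a common tile,'' so all $n$ boundary arcs of $U_\alpha^+=U_\beta^+$ are pairwise at distance $1$; in particular a third boundary arc $\ell_\mu\subset\partial U_\alpha^+\smallsetminus(\ell_\alpha\cup\ell_\beta)$ satisfies $d(\alpha,\mu)=d(\beta,\mu)=1$, not $d(\beta,\mu)=d(\alpha,\mu)+1$. More generally, for every $\ell_\nu\subset W$ with $\nu\neq\alpha,\beta$ one has $d(\beta,\nu)=d(\alpha,\nu)$, because the arcs separating $\ell_\nu$ from $\ell_\alpha$ are exactly those separating it from $\ell_\beta$ (both $\ell_\alpha$ and $\ell_\beta$ lie on $\partial U_\alpha^+$, on the same side of any such separating arc); this is precisely what the paper's equality $|h_\beta|=|h_\alpha-1|+1$ on those arcs encodes. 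Consequently, on the arcs of $W\smallsetminus(\ell_\alpha\cup\ell_\beta)$ you get $F_1=1$ and $F_2=2d(\alpha,\nu)-1$, so your claims ``$F_1\ge 2$'' there and ``$F_1=F_2=2$ on the remaining $n-2$ arcs of $\partial U_\alpha^+$'' are false (the correct values are $F_1=F_2=1$ on those arcs). Fortunately the logic of your argument only uses $F_1\ge 0$ and $F_2\ge 0$ on $\m D\smallsetminus V_\beta^-$, together with strict positivity of each $F_i$ on at least one boundary arc of $U_\alpha^+$, and the corrected values still satisfy all of this. So the proof is valid once the bookkeeping in your second paragraph is repaired; the slip comes from treating $\mc T$ as though siblings of $\beta$ across the common tile were one generation further from the root $\alpha$, when in fact they are equidistant from it.
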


\begin{proof}
Let us assume $\ell_\beta \subset \partial U_\alpha^+ \smallsetminus \ell_\alpha$; the case where  $\ell_\beta \subset \partial U_\alpha^- \smallsetminus \ell_\alpha$ is similar.
As both $|h_\beta|$ and $|h_\alpha - 1|$ are harmonic functions inside each tile $U_\mu^\pm$, it is enough to show that $|h_\beta(z)| \ge |h_\alpha(z) - 1|$ when $z \in \ell_\mu$ 
for every arc $\ell_\mu$.
Let us denote by $\mc T_\alpha(\nu)$ the subtree of $\mc T_\alpha$ rooted at $\nu$.
Case-by-case checking yields that
\[|h_\beta(z)| = \begin{cases}
 |h_\alpha(z) - 1|,& \text{when } \mu \in \{\alpha\} \cup \mc T_\alpha(\beta) \cup \bigcup_{\nu: \ell_\nu \subset \partial U_\alpha^- \smallsetminus \ell_\alpha} \mc T_\alpha(\nu) \\
 |h_\alpha(z) - 1| + 1, & \text{when } \mu \in \bigcup_{\nu: \ell_\nu \subset \partial U_\alpha^+ \smallsetminus (\ell_\alpha \cup \ell_\beta)} \mc T_\alpha(\nu)
\end{cases}.\qedhere\]
\end{proof}

We are now ready to prove Lemma~\ref{lem:rho_subharmonic}.

\begin{proof}[Proof of Lemma~\ref{lem:rho_subharmonic}]
It suffices to show that $\widetilde \rho$ is subharmonic in $\m D$. We will again use Lemma~\ref{lem:subharmonicity_criterion}. It is enough to consider the case $z \in \widehat \ell_\alpha$. We claim that one can choose $u = |h_\alpha - \widehat h_\alpha|$ in the domain $\widehat U_\alpha$, where $\widehat h_\alpha$ is constructed analogously to $h_\alpha$ but using the arcs $\widehat \ell_\beta$ instead. It is clear that $u(z) = \widetilde \rho(z)$ and from Lemma~\ref{lem:h_shift_inequality} we see that on $\widehat \ell_\beta \subset \partial \widehat U_\alpha^\pm \smallsetminus \widehat \ell_\alpha$ we have $\widetilde \rho = |h_\beta| \ge |h_\alpha \mp 1| = |h_\alpha - \widehat h_\alpha|$. Thus, it remains to show that $|h_\alpha - \widehat h_\alpha|$ is subharmonic near every $w \in \widehat U_\alpha$.

Notice that if $w \in \ell_\alpha$ or $w \in U_\beta$ for some $\beta \in I$, then $h_\alpha - \widehat h_\alpha$ is harmonic in a neighborhood of $w$ and thus $|h_\alpha - \widehat h_\alpha|$ is subharmonic in that same neighborhood. On the other hand if $w \in \ell_\beta$ for some $\beta \neq \alpha$, then $|h_\alpha(w)| \ge 1$ while $|\widehat h_\alpha(w)| < 1$. Thus $u = |h_\alpha| - \sgn(h_\alpha(w)) \widehat h_\alpha$ in a neighborhood of $w$. As $|h_\alpha|$ is subharmonic by Lemma~\ref{lem:halpha_subharmonic} and $\widehat h_\alpha$ is harmonic around $w$, we see that $u$ is subharmonic around $w$.
\end{proof}

Let us close this section by proving Theorem~\ref{thm:uniqueness}.

\begin{proof}[Proof of Theorem~\ref{thm:uniqueness}]
By Lemmas \ref{lem:rho_bounded} and \ref{lem:rho_subharmonic}, $\rho$ is a bounded subharmonic function on $\Sigma$ and hence equal to a constant $c \ge 0$.
Fix $\alpha \in I$. As we saw in the proof of Lemma~\ref{lem:rho_subharmonic}, we have $|h_\alpha - \widehat h_\alpha| \le \widetilde \rho = c$ on the boundary of $\widehat U_\alpha$.
Since $|h_\alpha - \widehat h_\alpha|$ is subharmonic in $\widehat U_\alpha$ and equals $c$ on $\widehat \ell_\alpha$, by maximum principle it has to equal $c$ everywhere in $\widehat U_\alpha$.
Now, if there is an arc $\widehat \ell_\beta \subset \partial \widehat U_\alpha$ that also intersects $U_\alpha^\pm$ at some point $z$, we get a contradiction due to the strict inequality $c = |h_\beta(z)| > |h_\alpha - \widehat h_\alpha|$ from Lemma~\ref{lem:h_shift_inequality}.
As the endpoints of $\ell_\alpha$ and $\widehat \ell_\alpha$ match, we have $U_\alpha \subset \widehat U_\alpha$, and since both $U_\alpha$ and $\widehat U_\alpha$ are fundamental domains for the covering map $\cover$, we must have $U_\alpha = \widehat U_\alpha$. In particular $\ell_\alpha = \widehat \ell_\alpha$.
\end{proof}

\section{Welding parametrization is a $C^1$ diffeomorphism} \label{sec:C1_lambda}

Recall that by Theorem~\ref{thm:piecewise_mob}, there is a correspondence between $C^1$ smooth piecewise geodesic loops and orientation-preserving piecewise Möbius $C^1$ diffeomorphisms of $\widehat{\mathbb{R}}$. 
The main aim of this section is to prove that this correspondence gives us a diffeomorphism between their parameter spaces.
We begin with a description of these spaces.
\begin{definition}
    For $n \ge 3$ we let $\mathcal{W}_n$ denote the space of   piecewise Möbius $C^1$ diffeomorphisms on $\widehat{\R}$ with $n$ marked points, %
    up to pre- and post-composition with global Möbius transformations. Formally, $\mathcal{W}_n$ can be defined as the set of equivalence classes of tuples $(\varphi; x_1, \dots, x_n)$ where $x_1,\dots,x_n$ is a set of distinct points ordered increasingly
    on $\widehat{\R}$ and $\varphi \colon \widehat{\R} \to \widehat{\R}$ is a $C^1$ diffeomorphism such that its restriction to every interval $[x_j,x_{j+1}]$ (with $x_{n+1} = x_1$) is a Möbius transformation in $\PSL(2,\R).$ Two tuples $(\varphi^\sigma; x_1^{\sigma}, \dots, x_n^{\sigma})$ ($\sigma = 1, 2$) are considered equivalent if there exist two global Möbius automorphisms $\alpha,\beta\in \PSL(2,\R)$ such that $\varphi^2 = \alpha \circ \varphi^1 \circ \beta$ and $\beta(x_j^2) = x_j^1$ for all $j=1,\dots,n$.
\end{definition}

There are several natural charts on $\mathcal{W}_n$ which makes it a manifold, such as that described in \cite[Thm.\,4.7]{MRW1}.
We will next describe a global chart 
which is particularly useful for our purpose. Notice first that every $p \in \mathcal{W}_n$ has a representative of the form $(\varphi; x_1, \dots, x_{n-3}, 0, 1, \infty)$ where $-\infty < x_1 < \dots < x_{n-3} < 0$ and $\varphi$ fixes $0,1,\infty$. From now on we will write $(\varphi_p; x_{p,1}, \dots, x_{p,{n-3}}, x_{p,n-2} = 0, x_{p, n-1} = 1, x_{p,n} = x_{p,0} = \infty)$ for this particular representative. We will also often drop the subindex $p$ for brevity.

As $\varphi_p$ is $C^1$ and %
analytic
outside of $\{x_j\}$, its pre-Schwarzian $\varphi_p''/\varphi_p'$ is a piecewise smooth function, and we write $\lambda_k$ for (the size of) the jump  at $x_k$ as in \eqref{eq:jump_Def}. 
For later use, we note that the Schwarzian derivative of $\varphi_p$ can be written as 
\[\mc S[\varphi_p] = \frac{\dd}{\dd x}\left(\frac{\varphi_p''}{\varphi_p'}\right) - \frac{1}{2} \left(\frac{\varphi_p''}{\varphi_p'}\right)^2 = \sum_{k=1}^{n-1} \lambda_k \delta_{x_k}\] by interpreting $\dd/\dd x$ as a distributional derivative,
where $\delta_x$ is the Dirac-measure at $x.$ Thus to every $p \in \mathcal{W}_n$ we can associate points
$x_1,...,x_{n-3}$ and jumps $\lambda_1,\dots,\lambda_{n-3},$ where the jumps satisfy certain constraints derived below. We will next reverse this process and use the variables $\lambda_k, 1\leq k\leq n-3,$ together with $x_k$ to define a global chart for $\mc W_n$.

Suppose that we are given $\lambda_1,\dots,\lambda_{n-3}$ and the points $x_1,\dots,x_{n-3}$. We will construct $\varphi$ by first constructing a map $\widetilde{\varphi}$ whose restriction to $[x_k,x_{k+1}]$, $k=0,\dots,n-3$ is given by $\widetilde{\varphi}_k = T_0 \circ T_1 \circ \dots \circ T_k$ where $T_0(x) = x$ and for $k\geq1,$
\begin{equation}\label{eq:Tk}T_k(x) \coloneqq \frac{\big(1 - \frac{1}{2}\lambda_k x_k)x + \frac{1}{2}\lambda_k x_k^2}{-\frac{1}{2} \lambda_k x + 1 + \frac{1}{2} \lambda_k x_k}
\end{equation}
is the Möbius map which satisfies $T_k(x_k) = x_k$, $T_k'(x_k) = 1$ and $T_k''(x_k) = \lambda_k$. Note that for any Möbius transform $\psi$ (particularly also for
$\psi = \widetilde{\varphi}_{k-1}$),
we have
\[\frac{(\psi \circ T_k)''(x_k)}{(\psi \circ T_k)'(x_k)} = \frac{\psi''(x_k)}{\psi'(x_k)} + \lambda_k,\]
so that the pre-Schwarzian of $\widetilde{\varphi}$ has the desired jumps $\lambda_k$ at $x_k,$ for $1\leq k\leq n-3.$

To define $\widetilde{\varphi}$ on 
$[0,\infty)=[x_{n-2}, x_{n-1}]\cup[x_{n-1}, x_{n}),$
assuming that $\widetilde{\varphi}_{n-3}(x_{n-2}) = \widetilde{\varphi}_{n-3}(0)\neq\infty,$ we let
$\widetilde{\varphi}_{n-2}$ be the (unique) Möbius map that agrees with 
$\widetilde{\varphi}_{n-3}$ at 0, has the same derivative there, has derivative $1$ at $1$, and does not have a pole on $[0,1]$ (there are two Möbius maps that satisfy these endpoint conditions, and only one of them satisfies $\widetilde{\varphi}_{n-2}(0)<\widetilde{\varphi}_{n-2}(1)$).
 Then we define $\widetilde{\varphi}_{n-1}$ to be the affine map of slope 1 that agrees with $\widetilde{\varphi}_{n-2}$ at 1.
Finally, the map $\varphi$ is obtained by composing with an affine transformation, \[\varphi(x) \coloneqq \frac{\widetilde{\varphi}(x) - \widetilde{\varphi}(0)}{\widetilde{\varphi}(1) - \widetilde{\varphi}(0)}.\]
This process always yields a piecewise-Möbius map, which is $C^1$ smooth at the $n$ marked points, but 
it is a homeomorphism only if none of the $\varphi_k$ has a pole on $[x_k,x_{k+1}]$ for $1\leq k\leq n-3$. 
The following proposition explicitly identifies the necessary and sufficient condition on $\lambda_k$.

\begin{prop} Given the parameters  $-\infty < x_1 < \dots < x_{n-3} < 0$ and $\lambda_1,\dots,\lambda_{n-3}\in\R$,
the above procedure yields a well-defined $C^1$ piecewise Möbius  homeomorphism of $\widehat{\R}$ if and only if 
the $\lambda_k$ satisfy the inequalities
\begin{equation}\label{eq:lambdaconstraint}-\infty < \lambda_k < \frac{2}{x_{k+1} - x_k} - \frac{\widetilde{\varphi}_{k-1}''(x_k)}{\widetilde{\varphi}_{k-1}'(x_k)}
\end{equation}
for every $k=1,\dots,n-3$. 
\end{prop}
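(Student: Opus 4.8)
The plan is to analyze the construction arc-by-arc, tracking the single obstruction at each step: the requirement that $\tilde\varphi_k = \tilde\varphi_{k-1}\circ T_k$ has no pole on $[x_k,x_{k+1}]$. First I would observe that $\tilde\varphi_{k-1}$ is a genuine Möbius transformation (a composition of $T_1,\dots,T_{k-1}$), analytic and injective on a neighborhood of $[x_{k-1},x_k]$ with no pole there by the inductive hypothesis; in particular $\tilde\varphi_{k-1}'(x_k)\neq 0$ and is finite, so the quantity $\tilde\varphi_{k-1}''(x_k)/\tilde\varphi_{k-1}'(x_k)$ in \eqref{eq:lambdaconstraint} is well-defined. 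Since post-composing $T_k$ with the Möbius map $\tilde\varphi_{k-1}$ cannot create a pole in the interior of $[x_k,x_{k+1}]$ unless $T_k$ itself sends some point of $(x_k,x_{k+1}]$ to the pole of $\tilde\varphi_{k-1}$, and since $\tilde\varphi_{k-1}$ maps $[x_{k-1},x_k]$ into $\widehat{\R}$ with its pole lying \emph{outside} that interval, the first place where a pole can be introduced is where $T_k$ develops one, i.e. at its own pole. Hence the condition "$\tilde\varphi_k$ has no pole on $[x_k,x_{k+1}]$'' should reduce, after an inductive bookkeeping of which side of $\widehat{\R}$ things land on, to "$T_k$ has no pole on $[x_k,x_{k+1}]$ and $T_k$ does not map $[x_k,x_{k+1}]$ across the pole of $\tilde\varphi_{k-1}$.''

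Next I would compute the pole of $T_k$ explicitly from \eqref{eq:Tk}: the denominator $-\tfrac12\lambda_k x + 1 + \tfrac12\lambda_k x_k$ vanishes at $x = x_k + 2/\lambda_k$. Since $T_k(x_k)=x_k$ and $T_k'(x_k)=1>0$, the map $T_k$ is increasing near $x_k$; it has no pole on $[x_k,x_{k+1}]$ precisely when $x_k + 2/\lambda_k \notin [x_k,x_{k+1}]$, and combined with orientation-preservation on the interval this forces $x_k + 2/\lambda_k > x_{k+1}$ (for $\lambda_k>0$) or $\lambda_k\le 0$ (in which case the pole is at or to the left of $x_k$, harmless). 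A short case analysis on the sign of $\lambda_k$ then collapses to the single inequality
\[
\lambda_k < \frac{2}{x_{k+1}-x_k},
\]
with no lower bound — this is the "bare'' constraint before accounting for $\tilde\varphi_{k-1}$. To incorporate the latter, I would use the pre-Schwarzian additivity already noted in the text,
\[
\frac{(\tilde\varphi_{k-1}\circ T_k)''(x_k)}{(\tilde\varphi_{k-1}\circ T_k)'(x_k)} = \frac{\tilde\varphi_{k-1}''(x_k)}{\tilde\varphi_{k-1}'(x_k)} + \lambda_k,
\]
together with a change of variables: replacing $T_k$ by $\tilde\varphi_{k-1}\circ T_k$ and reading off the obstruction in the $\tilde\varphi_{k-1}$-coordinate. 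Because the forbidden event is that $\tilde\varphi_k$ takes the value $\infty$ inside $(x_k,x_{k+1}]$, and the composite $\tilde\varphi_{k-1}\circ T_k$ restricted to that interval is again a single Möbius map fixing $x_k$ with derivative $1$ there and second-derivative-over-first equal to the right-hand side displayed above, the same computation as for $T_k$ — but now with "$\lambda_k$'' replaced by $\lambda_k + \tilde\varphi_{k-1}''(x_k)/\tilde\varphi_{k-1}'(x_k)$ — gives exactly
\[
\lambda_k + \frac{\tilde\varphi_{k-1}''(x_k)}{\tilde\varphi_{k-1}'(x_k)} < \frac{2}{x_{k+1}-x_k},
\]
which is \eqref{eq:lambdaconstraint}. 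I would then run the induction on $k=1,\dots,n-3$: assuming $\tilde\varphi_{k-1}$ is a pole-free orientation-preserving homeomorphism of $[x_{k-1},x_k]$ onto its image inside $\widehat\R$, the inequality \eqref{eq:lambdaconstraint} is necessary and sufficient for $\tilde\varphi_k$ to extend this to a pole-free orientation-preserving homeomorphism of $[x_{k-1},x_{k+1}]$.

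Finally I would handle the last two arcs $[0,1]$ and $[1,\infty)$ separately, showing they impose no further constraints: the text already specifies that $\tilde\varphi_{n-2}$ is chosen as the unique pole-free branch on $[0,1]$ matching the endpoint data (so its existence is automatic once $\tilde\varphi_{n-3}(0)\neq\infty$, which holds since $\tilde\varphi_{n-3}$ has no pole on $[x_{n-3},0]$), and $\tilde\varphi_{n-1}$ is affine of slope $1$ hence has no pole on $[1,\infty)$. The normalization $\varphi(x) = (\tilde\varphi(x)-\tilde\varphi(0))/(\tilde\varphi(1)-\tilde\varphi(0))$ is an affine automorphism of $\widehat\R$ and does not affect poles in $[x_1,\infty)$ nor $C^1$-smoothness at the marked points (smoothness at the $x_k$ was already built into the $T_k$ via $T_k'(x_k)=1$, $T_k''(x_k)=\lambda_k$, and at $\infty$ by the slope-$1$ matching). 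Conversely, if any of the inequalities \eqref{eq:lambdaconstraint} fails, the corresponding $\tilde\varphi_k$ acquires a pole in $(x_k,x_{k+1}]$ or fails to be injective there, so the resulting map is not a homeomorphism of $\widehat\R$; this gives necessity.

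The main obstacle I anticipate is the bookkeeping in the inductive step — specifically, verifying that post-composition by the Möbius map $\tilde\varphi_{k-1}$ genuinely cannot introduce a pole in the interior of $[x_k,x_{k+1}]$ beyond the one already detected by the shifted inequality, i.e. that the pole of $\tilde\varphi_{k-1}$ never lands inside $T_k([x_k,x_{k+1}])$ when \eqref{eq:lambdaconstraint} holds. This requires keeping careful track of the image intervals $\tilde\varphi_{k-1}([x_{k-1},x_k])$ and the location of the pole of $\tilde\varphi_{k-1}$ relative to them, which is where I expect the argument to need the most care; the explicit form \eqref{eq:Tk} and the monotonicity of each $T_k$ near its fixed point $x_k$ should make this tractable.
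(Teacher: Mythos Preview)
Your approach is essentially the paper's, but you take an unnecessary detour and make one false intermediate claim. You assert that the composite $\tilde\varphi_k = \tilde\varphi_{k-1}\circ T_k$ ``fixes $x_k$ with derivative $1$ there''; this is not true in general, since $\tilde\varphi_k(x_k)=\tilde\varphi_{k-1}(x_k)$ and $\tilde\varphi_k'(x_k)=\tilde\varphi_{k-1}'(x_k)$. Fortunately the conclusion survives, because the pole formula you extracted for $T_k$ does not actually depend on those normalizations: for \emph{any} M\"obius map $\psi$ regular at $x$ with $c=\psi''(x)/\psi'(x)$, one computes directly (from $\psi(z)=(az+b)/(cz+d)$) that the unique pole of $\psi$ sits at $x+2/c$. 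The paper's proof is exactly this observation applied to $\tilde\varphi_k$ itself with $c=\tilde\varphi_{k-1}''(x_k)/\tilde\varphi_{k-1}'(x_k)+\lambda_k$, after which $x_k+2/c\notin[x_k,x_{k+1}]$ is equivalent to \eqref{eq:lambdaconstraint} by a sign check.

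This also dissolves the ``main obstacle'' you anticipate at the end. Once you treat $\tilde\varphi_k$ as a single M\"obius map, there is only one pole to locate; the separate tracking of the pole of $T_k$ and the pole of $\tilde\varphi_{k-1}$, and the worry about whether $T_k([x_k,x_{k+1}])$ meets the latter, are simply not needed. Your handling of the last two arcs and of necessity is fine and matches the paper.
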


\begin{proof}
    A short computation %
    shows that if $\psi$ is a Möbius map whose pre-Schwarzian at $x$ is $c$, then it has a pole at $x + 2/c$. Applying this to $\widetilde{\varphi}_k$ with $c = \widetilde{\varphi}_{k-1}''(x_k)/\widetilde{\varphi}_{k-1}'(x_k) + \lambda_k$, one easily checks that $x_k + 2/c \notin [x_k, x_{k+1}]$ 
    is equivalent to 
    \eqref{eq:lambdaconstraint}. Thus assuming \eqref{eq:lambdaconstraint} for all $k=1,\dots n-3,$ $\varphi$ is a homeomorphism between $[-\infty,0]$ 
    and $[-\infty,\widetilde{\varphi}_{n-3}(0)]$, the assumption $\widetilde{\varphi}_{n-3}(0)\neq\infty$ is satisfied, and $\widetilde{\varphi}$ (hence $\varphi$) is a global homeomorphism of $\widehat{\R}$.
\end{proof}

In particular, we see that $x_k$ and $\lambda_k$ with the above restrictions can be used to define a global chart and give $\mc W_n$ the structure of a smooth $2n-6$ dimensional manifold.

Let us now introduce for each $p \in \mc W_n$ the conformal maps $f_p \colon \H \to \Chat$ and $g_p \colon \H^* \to \Chat$ that solve the welding problem and satisfy $\varphi_p = g_p^{-1} \circ f_p$ while fixing $0,1,\infty$. The maps $f_p$ and $g_p$ map $\H$ and $\H^*$ to $\Omega_p^+$ and $\Omega_p^-$, respectively, and $f_p(\R) = g_p(\R)$ is the piecewise geodesic loop $\gamma_p$.

\begin{prop}\label{prop:f_differentiability}
    The map $\mc W_n \times \overline{\H} \ni (p,z) \mapsto f_p(z) \in \C$ is $C^1$ smooth and $f_p'(x) \neq 0$ for every $x \in \R$.
\end{prop}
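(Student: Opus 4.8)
The plan is to reduce the global statement to a local one near each marked point, and to use the explicit model maps $\mathfrak f_\theta, \mathfrak g_\theta$ from Section~\ref{sub:pair-welding} together with the chart $(x_k,\lambda_k)$ from this section as the source of smoothness. First I would set up the conformal welding: given $p\in\mc W_n$ with representative $(\varphi_p;x_1,\dots,x_{n-3},0,1,\infty)$, the curve $\gamma_p$ is piecewise geodesic by Theorem~\ref{thm:piecewise_mob}, and $f_p,g_p$ are the (normalized, hence unique) solutions of the welding problem fixing $0,1,\infty$. Away from the preimages $\{x_{p,k}\}$ of the marked points, $\varphi_p$ is a genuine Möbius map on each open arc, so $f_p$ extends analytically across the interior of each arc of $\R$ by Schwarz reflection through the corresponding Möbius piece; on such interior arcs $C^1$ dependence in $(p,z)$ and non-vanishing of $f_p'$ are immediate once we have it at the marked points, since $f_p$ is analytic in an open neighborhood of the arc and the construction there depends analytically/smoothly on finitely many Möbius coefficients, which depend smoothly on the chart.

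The substance is therefore the behavior of $f_p$ at each marked point $x_{p,k}$. Near such a point the welding $\varphi_p$ looks, up to pre- and post-composition by fixed Möbius maps, exactly like the model welding $\varphi_\theta$ of a geodesic pair from Lemma~\ref{lem:pair-welding-homeo}, with the angle $\theta=\theta_k$ determined by the jump $\lambda_k=-4\pi\cos\theta_k$ of the pre-Schwarzian (equation~\eqref{eq:jump_Def}); indeed both are piecewise-Möbius homeomorphisms of $\widehat\R$ with a single prescribed pre-Schwarzian jump, hence agree up to composition with Möbius maps. Consequently $f_p$ near $z_k$ is obtained by post-composing a fixed Möbius map with $\mathfrak f_{\theta_k}$ pre-composed with a Möbius map, and the asymptotic expansions of Lemma~\ref{lem:ftheta-asymptotics},
\[
\mathfrak f_\theta(z)=\tfrac1{2\ii}z-\tfrac12\cos\theta\, z^2\log z+O(z^2),\qquad
\mathfrak f_\theta'(z)=\tfrac1{2\ii}-\cos\theta\, z\log z+O(z),
\]
together with $\partial_\theta\mathfrak f_\theta(z)=\tfrac12\sin\theta\, z^2\log z+O(z^2)$, show two things: $\mathfrak f_\theta'(0)=1/(2\ii)\neq 0$ (so $f_p'\neq 0$ at the marked points and, by continuity, in a full neighborhood), and $\mathfrak f_\theta$ and $\mathfrak f_\theta'$ are jointly continuous in $(\theta,z)$ up to the boundary with continuous $\theta$-derivative. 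The key point is that the potentially problematic $z^2\log z$ and $z\log z$ terms vanish at $z=0$ and are themselves jointly continuous and $C^1$ in $\theta$ on $\overline{\H}$ near $0$.

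To turn this into a clean $C^1$ statement I would proceed in the following order. (1) Express $f_p$ near each $z_k$ explicitly as $M_k^{\mathrm{out}}\circ \mathfrak f_{\theta_k(p)}\circ M_k^{\mathrm{in}}(p)$ where $M_k^{\mathrm{in}}(p)$ is the Möbius map sending the relevant arc-data to the normalized position of the model (its coefficients are rational in $x_k,\lambda_k$ and neighboring data, hence smooth in $p$ in the chart), $\theta_k(p)=\arccos(-\lambda_k/4\pi)$ is smooth in $p$, and $M_k^{\mathrm{out}}$ is fixed; then glue these local descriptions with the analytic description on interior arcs using a smooth partition of unity to reconstruct $f_p$ globally --- or, more cleanly, argue locally on each of the finitely many charts of $\overline{\H}$ and invoke that a map which is $C^1$ on each member of a finite open cover is $C^1$. (2) Deduce joint $C^1$ smoothness of $(p,z)\mapsto f_p(z)$ from the chain rule, Lemma~\ref{lem:ftheta-asymptotics}, and the smooth dependence of $M_k^{\mathrm{in}}$ and $\theta_k$ on $p$. (3) Conclude $f_p'(x)\neq 0$ for $x\in\R$: on open arcs this is clear since $f_p$ is a non-constant analytic extension; at the marked points $f_p'$ is a nonzero multiple of $\mathfrak f_{\theta_k}'(0)=1/(2\ii)$.

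I expect the main obstacle to be step~(1)--(2): rigorously justifying that $f_p$ in a neighborhood of a marked point \emph{is} (a Möbius conjugate of) $\mathfrak f_{\theta_k(p)}$ --- that is, that the local welding near $x_{p,k}$ is exactly the model welding and that the solution of the local welding problem depends $C^1$ on the parameters. This requires uniqueness of the normalized welding map and a localization argument (the global $f_p$ restricted near $z_k$ solves the same welding problem as the local model, up to Möbius), both of which are standard for quasicircles but need to be stated carefully. Once the identification with $\mathfrak f_{\theta_k(p)}$ is in hand, the smoothness is essentially dictated by Lemma~\ref{lem:ftheta-asymptotics} and the smoothness of $p\mapsto(\theta_k(p),M_k^{\mathrm{in}}(p))$.
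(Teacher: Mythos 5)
There is a genuine gap, and it is structural: you conflate the local structure of the \emph{welding homeomorphism} with the local structure of the \emph{conformal maps solving the welding problem}. It is true that near $x_{p,k}$ the homeomorphism $\varphi_p$ agrees with the model welding $\varphi_{\theta_k}$ up to pre- and post-composition by M\"obius maps. But this does not give $f_p = M_k^{\mathrm{out}}\circ\mathfrak f_{\theta_k}\circ M_k^{\mathrm{in}}(p)$ with $M_k^{\mathrm{out}}$ a \emph{fixed} M\"obius map. The correct factorization (the paper's Lemma~\ref{lem:factorization}) is $f_p=\psi_p\circ\mathfrak f_{\theta_k}\circ\alpha_p$, where $\psi_p$ is a full conformal map from $\m D$ onto $(\Chat\smallsetminus\gamma_p)\cup\gamma_{k}\cup\gamma_{k-1}\cup\{z_k\}$ --- a domain bounded by the \emph{rest of the curve}, which depends on $p$ through the solution of the global welding problem. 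The welding problem is globally coupled: $f_p$ near $z_k$ is not determined by the local data $(x_{k-1},x_k,x_{k+1},\lambda_k)$ alone, so your localization does not reduce the problem; all the difficulty is hidden in the $p$-dependence of $\psi_p$. Worse, in the paper the $C^1$ dependence of $\psi_p$ on $p$ is \emph{deduced from} Proposition~\ref{prop:f_differentiability} (via $\psi=f\circ\alpha^{-1}\circ\mathfrak f_\theta^{-1}$), so using the factorization to prove the proposition is circular. The same issue infects your treatment of the interior arcs: $f_p$ there is not "a construction depending on finitely many M\"obius coefficients"; the passage from the piecewise-M\"obius data to $f_p$ is precisely the nontrivial analytic step.

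The paper's actual proof is short and avoids all of this: the curve is $C^{1,1-\vare}$ by Lemma~\ref{lem:loop_geodesic_minimizer}, so Pommerenke's boundary regularity theorem gives $f_p'(x)\neq 0$ on $\R$; and since $p\mapsto\varphi_p$ is differentiable into $C^{1+\vare}(\R,\R)$, a general differentiability theorem for the welding-to-conformal-map correspondence (cited as \cite[Thm.~5.1]{de2003differentiability}) gives $C^1$ dependence of $f_p$ on $p$. If you want to avoid that citation you would need to supply your own proof that the solution of the welding problem depends $C^1$ on the welding homeomorphism in a suitable topology --- that is the missing ingredient, and it cannot be replaced by the local model maps $\mathfrak f_\theta$.
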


\begin{proof}
    Note that by Lemma~\ref{lem:loop_geodesic_minimizer} %
    a $C^1$ piecewise geodesic Jordan curve is in fact $C^{1,1 - \varepsilon}$-smooth and hence by \cite[Thm.~3.5]{Pommerenke_boundary} we have $f_p'(x) \neq 0$ for every $x \in \R$.
    Since $p \mapsto \varphi_p$ is a differentiable map to $\mc W_n \to C^{1 + \varepsilon}(\R, \R)$, it follows, e.g., from \cite[Thm.~5.1]{de2003differentiability} that $p \mapsto f_p(z)$ is $C^1$ smooth.
\end{proof}

The main result of this section is that conformal welding yields a $C^1$ diffeomorphism $\mc W_n \to T_{0,n}$. More precisely, we have
\begin{thm}\label{thm:differentiability}
    The map $\mc F \colon \mc W_n \to \{\mc L(z_1,...,z_{n-3},0,1,\infty; \gamma )\} \cong T_{0,n}$ given by
    \[\mc F(p) = \Big(\big(f_p(x_{p,k})\big)_{k=1}^{n-3}, 0, 1, \infty; f_p(\R)\Big)\]
    is a $C^1$ diffeomorphism.
\end{thm}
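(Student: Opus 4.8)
The strategy is to show that $\mc F$ is a $C^1$ bijection with $C^1$ inverse by producing an explicit $C^1$ inverse and checking differentiability in both directions via the parametrizations already set up. First I would establish that $\mc F$ is well-defined and a bijection. Well-definedness is essentially Theorem~\ref{thm:piecewise_mob}: a representative $(\varphi_p; x_{p,1},\dots,x_{p,n-3},0,1,\infty)$ gives, through the welding maps $f_p,g_p$, a $C^1$ piecewise geodesic loop $\gamma_p$ through the points $f_p(x_{p,k})$, and changing the representative by $(\alpha,\beta)\in\PSL(2,\R)^2$ only post-composes $f_p$ by a global M\"obius map, hence leaves the isotopy class in $T_{0,n}$ unchanged (after renormalizing to fix $0,1,\infty$). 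Injectivity is exactly Theorem~\ref{thm:uniqueness}: if $\mc F(p)=\mc F(p')$ then $\gamma_p$ and $\gamma_{p'}$ are isotopic $C^1$ piecewise geodesic loops through the same marked points, hence equal, hence their weldings agree up to the $\PSL(2,\R)\times\PSL(2,\R)$ ambiguity, i.e.\ $p=p'$ in $\mc W_n$. Surjectivity combines Lemma~\ref{lem:loop_geodesic_minimizer} (each isotopy class contains a $C^1$ piecewise geodesic minimizer) with Theorem~\ref{thm:piecewise_mob} (its welding is piecewise M\"obius and $C^1$), so that this minimizer equals $\gamma_p$ for the corresponding $p\in\mc W_n$.

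Next I would prove that $\mc F$ is $C^1$. Using the global chart on $\mc W_n$ by $(x_1,\dots,x_{n-3},\lambda_1,\dots,\lambda_{n-3})$, the map $p\mapsto\varphi_p$ is $C^1$ into $C^{1+\varepsilon}(\widehat\R)$ (the formulas \eqref{eq:Tk} for $T_k$ and the normalizing affine maps depend real-analytically on the parameters, away from the pole locus cut out by \eqref{eq:lambdaconstraint}), and then Proposition~\ref{prop:f_differentiability} gives that $(p,z)\mapsto f_p(z)$ is $C^1$ on $\mc W_n\times\overline\H$ with $f_p'(x)\neq0$. In particular $p\mapsto(f_p(x_{p,k}))_{k=1}^{n-3}$ is $C^1$, and these are exactly the coordinates of $\mc F(p)$ in the natural chart on $\{\mc L(z_1,\dots,z_{n-3},0,1,\infty;\gamma)\}\cong T_{0,n}$ given by the position of the punctures, so $\mc F$ is $C^1$.

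The remaining — and I expect \emph{hardest} — part is showing $\mc F^{-1}$ is $C^1$. Two routes are available. (i) Show directly that $\mc F$ is an immersion, i.e.\ $d\mc F_p$ is injective for every $p$; since $\mc W_n$ and $T_{0,n}$ both have dimension $2n-6$, an injective $C^1$ map between manifolds of equal dimension that is a bijection is a diffeomorphism by invariance of domain, provided $d\mc F$ has full rank. To compute $d\mc F_p$ one differentiates the welding: an infinitesimal deformation of $p$ changes $\varphi_p$, hence $f_p$ and the puncture positions $f_p(x_{p,k})$; the point positions and the jumps $\lambda_k$ together carry $2n-6$ real parameters, and one must check that no nontrivial infinitesimal deformation of $(x_k,\lambda_k)$ produces a curve isotopic to $\gamma_p$ with the punctures fixed to first order. (ii) Alternatively, and perhaps more robustly, one constructs $\mc F^{-1}$ explicitly and proves its smoothness: given a quasicircle $\gamma$ through $z_1,\dots,z_{n-3},0,1,\infty$ in a prescribed isotopy class, Theorem~\ref{thm:uniqueness} selects the $C^1$ piecewise geodesic representative, whose uniformizing maps $f,g$ (and hence welding $\varphi$, marked points $x_k=f^{-1}(z_k)$, jumps $\lambda_k$) depend on $\gamma$; one then invokes the geodesic-pair computations of Section~\ref{sub:pair-welding} — in particular Lemma~\ref{lem:pair-welding-homeo} and Lemma~\ref{lem:ftheta-asymptotics}, which pin down the local structure of $\varphi$ near each marked point in terms of the opening data — to show the parameters $(x_k,\lambda_k)$ vary $C^1$-ly with the point configuration. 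I would favor route (i): combine the $C^1$ bijectivity already established with a rank computation of $d\mc F$, reducing the whole statement to invariance of domain plus nonvanishing of a Jacobian, the latter following because a kernel vector of $d\mc F_p$ would yield, via the inverse welding, a nonconstant holomorphic quadratic differential on $\Sigma$ with at worst simple poles that is tangent to the zero deformation — impossible by the identification of $\mc P(\S,\marking)$ with such differentials and Lemma~\ref{lem:quadratic_diff}.
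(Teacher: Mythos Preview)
Your overall architecture matches the paper's: establish bijectivity via Theorem~\ref{thm:uniqueness} and Lemma~\ref{lem:loop_geodesic_minimizer}, prove $\mc F$ is $C^1$ via Proposition~\ref{prop:f_differentiability}, and then invoke the inverse function theorem by showing $d\mc F_p$ has full rank. The paper follows exactly your route~(i).

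The gap is in your final sentence. The claim that ``a kernel vector of $d\mc F_p$ would yield, via the inverse welding, a nonconstant holomorphic quadratic differential on $\Sigma$ with at worst simple poles that is tangent to the zero deformation'' is neither made precise nor justified, and Lemma~\ref{lem:quadratic_diff} does not deliver what you need: it only says that distinct projective structures give distinct Schwarzians, a statement about the map $Z\mapsto q$ itself, not about its differential. Moreover, the link between the Schwarzian's residues and the welding parameters $(x_k,\lambda_k)$ is only established in Section~\ref{sec:accessory}, and that section \emph{uses} Theorem~\ref{thm:differentiability} (see the proof of Proposition~\ref{prop:same_der}), so appealing to that circle of ideas here is circular.

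What the paper actually does to kill a putative kernel vector $v$ is a three-step chain. First (Lemma~\ref{lem:corners_need_to_move}), it shows that $\partial_v F = 0$ forces $\Im(\partial_v f/f')\equiv 0$ on $\R$; this is the substantive step and is proved by a subharmonic\,/\,maximum-principle argument on the punctured sphere, closely modeled on the uniqueness proof of Section~\ref{sec:uniqueness}: one builds bounded harmonic potentials $u_k$ on each slit domain $\Omega_+\cup\Omega_-\cup\gamma_k$ from the normal variation and compares their boundary values via an explicit cross-ratio inequality, forcing the assembled function $|u|$ to be a subharmonic constant, hence zero. Second (Lemma~\ref{lem:v_to_normal_variation}), a short Liouville-type argument shows that vanishing of this imaginary part forces $\partial_v f\equiv 0$ and hence $\partial_v\varphi_p=0$. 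Third (Lemmas~\ref{lem:phi_p_null_perturbation} and~\ref{lem:trivial_v_moves_corners}), one checks via the distributional Schwarzian $\mc S[\varphi_p]=\sum\lambda_k\delta_{x_k}$ that $\partial_v\varphi_p=0$ allows $v$ only to move points $x_k$ with $\lambda_k=0$, and any such nonzero $v$ genuinely moves $z_k=f_p(x_k)$ since $f_p'(x_k)\neq 0$, contradicting $\partial_v F=0$. The heart of the matter is the potential-theoretic Lemma~\ref{lem:corners_need_to_move}, which your proposal does not anticipate; the precise asymptotics of Proposition~\ref{prop:f_asymptotics} (derived from Lemmas~\ref{lem:pair-welding-homeo} and~\ref{lem:ftheta-asymptotics}) are needed there to ensure the potentials $u_k$ stay bounded near the vertices.
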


By the existence and uniqueness of geodesic loops and their welding characterization, we already know that $\mc F$ is a bijection.
The main task left is to show that the inverse of $\mc F$ is also $C^1$ smooth. This will follow from the inverse function theorem once we show that the map $F \colon \mc W_n \to \Chat^{n-3}$, 
$$F(p) = \big(f_p(x_{p,k})\big)_{k=1}^{n-3}$$ %
has an invertible tangent map at every point. Or equivalently, that for any $v \in T_p \mc W_n$, we have $\partial_v F = 0$ if and only if $v = 0$. 
To do this, it is important to quantify how $f_p$ behaves on the real line, especially near the points $x_k$.

\begin{prop}\label{prop:f_asymptotics}
    For every $v \in T_p \mc W$ and $1 \le k \le n-3$ we have the asymptotics
    \[\frac{\partial_v f(z)}{f'(z)} = \frac{\partial_v f(x_k)}{f'(x_k)} + \Big(\frac{\partial_v f(x_k)}{f'(x_k)} + \partial_v x_k\Big) \frac{\lambda_k}{2\pi \ii} (z - x_k) \log(z - x_k) + O_{z \to x_k}(z-x_k),\]
    as well as
    \[\frac{\partial_v f(z)}{f'(z)} = O_{z \to 0}(z), \quad \frac{\partial_v f(z)}{f'(z)} = O_{z \to 1}(z - 1) \text{ and} \quad \frac{\partial_v f(z)}{f'(z)} = O_{z \to \infty}(z).\]
    Analogous claims hold for the map $g_p$.
\end{prop}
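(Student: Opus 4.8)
The plan is to transfer the problem through the welding construction to the known asymptotics of the model maps $\mathfrak f_\theta$ from Lemma~\ref{lem:ftheta-asymptotics} and the model weldings $\varphi_\theta$ from Lemma~\ref{lem:pair-welding-homeo}. The pre-Schwarzian of $\varphi_p$ has a jump of size $\lambda_k$ at each $x_k$ ($1\le k\le n-3$) and is $C^1$ (hence the jump is $0$) at $0,1,\infty$; comparing with the model welding, the jump $\lambda_k$ plays the role of the $-4\pi\cos\theta$ appearing in \eqref{eq:jump_Def}. Near $x_k$ the curve $\gamma_p$ locally looks like a geodesic pair with a vertex at $f_p(x_k)$, so by the explicit form of $\mathfrak f_\theta$ in Lemma~\ref{lem:ftheta-asymptotics}, the conformal map $f_p$ has an expansion of the form $f_p(z)=f_p(x_k)+a_k(z-x_k)+b_k(z-x_k)^2\log(z-x_k)+O((z-x_k)^2)$ with $b_k/a_k$ proportional to $\lambda_k$; at $0,1,\infty$ the absence of a jump means $f_p$ is $C^{1,1-\varepsilon}$ and the logarithmic term vanishes, giving $f_p(z)/f_p'(z)$ expansions without a log.

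First I would make the local model precise: fix $p$ and $k$, let $\theta_k$ be defined by $\lambda_k=-4\pi\cos\theta_k$ (i.e.\ the opening angle of the geodesic pair at $z_k=f_p(x_k)$), and use the uniqueness of $C^1$ geodesic pairs together with Theorem~\ref{thm:piecewise_mob} to write $f_p$ near $x_k$ as a composition $\Phi_k\circ \mathfrak f_{\theta_k}\circ M_k$ where $M_k$ is a Möbius map sending $x_k$ to $0$ and $\Phi_k$ is conformal and nondegenerate in a neighborhood of $\mathfrak f_{\theta_k}(0)=0$. Plugging the expansions of Lemma~\ref{lem:ftheta-asymptotics} into this composition and dividing by $f_p'$ yields the claimed form of $\partial_v f/f'$ once we differentiate in $p$: the $\theta_k$-derivative of $\mathfrak f_{\theta_k}$ (third line of Lemma~\ref{lem:ftheta-asymptotics}) produces the $\sin\theta_k\, z^2\log z$ term, whose coefficient combines with $\partial_v x_k$ (coming from differentiating $M_k$, which moves the marked point) and with $\partial_v f(x_k)/f'(x_k)$ (coming from differentiating $\Phi_k$, i.e.\ the motion of the vertex) into exactly the bracket $\big(\partial_v f(x_k)/f'(x_k)+\partial_v x_k\big)$; the identity $\partial_\theta(-4\pi\cos\theta)=4\pi\sin\theta$ converts $\sin\theta_k$ into $\partial_v\lambda_k$, but since the stated asymptotic keeps $\lambda_k$ and a factor $(z-x_k)\log(z-x_k)$ rather than $(z-x_k)^2\log$, one should be careful that the logarithmic term in $\partial_v f/f'$ at order $(z-x_k)$ is produced by differentiating $f_p'$ (whose expansion $f_p'(z)=a_k-2b_k(z-x_k)\log(z-x_k)+\cdots$ already carries a log), so the $\lambda_k/(2\pi\ii)$ coefficient emerges from the ratio. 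At $0,1,\infty$ the same composition argument applies with $\theta=\pi/2$ (so $\lambda=0$, no log), giving $\partial_v f(z)/f'(z)=O(z)$, $O(z-1)$, $O(z)$ respectively after subtracting the value at the marked point — which is $0$ there since the curve passes through $0,1,\infty$ for every $p$ and these are fixed, so $\partial_v f$ vanishes at $0,1$ and $\partial_v f/f'\to 0$ at $\infty$.

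For the rigorous dependence on $p$, I would invoke Proposition~\ref{prop:f_differentiability}: the map $(p,z)\mapsto f_p(z)$ is $C^1$ on $\mathcal W_n\times\overline{\H}$, and the construction of $\tilde\varphi_k$ via the explicit Möbius maps $T_k$ in \eqref{eq:Tk} shows that $p\mapsto(x_{p,k},\lambda_k)$ is a smooth chart, so $\theta_k=\theta_k(p)$, $M_k=M_k(p)$ and the germ of $\Phi_k$ depend $C^1$-smoothly on $p$; differentiating the composition in the $v$-direction is then legitimate, and the error terms $O((z-x_k)^2)$, $O(z)$, etc., in Lemma~\ref{lem:ftheta-asymptotics} are uniform enough (locally in $p$) that differentiation in $p$ preserves the order of the remainder. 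The main obstacle I anticipate is making the factorization $f_p=\Phi_k\circ\mathfrak f_{\theta_k}\circ M_k$ genuinely rigorous and $C^1$ in $p$: a priori one only knows $\gamma_p$ is a $C^1$ geodesic pair near the vertex, and one must argue that the uniformizing map of the actual complement $(\Chat\smallsetminus\gamma_p)\cup\gamma_{p,k}$ differs from the uniformizing map of the model half-plane picture by a conformal map $\Phi_k$ that is analytic (not merely $C^1$) in a full neighborhood of the vertex image — this uses that after uniformizing away the single arc through $z_k$, the remaining boundary is at positive distance, so $\Phi_k$ extends holomorphically; controlling its $p$-dependence then reduces to the already-established smoothness of $f_p$ away from the marked points, which follows again from \cite[Thm.~5.1]{de2003differentiability} applied to the welding. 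Once these pieces are in place, collecting terms gives precisely the two displayed asymptotics, and the argument for $g_p$ is verbatim the same with $\mathfrak g_\theta$ in place of $\mathfrak f_\theta$.
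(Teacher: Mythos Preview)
Your approach is essentially the same as the paper's: factor $f_p=\psi\circ\mathfrak f_\theta\circ\alpha$ with $\alpha\in\PSL(2,\R)$ and $\psi$ conformal on a full disc (this is exactly Lemma~\ref{lem:factorization}), differentiate the composition in $v$ using the three expansions of Lemma~\ref{lem:ftheta-asymptotics}, and combine; the paper also establishes the $C^1$-in-$p$ dependence of $\psi,\theta,\alpha$ inside Lemma~\ref{lem:factorization}, which is precisely the ``main obstacle'' you anticipate.

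Two small corrections. First, the relation is $\lambda_k=-4\pi\cos\theta\cdot\alpha'(x_k)$, not $\lambda_k=-4\pi\cos\theta$; the Möbius factor $\alpha'(x_k)$ is what makes the bookkeeping come out to $\lambda_k/(2\pi\ii)$ after you use $\partial_v\alpha(x_k)=-\alpha'(x_k)\partial_v x_k$. Second, at $0,1,\infty$ the angle $\theta$ is \emph{not} generally $\pi/2$ and the jumps $\lambda_{n-2},\lambda_{n-1},\lambda_n$ need not vanish; the log term disappears there because both $\partial_v x_k=0$ (these marked points are fixed) and $\partial_v f(x_k)=0$ (since $f_p$ fixes $0,1,\infty$), so the bracket $\big(\partial_v f(x_k)/f'(x_k)+\partial_v x_k\big)$ vanishes --- which is the reason you give at the end of that paragraph, and the one the paper uses.
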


The proof of Proposition~\ref{prop:f_asymptotics} will be based on the following factorization result.

\begin{lem}\label{lem:factorization}
    For every $1 \le k \le n-1$ and $p \in \mc W_n$ there exist a unique conformal map $\psi\colon \m D \to (\Chat \smallsetminus \gamma) \cup \gamma_k \cup \gamma_{k-1} \cup \{z_k\}$, an angle $\theta \in (0,\pi)$ and a Möbius transformation $\alpha \in \PSL(2,\R)$ such that
    \[f = \psi \circ \mathfrak f_\theta \circ \alpha\]
    where $\mathfrak f_\theta$ is the conformal %
    map of a geodesic pair defined in Section~\ref{sub:pair-welding}.
    All of $\psi$, $\theta$ and $\alpha$ are $C^1$ functions of $p$.
\end{lem}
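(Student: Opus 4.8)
The plan is to recognize, for a fixed $k \in \{1,\dots,n-1\}$, the two consecutive arcs $\gamma_{k-1}, \gamma_k$ of $\gamma$ meeting at $z_k$ as a conformal copy of the standard geodesic pair of Section~\ref{sub:pair-welding}, sitting inside the domain $D_k := (\Chat \smallsetminus \gamma) \cup \gamma_k \cup \gamma_{k-1} \cup \{z_k\}$, to uniformize $D_k$ accordingly, and to read $\alpha$ off as the resulting change of coordinate on $\H$. First I would observe that $\Chat \smallsetminus D_k$ is a closed Jordan sub-arc of $\gamma$ (running from $z_{k+1}$ to $z_{k-1}$), so that $D_k$ is simply connected, and that $\Omega_+$ and $\Omega_-$ are exactly the two components of $D_k \smallsetminus (\gamma_k \cup \gamma_{k-1} \cup \{z_k\})$, the latter being a cross-cut of $D_k$ from the boundary prime end $z_{k-1}$ to the boundary prime end $z_{k+1}$. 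By the piecewise geodesic property of $\gamma$ together with its $C^1$ smoothness (Theorem~\ref{thm:piecewise_mob}), $\gamma_{k-1} \cup \gamma_k \cup \{z_k\}$ is a $C^1$ geodesic pair in $(D_k; z_{k-1}, z_{k+1}; z_k)$.

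Next I would uniformize $D_k$. Let $\Phi_0 \colon D_k \to \m D$ be a Riemann map; post-composing with an automorphism of $\m D$, I may assume $\Phi_0(z_k) = 0$ and, after a further rotation, that $\{\Phi_0(z_{k-1}), \Phi_0(z_{k+1})\} = \{e^{\ii \theta}, e^{-\ii\theta}\}$ for some $\theta$; since $z_{k-1} \neq z_{k+1}$ and distinct boundary points have distinct images, this forces $\theta \in (0,\pi)$. Conformal invariance of the geodesic property makes $\Phi_0(\gamma_{k-1} \cup \gamma_k \cup \{z_k\})$ a $C^1$ geodesic pair in $(\m D; e^{\ii\theta}, e^{-\ii\theta}; 0)$, so by the uniqueness of $C^1$ geodesic pairs (\cite[Thm.\,3.9, Cor.\,3.10]{MRW1}, transported along a conformal map to $(\H; 0, \infty)$) it is the standard geodesic pair of that configuration, whose two complementary components in $\m D$ are $\mathfrak f_\theta(\H)$ and $\mathfrak g_\theta(\H^*)$. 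If $\Phi_0(\Omega_+) = \mathfrak g_\theta(\H^*)$ rather than $\mathfrak f_\theta(\H)$, I would replace $\Phi_0$ by the rotated map $-\Phi_0$: as one checks from the explicit map $h_\theta$, rotation by $\pi$ carries the standard pair of parameter $\theta$ to that of parameter $\pi - \theta$ and interchanges its two complementary components, so after renaming $\theta := \pi - \theta$ I obtain a Riemann map $\Phi \colon D_k \to \m D$ with $\Phi(z_k) = 0$, carrying the geodesic pair to the standard one of parameter $\theta \in (0,\pi)$, and with $\Phi(\Omega_+) = \mathfrak f_\theta(\H)$. Set $\psi := \Phi^{-1}$.

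I would then set $\alpha := \mathfrak f_\theta^{-1} \circ \Phi \circ f$, which is the composite of the conformal bijections $f\colon \H \to \Omega_+$, $\Phi|_{\Omega_+} \colon \Omega_+ \to \mathfrak f_\theta(\H)$ and $\mathfrak f_\theta^{-1} \colon \mathfrak f_\theta(\H) \to \H$, hence a biholomorphism of $\H$, i.e.\ an element of $\PSL(2,\R) = \operatorname{Aut}(\H)$; unwinding the definitions gives $f = \psi \circ \mathfrak f_\theta \circ \alpha$, as required. For uniqueness, note that any triple $(\psi, \theta, \alpha)$ as in the statement satisfies $\psi(\mathfrak f_\theta(\H)) = f(\H) = \Omega_+$ and carries the standard geodesic pair of parameter $\theta$ onto the cross-cut $\gamma_{k-1} \cup \gamma_k \cup \{z_k\}$ of $D_k$, whence $\psi(0) = z_k$; these, together with the orientation-compatible identification of the two endpoints of the standard pair with $z_{k-1}, z_{k+1}$ that is forced by $\Phi(\Omega_+) = \mathfrak f_\theta(\H)$, pin down $\psi$, hence $\theta$, and then $\alpha = \mathfrak f_\theta^{-1} \circ \psi^{-1} \circ f$. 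The only residual ambiguity is post-composing $\Phi$ with the rotation by $\pi$ (which would replace $\theta$ by $\pi - \theta$), and it is precisely the requirement $\psi(\mathfrak f_\theta(\H)) = \Omega_+$, rather than $\Omega_-$, that rules it out.

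It remains to check that $p \mapsto (\psi, \theta, \alpha)$ is $C^1$. By Proposition~\ref{prop:f_differentiability} the map $p \mapsto f_p$ is $C^1$, hence so are $p \mapsto z_j = f_p(x_{p,j})$ and $p \mapsto D_k(p)$ (whose boundary is an arc of $f_p(\widehat\R)$ determined by $x_{p,k-1}$ and $x_{p,k+1}$). The standard results on the smooth dependence of conformal maps on smoothly varying domains — of the same nature as those invoked above for $f_p$, cf.\ \cite{de2003differentiability} — then give that $p \mapsto \Phi_p$ is $C^1$ (one only has to take the usual care with the square-root behaviour of $\Phi_p$ at the tips $z_{k-1}, z_{k+1}$ of the slit $\Chat \smallsetminus D_k$, which is the same for all $p$); consequently $\theta_p$, read off from $\Phi_p(z_{k-1})$, and $\psi_p = \Phi_p^{-1}$ are $C^1$, and, using the joint smoothness of $(\theta, z) \mapsto \mathfrak f_\theta(z)$ evident from the explicit formula in Section~\ref{sub:pair-welding}, so is $\alpha_p = \mathfrak f_{\theta_p}^{-1} \circ \Phi_p \circ f_p$. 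I expect the main obstacle to be not any single estimate but the geometric bookkeeping that guarantees $\alpha \in \PSL(2,\R)$, namely choosing the normalization of $\Phi$ so that it \emph{simultaneously} carries the conformally rigid geodesic pair onto a standard one \emph{and} matches $\Omega_+, \Omega_-$ with the two sides of that standard pair in the order dictated by the orientation of $f$; once this is set up, the rest is a routine combination of the Riemann mapping theorem, the classification of $C^1$ geodesic pairs from \cite{MRW1}, and the differentiability input already established.
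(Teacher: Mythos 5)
Your construction of the triple $(\psi,\theta,\alpha)$ and your uniqueness argument are essentially the paper's: uniformize $D_k=(\Chat\smallsetminus\gamma)\cup\gamma_k\cup\gamma_{k-1}\cup\{z_k\}$, use the rigidity of $C^1$ geodesic pairs to identify the image of $\gamma_{k-1}\cup\gamma_k\cup\{z_k\}$ with the standard pair of some parameter $\theta$, and read off $\alpha$ as the induced automorphism of $\H$. That part is fine.

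The gap is in the $C^1$-dependence on $p$. You derive the smoothness of $\theta_p$ and $\psi_p$ from the claim that the Riemann map $\Phi_p$ of the slit domain $D_k(p)$ depends $C^1$-smoothly on $p$, invoking ``standard results on the smooth dependence of conformal maps on smoothly varying domains'' of the same nature as \cite{de2003differentiability}. But that reference concerns the differentiability of the solution of the \emph{welding} problem as a function of the welding homeomorphism; it says nothing about Riemann maps of a family of complements of $C^{1,1-\vare}$ Jordan arcs, and there is no off-the-shelf theorem giving $C^1$ dependence in that setting (Carath\'eodory kernel convergence gives continuity, and the square-root singularities at the slit tips are not the only issue --- one needs an actual differentiable structure on the family of domains to even state the claim). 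Moreover your order of deductions is the reverse of what is easy to justify: the natural formula $\Phi_p|_{\Omega_+}=\mathfrak f_{\theta_p}\circ\alpha_p\circ f_p^{-1}$ expresses $\Phi_p$ in terms of $\theta_p$ and $\alpha_p$, so proving $\Phi_p$ smooth \emph{first} and extracting $\theta_p$ from it afterwards requires an independent argument you have not supplied.

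The paper closes exactly this gap by a short computation that bypasses the Riemann map entirely: comparing the jump of the pre-Schwarzian of $\varphi_p$ at $x_k$ with that of $\varphi_\theta$ (Lemma~\ref{lem:pair-welding-homeo}) yields the explicit relation
\[
\frac{2\pi}{\theta-\tan\theta}=\frac{(x_k-x_{k-1})(x_{k+1}-x_k)\,\lambda_k}{x_{k+1}-x_{k-1}},
\]
whose left-hand side is a strictly increasing $C^1$ function of $\theta\in(0,\pi)$. Since $(x_j,\lambda_j)$ are the global coordinates on $\mc W_n$, the inverse function theorem gives $\theta=\theta(p)\in C^1$ directly; then $\alpha_p$ is an explicit M\"obius map determined by $\theta_p$ and the $x_j$, and $\psi=f\circ\alpha^{-1}\circ\mathfrak f_\theta^{-1}$ (resp.\ $f\circ\alpha^{-1}\circ\mathfrak g_\theta^{-1}$ on the other side) is $C^1$ in $p$ by Proposition~\ref{prop:f_differentiability}, with analyticity of $\partial_v\psi$ in the interior recovered via Morera. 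If you want to keep your route, you must either prove the smooth dependence of $\Phi_p$ on $p$ for this family of slit domains, or replace that step by an argument of the above type that pins down $\theta_p$ from the welding data.
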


\begin{proof}
    Let $\psi \colon \m D \to (\Chat \smallsetminus \gamma) \cup \gamma_k \cup \gamma_{k-1} \cup \{z_k\}$ be the conformal map which takes $0$ to $z_k$, $\ee^{\ii\theta}$ to $z_{k-1}$ and $\ee^{-\ii\theta}$ to $z_{k+1}$, where $\theta \in (0,\pi)$ is determined by $\gamma$ and hence is a function of $p \in \mc W_n$. We may then write $f = \psi \circ \mathfrak f_\theta \circ \alpha$, where $\alpha$ is the Möbius transformation taking $x_k$ to $0$, $x_{k-1}$ to $1/D$ and $x_{k+1}$ to $1/B$, with $B = \big(\pi/2 - \theta\big) \cos(\theta) + \sin(\theta)$ and $D = \big(\pi/2  + \theta\big) \cos(\theta) - \sin(\theta)$ (we recall Figure~\ref{fig:G_maps} in Section~\ref{sub:pair-welding} for the mapping properties of $\mathfrak f_\theta$).

    Similarly, if we let $y_\ell = \varphi_p(x_\ell)$ for $\ell = 1,\ldots,n$ and let $\beta$ be the Möbius transformation taking $y_{k-1}$ to $1/C$, $y_k$ to $0$ and $y_{k+1}$ to $1/B$ with $C = -(3\pi/2) \cos(\theta) - \sin(\theta) + \theta \cos(\theta)$, then $g = \psi \circ \mathfrak g_\theta \circ \beta$, and in particular $\varphi_p(z) = \beta^{-1} \circ \varphi_\theta \circ \alpha(z)$ for $z \in [x_{k-1}, x_{k+1}]$, where $\varphi_\theta$ is the map from Lemma~\ref{lem:pair-welding-homeo}.
    If we compute the jump in the pre-Schwarzian at $z = x_k$ on both sides of this equation, we get by Lemma~\ref{lem:pair-welding-homeo} that $\lambda_k = -4 \pi \cos(\theta) \alpha'(x_k)$.
    Moreover, one can compute that
    \[\alpha'(x_k) = \frac{x_{k+1} - x_{k-1}}{(x_k - x_{k-1})(x_{k+1} - x_k) (B - D)},\]
    and noting that $B - D = 2 \sin(\theta) - 2 \theta \cos(\theta)$, we get
    \[\frac{2 \pi}{\theta - \tan(\theta)} = \frac{(x_k - x_{k-1})(x_{k+1} - x_k) \lambda_k}{x_{k+1} - x_{k-1}}.\]
    The left hand side is $C^1$ and strictly increasing function for $\theta \in (0,\pi)$ (when defined to be $0$ at $\theta = \pi/2$). In particular, by the inverse function theorem, $\theta$ is a $C^1$ function of $p \in \mc W_n$.
    It is easy to check that also $\mathfrak f_\theta$, $\alpha$ and $\alpha^{-1}$ are all $C^1$ functions of $p$.
    Moreover $\mathfrak f_\theta^{-1}$ and $\mathfrak g_\theta^{-1}$ are $C^1$ functions of $p$ and $z$ in their respective domains of definition (see Figure~\ref{fig:G_maps}).
    In particular we have either $\psi = f \circ \alpha^{-1} \circ \mathfrak f_\theta^{-1}$ or $\psi = f \circ \alpha^{-1} \circ \mathfrak g_\theta^{-1}$ depending on which connected component of the complement of the geodesic pair one looks at, and both formulas extend as $C^1$ smooth maps of $z$ to the boundary geodesic pair. In the closure of either connected component, $\psi$ is a $C^1$ smooth function of $p$. 
    Since $\psi$ is analytic in $z$, it follows from Morera's theorem that for any $v \in T_p \mc W_n$ the map $\partial_v \psi$ is analytic in $\m D$.
    In particular, $p \mapsto \psi$ is $C^1$ smooth in the whole unit disc.
\end{proof}

\begin{proof}[Proof of Proposition~\ref{prop:f_asymptotics}]
    Using the factorization, we now have
    \[\partial_v f = \partial_v \psi \circ \mathfrak f_\theta \circ \alpha + \psi' \circ \mathfrak f_\theta \circ \alpha \cdot \partial_v \mathfrak f_\theta \circ \alpha + \psi' \circ \mathfrak f_\theta \circ \alpha \cdot \mathfrak f_\theta' \circ \alpha \cdot \partial_v \alpha.\]
    Since $\partial_v \psi$ is $C^1$ smooth (or even analytic) in $\m D$, we have by Lemma~\ref{lem:ftheta-asymptotics} the asymptotics
    \[\partial_v \psi(\mathfrak f_\theta(\alpha(z))) = \partial_v \psi(0) + O_{z \to x_k}(z - x_k).\]
    By the same lemma we also have
    \[\partial_v \mathfrak f_\theta(z) = \partial_v \theta \cdot \Big(\frac{1}{2} \sin(\theta) z^2 \log(z) + O_{z \to 0}(z^2)\Big),\]
    which implies that $\psi'(\mathfrak f_\theta(\alpha(z))) \cdot \partial_v \mathfrak f_\theta(\alpha(z)) = o_{z \to x_k}(z - x_k)$.
    Finally (still by Lemma~\ref{lem:ftheta-asymptotics}) we have
    \[\mathfrak f'_\theta(z) = \frac{1}{2 \ii} - \cos(\theta) z \log(z) + O_{z \to 0}(z),\]
    giving us
    \begin{align*}
        & \psi'(\mathfrak f_\theta(\alpha(z))) \mathfrak f_\theta'(\alpha(z)) \partial_v \alpha(z) \\
        & = \frac{1}{2 \ii} \psi'(0) \partial_v \alpha(x_k) - \psi'(0) \cos(\theta) \partial_v \alpha(x_k) \alpha'(x_k) (z - x_k) \log(z - x_k) + O_{z \to x_k}(z - x_k).
    \end{align*}
    Similarly
    \begin{align*}
        & f'(z) = \psi'(\mathfrak f_\theta(\alpha(z))) \, \mathfrak f_\theta'(\alpha(z)) \, \alpha'(z) \\
        & = \frac{1}{2 \ii} \psi'(0) \alpha'(x_k) - \psi'(0) \cos(\theta) \alpha'(x_k)^2 (z - x_k) \log(z - x_k) + O_{z \to x_k}(z - x_k).
    \end{align*}
    Note that by differentiating the identity $\alpha(x_k) = 0$ we get $\partial_v \alpha(x_k) = -\alpha'(x_k) \partial_v x_k$
    and that Lemma~\ref{lem:pair-welding-homeo} together with the fact that $\varphi = \alpha^{-1} \circ \varphi_p \circ \alpha$ gives us $\lambda_k = \lambda[\varphi_\theta](0) \alpha'(x_k) = -4\pi \cos(\theta) \alpha'(x_k)$.
    Hence we get
    \[\partial_v f(z) = \partial_v f(x_k) + \frac{\lambda_k}{2\pi \ii} f'(x_k) \partial_v x_k (z - x_k) \log(z - x_k) + O_{z \to x_k}(z - x_k)\]
    and
    \[f'(z) = f'(x_k) - \frac{\lambda_k}{2\pi \ii} f'(x_k) (z - x_k) \log(z - x_k) + O_{z \to x_k}(z - x_k),\]
    which together yield the claimed asymptotics at $x_k$. The same computation shows the asymptotics at $0$ and $1$, since $\partial_v \alpha(0) = \partial_v \alpha(1) = 0$.
    Finally, to check the behavior at $\infty$, one can compare it to the behavior at $0$ after composing with $1/z$.
\end{proof}

We will next characterize those $v \in T\mc W_n$ for which $\partial_v \varphi = 0$. On the punctured sphere, this corresponds to the situation where the curve $\gamma$ stays fixed. The upshot is that this can only happen if some of $z_k$ move along the hyperbolic geodesic between $z_{k-1}$ and $z_{k+1}$.

\begin{lem}\label{lem:phi_p_null_perturbation}
    Suppose that $\partial_v \varphi = 0$ for some $v \in T\mc W_n$. Then $v = \sum_{k=1}^{n-3} c_k \mathbf{1}[\lambda_k = 0] \frac{\partial}{\partial x_k}$ for some constants $c_k \in \R$.
\end{lem}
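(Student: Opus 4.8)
The plan is to work in the global chart on $\mc W_n$ described above, writing a tangent vector as $v = \sum_{k=1}^{n-3}\big(a_k\,\partial_{x_k} + b_k\,\partial_{\lambda_k}\big)$ with $a_k,b_k\in\R$, and to show that the hypothesis $\partial_v\varphi = 0$ forces $b_k = 0$ for every $k$ and $a_k = 0$ whenever $\lambda_k\neq 0$; this is precisely the asserted form, with $c_k = a_k$. For orientation it is worth recording the (easy) reverse inclusion first: if $\lambda_k = 0$ then the map $T_k$ in \eqref{eq:Tk} is the identity independently of $x_k$, so $\varphi_p$ does not depend on $x_k$ at all and $\partial_{x_k}\varphi = 0$; thus the right-hand subspace lies in the kernel, and the content of the lemma is the opposite inclusion.

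To prove it, I would pick a smooth curve $t\mapsto p_t$ in $\mc W_n$ with $p_0 = p$ and $\dot p_0 = v$ (linear in the chart coordinates, hence staying in the region \eqref{eq:lambdaconstraint} for $|t|$ small), and set $\varphi^t = \varphi_{p_t}$, with moving marked points $x_k^t$ (so $\tfrac{d}{dt}x_k^t|_0 = a_k$ for $k\le n-3$, while $x_{n-2}^t = 0$, $x_{n-1}^t = 1$, $x_n^t = \infty$ stay fixed) and pre-Schwarzian jumps $\lambda_k^t$ (so $\tfrac{d}{dt}\lambda_k^t|_0 = b_k$ for $k\le n-3$). The crucial first step is to show that the Möbius \emph{pieces} of $\varphi^t$ are frozen to first order: on each of the $n$ boundary arcs, $\varphi^t$ coincides with a Möbius transformation $M_j^t\in\PSL(2,\R)$, and restricting to a fixed compact subinterval of the interior of the $j$-th arc (valid for $|t|$ small), the hypothesis $\partial_v\varphi\equiv 0$ gives $\partial_t|_0 M_j^t \equiv 0$ on that subinterval; since a Möbius transformation is pinned by its values at three points, $\partial_t|_0 M_j^t = 0$ in $T\PSL(2,\R)$ for every $j$, and therefore $\partial_t|_0$ of all $x$-derivatives of $M_j^t$, and of the pole $p_j^t$ of $M_j^t$, also vanish at any fixed point.

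The remaining steps then extract $a_k\lambda_k = 0$ and $b_k = 0$ by differentiating the two matching conditions at the moving point $x_k^t$. Differentiating the $C^1$-matching identity $(M_{k-1}^t)'(x_k^t) = (M_k^t)'(x_k^t)$ at $t=0$ and using the freezing, only the transport terms $M_j''(x_k)a_k$ survive, giving $a_k\big(M_k''(x_k) - M_{k-1}''(x_k)\big) = 0$; since the definition of the pre-Schwarzian jump together with $M_k'(x_k) = M_{k-1}'(x_k) = \varphi'(x_k)$ yields $M_k''(x_k) - M_{k-1}''(x_k) = \varphi'(x_k)\lambda_k$ and $\varphi'(x_k) > 0$ (as $\varphi$ is an orientation-preserving $C^1$ diffeomorphism), this gives $a_k\lambda_k = 0$, hence $a_k = 0$ whenever $\lambda_k\neq 0$. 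Differentiating next the jump relation $\tfrac{-2}{x_k^t - p_k^t} - \tfrac{-2}{x_k^t - p_{k-1}^t} = \lambda_k^t$ at $t=0$ and again keeping only the transport terms in $x_k^t$ (the pole terms being frozen), one gets $b_k = 2a_k\big((x_k - p_k)^{-2} - (x_k - p_{k-1})^{-2}\big)$; if $\lambda_k\neq 0$ this is $0$ by the previous step, and if $\lambda_k = 0$ the jump being zero forces $p_k = p_{k-1}$, so again $b_k = 0$. Assembling Steps 2 and 3 gives $v = \sum_{k:\lambda_k = 0} a_k\,\partial_{x_k}$, as claimed.

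The main obstacle I anticipate is a delicate piece of bookkeeping rather than a deep difficulty: because the marked points $x_k^t$ move with $t$, one must differentiate through the chain rule $\tfrac{d}{dt}\big[F^t(x_k^t)\big] = (\partial_t F^t)(x_k) + F'(x_k)\,\dot x_k$, and the whole argument depends on first establishing that $\partial_t|_0$ of each Möbius piece (and its derivatives) vanishes, so that the "internal" term $(\partial_t F^t)(x_k)$ disappears and only the transport term remains. With that in hand, the matching conditions look trivially satisfied at zeroth order and reveal the constraint $a_k\lambda_k = 0$ only through the second-order mismatch $\varphi'(x_k)\lambda_k$ of the two adjacent Möbius pieces at $x_k$. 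A minor additional care is needed to allow affine pieces (poles at $\infty$), but the identities above hold verbatim in that limit.
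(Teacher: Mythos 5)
Your proof is correct, but it takes a different route from the paper's. The paper's argument is a two-line distributional computation: viewing $\mc S[\varphi_p]=\sum_k \lambda_k\delta_{x_k}$ as a distribution jointly in $x$ and the chart coordinates, it commutes $\partial_v$ with $\mc S$ to get $0=\partial_v\mc S[\varphi]=\sum_k\big(d_k\,\delta_{x_k}-c_k\lambda_k\,\delta'_{x_k}\big)$ and then invokes the linear independence of $\delta$ and $\delta'$ at distinct points to conclude $d_k=0$ and $c_k\lambda_k=0$. You instead avoid distributions entirely: your ``freezing'' step (a nonzero infinitesimal M\"obius variation cannot vanish at three points, so $\partial_t|_0 M_j^t=0$ in $T\PSL(2,\R)$) is the explicit substitute for the paper's implicit fact that the smooth part of $\mc S[\varphi]$ contributes nothing, and your Steps 2--3 --- differentiating the $C^1$-matching condition and the jump relation $\lambda_k^t=-2/(x_k^t-p_k^t)+2/(x_k^t-p_{k-1}^t)$ at the moving point --- recover exactly the two constraints $a_k\lambda_k=0$ and $b_k=0$ that the paper reads off from the coefficients of $\delta'_{x_k}$ and $\delta_{x_k}$. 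The trade-off: the paper's version is shorter but leans on the slightly glossed-over commutation of $\partial_v$ with the nonlinear operator $\mc S$ in the distributional sense, whereas yours is longer but entirely elementary, makes the mechanism transparent, and handles the affine/pole-at-infinity cases explicitly. Both are complete; your bookkeeping of the transport terms $(\partial_t F^t)(x_k)+F'(x_k)\dot x_k$ and the observation that $M_k''(x_k)-M_{k-1}''(x_k)=\varphi'(x_k)\lambda_k$ with $\varphi'(x_k)>0$ are exactly what is needed.
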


\begin{proof}
    Let us consider both $\varphi$ and $\mc S[\varphi]$ as distributions on $\R^{2n-6+1}$. If $\partial_v \varphi = 0$, then since distributional derivatives commute we also have
    \[0 = \partial_v \mc S[\varphi] = \sum_{k=1}^{n-3} \partial_v[\lambda_k \delta(x - x_k)].\]
    Assuming that $v = \sum_{k=1}^{n-3} \big(c_k \partial/\partial x_k + d_k \partial/\partial \lambda_k \big)$ we have $$\partial_v[\lambda_k \delta(x - x_k)] = d_k \delta(x - x_k) - c_k \lambda_k \delta'(x - x_k),$$ which can only be $0$ if $d_k = 0$ and either $c_k = 0$ or $\lambda_k = 0$. 
\end{proof}

As a corollary, we get the following.

\begin{lem}\label{lem:trivial_v_moves_corners}
Suppose that $v \in T_p \mc W_n \smallsetminus \{0\}$ satisfies $\partial_v \varphi_p = 0$. Then $\partial_v F \neq 0$.
\end{lem}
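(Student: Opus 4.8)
The plan is to combine Lemma~\ref{lem:phi_p_null_perturbation} with the asymptotic expansion from Proposition~\ref{prop:f_asymptotics} to show that a nonzero $v$ with $\partial_v \varphi_p = 0$ necessarily moves at least one of the marked points $f_p(x_k)$ in a nontrivial way. By Lemma~\ref{lem:phi_p_null_perturbation} we may write $v = \sum_{k=1}^{n-3} c_k \mathbf 1[\lambda_k = 0]\, \partial/\partial x_k$, and since $v \ne 0$ there is at least one index $k$ with $c_k \ne 0$ and $\lambda_k = 0$. The goal is to extract from $\partial_v f_p$ a quantity that detects this displacement. The natural candidate is the vector $\big(\partial_v f_p(x_{p,j})\big)_{j=1}^{n-3} = \partial_v F(p)$; I would argue this cannot vanish.

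First I would record the structure of $\partial_v f_p$. Since $\partial_v \varphi_p = 0$, the welding homeomorphism is unchanged, so $\partial_v f_p$ and $\partial_v g_p$ glue to a single vector field along the curve $\gamma_p$: more precisely, $g_p^{-1} \circ f_p = \varphi_p$ gives, after differentiating in $v$, that $\partial_v f_p = (g_p' \circ \varphi_p)\cdot (\partial_v g_p)\circ \varphi_p$ on $\widehat{\R}$, so the holomorphic vector field $V := (\partial_v f_p)/f_p'$ on $\H$ (extended to $\overline\H$ by Proposition~\ref{prop:f_differentiability}) matches up with the corresponding field on $\H^*$ to define a holomorphic vector field on $\Chat \smallsetminus \{z_1,\dots,z_{n-3},0,1,\infty\}$, which is moreover tangent to (or at worst has prescribed mild singularities along) $\gamma_p$. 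The asymptotics of Proposition~\ref{prop:f_asymptotics}, together with $\lambda_k = 0$ for the relevant indices, tell us that near each $x_j$ the ratio $\partial_v f_p(z)/f_p'(z)$ is $C^1$ up to the boundary with value $\partial_v f_p(x_j)/f_p'(x_j)$ — in particular the logarithmic terms all drop out precisely at the indices where $\lambda_k = 0$ and $\partial_v x_k$ may be nonzero, and at $0,1,\infty$ it vanishes to first order. Hence $V$ extends holomorphically across each puncture $z_j$, vanishing at $0,1,\infty$; so $V$ is a global holomorphic vector field on $\Chat$ vanishing at three points, forcing $V \equiv 0$. Then $\partial_v f_p \equiv 0$ on $\overline\H$, and in particular $\partial_v x_j = \lim_{z\to x_j}\big(-\partial_v f_p(z)/f_p'(z) + (\partial_v f_p(x_j))/f_p'(x_j)\big)$-type bookkeeping via the first displayed asymptotic in Proposition~\ref{prop:f_asymptotics} would force $c_k = \partial_v x_k = 0$ for all $k$, contradicting $v \ne 0$.

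I would then conclude: since $\partial_v f_p \equiv 0$ contradicts $v\ne 0$, we must instead have $\partial_v F(p) = (\partial_v f_p(x_{p,k}))_{k=1}^{n-3} \ne 0$, which is the claim. (Concretely, if all $\partial_v f_p(x_{p,k}) = 0$ then $V$ vanishes at the three points $0,1,\infty$ and is regular everywhere, hence identically zero, and then the logarithmic coefficient $\big(\partial_v f_p(x_k)/f_p'(x_k) + \partial_v x_k\big)\lambda_k/(2\pi\ii)$ being irrelevant because $\lambda_k=0$, one reads off $\partial_v x_k = 0$ directly from $\partial_v f_p \equiv 0$ and the expansion — so $v = 0$.)

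The main obstacle I anticipate is justifying rigorously that $V = \partial_v f_p/f_p'$ extends \emph{holomorphically} (not merely continuously) across the punctures $z_j$, i.e. that the two boundary values from $\H$ and $\H^*$ agree to define a genuine holomorphic function near $z_j$ rather than just a continuous one with a possible singularity. This is where one must use both that $\varphi_p$ is unperturbed (so the two sides match along $\gamma_p$ as boundary values) \emph{and} that $\lambda_k = 0$ kills the would-be logarithmic branch point coming from the conical structure of the geodesic pair at $z_k$ (via Proposition~\ref{prop:f_asymptotics} and the factorization Lemma~\ref{lem:factorization}); a Morera/removable-singularity argument along $\gamma_p$ near $z_j$ should then upgrade continuity to holomorphy. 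The bookkeeping of which terms in Proposition~\ref{prop:f_asymptotics} survive when $\lambda_k=0$ versus $\lambda_k \ne 0$ is the delicate point, but the hypothesis $\partial_v\varphi_p=0$ from Lemma~\ref{lem:phi_p_null_perturbation} guarantees we only ever need the $\lambda_k=0$ case.
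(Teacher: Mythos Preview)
Your approach is far more elaborate than what is needed, and in the process you introduce a notational confusion that, as written, breaks the argument. The paper's proof is essentially two lines: $f_p$ depends on $p$ only through the welding $\varphi_p$ (it is the normalized solution of the welding problem for $\varphi_p$), and the map $\varphi \mapsto f$ is $C^1$ by the result already cited in Proposition~\ref{prop:f_differentiability}. Hence $\partial_v \varphi_p = 0$ gives $\partial_v f_p \equiv 0$ immediately, without any global vector-field or Liouville-type argument. The chain rule then yields
\[
\partial_v[f_p(x_{p,k})] = (\partial_v f_p)(x_{p,k}) + f_p'(x_{p,k})\,\partial_v x_{p,k} = f_p'(x_{p,k})\,\partial_v x_{p,k},
\]
which is nonzero for some $k$ by Proposition~\ref{prop:f_differentiability} and Lemma~\ref{lem:phi_p_null_perturbation}.

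Your vector-field route is, in effect, a re-derivation of the implication $\partial_v \varphi_p = 0 \Rightarrow \partial_v f_p \equiv 0$. It can be made rigorous, but two points in your write-up need repair. First, you identify $\partial_v F(p)$ with $\big((\partial_v f_p)(x_{p,j})\big)_j$; this drops the term $f_p'(x_{p,j})\,\partial_v x_{p,j}$ coming from the moving evaluation point, and that term is exactly what you are trying to detect. Second, you assert that the logarithmic singularities in Proposition~\ref{prop:f_asymptotics} disappear because $\lambda_k = 0$, but you must also treat indices with $\lambda_k \neq 0$. There the log coefficient is $\big((\partial_v f_p)(x_k)/f_p'(x_k) + \partial_v x_k\big)\lambda_k/(2\pi\ii)$, and the bracket vanishes only once you use the \emph{correct} contrapositive hypothesis $\partial_v F = 0$, which says precisely $(\partial_v f_p)(x_k)/f_p'(x_k) = -\partial_v x_k$. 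With these fixes your Liouville argument does go through, but the paper's direct appeal to the differentiability of welding makes all of it unnecessary.
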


\begin{proof}
As $\partial_v \varphi_p = 0$, by the aforementioned differentiability of the map $\varphi\mapsto f$ (\cite[Thm.~5.1]{de2003differentiability})
we also have $\partial_v f_p(x) = 0$. Hence $$\partial_v[f_p(x_{p,k})] = f_p'(x_{p,k}) \partial_v x_{p,k}$$ for every $1 \le k \le n-3$. The claim follows since by Proposition~\ref{prop:f_differentiability} we have $f_p'(x_{p,k}) \neq 0$ and by Lemma~\ref{lem:phi_p_null_perturbation} there exists $k \in \{1,\dots,n-3\}$ such that $\partial_v x_{p,k} \neq 0$.
\end{proof}

Geometrically, the next lemma says that if $\varphi_p$ is changing, then the loop $\gamma_p$ parametrized by $f_p$ is moving in its normal direction at some point on its boundary when $p$ moves along $v$.

\begin{lem}\label{lem:v_to_normal_variation}
   If $\partial_v \varphi_p \neq 0$, then for some $x \in \R$, we have $\Im\big(\partial_v f_p(x)/f_p'(x)\big) \neq 0$.
\end{lem}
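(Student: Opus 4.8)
The plan is to argue by contradiction: suppose $\Im(\partial_v f_p(x)/f_p'(x)) = 0$ for every $x \in \R$, and show this forces $\partial_v \varphi_p = 0$. The geometric content is that $\partial_v f_p$ is, along the boundary, a purely \emph{tangential} variation of the curve $\gamma_p$, so it can be undone by a reparametrization of $\R$; the question is whether this reparametrization is compatible on both sides of the welding. First I would introduce the real-valued function $u(x) := \partial_v f_p(x)/f_p'(x)$ (real by hypothesis), defined on $\R \setminus \{x_1,\dots,x_{n-3},0,1,\infty\}$, and similarly $\hat u(y) := \partial_v g_p(y)/g_p'(y)$ on the image side. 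Morally $u$ is the generator of the infinitesimal reparametrization of $\partial \H$ induced by the deformation when one records only how points slide along the fixed curve.

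The key computation is to differentiate the welding relation $f_p = g_p \circ \varphi_p$ with respect to $v$. This gives
\[
\partial_v f_p(x) = (\partial_v g_p)(\varphi_p(x)) + g_p'(\varphi_p(x)) \, \partial_v \varphi_p(x).
\]
Dividing by $f_p'(x) = g_p'(\varphi_p(x))\varphi_p'(x)$ yields
\[
u(x) = \frac{\hat u(\varphi_p(x))}{\varphi_p'(x)} + \frac{\partial_v \varphi_p(x)}{\varphi_p'(x)},
\]
so that $\partial_v \varphi_p(x) = \varphi_p'(x) u(x) - \hat u(\varphi_p(x))$. Thus I must show that the real function $x \mapsto \varphi_p'(x)u(x) - \hat u(\varphi_p(x))$ vanishes identically. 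The strategy is: (i) $\partial_v f_p$ extends holomorphically to $\overline{\H}$ (by Proposition~\ref{prop:f_differentiability} and its proof, $p \mapsto f_p$ is $C^1$ into a space of maps holomorphic in $\H$ and $C^1$ up to $\R$); hence $\partial_v f_p / f_p'$ is holomorphic in $\H$ and, by hypothesis, has real boundary values on all of $\R$ except the finitely many marked points. (ii) By Proposition~\ref{prop:f_asymptotics}, $\partial_v f_p/f_p'$ — equivalently $u$ — has only a mild $(z-x_k)\log(z-x_k)$-type singularity at each $x_k$ and is $O(z-x_k)$ otherwise; in particular $u$ is continuous (indeed vanishes) at each $x_k$, at $0$, at $1$, and the behavior at $\infty$ is controlled by the $1/z$ change of coordinates. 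So $\partial_v f_p/f_p'$ is a function holomorphic in $\H$, continuous on $\overline\H\cup\{\infty\}$, with real boundary values everywhere on $\widehat\R$. By the Schwarz reflection principle it extends to a holomorphic function on $\Chat$, hence is a constant $c \in \R$; examining the normalization (the points $0,1,\infty$ are fixed, so $\partial_v f_p(0)=\partial_v f_p(1)=0$, and the $O(z)$ vanishing at $0$) forces $c = 0$. The same argument on the $\H^*$ side, using the analogous statement of Proposition~\ref{prop:f_asymptotics} for $g_p$, gives $\partial_v g_p/g_p' \equiv 0$, i.e. $\hat u \equiv 0$. Plugging $u \equiv 0$ and $\hat u \equiv 0$ into the displayed formula for $\partial_v \varphi_p$ gives $\partial_v \varphi_p \equiv 0$, contradicting the hypothesis.

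The main obstacle is step (ii): justifying that $\partial_v f_p/f_p'$ is genuinely continuous up to and including the marked points $x_k$ (and at $\infty$), so that the reflection principle applies across \emph{all} of $\widehat\R$ rather than only across each open arc. This is exactly what Proposition~\ref{prop:f_asymptotics} is designed to supply — the $(z-x_k)\log(z-x_k)$ correction term tends to $0$, so the ratio extends continuously by $0$ at each $x_k$ (and the stated $O(z)$, $O(z-1)$, $O(z)$ bounds handle $0,1,\infty$). One must also be slightly careful that a holomorphic function on $\H$ that is continuous on $\overline\H$ with real boundary values, but only known a priori to extend across $\widehat\R$ minus finitely many points, indeed extends across those points too: this follows since the extension is bounded near each such point (by continuity) and the point is removable. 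Once this is in place the rest is the Liouville/normalization bookkeeping described above, which is routine.
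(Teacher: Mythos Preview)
Your approach is essentially the paper's: argue by contradiction, use Proposition~\ref{prop:f_asymptotics} to control the boundary behavior of $h(z):=\partial_v f_p(z)/f_p'(z)$, apply Schwarz reflection and Liouville to conclude $h\equiv 0$ (and similarly for $g$), then differentiate the welding relation to get $\partial_v\varphi_p=0$.

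There is one slip. You assert that $h$ is ``continuous on $\overline{\H}\cup\{\infty\}$'' and hence extends holomorphically to $\Chat$, forcing it to be a constant. But Proposition~\ref{prop:f_asymptotics} only gives $h(z)=O(z)$ as $z\to\infty$, so $h$ need not be bounded there and the extension is to $\C$, not $\Chat$. The fix is immediate with the data you already cite: after reflection you get an entire function of at most linear growth, hence $h(z)=az+b$ with $a,b\in\R$; the vanishing $h(0)=h(1)=0$ (from the $O(z)$ and $O(z-1)$ asymptotics at $0$ and $1$) then forces $a=b=0$. The paper packages this slightly differently by considering $h(z)/(z(1-z))$, which is bounded in $\H$ and $O(1/z)$ at infinity, and applying reflection plus Liouville directly to that quotient.
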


\begin{proof}
    Let us set $h(z) = \partial_v f(z)/f'(z)$  for $z \in \overline{\H}$.
    By Proposition~\ref{prop:f_asymptotics} the analytic function $h(z)/(z(1-z))$ is bounded and $O(1/z)$ as $z \to \infty$. Assuming to the contrary that its imaginary part vanishes, we see that $h(z)/(z(1-z))$ has to be identically $0$, implying also that $\partial_v f(z) = 0$. A similar computation yields that $\partial_v g(z) = 0$ as well, and hence $\partial_v \varphi = \partial_v [g^{-1}] \circ f + [g^{-1}]' \circ f \cdot \partial_v f = 0$.
\end{proof}

The main part of our argument is the following lemma, whose proof uses ideas similar to those employed in Section~\ref{sec:uniqueness}.

\begin{lem}\label{lem:corners_need_to_move}
Suppose that $\partial_v F = 0$. Then $\Im\big(\partial_v f/f'\big) = 0$.
\end{lem}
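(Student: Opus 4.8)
The statement to prove is Lemma~\ref{lem:corners_need_to_move}: if $\partial_v F = 0$, meaning the moduli of the punctures $z_1,\dots,z_{n-3}$ are not moving to first order, then $\Im(\partial_v f/f') = 0$ on $\R$, i.e. the curve $\gamma_p$ is not being pushed in its normal direction at any boundary point. The overall strategy is parallel to the proof of Theorem~\ref{thm:uniqueness}: build a bounded subharmonic function on $\Sigma = \Chat \smallsetminus \{z_1,\dots,z_n\}$ out of the ``normal displacement field'', force it to be constant, and then use a strict-monotonicity input to force that constant to be zero.

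\textbf{Step 1: Set up the infinitesimal normal displacement and its harmonic conjugate.} Write $u(x) = \Im(\partial_v f(x)/f'(x))$ for $x \in \R$; this measures the first-order displacement of $\gamma_p$ in the direction normal to $\gamma_p$, as read in the $\H$-coordinate. Similarly define the analogous quantity for $g_p$ on $\H^*$. Since $\partial_v f / f'$ is analytic in $\H$ and extends $C^1$-smoothly to $\overline\H$ by Proposition~\ref{prop:f_asymptotics}, the function $u$ is the boundary value of a harmonic function in $\H$, and by Proposition~\ref{prop:f_asymptotics} it has at worst logarithmic growth at the $x_k$ and is controlled at $0,1,\infty$; in particular $u$ is continuous on all of $\widehat\R$ after dividing by $z(1-z)$ as in the proof of Lemma~\ref{lem:v_to_normal_variation}. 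Pushing forward by $f_p$, the normal component of the variation defines a well-defined continuous function $V$ on the curve $\gamma = \gamma_p$, and the hypothesis $\partial_v F = 0$ says that $V$ vanishes at the punctures $z_1,\dots,z_{n-3}$ (and at $0,1,\infty$, since those are held fixed). So $V$ vanishes at all $n$ marked points.

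\textbf{Step 2: Lift to the universal cover and build a subharmonic function.} Exactly as in Section~\ref{sec:uniqueness}, pass to $\cover \colon \m D \to \Sigma$, lift $\gamma$ to the arcs $\ell_\alpha$ bounding the tiles $U_\alpha^\pm$. For each arc $\ell_\alpha$, the variation field restricted to the domain $U_\alpha$ (where $\ell_\alpha$ is a hyperbolic geodesic, uniformized to the strip $\{|\Im w| < 1\}$) has a harmonic ``normal component'' $n_\alpha$: it is the harmonic function in $U_\alpha$ whose boundary values on $\partial U_\alpha^\pm \smallsetminus \ell_\alpha$ record how the neighboring curve-pieces move, built so that $n_\alpha|_{\ell_\alpha}$ agrees with the normal displacement of $\ell_\alpha$ and the cross-arc increments are governed by the same combinatorics ($\sigma(\alpha,\nu)d(\alpha,\nu)$-type shifts) as in Section~\ref{sec:uniqueness}. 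The key structural fact is that these $n_\alpha$ are consistent: on a common arc $\ell_\beta \subset \partial U_\alpha^\pm$, the normal displacement of $\ell_\beta$ equals $n_\alpha \mp (\text{a shift})$, giving an analogue of Lemma~\ref{lem:h_shift_inequality}. Then $|n_\alpha|$ is subharmonic in $\m D$ (same argument as Lemma~\ref{lem:halpha_subharmonic}), and patching the functions $z \mapsto |n_\alpha(z)|$ appropriately, harmonic-extending inside each tile, produces a deck-invariant subharmonic function $\tilde\rho$, hence a bounded subharmonic $\rho$ on $\Sigma$. Boundedness comes from the $C^1$ regularity of $\gamma$ and $\hat\gamma$-type curves (here, from Proposition~\ref{prop:f_asymptotics}: the log-growth of $\partial_v f/f'$ at corners is integrable/bounded in the relevant sense), exactly as in Lemma~\ref{lem:rho_bounded}.

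\textbf{Step 3: Conclude.} A bounded subharmonic function on the punctured sphere $\Sigma$ is constant, say $\rho \equiv c \ge 0$. Now suppose $c > 0$. As in the proof of Theorem~\ref{thm:uniqueness}, fix $\alpha$; on $\partial U_\alpha$ the relevant combination equals $c$, it is subharmonic inside $U_\alpha$ and equals $c$ on $\ell_\alpha$, so by the maximum principle it is $\equiv c$ in $U_\alpha$; but the strict inequality in the shift-lemma applied to an arc $\ell_\beta$ meeting the interior of $U_\alpha^\pm$ gives a contradiction --- unless in fact no such crossing arc exists and the normal displacement simply vanishes identically. Tracking through, $c > 0$ forces the displacement field to be nonzero but, combined with $\partial_v F = 0$ (the marked points do not move, killing the ``boundary data'' that could sustain a positive constant), yields a contradiction. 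Hence $c = 0$, so $n_\alpha \equiv 0$ on each $\ell_\alpha$, which unwinds to $\Im(\partial_v f / f') = 0$ on $\R$ (and likewise for $g$). I expect \textbf{the main obstacle} to be Step~2: setting up the harmonic ``normal component'' functions $n_\alpha$ so that (a) they genuinely interpolate the combinatorial shifts across all neighboring arcs, (b) the analogue of Lemma~\ref{lem:h_shift_inequality} holds with a strict inequality in tile interiors, and (c) the log-type boundary behavior from Proposition~\ref{prop:f_asymptotics} still gives a globally bounded $\rho$ --- the bookkeeping is essentially that of Section~\ref{sec:uniqueness} but with the ``distance function'' replaced by the genuine infinitesimal deformation, and one must check the signs and the strictness survive the linearization.
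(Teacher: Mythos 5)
Your high-level strategy --- encode the infinitesimal normal displacement in a bounded subharmonic function on $\Sigma$, force it to be constant, then force the constant to be zero --- is exactly the paper's, and your Step 1 observation that $\partial_v F=0$ makes the normal displacement vanish at the marked points is correct and is used. But the heart of the argument, which you yourself flag as ``the main obstacle'' in Step 2, is missing, and the substitute you propose (linearizing the integer-shift combinatorics $\sigma(\alpha,\nu)d(\alpha,\nu)$ of Section~\ref{sec:uniqueness} on the universal cover) does not work: in the uniqueness proof the comparison functions $h_\alpha$ take quantized integer values on the arcs, and the shift inequality $|h_\beta|\ge|h_\alpha\mp1|$ is a purely combinatorial fact about the tree of tiles. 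There is no analogue of the ``$\mp 1$'' for an infinitesimal deformation field, so there is nothing to quantize and no reason a tree-distance-type inequality should hold.

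What the paper actually does is stay on the sphere (no universal cover) and replace the combinatorial shift by an explicit analytic weight. For each arc $\gamma_k$ it sets $f_k=f$ on $\H$, $f_k=g\circ\varphi_k$ on $\H^*$, and defines
\[u_k(f_k(z)) = \Im\Big(\tfrac{x_{k+1}-x_k}{(x_{k+1}-z)(z-x_k)}\Big(\tfrac{\partial_v f_k(z)}{f_k'(z)} + \tfrac{x_{k+1}-z}{x_{k+1}-x_k}\,\partial_v x_k + \tfrac{z-x_k}{x_{k+1}-x_k}\,\partial_v x_{k+1}\Big)\Big).\]
Three ingredients make this work and are absent from your sketch: (i) the affine correction terms in $\partial_v x_k,\partial_v x_{k+1}$ combined with $\partial_v F=0$ (i.e.\ $\partial_v f(x_k)+f'(x_k)\partial_v x_k=0$) cancel the simple poles of the cross-ratio weight at $x_k$ and $x_{k+1}$, which is what yields boundedness; the logarithmic asymptotics of Proposition~\ref{prop:f_asymptotics} alone do not give this without those correction terms. (ii) The comparison $|u_k|\le|u|$ on a neighboring arc $\gamma_\ell$ reduces to the elementary partial-fraction inequality $\frac{1}{x-x_{k+1}}-\frac{1}{x-x_k}\le\frac{1}{x_{\ell+1}-x}+\frac{1}{x-x_\ell}$ between the weights of nested intervals, and on the $\Omega^-$ side one must additionally invoke M\"obius invariance of cross-ratios to transport the estimate through $\varphi_\ell^{-1}\circ\varphi_k$; relating the two sides of the curve is a genuine step, not automatic from a ``well-defined normal field $V$ on $\gamma$''. (iii) The constant is forced to be $0$ because these inequalities are strict away from the vertices --- not, as your Step 3 suggests, by a second application of $\partial_v F=0$. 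Without (i)--(iii) the argument does not close.
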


\begin{proof}
Let $\varphi_k$ denote the $k-$th Möbius transformation of $\varphi$, i.e. $\varphi_k$ is defined on all of $\Chat$ and $\varphi|_{[x_k,x_{k+1}]}(x) = \varphi_k(x)$. We begin by defining for every $k \in \{0,\dots,n-1\}$ the function
\[f_k(z) = \begin{cases} f(z), \quad \text{if } z \in \H \cup \R \\ g \circ \varphi_k(z), \quad \text{if } z \in \H^*\end{cases}\]
which is a conformal map of the domain $\H \cup \H^* \cup (x_k,x_{k+1})$ onto the domain $\Omega_+ \cup \Omega_- \cup \gamma_k$.
We then define a harmonic function $u_k$ on $\Omega_+ \cup \Omega_- \cup \gamma_k$ by setting
\[u_k(f_k(z)) = \Im\Big(\frac{x_{k+1} - x_k}{(x_{k+1} - z)(z - x_k)} \cdot \Big(\frac{\partial_v f_k(z)}{f_k'(z)} + \frac{x_{k+1} - z}{x_{k+1} - x_k} \partial_v x_k + \frac{z - x_k}{x_{k+1} - x_k} \partial_v x_{k+1}\Big)\Big),\]
with the natural interpretations of the fractions if $x_k$ or $x_{k+1}$ equals $\infty$.
Note that we can also extend $u_k$ to the boundary $\gamma \smallsetminus \gamma_k$ of the domain as a bounded function. Indeed, on $\Omega^+$ the function is bounded since by the assumption $\partial_v F = 0$ we have $0 = \partial_v[f(x_k)] = \partial_v f(x_k) + f'(x_k)\partial_v x_k$. Together with Proposition~\ref{prop:f_asymptotics} this implies that
\[\frac{\partial_v f_k(z)}{f_k'(z)} + \frac{x_{k+1} - z}{x_{k+1} - x_k} \partial_v x_k + \frac{z - x_k}{x_{k+1} - x_k} \partial_v x_{k+1}\]
is $O(z - x_k)$ at $x_k$. Similarly one sees that it is also $O(z - x_{k+1})$ at $x_{k+1}$.
It also grows at most linearly so that together with the front factor, the quantity is bounded as $z \to \infty$. On $\Omega^-$, on the other hand, it suffices to consider what happens at the point $y_0 = \varphi_k^{-1}(\infty)$. If $y_0 = \infty$, then we are fine because $g(z) = O(z)$, $\varphi_k$ is linear, and the front factor will again make the quantity bounded. If $y_0$ is finite, then $f_k'(z) = g'(\varphi_k(z)) \varphi_k'(z)$ has a second order pole at $y_0$, while $\partial_v[g \circ \varphi_k](z) = \partial_v g(\varphi_k(z)) + g'(\varphi_k(z))\partial_v \varphi_k(z) = O((z-y_0)^{-2})$ is also blowing up at most quadratically. Note that when $z$ is real, we simply have
\[u_k(f_k(z)) = \Im\Big(\frac{x_{k+1} - x_k}{(x_{k+1} - z)(z - x_k)} \cdot \frac{\partial_v f_k(z)}{f_k'(z)}\Big).\]

Let us now define the function $u \colon \Chat \smallsetminus \{z_1,\dots,z_n\} \to \R$ by setting $u(z) = u_k(z)$ if $z \in \gamma_k$ and extending harmonically into the two domains $\Omega_+$ and $\Omega_-$. We claim that $|u|$ is a bounded subharmonic function and hence a constant. To see this, it is enough to check subharmonicity at $z \in \gamma_k$, which will follow if we can show that $|u_k(w)| \le |u(w)|$ for all $w \in \partial \Omega_k$. Let us first consider the case when $w = f(x) \in \Omega_k^+$ for some $x \in (x_\ell, x_{\ell+1})$ with $\ell \neq k$. In this case
\begin{align*}
    u_k(w) & = \Im\Big(\frac{x_{k+1} - x_k}{(x_{k+1} - x)(x - x_k)} \cdot \frac{\partial_v f(x)}{f'(x)}\Big) \\
    & = \frac{x_{k+1} - x_k}{(x_{k+1} - x)(x - x_k)} \cdot \frac{(x_{\ell+1} - x)(x - x_\ell)}{x_{\ell+1} - x_\ell} \cdot u(w).
\end{align*}
Let us assume that $x_{k+1} \le x_\ell$, the case $x_{\ell+1} \le x_k$ being similar. It is then enough to show that
\[\frac{x_{k+1} - x_k}{(x - x_{k+1})(x - x_k)} \le \frac{x_{\ell+1} - x_\ell}{(x_{\ell+1} - x)(x - x_\ell)},\]
but this is evident by writing both sides as partial fractions:
\[\frac{1}{x - x_{k+1}} - \frac{1}{x - x_k} \le \frac{1}{x_{\ell + 1} - x} + \frac{1}{x - x_\ell}.\]
This takes care of the $+$-side of the boundary of $\Omega_k$. For $\Omega_k^-$ let us write $y_\ell = \varphi(x_\ell)$ for all $\ell$ and suppose that $w = f_k(x) = g(\varphi_k(x)) \in \Omega_k^-$ for some $x \in (\varphi_k^{-1}(y_\ell), \varphi_k^{-1}(y_{\ell+1}))$.
Using that $f_k = f_\ell \circ \varphi_\ell^{-1} \circ \varphi_k$ on $\H^*$ we have
\begin{align*}
    f_k'(f_k^{-1}(w)) & = f_\ell'(f_\ell^{-1}(w)) \frac{\varphi_k'(f_k^{-1}(w))}{\varphi_\ell'(f_\ell^{-1}(w))}, \\
    \partial_v f_k(f_k^{-1}(w)) & = \partial_v f_\ell \circ (f_\ell^{-1}(w)) + f_\ell'(f_\ell^{-1}(w)) \partial_v [\varphi_\ell^{-1} \circ \varphi_k](f_k^{-1}(w))
\end{align*}
so that
\[\frac{\partial_v f_k(f_k^{-1}(w))}{f_k'(f_k^{-1}(w))} = \frac{\varphi_\ell'(f_\ell^{-1}(w))}{\varphi_k'(f_k^{-1}(w)} \left( \frac{\partial_v f_\ell(f_\ell^{-1}(w))}{f_\ell'(f_\ell^{-1}(w))} + \partial_v[\varphi_\ell^{-1} \circ \varphi_k](f_k^{-1}(w))\right).\]
Note that the second term is real when $w$ is on the boundary of $\Omega_k$. Hence, we have
\[u_k(w) = \frac{x_{k+1} - x_k}{(x_{k+1} - x)(x - x_k)} \cdot \frac{(x_{\ell + 1} - f_\ell^{-1}(w))(f_\ell^{-1}(w) - x_\ell)}{x_{\ell + 1} - x_\ell} \cdot \frac{\varphi_\ell'(f_\ell^{-1}(w))}{\varphi_k'(f_k^{-1}(w))} \cdot u(w).\]
Let us write $\psi_j(z) = \varphi_j^{-1}(z)$ for all $j \in \{1,\dots,n\}$ and also let $\varphi_k(x) = y$. Then we can write the above as
\[u_k(w) = \frac{(\psi_k(y_{k+1}) - \psi_k(y_k)) \psi_k'(y)}{(\psi_k(y_{k+1}) - \psi_k(y))(\psi_k(y) - \psi_k(y_k))} \cdot \frac{(\psi_\ell(y_{\ell + 1} - \psi_\ell(y))(\psi_\ell(y) - \psi_\ell(y_\ell))}{(\psi_{\ell}(y_{\ell + 1}) - \psi_\ell(y_\ell)) \psi_\ell'(y)} \cdot u(w).\]
As Möbius transformations preserve cross-ratios, one can (after taking a limit so that the derivative appears) deduce that the front factor equals
\[\frac{y_{k+1} - y_k}{(y_{k+1} - y)(y - y_k)} \cdot \frac{(y_{\ell + 1} - y)(y - y_\ell)}{y_{\ell + 1} - y_\ell},\]
which is at most $1$ in absolute value, as was seen above.

Thus, it follows that $|u|$ is subharmonic and hence a constant. In fact, as the inequalities obtained above are strict outside of the vertices, we must have $|u| = 0$ and thus $\Im(\partial_v f / f') = 0$.
\end{proof}

We are now ready to prove Theorem~\ref{thm:differentiability}.

\begin{proof}[Proof of Theorem~\ref{thm:differentiability}]
As indicated above, it suffices to show that for every $v \in T_p \mc W_n$ we have $\partial_v F = 0$ if and only if $v = 0$. By Lemma~\ref{lem:corners_need_to_move}, $\partial_v F = 0$ implies that $\Im\big(\partial_v f/f'\big) = 0$, which in turn by Lemma~\ref{lem:v_to_normal_variation} implies that $\partial_v \varphi = 0$. To avoid contradiction in Lemma~\ref{lem:trivial_v_moves_corners}, we then need to have $v = 0$.
\end{proof}

\section{%
Accessory parameters and Loewner energy} \label{sec:accessory}

The goal of this section is to show that the unique $C^1$ piecewise geodesic Jordan curve in the isotopy class $\mc L(z_1, \ldots, z_{n-3}, 0, 1, \infty; \tau)$ defines a special marked projective structure over the corresponding point in $T_{0,n}$ (Lemma~\ref{lem:geodesic_extension}). Furthermore, the Schwarzian derivative that compares this projective structure with the trivial projective structure has at most simple poles at the points $z_1, \ldots, z_{n-3}, 0, 1, \infty$, whose residues are given by the variation $\partial_{z_k} I^L(\gamma)/2$ of the minimal Loewner energy (Theorem~\ref{thm:Jordan_accessory}).
This result is analogous to the fact that the accessory parameters of the $n$-punctured sphere associated with the Fuchsian projective structure can be expressed using the classical Liouville action \cite{TZ1,TZ2}.

\subsection{Projective structure associated with a piecewise geodesic curve}
\label{sec:projective_from_curve}

We now show that any Jordan curve passing through $\{z_1,\cdots,z_n\}$ with the geodesic property determines a projective structure on the $n$-punctured sphere $\S = \Chat \smallsetminus \{z_1,\cdots,z_n\}$ by constructing a developing map. Note that in this subsection we do not assume that $\g$ is $C^1$ smooth.

As before, let $\g$ be a Jordan curve through points $z_1,\ldots,z_n$ with geodesic property in  $\mc L (z_1, \ldots, z_{n}; \tau)$,  
let $\O$ be the connected component of $\Chat \smallsetminus \g$ where $(z_1, \ldots, z_n)$ goes around $\O$ counterclockwise, and denote $\O^*$ the other connected component.
Let $f$ and $g$ be, respectively, a conformal map from $\m H$ to $\O$ and from $\m H^*$ onto $\O^*$. Recall that the welding homeomorphism of $\g$ is given by $\weld := g^{-1} \circ f|_{\widehat \R}$ %
and let $x_k=f^{-1}(z_k)$ and $y_k=g^{-1}(z_k)=\weld(x_k).$ 
We write $(x_k, x_{k+1})  = : l_k \subset \widehat \R$, $\varphi_k = \varphi|_{l_k} \in \PSL(2, \m R)$, and $\g_k$ the part of $\g$ between $z_k$ and $z_{k+1}$.

We now consider a conformal map $H: \O \cup \O^* \cup \g_1 \to \m H \cup \m H^* \cup l_1$ such that $H|_{\O} = f^{-1}$. This is possible since $\g_1$ and $l_1$ are, respectively, the conformal geodesic in $\O \cup \O^* \cup \g_1$ and $\m H \cup \m H^* \cup l_1$. In other words, the map $f^{-1}$ can be extended analytically across $\g_1$ via the map $H$. Similarly, $f^{-1}$ extends analytically across all $\g_k$, and each of these extensions can be analytically extended again across each $\g_j$.
\begin{lem}\label{lem:geodesic_extension}
By repeatedly extending $f^{-1}$ across every arc of $\g$ using the geodesic property, we obtain a holomorphic function $h: \widetilde \S \to \CC$, where $\widetilde \S$ is the universal cover of $\S$.
The function $h$ is a developing map of a projective structure on $\S$. Furthermore, the projective structure on $\S$ obtained does not depend on the choice of $f: \m H \to \O$.
\end{lem}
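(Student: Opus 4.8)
The plan is to make precise the iterative analytic continuation of $f^{-1}$ described in the paragraph preceding the statement and to organize it as a single holomorphic map on the universal cover. First I would set up the combinatorial bookkeeping, closely paralleling the covering-space picture of Section~\ref{sec:uniqueness}: fix the universal covering $\cover\colon \m D\to\S$, so that $\cover^{-1}(\g)$ decomposes into countably many arcs $\ell_\alpha$, each a lift of some $\g_k$ and a hyperbolic geodesic in a domain $U_\alpha\subset\m D$ which splits into $U_\alpha^{\pm}$ with $\cover(U_\alpha^{\pm})=\O^{\pm}$. The tiles $\{U_\alpha^{\pm}\}$ partition $\m D$, and the adjacency graph on the index set is a tree, rooted at the tile containing a chosen basepoint. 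I would then define $h$ tile by tile along this tree: on the root tile (a lift of $\O$) set $h=f^{-1}\circ\cover$; if $h$ has been defined on the closure of a tile adjacent across a lift $\ell_\alpha$ of $\g_k$, extend it across $\ell_\alpha$ using the conformal map $H_k$ of Lemma~\ref{lem:geodesic_extension}'s discussion, i.e. the map sending $\O\cup\O^*\cup\g_k$ to $\m H\cup\m H^*\cup l_k$ that agrees with $f^{-1}$ on $\O$, which exists precisely because $\g_k$ is the conformal geodesic in $\O\cup\O^*\cup\g_k$ and $l_k$ the conformal geodesic in $\m H\cup\m H^*\cup l_k$.

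The key point to verify is \emph{consistency}: the value assigned to a given tile does not depend on the path in the tree used to reach it. Since the adjacency graph is a tree, there is a unique reduced path between any two tiles, so there is no monodromy obstruction at all — this is exactly why passing to the universal cover (rather than working on $\S$ or $\Chat\smallsetminus\g$) is the right move, and it is the technical heart of the argument. What does require a short check is that two successive reflections are compatible, namely that extending across $\ell_\alpha$ and then back across the \emph{same} $\ell_\alpha$ returns the original branch; this follows because each $H_k$ is an analytic extension of a single-valued conformal map, so the composition of the two extension steps is the identity near $\ell_\alpha$ by the identity theorem. One then checks that the locally-defined pieces glue to a holomorphic (indeed locally injective, since each piece is conformal) map $h\colon\m D\to\Chat$, and that $h$ descends to the stated developing map on $\tilde\S$ once we identify $\m D$ with $\tilde\S$.

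It remains to see that $h$ is a developing map of a \emph{projective} structure, i.e. that the transition maps between overlapping local inverses $h\circ\cover^{-1}$ on $\S$ lie in $\PSL(2,\C)$. For this I would argue that any two branches of the multivalued map $h\circ\cover^{-1}$ obtained by continuation across a single arc $\g_k$ differ by the M\"obius map $(f\circ H_k^{-1})$ composed appropriately — concretely, crossing $\g_k$ replaces the chart $f^{-1}$ by $H_k$, and $H_k\circ f$ is a conformal self-map of $\m H$, hence an element of $\PSL(2,\R)\subset\PSL(2,\C)$; iterating, any two branches differ by an element of the group generated by these, so the transition functions are M\"obius. This simultaneously shows the holonomy lies in $\PSL(2,\R)$ (a fact used later in Theorem~\ref{thm:Z_gamma}), though for the present statement $\PSL(2,\C)$ suffices. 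Finally, independence of the choice of $f\colon\m H\to\O$: any other choice is $f\circ A$ for some $A\in\PSL(2,\R)$, which replaces $h$ by $A^{-1}\circ h$; since post-composition of a developing map by a fixed M\"obius transformation yields the same projective structure (as recalled after \eqref{eq:conjugate_A}), the projective structure is unchanged. I expect the only genuinely delicate step to be the careful verification that the tile-by-tile definition is well-posed and yields a globally holomorphic map — everything else is bookkeeping plus the identity theorem — and this is handled cleanly by the tree structure of the tiling.
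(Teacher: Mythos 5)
Your proposal is correct and follows essentially the same route as the paper: construct $h$ by analytic continuation of $f^{-1}$ across the arcs (the paper takes the well-posedness of this continuation on the universal cover for granted, and your tree-of-tiles bookkeeping makes it explicit), then observe that every branch restricted to a lift of $\O$ (resp.\ $\O^*$) is a conformal map onto $\m H$ (resp.\ $\m H^*$), so all transition maps and deck-transformation factors lie in $\PSL(2,\R)\subset\PSL(2,\C)$, and independence of the choice of $f$ follows since two choices differ by post-composition with an element of $\PSL(2,\R)$. One minor slip that does not affect validity: the composition $H_k\circ f$ you cite is actually the identity on $\m H$ (because $H_k|_{\O}=f^{-1}$ by construction); the nontrivial single-crossing Möbius map is the comparison of $H_k|_{\O^*}$ with $g^{-1}$, i.e.\ $H_k\circ g$, a conformal self-map of $\m H^*$ (in the paper's notation, $\weld_k^{-1}$).
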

We denote the projective structure constructed this way by $Z_\g$.
\begin{proof}
We need to show that the projective structure on $\widetilde \S$ projects to a projective structure on $\S$, namely, $h \circ \cover^{-1}$ restricted to small open sets defines an atlas of local charts (projectively compatible while taking different lifts in $\cover^{-1}$), where $\cover: \widetilde \S \to \S$ is the covering map. 
Note that from the construction, for any simply connected pre-image $\widetilde \O = \cover^{-1} (\O)$,  we have $h|_{\widetilde \O}$ is a conformal map $\widetilde \O \to \m H$. 
Therefore, for all $\a \in \pi_1$, there exists $A_\a \in \PSL(2,\m R)$ such that  $h \circ \a = A_\a \circ h$. The local charts are, therefore, projectively compatible.

Any other choice of $f^{-1}$ is obtained by post-composing an element in $\PSL(2,\m R)$. Therefore they all give projectively compatible charts. 
\end{proof}

We can compute the holonomy representation of $Z_\g$ explicitly. 
\begin{prop}\label{prop:hol}
The holonomy representation of the projective structure with the developing map $h$ is given by 
$$\rho_\g(\eta_k) =  \weld_{k-1}^{-1} \circ \weld_k \in \PSL(2,\m R), \quad \text{ for all } k = 1, \ldots,n, $$ where $(\eta_1, \ldots, \eta_n)$ is the generator of $\pi_1$ described at the end of Section~\ref{sec:Teichmuller}.
Furthermore, $\g$ is $C^1$ smooth if and only if $\rho_\g(\eta_k)$ is  parabolic %
for all $1\le k\le n$.
\end{prop}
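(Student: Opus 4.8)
The plan is to compute the holonomy by tracking how the developing map $h$ of Lemma~\ref{lem:geodesic_extension} transforms under analytic continuation around each generator $\eta_k$, and then to read off the $C^1$ criterion from Theorem~\ref{thm:piecewise_mob} together with an elementary fact about parabolic M\"obius transformations.

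First I would isolate the effect of a single geodesic extension. Normalizing so that $h=f^{-1}$ on a distinguished lift of $\O$, the conformal extension of $f^{-1}$ across an arc $\g_j$ exists because $\g_j$ is a hyperbolic geodesic of $\O\cup\O^*\cup\g_j$ while $l_j=(x_j,x_{j+1})$ is one of $\m H\cup\m H^*\cup l_j$; it maps $\O^*$ conformally onto $\m H^*$, hence on $\O^*$ it has the form $\psi\circ g^{-1}$ with $\psi\in\PSL(2,\R)$, and comparing the two expressions on $\g_j$ together with the welding identity $g^{-1}|_{\g_j}=\weld_j\circ f^{-1}|_{\g_j}$ forces $\psi=\weld_j^{-1}$. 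Thus extending $f^{-1}$ across $\g_j$ into $\O^*$ produces $\weld_j^{-1}\circ g^{-1}$, and extending this again across an arc $\g_i$ back into $\O$ produces $\weld_j^{-1}\circ\weld_i\circ f^{-1}$ (the intermediate branch is single-valued on the simply connected domain $\O^*$, so no extra monodromy appears). Now $\eta_k$, based at a point of $\O$, crosses $\g_{k-1}$, winds around $z_k$ inside $\O^*$, and crosses $\g_k$ back; continuing $h$ along it turns $f^{-1}$ first into $\weld_{k-1}^{-1}\circ g^{-1}$ and then into $\weld_{k-1}^{-1}\circ\weld_k\circ f^{-1}=(\weld_{k-1}^{-1}\circ\weld_k)\circ h$, giving $\rho_\g(\eta_k)=\weld_{k-1}^{-1}\circ\weld_k\in\PSL(2,\R)$. (As a sanity check, the product $\weld_0^{-1}\weld_1\cdots\weld_{n-1}^{-1}\weld_n$ telescopes to $\weld_0^{-1}\weld_n=\Id$, matching $\eta_1\cdots\eta_n=\Id$, and replacing $f$ by another uniformizing map conjugates every $\weld_j$, hence conjugates $\rho_\g$, as it must by Lemma~\ref{lem:geodesic_extension}.)

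For the second assertion I would observe that $\rho_\g(\eta_k)=\weld_{k-1}^{-1}\circ\weld_k$ always fixes $x_k$, because $\weld_{k-1}(x_k)=\weld_k(x_k)=y_k$. By Theorem~\ref{thm:piecewise_mob}, $\g$ is $C^1$ if and only if the piecewise-M\"obius welding $\weld$ is $C^1$; since $\weld_{k-1}$ and $\weld_k$ already agree in value at $x_k$, this is equivalent to $\weld_{k-1}'(x_k)=\weld_k'(x_k)$ for every $k$, that is, to $\rho_\g(\eta_k)'(x_k)=1$ for every $k$. The proof then finishes with the dictionary: for $T\in\PSL(2,\R)$ fixing a point $x\in\widehat\R$ one has $T'(x)=1$ if and only if $T$ is parabolic or the identity — conjugating $x$ to $\infty$ turns $T$ into an affine map $z\mapsto az+b$ with $a>0$, the condition $T'(x)=1$ becomes $a=1$, and $a=1$ with $b\neq0$ is exactly the parabolic case, whereas a hyperbolic $T$ fixing $x$ has $T'(x)\neq1$ and an elliptic one has no real fixed point at all. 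Hence $\g$ is $C^1$ if and only if each $\rho_\g(\eta_k)$ is parabolic, with the degenerate value $\rho_\g(\eta_k)=\Id$ — which occurs precisely when $\g$ is analytic at $z_k$ — read as a limiting parabolic; I would also note in passing that hyperbolic holonomy at $z_k$ corresponds exactly to the logarithmic spiralling of $\g$ at $z_k$ mentioned in the introduction.

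The genuinely routine part is the analytic-continuation bookkeeping; the one place that demands care is fixing the composition order and the index conventions in the extension formulas so that the answer comes out as $\weld_{k-1}^{-1}\circ\weld_k$ and not its inverse or a cyclic variant, together with the treatment of the identity element in the parabolicity dictionary. Neither is a serious obstacle, so I expect the main effort here to be presentational rather than conceptual.
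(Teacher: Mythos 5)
Your proof is correct and follows essentially the same route as the paper's: compute $\rho_\g(\eta_k)$ by extending $f^{-1}$ across $\g_{k-1}$ into $\O^*$ (obtaining $\weld_{k-1}^{-1}\circ g^{-1}$ via the welding identity) and then back across $\g_k$ into $\O$, and read off parabolicity from $\rho_\g(\eta_k)(x_k)=x_k$ together with the derivative condition $\rho_\g(\eta_k)'(x_k)=1$ supplied by Theorem~\ref{thm:piecewise_mob}. In fact you are slightly more thorough than the paper, which only spells out the case $k=2$ and the forward implication ($C^1$ implies parabolic): you make the converse explicit and correctly flag the degenerate case $\rho_\g(\eta_k)=\Id$ (when $\weld_{k-1}=\weld_k$ and $\g$ is analytic through $z_k$), which the paper's statement silently subsumes under ``parabolic.''
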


\begin{proof}
   We prove the statement for $\eta_2$, which starts from a point in $\O$, goes around $z_2$ by first crossing $\g_1$, then $\g_2$. The other cases are similar.
Since $g^{-1} \circ f|_{l_1} = \weld_1$ and $f^{-1}|_{l_1} = H|_{l_1}$, we have $g^{-1} = \weld_1 \circ H|_{\O^*}$ on $\g_1$. As both $g^{-1}$ and $H|_{\O^*}$ map $\O^*$ onto $\m H^*$, we have 
\begin{equation*}
\weld_1^{-1} \circ g^{-1} = H|_{\O^*},    
\end{equation*}
where with slight abuse of notation, $\weld_1$ refers to the M\"obius map in $\PSL(2,\m R)$ acting on $\Chat$. 

The geodesic property allows us to extend analytically $H|_{\O}$ across $\g_2$ to a conformal map $\widehat H :  \O \cup \O^* \cup \g_2 \to \m H \cup \m H^* \cup l_2$ such that $\widehat H|_{\O^*} = H|_{\O^*}$.  
We have 
$$\widehat H|_{\g_2} = H|_{\g_2} = \weld_1^{-1} \circ g^{-1} |_{\g_2} = \weld_1^{-1} \circ \weld_2 \circ f^{-1} |_{\g_2}.$$
As both $\widehat H$ and $f^{-1}$ map $\O$ onto $\m H$, we obtain
\begin{equation}\label{eq:holonomy}
\widehat H |_{\O} = \weld_1^{-1} \circ \weld_2 \circ f^{-1}.    
\end{equation}
In terms of the developing map, we obtain that $h \circ \eta_2 = \weld_1^{-1} \circ \weld_2 \circ h$. That is $\rho_\g(\eta_2) =  \weld_1^{-1} \circ \weld_2 \in \PSL(2,\m R)$.

Since $\weld$ is continuous, we have $\weld_1 (x_2) = \weld_2 (x_2)$ where  $x_k =f^{-1} (z_k)$. Therefore,
$\rho_\g(\eta_2) (x_2) = x_2$.
If $\g$ is $C^1$ smooth, from Theorem~\ref{thm:piecewise_mob}, we have $\weld'$ is continuous. Hence, $(\weld_1^{-1} \circ \weld_2)'(x_2) =1$ which shows that $\rho_\g(\eta_2)$ is parabolic.
\end{proof}

Summarizing the observation above, we obtain:
\begin{thm}\label{thm:Z_gamma}
A Jordan curve $\g \in  \mc L(z_1, \ldots, z_{n}; \tau)$ with the geodesic property determines a projective structure $Z_\g$ with a developing map $h$ whose holonomy representation takes values in $\PSL(2,\m R)$. 
 Moreover, if $\g$ is $C^1$ smooth, the holonomy around each puncture $z_k$ is parabolic. 
\end{thm}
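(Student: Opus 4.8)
The plan is to obtain Theorem~\ref{thm:Z_gamma} by assembling the two results that immediately precede it, Lemma~\ref{lem:geodesic_extension} and Proposition~\ref{prop:hol}, together with the welding characterization of the geodesic property in Theorem~\ref{thm:piecewise_mob}. First I would invoke Lemma~\ref{lem:geodesic_extension}: fixing a conformal map $f\colon \m H \to \O$ and repeatedly extending $f^{-1}$ across each arc $\g_k$ produces a holomorphic developing map $h\colon \tilde\S \to \Chat$ of a projective structure $Z_\g$ on $\S$ that does not depend on the choice of $f$. The extension step is legitimate precisely because $\g_k$ and $l_k$ are the hyperbolic geodesics of the simply connected domains $\O \cup \O^* \cup \g_k$ and $\m H \cup \m H^* \cup l_k$, so $f^{-1}$ continues conformally across $\g_k$ rather than merely by Schwarz reflection; this settles the existence of $Z_\g$ and of its developing map.

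For the holonomy I would read off Proposition~\ref{prop:hol}, which identifies $\rho_\g(\eta_k) = \weld_{k-1}^{-1}\circ \weld_k$ for each generator $\eta_k$ of $\pi_1$. Since $\g$ has the geodesic property, Theorem~\ref{thm:piecewise_mob} tells us the welding $\weld$ is piecewise M\"obius, so each restriction $\weld_k = \weld|_{l_k}$ is the restriction of an element of $\PSL(2,\m R)$, whence $\weld_{k-1}^{-1}\circ\weld_k \in \PSL(2,\m R)$; equivalently, $h$ maps each simply connected lift of $\O$ conformally onto $\m H$, so the deck transformations act by elements of $\PSL(2,\m R)$. Because $\eta_1,\dots,\eta_n$ generate $\pi_1$, the whole holonomy representation $\rho_\g$ takes values in $\PSL(2,\m R)$.

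For the last assertion, suppose $\g$ is $C^1$ smooth. Then Theorem~\ref{thm:piecewise_mob} gives that $\weld$ is $C^1$, so at the breakpoint $x_k = f^{-1}(z_k)$ continuity of $\weld$ forces $\weld_{k-1}(x_k) = \weld_k(x_k)$, i.e. $x_k$ is a fixed point of $\rho_\g(\eta_k)$, and continuity of $\weld'$ forces $(\weld_{k-1}^{-1}\circ\weld_k)'(x_k) = 1$. A nontrivial M\"obius transformation fixing a point of $\widehat\R$ with unit derivative there has only that fixed point and is therefore parabolic, so $\rho_\g(\eta_k)$, which is the holonomy around the puncture $z_k$, is parabolic for every $k$.

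I do not anticipate a genuine obstacle here, since the theorem merely repackages Lemma~\ref{lem:geodesic_extension} and Proposition~\ref{prop:hol}. The two substantive inputs, which I would not reprove, are the global consistency on $\tilde\S$ of the iterated conformal continuation across geodesic arcs together with its descent to a projective structure on $\S$ (Lemma~\ref{lem:geodesic_extension}), and the equivalence of $C^1$-smoothness for the Jordan curve $\g$ and for its welding $\weld$ (Theorem~\ref{thm:piecewise_mob}); the latter is exactly what upgrades ``$\rho_\g(\eta_k)$ fixes $x_k$'' to ``$\rho_\g(\eta_k)$ fixes $x_k$ with unit derivative'', and hence to parabolicity.
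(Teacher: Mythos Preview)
Your proposal is correct and follows exactly the route the paper takes: the theorem is stated in the paper with the preamble ``Summarizing the observation above, we obtain'' and carries no separate proof, being precisely the repackaging of Lemma~\ref{lem:geodesic_extension} and Proposition~\ref{prop:hol} (together with Theorem~\ref{thm:piecewise_mob}) that you describe. Your identification of the two substantive inputs is accurate; the parabolicity argument via continuity of $\weld$ and $\weld'$ at $x_k$ is verbatim the one given inside the proof of Proposition~\ref{prop:hol}.
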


\begin{remark}
Although the Fuchsian projective structure $Z_F$ (see Example~\ref{ex:projective structures}) on $\S$ also has holonomy representation in $\PSL(2,\m R)$ and $\rho_F(\eta_k)$ are parabolic, $Z_\g$ is different as $h$ maps onto $\CC$ and cannot be projectively equivalent to the uniformizing map $h_F$.
\end{remark}

The rest of Section~\ref{sec:accessory} studies the Schwarzian derivative comparing $Z_\g$ to $Z_F$. %

\subsection{The variation of the energy of geodesic pairs}
\label{sec:variation_pair}

The goal of this section is to show the following result. 
We fix a simply connected domain with two marked boundary points $(D;a,b)$. For $z \in D$, we define
$$I_{D;a,b} (z) =\min_{\g} I_{D;a,b} (\g) = I_{D;a,b} (\g^z) $$
where the minimum is taken over all chords  in $(D;a,b)$ passing through $z,$ and $\g^z$ is the unique energy minimizer as in Section~\ref{sub:geodesic}.

\begin{thm}\label{thm:C_I_pair}
     Let $h$ be a uniformizing map from a connected component of $D \smallsetminus \g^z$ to $\m D$. Then the Schwarzian derivative $\mc S [h]$ extends to an analytic function in $D\smallsetminus \{z\}$, has a simple pole at $z$, and
   \begin{equation}\label{eq:pair_residue}
   \mc S [h](w) \sim \frac{\partial_z I_{D;a,b} (z)/2}{w -z} \qquad \text{as } w \to z,
   \end{equation}
    where $\partial_z = (\partial_x - \ii \partial_y)/2$ is the Wirtinger derivative.
\end{thm}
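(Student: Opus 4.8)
The plan is to reduce the statement to one explicit model by conformal invariance and then to match two formulas: the closed form of the minimal energy $I_{D;a,b}(z)$ coming from Lemma~\ref{lem:8ln}, and the Schwarzian of the uniformizing map computed from the explicit geodesic‑pair map $h_\theta$ of Section~\ref{sub:pair-welding}. The geodesic property (equivalently, the factorization through $h_\theta$) will give the analytic extension across the two arcs of $\g^z$ for free, since $\mc S[h_\theta]$ turns out to be a rational function.

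First I would fix a conformal map $\phi\colon D\to\H$ with $\phi(a)=0$, $\phi(b)=\infty$. By conformal invariance of the Loewner energy and Lemma~\ref{lem:8ln} together with the scaling relation $I_{\H;0,\infty}(\lambda w)=I_{\H;0,\infty}(w)$, one obtains $I_{D;a,b}(z)=I_{\H;0,\infty}(\phi(z))=-8\log\bigl(\Im\phi(z)/\abs{\phi(z)}\bigr)$. Writing $\Im\phi(z)=(\phi(z)-\overline{\phi(z)})/2\ii$ and $\abs{\phi(z)}^{2}=\phi(z)\overline{\phi(z)}$ and using that $\partial_z$ annihilates $\overline{\phi(z)}$, a short Wirtinger computation gives
\[\partial_z I_{D;a,b}(z)=\phi'(z)\left(\frac{4}{\phi(z)}+\frac{4\ii}{\Im\phi(z)}\right).\]
Substituting $\phi(z)=\abs{\phi(z)}e^{\ii\theta}$ and using the identity $4e^{-\ii\theta}+4\ii/\sin\theta=4\ii\cos(\theta)e^{-\ii\theta}/\sin\theta$, this equals $\tfrac{\phi'(z)}{\abs{\phi(z)}}\cdot\tfrac{4\ii\cos(\theta)e^{-\ii\theta}}{\sin\theta}$.

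Next I would compute the Schwarzian side in the standard disc model. Conjugating $\phi$ by a M\"obius map $M\colon\m D\to\H$ with $e^{\ii\theta}\mapsto 0$, $e^{-\ii\theta}\mapsto\infty$, $0\mapsto\phi(z)$ (where $\theta=\arg\phi(z)$), the chord $\g^z$ becomes, by conformal invariance and uniqueness of the energy minimizer, the standard $C^1$ geodesic pair in $(\m D;e^{\ii\theta},e^{-\ii\theta};0)$ of Lemma~\ref{lem:geodesic_property_pair}; its uniformizing map of either complementary component is $\mathfrak f_\theta^{-1}=1/h_\theta$ with $h_\theta(\zeta)=\tfrac12\bigl(\ii\zeta+(\ii\zeta)^{-1}\bigr)-\ii\cos(\theta)\log(\ii\zeta)$. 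Since $\zeta\mapsto 1/\zeta$ is M\"obius, $\mc S[h]=\mc S[\mathfrak f_\theta^{-1}]=\mc S[h_\theta]$, independently of the component and of the particular uniformizing map chosen. The key point is that $h_\theta',h_\theta'',h_\theta'''$ are rational, with $h_\theta'(\zeta)=\tfrac{\ii}{2\zeta^{2}}(\zeta-e^{\ii\theta})(\zeta-e^{-\ii\theta})$, so $\mc S[h_\theta]$ is rational and holomorphic on $\m D\smallsetminus\{0\}$; pulling back by the conformal map $g:=M^{-1}\circ\phi\colon D\to\m D$ via the chain rule \eqref{eq:chain_S} (the term $\mc S[g]$ being holomorphic on $D$) yields the asserted holomorphic extension of $\mc S[h]$ to $D\smallsetminus\{z\}$. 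Expanding $h_\theta'''/h_\theta'-\tfrac32(h_\theta''/h_\theta')^{2}$ at $\zeta=0$, the double‑pole terms cancel — this is the one computation that should be carried out in full — leaving $\mc S[h_\theta](\zeta)=-4\cos(\theta)/\zeta+O(1)$ as $\zeta\to 0$.

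Finally I would transport the residue through $g$. Since $\mc S[g]$ has no pole at $z$ and $g(z)=0$, the residue of $\mc S[h]=\mc S[h_\theta\circ g]$ at $z$ is $g'(z)\cdot(-4\cos\theta)$; computing $M$ explicitly gives $g'(z)=\phi'(z)/M'(0)$ with $M'(0)=2\ii\,e^{\ii\theta}\sin(\theta)\,\abs{\phi(z)}$, whence
\[\operatorname{Res}_{w=z}\mc S[h]=\frac{-4\cos(\theta)\,\phi'(z)}{2\ii\,e^{\ii\theta}\sin(\theta)\,\abs{\phi(z)}}=\frac{\phi'(z)}{\abs{\phi(z)}}\cdot\frac{2\ii\cos(\theta)\,e^{-\ii\theta}}{\sin\theta},\]
which is exactly $\tfrac12\,\partial_z I_{D;a,b}(z)$ by the first step, giving \eqref{eq:pair_residue}. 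I expect the main obstacle to be not a single deep step but the bookkeeping: tracking how the residue of a Schwarzian (which transforms like a quadratic differential under conformal coordinate changes, not like a function) pulls back along $D\xrightarrow{\phi}\H\xrightarrow{M^{-1}}\m D$, checking that each auxiliary $\mc S[\text{conformal map}]$ is holomorphic near the relevant point and so leaves the pole undisturbed, verifying independence of the choice of complementary component and of uniformizing map, and reconciling the two trigonometric expressions at the end.
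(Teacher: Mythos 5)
Your proposal is correct and follows essentially the same route as the paper: compute the residue $-4\cos\theta$ of $\mc S[h_\theta]$ in the standard disc model (which the paper cites from \cite{MRW1} in Lemma~\ref{lem_pole_disk}, while you carry out the computation directly — it does check out, with $h_\theta'(\zeta)=\tfrac{\ii}{2\zeta^2}(\zeta-e^{\ii\theta})(\zeta-e^{-\ii\theta})$ leading to $\mc S[h_\theta](\zeta)\sim -4\cos(\theta)/\zeta$), match it against the Wirtinger derivative of the explicit formula $I_{\H;0,\infty}(z)=-8\log(\Im z/|z|)$ from Lemma~\ref{lem:8ln}, and transport both sides via the $(1,0)$-form transformation law for the residue of a Schwarzian with a simple pole (the paper's Lemma~\ref{lem:C_1_0_form}). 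The only cosmetic difference is that you compose $D\to\H\to\m D$ into a single change of variables rather than passing through the intermediate Lemma~\ref{lem:H_Sch_I} at the point $e^{\ii\theta}$.
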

 
The fact that $\mc S[h]$ extends to an analytic function in $D \smallsetminus \{z\}$ follows from the geodesic property of $\g$, see Lemma~\ref{lem:geodesic_extension}.  The proof of \eqref{eq:pair_residue} has two parts. We first use an explicit expression of $h$ to compute $\mc S[h]$ in the case of a geodesic pair $(\m D; \ee^{\ii \t}, \ee^{-\ii \t}; 0)$ (Lemma~\ref{lem_pole_disk}), then relate the residue to the Loewner energy and extend the result to general geodesic pairs.

For any conformal map $f$, if the Schwarzian $\mc S [f]$ has a simple pole at $z$, we write $C(f)(z) \in \m C$ for the residue of $\mc S[f]$ at $z$ so that $$\mc S[f] (w) \sim \frac{C(f)(z)}{w-z}, \quad \text{as } w \to z.$$ 

Note that $C(f)$ transforms like a $(1,0)$-form:
\begin{lem}\label{lem:C_1_0_form}
If $g$ is a conformal map and $g(z)$ is a simple pole of $\mc S[f]$, then $z$ is a simple pole of $\mc S[f \circ g]$ and
\begin{equation*}
C(f \circ g) (z) = C (f) \circ g(z) \cdot g'(z).
\end{equation*}
If $z$ is a simple pole of $\mc S[g]$ and $f \in \PSL(2,\C)$, then 
$$C(f \circ g) (z) = C(g) (z).$$
\end{lem}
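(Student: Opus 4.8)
\textbf{Proof plan for Lemma~\ref{lem:C_1_0_form}.}

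The plan is to reduce everything to the Schwarzian chain rule \eqref{eq:chain_S}, namely $\mc S[f \circ g] = \mc S[f] \circ g \cdot (g')^2 + \mc S[g]$, and to track the behavior of the two terms on the right-hand side near $z$.

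For the first assertion, I would start from the assumption that $\mc S[f]$ has a simple pole at $w_0 := g(z)$ with residue $C(f)(w_0)$, i.e. $\mc S[f](w) = C(f)(w_0)/(w - w_0) + O(1)$ as $w \to w_0$. Since $g$ is conformal near $z$, write the Taylor expansion $g(w) = w_0 + g'(z)(w - z) + O((w-z)^2)$, so that $g(w) - w_0 = g'(z)(w-z)(1 + O(w-z))$ with $g'(z) \neq 0$. Substituting,
\[
\mc S[f]\circ g(w) = \frac{C(f)(w_0)}{g'(z)(w-z)}\,(1 + O(w-z)) + O(1) = \frac{C(f)(w_0)}{g'(z)}\cdot\frac{1}{w-z} + O(1).
\]
Multiplying by $(g'(w))^2 = (g'(z))^2 + O(w-z)$, which is analytic and nonvanishing at $z$, the product $\mc S[f]\circ g \cdot (g')^2$ has a simple pole at $z$ with residue $C(f)(w_0) \cdot g'(z)$. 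The remaining term $\mc S[g]$ is analytic near $z$ (as $g$ is conformal there), so it contributes nothing to the principal part. Hence $\mc S[f\circ g]$ has a simple pole at $z$ with residue $C(f)(w_0)\,g'(z) = C(f)\circ g(z)\cdot g'(z)$, as claimed.

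For the second assertion, suppose $z$ is a simple pole of $\mc S[g]$ and $f \in \PSL(2,\C)$. By the chain rule, $\mc S[f \circ g] = \mc S[f] \circ g \cdot (g')^2 + \mc S[g]$, and since $\mc S[f] \equiv 0$ for $f \in \PSL(2,\C)$, this collapses to $\mc S[f \circ g] = \mc S[g]$ identically wherever both sides are defined; in particular they have the same principal part at $z$, so $C(f \circ g)(z) = C(g)(z)$. The only mild subtlety is that $f$, being a M\"obius map, could send the value $g(z)$ (if it were the relevant point) to $\infty$ — but this does not affect the identity $\mc S[f \circ g] = \mc S[g]$ as an identity of meromorphic functions of the variable near $z$, so no genuine obstacle arises here. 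Overall this lemma is routine bookkeeping with the chain rule; the only point requiring any care is the first-order Taylor expansion of $g$ to extract the factor $g'(z)$ in the residue, and there is no real obstacle.
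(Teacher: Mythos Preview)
Your proof is correct and follows essentially the same approach as the paper: apply the Schwarzian chain rule, observe that $\mc S[g]$ is holomorphic at $z$ so the principal part comes entirely from $\mc S[f]\circ g \cdot (g')^2$, and expand $g(w)-g(z)\sim g'(z)(w-z)$ to read off the residue; the second claim follows immediately from $\mc S[f]\equiv 0$ for $f\in\PSL(2,\C)$.
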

\begin{proof}
   From the chain rule for Schwarzian derivatives,  $
       \mc S [f \circ g] (w)  = \mc S [f] (g(w)) g'(w)^2 + \mc S [g] (w)$. As $\mc S [g]$ is holomorphic at $z$,  we have
  $$\mc S [f \circ g] (w)  \sim \frac{C(f)(g(z))}{g(w)-g(z)} [g'(w)]^2 \sim \frac{C(f)(g(z)) g'(z)}{ w-z}$$
  as $w \to z$. Similarly, if $f \in \PSL(2,\C)$, then $\mc S[f \circ g] = \mc S[g]$ which shows the second claim.
\end{proof}

\begin{lem}\label{lem_pole_disk}
For $ 0 \le \t < \pi /2$, let $h_\t$ be the conformal map $\m D \smallsetminus \gamma \to \H \cup \H^*$ defined in Section~\ref{sub:pair-welding}.
   The Schwarzian of $h_\t$ extends meromorphically to $\m D$ and has a simple pole at $0$ with $C(h_\t) (0)=-4 \cos (\t)$.
\end{lem}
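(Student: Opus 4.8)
The plan is to compute $\mc S[h_\theta]$ directly from the explicit formula
\[h_\theta(z) = \tfrac{1}{2}\Bigl(\ii z + \tfrac{1}{\ii z}\Bigr) - \ii\cos(\theta)\log(\ii z)\]
recalled in Section~\ref{sub:pair-welding}. The key observation is that, although $h_\theta$ is multivalued because of the logarithmic term, its derivative
\[h_\theta'(z) = \frac{\ii}{2}\Bigl(1 + \frac{1}{z^2} - \frac{2\cos(\theta)}{z}\Bigr) = \frac{\ii}{2}\cdot\frac{z^2 - 2\cos(\theta)\,z + 1}{z^2}\]
is a genuine rational function of $z$. Hence the pre-Schwarzian $p := h_\theta''/h_\theta' = (\log h_\theta')'$, and therefore $\mc S[h_\theta] = p' - \tfrac{1}{2} p^2$, are rational functions; in particular $\mc S[h_\theta]$ extends meromorphically to all of $\Chat$, so certainly to $\m D$. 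Since $z^2 - 2\cos(\theta)\,z + 1$ has roots $e^{\pm\ii\theta}\in\partial\m D$, the only pole of $\mc S[h_\theta]$ lying inside $\m D$ is the one coming from the factor $z^2$ in the denominator of $h_\theta'$, i.e. the pole at $z = 0$.

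To determine the order and residue of that pole, I would use the factorization of $h_\theta'$ above to write
\[p(z) = \frac{\dd}{\dd z}\log\bigl(z^2 - 2\cos(\theta)\,z + 1\bigr) - \frac{2}{z} = \frac{2z - 2\cos(\theta)}{z^2 - 2\cos(\theta)\,z + 1} - \frac{2}{z},\]
so that near $z = 0$ one has $p(z) = -\tfrac{2}{z} + q(z)$ with $q$ holomorphic at $0$ and $q(0) = -2\cos(\theta)$. Substituting into $\mc S[h_\theta] = p' - \tfrac{1}{2} p^2$, the contribution $p'(z) = \tfrac{2}{z^2} + q'(z)$ has no $1/z$-term, while $-\tfrac{1}{2} p(z)^2 = -\tfrac{2}{z^2} + \tfrac{2q(z)}{z} - \tfrac{1}{2} q(z)^2$ contributes $2q(0)/z = -4\cos(\theta)/z$; the second-order poles cancel and we are left with a simple pole at $0$ of residue $2q(0) = -4\cos(\theta)$. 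Since $\cos(\theta) > 0$ for $\theta\in[0,\pi/2)$ this is a genuine simple pole, which is exactly the claim $C(h_\theta)(0) = -4\cos(\theta)$.

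This argument involves no real difficulty. The one point worth being careful about is that the two double poles — the one from $p'$ and the one from $\tfrac{1}{2} p^2$ — really do cancel, equivalently that $h_\theta$ has an honest simple pole rather than a logarithmic singularity at $0$; this is immediate from the leading term $-2/z$ of $p$ there. If one prefers a more computational route, one can simply carry out the whole calculation and obtain the closed form
\[\mc S[h_\theta](z) = \frac{-4\cos(\theta)\,z^2 + \bigl(6 + 2\cos^2(\theta)\bigr)\,z - 4\cos(\theta)}{z\,\bigl(z^2 - 2\cos(\theta)\,z + 1\bigr)^2},\]
from which the meromorphic extension to $\m D$ and the value $C(h_\theta)(0) = -4\cos(\theta)$ can both be read off directly.
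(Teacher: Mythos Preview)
Your proof is correct and self-contained. The paper takes a different route: rather than computing the Schwarzian directly, it invokes a result from the companion paper \cite{MRW1} (Corollary~3.11 and Equation~(6) there) for the map $G_{\pi/2-\theta}$, and then uses the transformation rule of Lemma~\ref{lem:C_1_0_form} together with $h_\theta(z)=G_{\pi/2-\theta}(\ii z)$ to obtain $C(h_\theta)(0)=\ii\,C(G_{\pi/2-\theta})(0)=-4\cos\theta$. Your approach has the advantage of being entirely internal to this paper: you exploit that $h_\theta'$ is rational (so the Schwarzian is manifestly rational), locate its only pole in $\m D$ at $0$, and read off the residue from the pre-Schwarzian expansion $p(z)=-2/z+q(z)$ with $q(0)=-2\cos\theta$; the cancellation of the double poles is exactly the general fact that the Schwarzian of a function with a simple pole is regular there up to a simple pole. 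The paper's approach avoids duplicating a computation already recorded in \cite{MRW1}, at the cost of an external dependency.
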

\begin{proof}
This is essentially Corollary 3.11 in \cite{MRW1}, with a slightly altered normalization. Indeed, Equation~(6) in \cite{MRW1} and Lemma~\ref{lem:C_1_0_form}  show that $\mc S[h_\t]$ has a simple pole at $0$ and
  $$C(h_\t) (0)  = \ii C(G_{\pi/2 -\t}) (0) = -4 \cos(\t)$$
  as claimed.
\end{proof}

We will now relate the pole of the Schwarzian to the minimal Loewner energy. For this, we take the domain $(\m H; 0, \infty)$ as a reference domain and vary the interior marked point. There is a unique M\"obius map $\phi_\t: \m D \to \m H$ sending $\ee^{\ii \t}$ to $0$, $\ee^{-\ii \t}$ to $\infty$, and $0$ to a point on the unit circle, namely
$$\phi_\t(w) = \frac{w-\ee^{\ii \t}}{w \,\ee^{\ii \t} - 1}.$$
In particular, $\phi_\t (0) = \ee^{\ii \t}$ and $\phi_\t'(0) = \ee^{2\ii \t} -1 = 2 \ii \ee^{\ii \t} \sin (\t)$.
\begin{figure}[H]
\centering
  \includegraphics[width=0.6\linewidth]{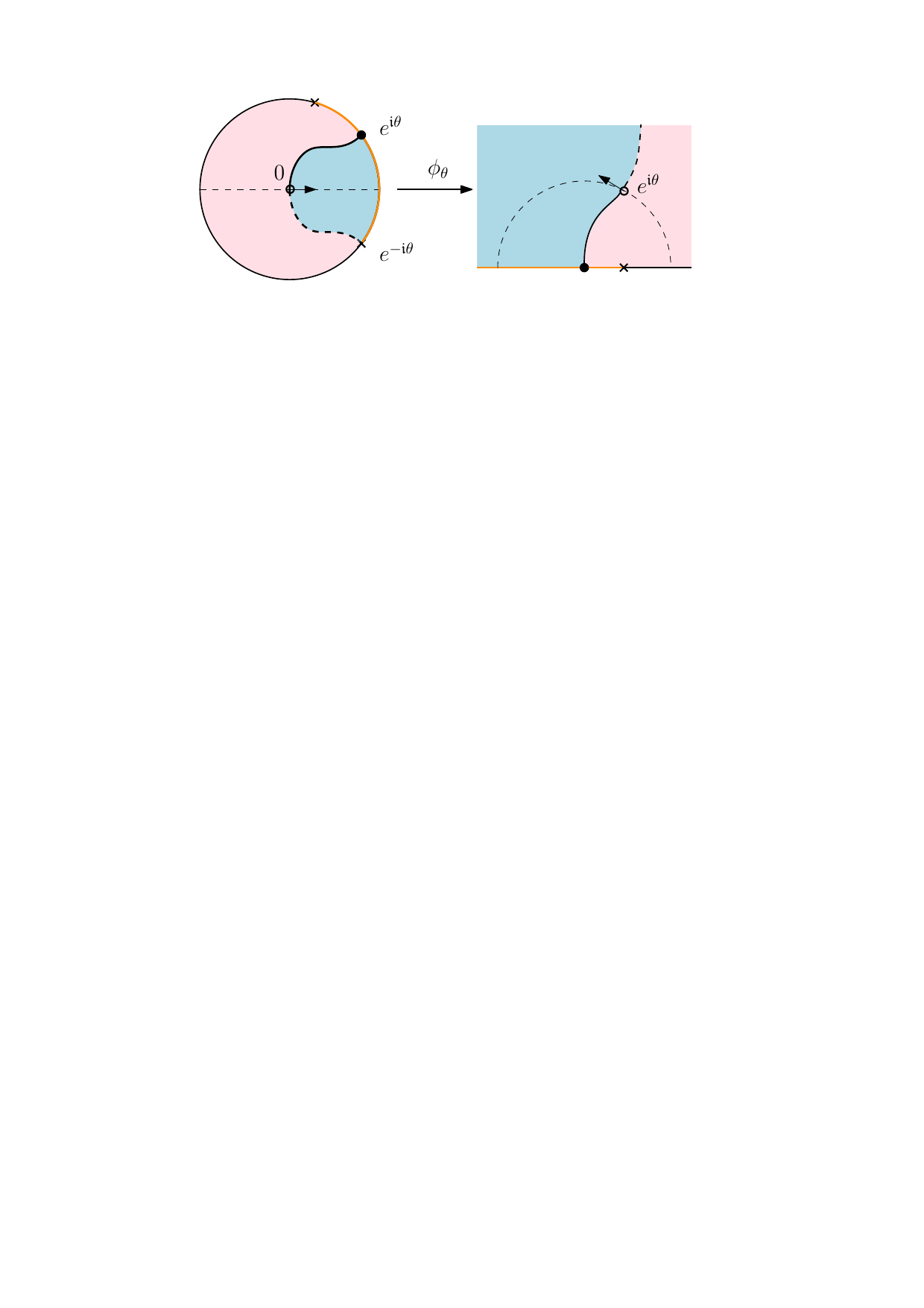}
  \caption{\label{fig_schwarzian_H} $C^1$ smooth geodesic pairs in $\m D$ and $\m H$.}
\end{figure}

\begin{lem}\label{lem:H_Sch_I} 
The Schwarzian derivative of the uniformizing map $h_\t \circ \phi_\t^{-1}$ extends meromorphically with a simple pole at $z = \ee^{\ii \t}$, and
$$C (h_\t \circ \phi_\t^{-1}) (\ee^{\ii\t}) = \partial_z I_{\m H; 0, \infty} (z)/2|_{z = \ee^{\ii \t}}.$$
\end{lem}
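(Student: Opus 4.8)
The plan is to compute $C(h_\theta \circ \phi_\theta^{-1})(e^{\ii\theta})$ directly from Lemma~\ref{lem_pole_disk} using the transformation rule for $C$ under pre-composition by the M\"obius map $\phi_\theta^{-1}$, and then to separately compute the Wirtinger derivative $\partial_z I_{\m H;0,\infty}(z)$ at $z = e^{\ii\theta}$ from the known explicit formula for the minimal energy, checking that the two match. For the first part, by the second statement of Lemma~\ref{lem:C_1_0_form} (pre-composition only — here $\phi_\theta^{-1}$ is applied on the inside, so I actually use the first statement with $g = \phi_\theta^{-1}$ and $f = h_\theta$), $\mc S[h_\theta \circ \phi_\theta^{-1}]$ has a simple pole at $z = \phi_\theta(0) = e^{\ii\theta}$ with residue
\[
C(h_\theta \circ \phi_\theta^{-1})(e^{\ii\theta}) = C(h_\theta)(0)\cdot (\phi_\theta^{-1})'(e^{\ii\theta}) = \frac{-4\cos(\theta)}{\phi_\theta'(0)} = \frac{-4\cos(\theta)}{2\ii e^{\ii\theta}\sin(\theta)} = \frac{2\ii \cos(\theta)\, e^{-\ii\theta}}{\sin(\theta)},
\]
using Lemma~\ref{lem_pole_disk} and the computed value $\phi_\theta'(0) = 2\ii e^{\ii\theta}\sin(\theta)$.

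For the second part, recall from Lemma~\ref{lem:8ln} that the minimal Loewner energy of a chord in $(\m H;0,\infty)$ through the point $z = e^{\ii\theta}$ on the unit circle equals $-8\log\sin(\theta)$. More generally, by the scaling and M\"obius invariance of the chordal energy (Definition~\ref{df:chordal_LE} and the remarks following it), for an arbitrary interior point $z \in \m H$ one has $I_{\m H;0,\infty}(z) = -8\log\big(\sin(\arg z)\big) = -8\log\big(\Im(z)/|z|\big) = -4\log\big(|\Im(z)|^2/|z|^2\big)$, since $\sin(\arg z) = \Im(z)/|z|$ for $z \in \m H$. Then I would write $|z|^2 = z\bar z$ and $(\Im z)^2 = -(z - \bar z)^2/4$, so that
\[
I_{\m H;0,\infty}(z) = -4\log\!\Big(\frac{-(z-\bar z)^2}{4 z \bar z}\Big) = -8\log(z - \bar z) + 4\log z + 4\log\bar z + \text{const},
\]
and apply $\partial_z$, treating $\bar z$ as independent: $\partial_z I = -8/(z - \bar z) + 4/z$. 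At $z = e^{\ii\theta}$ we have $z - \bar z = 2\ii\sin\theta$ and $1/z = e^{-\ii\theta}$, giving
\[
\partial_z I_{\m H;0,\infty}(z)\big|_{z = e^{\ii\theta}} = \frac{-8}{2\ii\sin\theta} + 4e^{-\ii\theta} = \frac{4\ii}{\sin\theta} + 4e^{-\ii\theta} = \frac{4\ii + 4e^{-\ii\theta}\sin\theta}{\sin\theta} = \frac{4\ii(1 - \ii e^{-\ii\theta}\sin\theta)}{\sin\theta}.
\]
Expanding $1 - \ii e^{-\ii\theta}\sin\theta = 1 - \ii(\cos\theta - \ii\sin\theta)\sin\theta = 1 - \sin^2\theta - \ii\sin\theta\cos\theta = \cos^2\theta - \ii\sin\theta\cos\theta = \cos\theta\, e^{-\ii\theta}$, so $\partial_z I\big|_{z=e^{\ii\theta}} = 4\ii\cos\theta\, e^{-\ii\theta}/\sin\theta$, hence $\partial_z I/2 = 2\ii\cos\theta\,e^{-\ii\theta}/\sin\theta$, which agrees with the residue computed above.

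The main obstacle — or rather, the point requiring care — is justifying the formula $I_{\m H;0,\infty}(z) = -8\log(\Im z / |z|)$ for a general interior point $z$, not just $z = e^{\ii\theta}$: this needs the rotation/scaling invariance of the chordal energy, which follows because any $z \in \m H$ can be moved to $e^{\ii\theta'}$ by a M\"obius automorphism of $(\m H;0,\infty)$ (which must be of the form $w \mapsto \lambda w$, $\lambda > 0$, composed with nothing since $0,\infty$ are fixed) — so $z = r e^{\ii\theta'}$ is sent to $e^{\ii\theta'}$ and the energy is unchanged, giving $I_{\m H;0,\infty}(z) = I_{\m H;0,\infty}(e^{\ii\theta'}) = -8\log\sin\theta' = -8\log(\Im z/|z|)$. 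Once this is in hand, the computation of $\partial_z$ is a routine Wirtinger-derivative calculation, and the matching of the two expressions is immediate. Everything else is bookkeeping with the explicit maps $\phi_\theta$ and $h_\theta$ already recorded in Section~\ref{sub:pair-welding}.
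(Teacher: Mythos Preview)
Your proposal is correct and follows essentially the same approach as the paper's own proof: both compute the residue via Lemma~\ref{lem:C_1_0_form} and Lemma~\ref{lem_pole_disk}, write $I_{\m H;0,\infty}(z)=-8\log(\Im z/|z|)$ in terms of $z,\bar z$, and compare the Wirtinger derivative at $z=e^{\ii\theta}$ with the residue. The only minor difference is cosmetic---your algebraic simplification routes through $1-\ii e^{-\ii\theta}\sin\theta=\cos\theta\,e^{-\ii\theta}$, while the paper combines the two terms directly as $-4(z+\bar z)/((z-\bar z)z)$---and you spell out the scaling-invariance justification for the energy formula that the paper simply asserts.
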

\begin{proof}
   It follows from Lemma~\ref{lem:C_1_0_form} and Lemma~\ref{lem_pole_disk} that 
   $$C (h_\t \circ \phi_\t^{-1}) (\ee^{\ii \t})= -\frac{4 \cos \t}{\phi'_\t (0)} = -\frac{2 \cot \t}{\ii \ee^{\ii \t}}.$$
   On the other hand, by Lemma~\ref{lem:8ln}, 
   $$I_{\m H; 0, \infty} (z) = -8 \log \frac{\Im z}{\abs z} = - 8 \log \frac{z - \ad z}{ 2\ii} + 4 \log z \ad z. $$
   We have
   $$\partial_z I_{\m H; 0, \infty} (z)|_{z= \ee^{\ii \t}} = \frac{-8}{z-\ad z} + \frac{4}{z} \Big\vert_{z = \ee^{\ii \t}} = \frac{-4 (z + \ad z)}{(z-\ad z)z}\Big\vert_{z = \ee^{\ii \t}} = \frac{-4 \cot \t}{\ii \ee^{\ii \t}} = 2 C (h_\t \circ \phi_\t^{-1}) (\ee^{\ii \t})$$
   as claimed.
\end{proof}
\begin{proof}[Proof of Theorem~\ref{thm:C_I_pair}]
The Loewner chordal energy is conformally invariant, i.e., for $\psi: \m H \to D$ a conformal map such that $a = \psi (0)$ and $b = \psi(\infty)$, we have
$I_{D;a,b} (\psi(\g)) = I_{\m H; 0, \infty} (\g).$
Then we clearly have
$$I_{D;a,b} (\psi(z)) = I_{\m H; 0, \infty} (z).$$
This implies that $\partial_z I_{D;a,b}(z)$ also transforms like a $(1,0)$-form:
$$\partial_z I_{\m H; 0,\infty} (z) = (\partial_{z} I_{D;a,b})\circ \psi (z)\cdot \psi'(z),$$
with concludes the proof by Lemma~\ref{lem:C_1_0_form} and Lemma~\ref{lem:H_Sch_I}.
\end{proof}

\subsection{Schwarzian and the jump parameter}

We now relate the residue in the Schwarzian to the parameter $\lambda$ of the jump of the pre-Schwarzian of the welding homeomorphism as defined in \eqref{eq:jump_Def}.

Let $\g$ be the $C^1$ smooth geodesic pair in $(D;a,b;z_0)$, $\O$ the connected component of $D \smallsetminus \g$ where $(a,z_0,b)$ are in counter-clockwise order in $\partial \O$, and $\O^*$ the connected component where $(a,z_0,b)$ are in clockwise order of $\partial \O^*$. Let $f : \m H \to \O$ and $g : \m H^* \to \O^*$ be conformal maps such that the $x_0 := f^{-1}(z_0)$ and $g^{-1}(z_0)$ are not $\infty$. As before, we write the part of $\g$ from $a$ to $z_0$ as $\g_1$, and the part from $z_0$ to $b$ as $\g_2$.
\begin{thm}\label{thm:lambda_C}
The welding homeomorphism of $\g$, defined as $\varphi := g^{-1} \circ f$  on the interval $f^{-1} (\g) \subset \widehat {\m R}$, is $C^1$, and in $\PSL(2,\m R)$ on $I_1 := f^{-1} (\g_1)$ and on $I_2 : = f^{-1} (\g_2)$. 
Moreover, the jump $\lambda [\varphi] (x_0)$ of %
$\varphi''/\varphi'$ at $\{x_0\} = I_1 \cap I_2 = \{f^{-1} (z_0)\}$ satisfies
    \begin{equation} \label{eq:lambda_C}
    \frac{\lambda [\varphi] (x_0)}{f'(x_0)} = 2 \pi \ii \,C[f^{-1}] (z_0).
    \end{equation}
\end{thm}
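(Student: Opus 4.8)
The plan is to reduce the general statement to the explicit computation already available for the standard geodesic pair in $(\m D; e^{\ii\theta}, e^{-\ii\theta}; 0)$ via Lemmas~\ref{lem:pair-welding-homeo}, \ref{lem:ftheta-asymptotics}, \ref{lem_pole_disk}, and the conformal covariance properties established in Lemma~\ref{lem:C_1_0_form} and Lemma~\ref{lem:C_I_pair}. First I would note that the fact that $\varphi = g^{-1}\circ f$ is piecewise Möbius on $I_1$ and $I_2$ and $C^1$ at $x_0$ is immediate from the geodesic pair being $C^1$ smooth together with Theorem~\ref{thm:piecewise_mob} (applied locally, or rather from the companion paper's analysis of weldings of geodesic pairs recalled in Section~\ref{sub:pair-welding}). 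So the content is the identity \eqref{eq:lambda_C} relating the jump $\lambda[\varphi](x_0)$ to the residue $C[f^{-1}](z_0)$.

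The key step is the factorization. Exactly as in Lemma~\ref{lem:factorization}, any $C^1$ geodesic pair $(D;a,b;z_0)$ can be written so that, after a Möbius change of coordinates $\psi\colon \m D \to D$ sending $0$ to $z_0$, $e^{\ii\theta}$ to $a$, $e^{-\ii\theta}$ to $b$ for the appropriate $\theta \in (0,\pi)$, the conformal map $f$ of the positive side satisfies $f = \psi \circ \mathfrak f_\theta \circ \alpha$ with $\alpha \in \PSL(2,\R)$ sending $x_0$ to $0$ (and the prescribed boundary points to $1/D$, $1/B$). Consequently $f^{-1} = \alpha^{-1}\circ \mathfrak f_\theta^{-1}\circ \psi^{-1} = \alpha^{-1}\circ h_\theta \circ (\text{post-compose by }w\mapsto 1/w)\circ \psi^{-1}$; more directly, $f^{-1}$ agrees, up to pre-composition by the Möbius map $\psi^{-1}$ and post-composition by the Möbius map $\alpha^{-1}$, with the uniformizing map $h_\theta$ of the standard pair (the extra $1/z$ is also Möbius). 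Since $C(\cdot)(z_0)$ is unchanged under post-composition with $\PSL(2,\C)$ and transforms as a $(1,0)$-form under pre-composition (Lemma~\ref{lem:C_1_0_form}), we get
\[
C[f^{-1}](z_0) = C[h_\theta](0)\cdot (\psi^{-1})'(z_0) = -4\cos(\theta)\,(\psi^{-1})'(z_0),
\]
using Lemma~\ref{lem_pole_disk}. On the other side, the jump $\lambda[\varphi](x_0)$ is computed from the standard welding $\varphi_\theta$ of Lemma~\ref{lem:pair-welding-homeo}: since $\varphi = \beta^{-1}\circ \varphi_\theta \circ \alpha$ on a neighborhood of $x_0$ (with $\beta$ the analogous Möbius normalization on the negative side), and the pre-Schwarzian jump transforms covariantly under pre- and post-composition by Möbius maps, we obtain $\lambda[\varphi](x_0) = \lambda[\varphi_\theta](0)\,\alpha'(x_0) = -4\pi\cos(\theta)\,\alpha'(x_0)$.

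The last step is to match the two derivative factors. It remains to check that $\alpha'(x_0)$ and $(\psi^{-1})'(z_0)$ are related so that the two displayed expressions give \eqref{eq:lambda_C}. Writing $f = \psi\circ\mathfrak f_\theta\circ\alpha$ and differentiating at $x_0$ gives $f'(x_0) = \psi'(0)\,\mathfrak f_\theta'(0)\,\alpha'(x_0)$; by Lemma~\ref{lem:ftheta-asymptotics}, $\mathfrak f_\theta'(0) = 1/(2\ii)$, and $\psi'(0) = 1/(\psi^{-1})'(z_0)$. Hence
\[
\frac{\lambda[\varphi](x_0)}{f'(x_0)}
= \frac{-4\pi\cos(\theta)\,\alpha'(x_0)}{\psi'(0)\,\tfrac{1}{2\ii}\,\alpha'(x_0)}
= \frac{-4\pi\cos(\theta)\cdot 2\ii}{\psi'(0)}
= 2\pi\ii \cdot \big(-4\cos(\theta)\,(\psi^{-1})'(z_0)\big)
= 2\pi\ii\, C[f^{-1}](z_0),
\]
which is exactly \eqref{eq:lambda_C}. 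The main obstacle I anticipate is bookkeeping: making sure the various Möbius normalizations $\alpha,\beta,\psi$ and the $1/z$ swaps are consistent with the orientation conventions (which component is $\O$ versus $\O^*$, and the branch of $\log$ in $h_\theta$) so that the factor $\mathfrak f_\theta'(0)=1/(2\ii)$ appears with the correct sign and no spurious constant is lost; but no genuinely new estimate is needed beyond what Section~\ref{sub:pair-welding} already provides.
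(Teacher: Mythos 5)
Your proposal is correct and follows essentially the same route as the paper: first verify the identity for the standard pair in $(\m D; e^{\ii\theta},e^{-\ii\theta};0)$ using the explicit jump $\lambda[\varphi_\theta](0)=-4\pi\cos\theta$, the derivative $\mathfrak f_\theta'(0)=(2\ii)^{-1}$, and the residue $C(h_\theta)(0)=-4\cos\theta$, then transport to the general case via the factorization $f=\psi\circ\mathfrak f_\theta\circ\alpha$ and the covariance of $\lambda$, $f'$, and $C$ under the Möbius normalizations. The only omissions are the paper's minor reductions (handling the case where one must swap $f$ and $g$, and cutting the curve when $a=b$ or $D$ is not conformally a disc), which are exactly the bookkeeping issues you already flag.
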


\begin{proof}
    We consider first the case of a geodesic pair in $(\m D ; \ee^{\ii \theta}, \ee^{-\ii \theta}; 0)$, where $\theta \in (0, \pi /2]$ and choose $f = \mathfrak f_\theta$ and $g = \mathfrak g_\theta$ as defined in Section~\ref{sub:pair-welding}.
    Recall that by Lemma~\ref{lem:pair-welding-homeo} we have
    \begin{equation}\label{eq:lambda_explicit_theta}
        \lambda[\varphi_\theta] (0) = -4 \pi \cos (\theta).
    \end{equation}
    and by Lemma~\ref{lem:ftheta-asymptotics} we also have
    \[\mathfrak f_\theta'(0) = (2 \ii)^{-1}.\]

On the other hand, $\mathfrak f_\theta^{-1}$ has the same Schwarzian as $h_\theta$, which was computed in Lemma~\ref{lem_pole_disk}. Hence by \eqref{eq:lambda_explicit_theta} we have
$$2 \pi \ii C[\mathfrak f_\theta^{-1}] (0)  = - 8 \pi \ii \cos (\theta) = \frac{\lambda[\varphi_\theta](0)}{\mathfrak f_\theta'(0)}.$$

    Now we consider an arbitrary $C^1$ smooth geodesic pair in $(D; a,b; z_0)$ and conformal maps $f : \m H \to \O$ and $g : \m H^* \to \O^*$. From the assumption,  $f^{-1} (z_0) = x_0 \in \m R$. 
    We assume that $D$ is a simply connected domain conformally equivalent to $\m D$ and $a \neq b$. Otherwise, we can cut open a piece of $\g_1$ near $a$, and the remaining curve is still a geodesic pair in the complement of the piece, and the corresponding welding is simply the restriction to a smaller interval containing $x_0$.
    
    There exists $\theta \in (0, \pi/2]$ such that either $(D; a,b; z_0)$ or $(D; b,a; z_0)$ is conformally equivalent to $(\m D; \ee^{\ii \theta}, \ee^{-\ii \theta}; 0)$.
Assume without loss of generality that $(D; a,b; z_0)$ is conformally equivalent to $(\m D; \ee^{\ii \theta}, \ee^{-\ii \theta}; 0)$ by a conformal map $\psi : \m D \to D$ respecting the marked points (otherwise, we switch $f$ and $g$ and replace $\varphi$ by $\varphi^{-1}$). 

Now let $\alpha \in \PSL(2,\m R)$ such that $f = \psi \circ \mathfrak f_\theta \circ \alpha$. In particular, $\alpha (x_0) = 0$. Similarly, let $\beta \in \PSL(2, \m R)$ such that $g = \psi \circ \mathfrak g_\t \circ \beta$. The assumption that $g^{-1} (z_0) = : y_0 \neq \infty$ implies that $\beta(y_0) = 0$ and $\beta^{-1}$ is regular near $0$. 
We obtain 
$$\varphi = \beta^{-1} \circ \varphi_\theta \circ \alpha.$$
Using the chain rules
$$\lambda [\varphi] (x_0) = \lambda [\varphi_\t] (0) \alpha'(x_0), \qquad f'(x_0) = \psi'(0) \mathfrak f_\t'(0) \alpha'(x_0)$$
and
$$C [f^{-1}] (z_0) = C [\mathfrak f_\t^{-1} \circ \psi^{-1}] (z_0) = \frac{C [\mathfrak f_\t^{-1}] (0)}{\psi'(0)},$$
we obtain 
$$\frac{\lambda [\varphi] (x_0) }{f'(x_0)} = \frac{ \lambda [\varphi_\t] (0)}{ \psi'(0) f_\t'(0)} = 2\pi \ii \frac{C [\mathfrak f_\t^{-1}] (0)}{\psi'(0)} = 2\pi \ii C [f^{-1}] (z_0)$$
which completes the proof.
\end{proof}

\subsection{Schwarzian and piecewise geodesic Jordan curve}

We fix an isotopy class $\mc L (z_1, \ldots, z_{n}; \tau)$.
   Let $\g$ be the unique piecewise geodesic Jordan curve in this class and define $I(z_1, \ldots, z_{n}; \tau) = I^L(\g)$. 
   As before, we write $\g \smallsetminus \{z_1,\dots,z_n\} = \g_1 \cup \ldots \cup \g_n$ where $\g_k$ connects $z_k$ to $z_{k+1}$, and $\O$ (resp., $\O^*$) for the connected component of $\CC \smallsetminus \g$ where the marked points are ordered counterclockwise (resp. clockwise) on the boundary.  
   For small enough $\vare>0$, the ball $B_{\vare} (z_k)$ of Euclidean radius $\vare$ centered at $z_k$ does not intersect $\g \smallsetminus (\g_{k-1} \cup \g_k \cup \{z_k\})$.
   For $z \in B_{\vare} (z_k)$,
   we write 
   $\g^z_k$ for the unique $C^1$ smooth geodesic pair in $(\O \cup \O^* \cup \g_{k-1} \cup \g_k \cup \{z_k\}; z_{k-1}, z_{k+1}; z)$.
  We define $I(z_1, \ldots, z_{k-1}, z, z_{k+1}, \ldots z_{n}; \tau)$ to be the minimal energy of Jordan curves isotopic to $(\g \smallsetminus \g^{z_k}_k) \cup \g^z_k$ (relative to $z_1, \ldots, z_{k-1}, z, z_{k+1}, \ldots z_{n}$). %
   
\begin{thm}\label{thm:Jordan_accessory}
Let $Z_\g$ be the associated projective structure defined in Section~\ref{sec:projective_from_curve} and $q$ the corresponding quadratic differential defined in Lemma~\ref{lem:quadratic_diff}. Assume that $z_k\neq \infty$ for all $k = 1, \ldots, n$.    
We have
$$q (w) = \sum_{k = 1}^n \frac{\partial_{k} I(z_1, \ldots, z_{n}; \tau)/2}{w - z_k} \,\dd w^2,$$
where $\partial_k = (\partial_{x_k} - \ii \partial_{y_k})/2$ if $z_k = x_k + \ii y_k$.
\end{thm}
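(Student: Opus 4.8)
The plan is to reduce the global statement to the local computation of Theorem~\ref{thm:C_I_pair} via the factorization and differentiability results of Sections~\ref{sec:C1_lambda} and~\ref{sec:variation_pair}. First I would fix the developing map $h$ of $Z_\g$ obtained in Lemma~\ref{lem:geodesic_extension}, so that $h|_{\tilde\O} = f^{-1}$ for a lift $\tilde\O$ of $\O$, and hence $q = \mc S[f^{-1}]\,\dd w^2$ near a generic point of $\O$ (and extends across each arc $\g_k$ by the geodesic property). The quadratic differential $q$ is a well-defined holomorphic object on $\S = \Chat\smallsetminus\{z_1,\dots,z_n\}$ by Lemma~\ref{lem:quadratic_diff}. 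The content of the theorem is twofold: (i) $q$ has at most a simple pole at each $z_k$ and no pole elsewhere on $\Chat$ (including $\infty$); (ii) the residue at $z_k$ equals $\partial_k I(z_1,\dots,z_n;\tau)/2$.

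For (i) and the residue formula, the key step is the \emph{localization} of $q$ near $z_k$. By Lemma~\ref{lem:factorization}, near $z_k$ we may write $f = \psi\circ\mathfrak f_\theta\circ\alpha$ where $\psi$ is a conformal map of $\m D$ onto the ``two-arc'' domain $(\Chat\smallsetminus\g)\cup\g_{k-1}\cup\g_k\cup\{z_k\}$, $\theta\in(0,\pi)$, and $\alpha\in\PSL(2,\R)$; hence $f^{-1} = \alpha^{-1}\circ\mathfrak f_\theta^{-1}\circ\psi^{-1}$ near $z_k$. Since $\mc S[\alpha^{-1}]\equiv 0$, the chain rule \eqref{eq:chain_S} gives $\mc S[f^{-1}] = \mc S[\mathfrak f_\theta^{-1}\circ\psi^{-1}]$, and applying Lemma~\ref{lem:C_1_0_form} (the $(1,0)$-form transformation) together with Lemma~\ref{lem_pole_disk} shows that $\mc S[f^{-1}]$ has a simple pole at $z_k$ with residue $C[f^{-1}](z_k) = C[\mathfrak f_\theta^{-1}](0)/\psi'(0) = -4\cos(\theta)/\psi'(0)$. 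In particular there is no pole at any other point of $\Chat$: away from the punctures $f^{-1}$ (suitably extended) is locally conformal, so $q$ is holomorphic there, and the behaviour at $\infty$ is handled by conjugating with $1/w$ as in the proofs above (the hypothesis $z_k\neq\infty$ ensures $\infty$ is a regular point, and $q$ as a quadratic differential on $\Chat$ must vanish to order $\ge 4$ at $\infty$, consistent with $n$ simple poles summing residues to zero).

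It remains to identify $-4\cos(\theta)/\psi'(0)$ with $\partial_k I(z_1,\dots,z_n;\tau)/2$. Here I would combine two inputs. On one hand, Theorem~\ref{thm:C_I_pair}, applied to the geodesic pair $\g^{z}_k$ in the domain $D = \O\cup\O^*\cup\g_{k-1}\cup\g_k\cup\{z_k\}$ with marked boundary points $z_{k-1},z_{k+1}$, says that $\mc S[\psi^{-1}]$ (equivalently, the Schwarzian of the uniformizing map of a component of $D\smallsetminus\g^{z_k}_k$) has a simple pole at $z_k$ with residue $\partial_z I_{D;z_{k-1},z_{k+1}}(z)/2\big|_{z=z_k}$. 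On the other hand, the \emph{additivity} of Loewner energy (Lemma~\ref{lem:add_energy}, via root-invariance Theorems~\ref{thm:intro_root_invariance}--\ref{thm:intro_equiv_energy_WP}) lets me write, for $z$ near $z_k$,
\[
I(z_1,\dots,z_{k-1},z,z_{k+1},\dots,z_n;\tau) = \big(\text{terms not involving } z\big) + I_{D;z_{k-1},z_{k+1}}(z),
\]
since moving only $z_k$ changes the energy only through the chord $\g^z_k$ replacing $\g^{z_k}_k$ inside the \emph{fixed} domain $D$ bounded by the other arcs. Differentiating at $z = z_k$ gives $\partial_k I(z_1,\dots,z_n;\tau) = \partial_z I_{D;z_{k-1},z_{k+1}}(z)\big|_{z=z_k}$. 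Since $q = \mc S[f^{-1}]\,\dd w^2 = \mc S[\psi^{-1}]\,\dd w^2$ near $z_k$ (the pre- and post-composed Möbius maps contribute nothing to the Schwarzian), the residue of $q$ at $z_k$ is exactly $\partial_k I(z_1,\dots,z_n;\tau)/2$, which is the claim.

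\textbf{Main obstacle.} The delicate point is justifying that moving the puncture $z_k$ while keeping $\g$ the unique $C^1$ piecewise geodesic curve in the (correspondingly deformed) isotopy class indeed deforms the energy in the ``decoupled'' way claimed, i.e.\ that $I(z_1,\dots,z,\dots,z_n;\tau)$ depends on $z$, to first order, only through $I_{D;z_{k-1},z_{k+1}}(z)$ with $D$ \emph{frozen}. This requires that the two outer arcs $\g_j$ ($j\neq k-1,k$), and hence $D$ itself, do not move to first order as $z_k$ moves; equivalently, that the perturbation $\partial_{z_k}$ of the welding is supported (in the Schwarzian/jump picture of Section~\ref{sec:C1_lambda}) near $x_k = f^{-1}(z_k)$. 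This is precisely where the differentiability package of Section~\ref{sec:C1_lambda} — Theorem~\ref{thm:differentiability}, Proposition~\ref{prop:f_asymptotics}, and Lemma~\ref{lem:phi_p_null_perturbation} — is needed: it guarantees that $p\mapsto(f_p,g_p,\g_p)$ is $C^1$ on $\mc W_n$ and that the relevant derivatives exist and localize correctly, so that the naive additivity computation above is legitimate at the level of derivatives. Making this ``freezing'' precise, rather than the Schwarzian bookkeeping, is the heart of the argument.
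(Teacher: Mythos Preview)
Your overall architecture matches the paper's: localize $q$ near each $z_k$ via Theorem~\ref{thm:C_I_pair}, then identify the chordal derivative $\partial_z I_{D_k;z_{k-1},z_{k+1}}(z_k)$ with $\partial_k I(z_1,\dots,z_n;\tau)$, and finish with a Liouville argument using $q(w)=O(w^{-4})$ at $\infty$. There is, however, a genuine gap in the identification step, and also a smaller slip in the localization.

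\textbf{The main gap.} Your displayed ``additivity'' formula
\[
I(z_1,\dots,z_{k-1},z,z_{k+1},\dots,z_n;\tau)=\big(\text{terms not involving }z\big)+I_{D;z_{k-1},z_{k+1}}(z)
\]
is \emph{not} an equality: when $z_k$ moves, the \emph{entire} minimizer $\gamma$ changes, not only the pair $\gamma_k^{z}$; the outer arcs and hence the domain $D$ move too. What additivity gives is only the inequality
\[
I(z_1,\dots,z,\dots,z_n;\tau)\ \le\ I^A(\gamma\smallsetminus\gamma_k^{z_k})+I_{D;z_{k-1},z_{k+1}}(z),
\]
with equality at $z=z_k$. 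Your proposed fix in the ``Main obstacle'' paragraph --- that the outer arcs, and hence $D$, do not move to first order, equivalently that the welding perturbation is supported near $x_k$ --- is not correct: moving $z_k$ changes all welding parameters $(x_j,\lambda_j)$, and Lemma~\ref{lem:phi_p_null_perturbation} does not say that $\partial_{z_k}$ acts trivially on the other $\lambda_j$. The paper instead uses the standard envelope argument (this is Proposition~\ref{prop:same_der}): from the one-sided inequality with equality at $z_k$, \emph{if} $I(\cdot;\tau)$ is differentiable at $z_k$, the gradients must agree. The actual work is proving that $I(\cdot;\tau)$ is $C^1$: the paper shows it is locally Lipschitz (comparing against competitors built from the frozen outer arcs), invokes Rademacher to get a.e.\ differentiability, identifies the a.e.\ derivative with $\lambda_k/(\pi\ii f'(x_k))$ via Theorems~\ref{thm:C_I_pair} and~\ref{thm:lambda_C}, and then uses the continuity of $(x_k,\lambda_k,f'(x_k))$ in $p\in\mc W_n$ (Section~\ref{sec:C1_lambda}) to conclude $C^1$. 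Your proposal does not supply this differentiability argument, and the mechanism you suggest in its place would not establish it.

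\textbf{A smaller slip.} In the localization you write $q=\mc S[f^{-1}]\,\dd w^2=\mc S[\psi^{-1}]\,\dd w^2$ ``since the pre- and post-composed M\"obius maps contribute nothing''. In the factorization $f=\psi\circ\mathfrak f_\theta\circ\alpha$ only $\alpha$ is M\"obius; $\psi$ is not, so $\mc S[f^{-1}]\neq\mc S[\psi^{-1}]$ in general. This is harmless for the conclusion: $\Omega$ is itself a component of $D_k\smallsetminus\gamma_k^{z_k}$, so $f^{-1}$ \emph{is} a uniformizing map to which Theorem~\ref{thm:C_I_pair} applies directly, and the residue of $\mc S[f^{-1}]$ at $z_k$ is $\partial_z I_{D_k;z_{k-1},z_{k+1}}(z_k)/2$ without any detour through $\psi$.
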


\begin{remark} Let us first make a few remarks.
\begin{itemize}
    \item If we normalize the $n$-punctured sphere such that $z_{n-2} = 0$, $z_{n-1} = 1$, and $z_n = \infty$, then the expression of $q$ can be easily obtained using the property of quadratic differentials, see \eqref{eq:quadratic_differential}. The assumption of $\infty$ not being a marked point makes the expression of $q$ cleaner, so we only consider this case.

\item This result is reminiscent to the result on the accessory parameter of the Fuchsian projective structure, first conjectured by Polyakov and proved by Takhtajan and Zograf in \cite[Thm.\,1]{TZ1}, which shows that the associated quadratic differential is given by
$$q_F(w) = \sum_{k = 1}^n \left(\frac{1}{2(w -z_k)^2}  - \frac{\partial_k S_{cl}}{2\pi}\frac{1}{(w-z_k)}\right) \,\dd w^2$$
where $S_{cl}$ is the classical Liouville action on the $n$-punctured sphere.
\item Note that $q$ also uniquely determines $\g$. In fact, let $q$ and $\widehat q$ be the quadratic differentials associated with two piecewise geodesic Jordan curves $\g$ and $\widehat \g$. If $q = \widehat q$, then Lemma~\ref{lem:quadratic_diff} shows that $Z_{\g} = Z_{\widehat \g}$. In particular, there exists $A \in \PSL(2,\C)$ such that $\widehat h = A\circ h$, where $h$ and $\widehat h$ are the associated developing maps with the holonomy representation in $\PSL(2,\R)$. As any lift $h \circ \cover^{-1}|_{\O}$ maps $\O$ to $\m H$ and the marked points into $\widehat \R$, as $n \ge 3$, $A$ has to map at least $3$ points on $\widehat \R$ into $\widehat \R$, hence $A \in \PSL(2,\R)$. 
Since $\g = \cover \circ h^{-1} (\widehat \R)$ and $h^{-1} (\widehat \R) = \widehat h^{-1} \circ A^{-1} (\widehat \R) = \widehat h^{-1} (\widehat \R)$, we obtain $\g = \widehat \g$.
 \end{itemize}
\end{remark}

\bigskip
\begin{prop}\label{prop:same_der}
The function
$$I (\cdot; \tau) \colon (z_1, %
\ldots, z_{n}) \mapsto I(z_1, \ldots, %
z_{n}; \tau)$$ is $C^1$ smooth and for all $k = 1, \cdots, n$,
$$\partial_{z_k} I(z_1, \ldots, z_{n}; \tau) = \partial_z I_{D_k; z_{k-1}, z_{k+1}} (z_k),$$
where $D_k = \O \cup \O^* \cup \g^{z_k}_k$ and $I_{D_k; z_{k-1}, z_{k+1}}(z) = I_{D_k; z_{k-1}, z_{k+1}}(\g^z_k)$.
\end{prop}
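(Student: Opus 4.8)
The plan is to fix an index $k$ and reduce the statement to a one-variable envelope argument comparing the minimal loop energy with the chordal energy $I_{D_k}$ of Theorem~\ref{thm:C_I_pair}. Write $\g_{\mathrm{out}}:=\g\smallsetminus(\g_{k-1}\cup\g_k\cup\{z_k\})$ for the (fixed) arc of the minimizer $\g$ joining $z_{k-1}$ to $z_{k+1}$, so that $D_k=\O\cup\O^{*}\cup\g_{k-1}\cup\g_k\cup\{z_k\}=\Chat\smallsetminus\g_{\mathrm{out}}$ is a simply connected domain that does \emph{not} move when $z_k$ is perturbed, and, since $\g$ is $C^{1}$ and piecewise geodesic, $\g_{k-1}\cup\g_k\cup\{z_k\}$ is precisely the unique $C^{1}$ geodesic pair $\g_k^{z_k}$ in $(D_k;z_{k-1},z_{k+1};z_k)$. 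The first step is an additivity splitting isolating the part of the loop energy supported in $D_k$: rooting the loop $\g_{\mathrm{out}}\cup c$ at $z_{k-1}$, traversing $\g_{\mathrm{out}}$ first and splitting at $z_{k+1}$, root invariance (Theorem~\ref{thm:intro_root_invariance}) and the additivity Lemma~\ref{lem:add_energy} give, for \emph{every} chord $c$ in $(D_k;z_{k-1},z_{k+1})$,
\[
I^{L}(\g_{\mathrm{out}}\cup c)=\mc E(\g_{\mathrm{out}})+I_{D_k;z_{k-1},z_{k+1}}(c),
\]
where $\mc E(\g_{\mathrm{out}})$ depends on $\g_{\mathrm{out}}$ alone and is finite because $I^{L}(\g)<\infty$ (Lemma~\ref{lem:loop_geodesic_minimizer}, Theorem~\ref{thm:uniqueness}). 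Since $\g_{\mathrm{out}}\cup\g_k^{z}$ is by definition a representative of the perturbed isotopy class and $\g_k^{z}$ is the energy-minimizing chord through $z$ (Lemma~\ref{lem:geodesic_property_pair}), this yields the envelope inequality
\[
I(z_1,\dots,z,\dots,z_n;\tau)\ \le\ \mc E(\g_{\mathrm{out}})+I_{D_k;z_{k-1},z_{k+1}}(z),
\]
with equality at $z=z_k$.

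Now $z\mapsto I_{D_k;z_{k-1},z_{k+1}}(z)$ is real-analytic on $D_k$: by conformal invariance of the chordal Loewner energy it is the pull-back of $I_{\m H;0,\infty}(w)=-8\log(\Im w/\abs w)$ (Lemma~\ref{lem:8ln}) under a conformal map $\m H\to D_k$. Granting that $I(\cdot\,;\tau)$ is $C^{1}$, the conclusion follows at once: near $z_k$ the $C^{1}$ function $z\mapsto I(z_1,\dots,z,\dots,z_n;\tau)$ lies below the real-analytic function $z\mapsto\mc E(\g_{\mathrm{out}})+I_{D_k;z_{k-1},z_{k+1}}(z)$ and touches it at $z=z_k$, so $z_k$ is a local maximum of their difference; hence the total real derivative of the difference vanishes at $z_k$, and so does its Wirtinger part. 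Since $\mc E(\g_{\mathrm{out}})$ is constant in $z$ this reads $\partial_{z_k}I(z_1,\dots,z_n;\tau)=\partial_{z}I_{D_k;z_{k-1},z_{k+1}}(z)\big|_{z=z_k}$, consistent with $I(\cdot\,;\tau)\in C^{1}$ as the right-hand side is manifestly continuous in the configuration.

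The remaining, and main, obstacle is the $C^{1}$-smoothness of $(z_1,\dots,z_n)\mapsto I(z_1,\dots,z_n;\tau)=I^{L}(\g)$, where $\g$ is the unique $C^{1}$ piecewise geodesic curve of Theorem~\ref{thm:uniqueness} (equivalently the energy minimizer). I would obtain it from the welding parametrization of Section~\ref{sec:C1_lambda}: moving the marked points while keeping the isotopy class is a $C^{1}$ (indeed holomorphic) motion in $T_{0,n}$, hence, via the $C^{1}$ diffeomorphism $\mc F^{-1}\colon T_{0,n}\to\mc W_n$ of Theorem~\ref{thm:differentiability}, a $C^{1}$ motion of $p\in\mc W_n$; so it suffices to check that $p\mapsto I^{L}(f_p(\R))$ is $C^{1}$ on $\mc W_n$, which should follow from the identity \eqref{eq_disk_energy} of Theorem~\ref{thm:intro_equiv_energy_WP} together with the $C^{1}$-dependence of $f_p,g_p$ on $p$ (Proposition~\ref{prop:f_differentiability}). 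The delicate point there is the $C^{1}$-dependence of the two Dirichlet integrals in \eqref{eq_disk_energy}, which needs locally uniform control of $f_p',g_p'$ up to the boundary, where the welding maps are only $C^{1,1-\varepsilon}$. Alternatively, one may read off the $C^{1}$-regularity directly from the envelope picture, writing $I(\cdot\,;\tau)$ near a configuration as the infimum over admissible outer arcs $\gamma'$ of the real-analytic functions $\mc E(\gamma')+I_{\Chat\smallsetminus\gamma';z_{k-1},z_{k+1}}(\,\cdot\,)$ --- a locally uniform infimum whose minimizer is \emph{unique} by Theorem~\ref{thm:uniqueness} --- and invoking a Danskin/semiconcavity argument; that route instead requires a uniform Koebe-type bound on the second derivatives of $I_{\Chat\smallsetminus\gamma'}$ for outer arcs $\gamma'$ staying a definite distance from $z_k$.
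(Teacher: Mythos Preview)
Your envelope argument---the additivity splitting and the touching inequality giving $\partial_{z_k}I(\cdot\,;\tau)=\partial_z I_{D_k;z_{k-1},z_{k+1}}(z_k)$ \emph{once} $C^1$-smoothness is known---is exactly what the paper does and is correct.

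The gap is in the $C^1$-smoothness, which you correctly isolate as the main obstacle but do not resolve. Your route~(a), differentiating the Dirichlet integrals in \eqref{eq_disk_energy} in $p$, runs into precisely the boundary singularity you flag: near each $x_k$ one has $f_p'(z)=f_p'(x_k)\big(1+c(z-x_k)\log(z-x_k)+O(z-x_k)\big)$ by Lemma~\ref{lem:ftheta-asymptotics}, so $\nabla\log|f_p'|$ blows up like $\log|z-x_k|$ and the naive interchange of $\partial_v$ with $\int|\nabla\log|f_p'||^2$ requires nontrivial uniform bounds you have not supplied. Your route~(b) likewise needs a uniform Hessian bound that you leave unproved.

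The paper avoids this entirely by a Lipschitz--Rademacher bootstrap. First it shows $I(\cdot\,;\tau)$ is locally Lipschitz by the same comparison idea you use (upper bound by keeping the outer arc fixed), together with the continuous dependence of the minimizer $\g$ on the marked points coming from Theorem~\ref{thm:differentiability}. Rademacher then gives differentiability almost everywhere, and \emph{at each such point} your envelope argument applies and yields
\[
\partial_{z_k}I(\cdot\,;\tau)\;=\;\partial_z I_{D_k;z_{k-1},z_{k+1}}(z_k)\;=\;2\,C[f^{-1}](z_k)\;=\;\frac{\lambda_k}{\pi\ii\,f'(x_k)},
\]
the last two equalities by Theorems~\ref{thm:C_I_pair} and~\ref{thm:lambda_C}. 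The point is that the right-hand side is a \emph{manifestly continuous} function of the configuration: $\lambda_k$ is a coordinate on $\mc W_n$, $f'(x_k)=\tfrac{1}{2\ii}\psi'(0)\alpha'(x_k)$ is continuous in $p$ by the factorization of Lemma~\ref{lem:factorization}, and $p$ depends $C^1$ on $(z_1,\dots,z_n)$ by Theorem~\ref{thm:differentiability}. A locally Lipschitz function whose a.e.\ partial derivatives extend continuously is $C^1$, and the proof closes. In short: rather than differentiating the energy formula, the paper identifies the would-be derivative with the welding datum $\lambda_k/(\pi\ii f'(x_k))$ and reads off its continuity; this sidesteps the boundary issues you noted.
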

\begin{proof}
We first notice that if $I(z_1, %
\ldots, z_{n}; \tau)$ is differentiable at $z_k$, then 
\begin{equation}\label{eq:der_eq}
    \partial_{z_k} I(z_1, \ldots, z_{n}; \tau) = \partial_z I_{D_k; z_{k-1}, z_{k+1}} (z_k),
\end{equation}
since by definition, 
$$I(z_1, \ldots, z_{k-1}, \cdot, z_{k+1}, \ldots, z_{n}; \tau) \le I_{D_k; z_{k-1}, z_{k+1}} (\cdot) + I^A (\g \smallsetminus \g^{z_k}_k)$$
and they coincide at $z_k$.

To show the differentiability, we show first that $ I(z_1, \ldots, z_{k-1}, \cdot, z_{k+1}, \ldots, z_{n}; \tau)$ is locally Lipschitz. From Theorem~\ref{thm:differentiability}, the unique piecewise geodesic Jordan curve $\g^{z_k}$ in $\mc L (z_1, \cdots, z_n; \tau)$ depends continuously on $z_k$ with respect to the Hausdorff metric on the sphere.  
Therefore for every $z_k$, there exist a ball $B_r(z_k)$ with some radius $r>0$ centered at $z_k$, such that for all $z \in B_r(z_k)$, we have $B_{2r}(z_k) \cap \widehat \g^{z} = \emptyset$ where $\widehat \g^{z}$ is the arc in $ \g^{z}$ obtained by removing the geodesic pair passing through $z$.  In particular, this implies that the image of $B_r (z_k)$ under the conformal map $h_z : \Chat \smallsetminus \widehat \g^z \to \m H$ sending $z_{k -1} \mapsto 0$, $z_{k+1} \mapsto \infty$, $z \mapsto \ee^{\ii \theta}$ for some $\theta$, is contained in a compact set $K \subset \m H$ (which is independent of $z \in B_r (z_k)$). 

 Therefore, for $z,z' \in B_r(z_k)$,
\begin{align*}
& I(z_1, \ldots, z_{k-1}, z', z_{k+1}, \ldots, z_{n}; \tau) - I(z_1, \ldots, z_{k-1}, z, z_{k+1}, \ldots, z_{n}; \tau) \\
& \le I^A (\widehat \g^z) + I_{\Chat \smallsetminus \widehat \g^z; z_{k-1}, z_{k+1}} (z') - I(z_1, \ldots, z_{k-1}, z, z_{k+1}, \ldots z_{n}; \tau)\\
& =  I_{\m H; 0, \infty} (h_{z} (z'))  - I_{\m H; 0, \infty} (h_{z} (z)).
\end{align*}
Since $h_{z} (z'), h_{z} (z) \in K$, $I_{\m H; 0, \infty} (\cdot)$ is smooth in $K$ and $h_{z}'$ is uniformly bounded in $B_r (z_k)$ for all $z \in B_r (z_k)$, we obtain that there exists $c = c (z_k, r)$ such that for all $z, z' \in B_r (z_k)$,
$$I(z_1, \ldots, z_{k-1}, z', z_{k+1}, \ldots, z_{n}; \tau) - I(z_1, \ldots, z_{k-1}, z, z_{k+1}, \ldots, z_{n}; \tau)  \le c|z' - z|.$$
This proves that $I (\cdot; \tau)$ is locally Lipschitz.

From the Rademacher theorem, the Lipschitz function 
$I (\cdot; \tau)$  is differentiable almost everywhere. 
Let $(z_1, \ldots, z_{k-1}, z_k, z_{k+1}, \ldots, z_{n})$ be a differentiable point of  $I (\cdot; \tau)$. 
Let $\g$ be the unique minimizer in the isotopy class $(z_1, \ldots, z_{k-1}, z_k, z_{k+1}, \ldots, z_{n}; \tau)$, and let $f: \m H \to \O$ and $g : \m H^* \to \O^*$ be two conformal maps as before. We parametrize the welding homeomorphism $\varphi = g^{-1} \circ f$ by the coordinates $(x_k, \lambda_k = \lambda [\varphi] (x_k))$ as in Section~\ref{sec:C1_lambda}.
We have for all $k = 1, \cdots,  n $,
$$\partial_{z_k} I(z_1, \ldots, z_{n}; \tau) = \partial_z I_{D_k; z_{k-1}, z_{k+1}} (z_k) = 2 C[f^{-1}] (z_k) = \frac{\lambda_k}{\pi \ii f'(x_k)}.$$
The first equality follows from the observation \eqref{eq:der_eq}, the second equality follows from Theorem~\ref{thm:C_I_pair}, and the last equality follows from Theorem~\ref{thm:lambda_C}. 

Recalling the factorization $f = \psi \circ \mathfrak f_\theta \circ \alpha$ from Lemma~\ref{lem:factorization} we see that $f'(x_k) = \frac{1}{2 \ii}\psi'(0) \alpha'(x_k)$, which is a continuous function of $p \in \mc W_n$. Hence $\lambda_k/f'(x_k)$ depends continuously on $(z_1, \cdots, z_n)$.
Thus the almost everywhere defined partial derivatives of $I(\cdot; \tau)$ extend to continuous functions.
As $I(\cdot;\tau)$ is absolutely continuous on lines, it is determined by its partial derivatives and hence is a $C^1$ function, which completes the proof.
\end{proof}

\begin{proof}[Proof of Theorem~\ref{thm:Jordan_accessory}]

Recall that the quadratic differential $q$ is defined as $\mc S [h \circ \cover^{-1}]$. Here, $h : \widetilde \S \to \CC$ is a developing map constructed in Section~\ref{sec:projective_from_curve} such that any lift of $h \circ \cover^{-1}$ restricted to $\O$, is a conformal map from $\O$ onto $\m H$. 
Since for all $1 \le k\le n$, $\g^{z_k}_k = \g_{k-1} \cup \g_k \cup \{z_k\}$ is a geodesic pair in $(D_k; z_{k-1}, z_{k+1})$, Theorem~\ref{thm:C_I_pair} and Proposition~\ref{prop:same_der} show that 
$$q (w) \sim \frac{\partial_z I_{D_k; z_{k-1}, z_{k+1}} (z_k)/2}{w - z_k} =  \frac{\partial_k I(z_1, \ldots z_{n}; \tau)/2}{w - z_k} , \qquad \text{as } w \to z_k.$$

As $q$ does not have a pole at $\infty$, as a Schwarzian derivative, it means that 
$$q (w) = O(w^{-4}) \qquad \text {as } w\to \infty.$$
We obtain the claimed expression of $q$ since the difference of the left- and right-hand sides is holomorphic on $\CC$ and vanishes as $w \to \infty$.
\end{proof}

\section{Relation to Fuchsian projective structures} \label{sec:grafting}

In this section, we will show how one can obtain piecewise geodesic Jordan curves and their associated projective structures by grafting the Fuchsian projective structures.%

We first notice that there is another natural way to associate a unique Jordan curve in an isotopy class $\mc L (w_1, \ldots, w_n; \tau)$ with a marked Fuchsian projective structure, which gives another parametrization of $T_{0,n}$ by Jordan curves.

\begin{lem}\label{lem:geo_puncture}
Let $w_1,\ldots,w_n \in \Chat$, $n\ge 3$,  be  distinct points, $\tau$ be a Jordan curve passing through $w_1, \ldots, w_n$ in cyclic order, and  $%
\Sigma' \coloneqq \Chat \smallsetminus \{w_1,\dots,w_n\}$ be the $n$-punctured sphere endowed with its  complete hyperbolic metric. Then 
in the isotopy class $\mc L (w_1, \ldots, w_n; \tau)$ there exists a unique Jordan curve $\gamma'$  such that each arc of $\gamma'$ between consecutive cusps  is a hyperbolic geodesic
in $\Sigma'$. 
\end{lem}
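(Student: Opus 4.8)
The plan is to build $\gamma'$ arc by arc and then check that the pieces fit together into a Jordan curve of the correct isotopy type; the content is mostly organizational, the underlying facts being classical hyperbolic geometry. First, observe that the class $\mc L(w_1,\dots,w_n;\tau)$ determines, for every $k$, a homotopy class rel endpoints of the sub-arc $\tau_k$ joining the cusps $w_k$ and $w_{k+1}$ in $\Sigma'$. Passing to the holomorphic universal cover $\pi\colon\m D\to\Sigma'$, the arc $\tau_k$ lifts (once a lift is chosen) to a path joining two parabolic fixed points $p_k,p_{k+1}\in\partial\m D$, and we set $\gamma'_k:=\pi(\,\text{geodesic of }\m D\text{ from }p_k\text{ to }p_{k+1}\,)$; this is a complete geodesic of $\Sigma'$ running from cusp $w_k$ to cusp $w_{k+1}$, and it is the unique one in its homotopy class since a complete geodesic is determined by its endpoints on $\partial\m D$. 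In the punctured-disk model near a cusp a geodesic ray into the cusp is a radial segment, so $\gamma'_k$ extends continuously to a closed arc with endpoints $w_k,w_{k+1}$, and two arcs $\gamma'_{k-1},\gamma'_k$ meeting at $w_k$ arrive along two distinct radial directions (being distinct geodesics when $n\ge 3$).

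Second, I would prove that $\gamma':=\gamma'_1\cup\dots\cup\gamma'_n\cup\{w_1,\dots,w_n\}$ is a Jordan curve, again working upstairs. The set $\pi^{-1}(\tau\smallsetminus\{w_j\})$ cuts $\m D$ into ideal $n$-gons, each a lift of one of the two Jordan domains $\Omega,\Omega^*$ complementary to $\tau$, with $n$ distinct parabolic vertices occurring in cyclic order on $\partial\m D$ and boundary arcs that are lifts of the $\tau_k$. Any two of these lifted arcs have disjoint interiors, so their endpoints on $\partial\m D$ are unlinked (or shared); replacing each lifted arc, $\Gamma$-equivariantly, by the geodesic between its same parabolic endpoints therefore yields a $\Gamma$-invariant family of pairwise non-crossing chords, disjoint in $\m D$ except possibly at common boundary points. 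Using that $\Gamma$ is torsion-free one checks that each $\gamma'_k$ is itself simple, and projecting back to $\Sigma'$ and adjoining the cusps gives an embedded closed curve through $w_1,\dots,w_n$ in the original cyclic order, i.e.\ $\gamma'$ is a Jordan curve.

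Third, I would check $\gamma'\in\mc L(w_1,\dots,w_n;\tau)$. The lifts of $\tau$ and the lifts of $\gamma'$ are two ideal $n$-gon tessellations of $\overline{\m D}$ with the same vertex set and the same combinatorics, so there is a $\Gamma$-equivariant homeomorphism $\tilde H$ of $\overline{\m D}$, equal to the identity on $\partial\m D$, carrying one onto the other; it descends to a homeomorphism $H$ of $\Chat$ fixing each $w_k$ with $H(\tau)=\gamma'$. Because $\tilde H|_{\partial\m D}=\mathrm{id}$, each conjugate $\tilde H\alpha\tilde H^{-1}$ ($\alpha\in\Gamma$) is a deck transformation agreeing with $\alpha$ on $\partial\m D$, hence equals $\alpha$; thus $H$ acts trivially on $\pi_1(\Sigma')$ and preserves its peripheral structure, so by the Dehn--Nielsen--Baer theorem (equivalently, the ``homotopic implies isotopic'' principle for surfaces of finite type) $H$ is isotopic to the identity through homeomorphisms fixing $w_1,\dots,w_n$. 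Transporting $\tau$ along this isotopy gives $\gamma'$, proving existence.

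For uniqueness, suppose $\gamma'$ and $\gamma''$ both lie in $\mc L(w_1,\dots,w_n;\tau)$ and have the geodesic property in $\Sigma'$. The isotopy between them respects the cyclic order of the punctures and hence restricts to a homotopy rel endpoints between $\gamma'_k$ and $\gamma''_k$ for every $k$; being hyperbolic geodesics, both lift to the geodesic between the same pair of parabolic fixed points, so $\gamma'_k=\gamma''_k$ and $\gamma'=\gamma''$. I expect the main obstacle to be the second step — verifying that the concatenation of the geodesic arcs is genuinely embedded, especially the local picture at the cusps where one must rule out tangency or coincidence of $\gamma'_{k-1}$ and $\gamma'_k$ — together with the isotopy-class identification in the third step, which relies on the standard but non-elementary fact that a surface homeomorphism acting trivially on $\pi_1$ and on the peripheral structure is isotopic to the identity.
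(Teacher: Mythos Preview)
Your argument is correct and follows essentially the same route as the paper: lift $\tau$ to the universal cover, replace each lifted arc by the hyperbolic geodesic with the same ideal endpoints, observe that unlinked endpoints imply non-crossing geodesics, and project back down. The paper's proof is much terser than yours---it simply asserts that the resulting curve is a Jordan curve and that uniqueness ``is easy to see''---so your more careful treatment of the isotopy-class check (via Dehn--Nielsen--Baer) and of the cusp behavior fills in details the paper leaves to the reader.
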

\begin{proof}
    We denote by $\cover' \colon \H \to \Sigma'$ the (essentially unique) holomorphic universal covering map and by $\G' \subset \PSL(2,\m R)$ the group of deck transformations. The preimage of $\tau$ by $\cover'$ (see Figure~\ref{fig:covering1}) is a union of disjoint chords in $\m H$ invariant under $\G'$. We now  
    replace each chord with the hyperbolic geodesic in $\m H$ with the same endpoints. In this way we get a $\G'$-invariant family of hyperbolic geodesics. No pair of these  geodesics can cross, because this would lead to a corresponding crossing of the chords arising from $\tau$.  This implies that the images of these geodesics  under  $\cover'$ with the cusps in $\Sigma'$ added give a 
    Jordan  curve $\gamma'$. It is easy to see that this the unique Jordan curve sought after. 
\end{proof}

In \cite[Lem.\,4.2]{MRW1}, it was shown that $\g'$ is different from the unique curve in $\mc L (w_1, \ldots, w_n; \tau)$ with the geodesic property discussed in the rest of the paper, except when $\g'$ is a circle. We now show that their corresponding projective structures are related by a $\pi$-angle grafting.

\begin{figure}[ht]
\centering
  \includegraphics[width=0.8\linewidth]{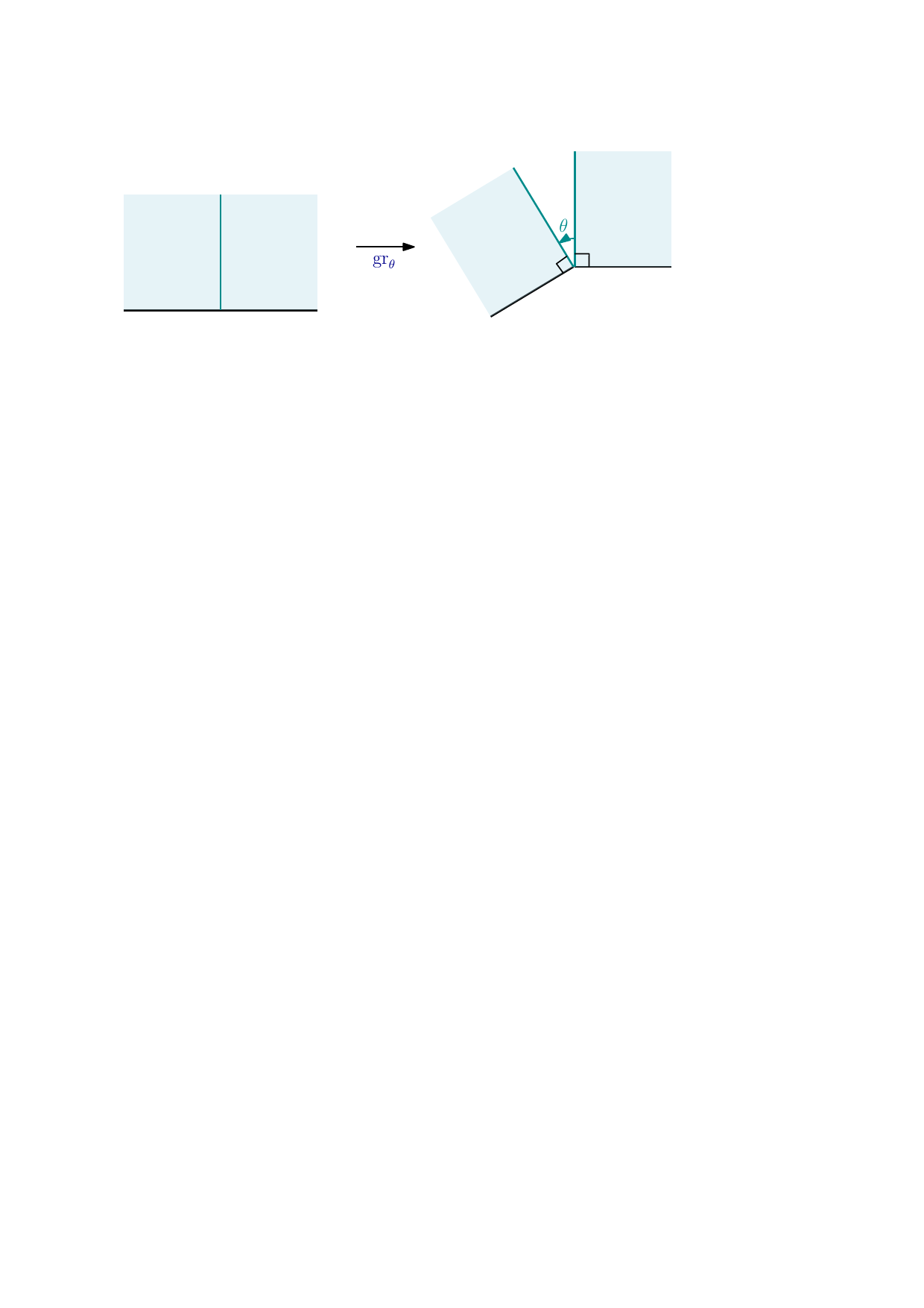}
  \caption{\label{fig:grafting1} Grafting by $\theta$ along $\ii \m R_+$.}
\end{figure} 

\emph{Grafting} is a way to deform a Riemann surface by inserting a new space along a measured lamination (see, e.g., \cite{goldman1987projective,tanigawa1997grafting}).
In the simplest case, grafting by an angle $\theta > 0$ along the imaginary axis in the upper half-plane model can be defined by constructing a Riemann surface where one inserts a wedge of angle $\theta$ at the origin as in Figure~\ref{fig:grafting1}.
Grafting along other geodesics in $\H$ can always be reduced to this case by conjugating by a Möbius transformation. Grafting along a geodesic on a general hyperbolic surface is realized in such a way that it lifts to the grafting along all the lifts of the geodesic on the universal cover $\m H$.
Grafting also defines a projective structure on the resulting surface, since locally the construction takes place on the Riemann sphere and defines a developing map in a natural way.

We will only consider grafting along countably many geodesics by the angle $\theta = \pi$ where the inserted wedge is a half-plane, or in the case where we graft along a semicircle and the inserted region becomes a disc, see Figure~\ref{fig:grafting2}.

\begin{figure}[ht]
\centering
  \includegraphics[width=0.8\linewidth]{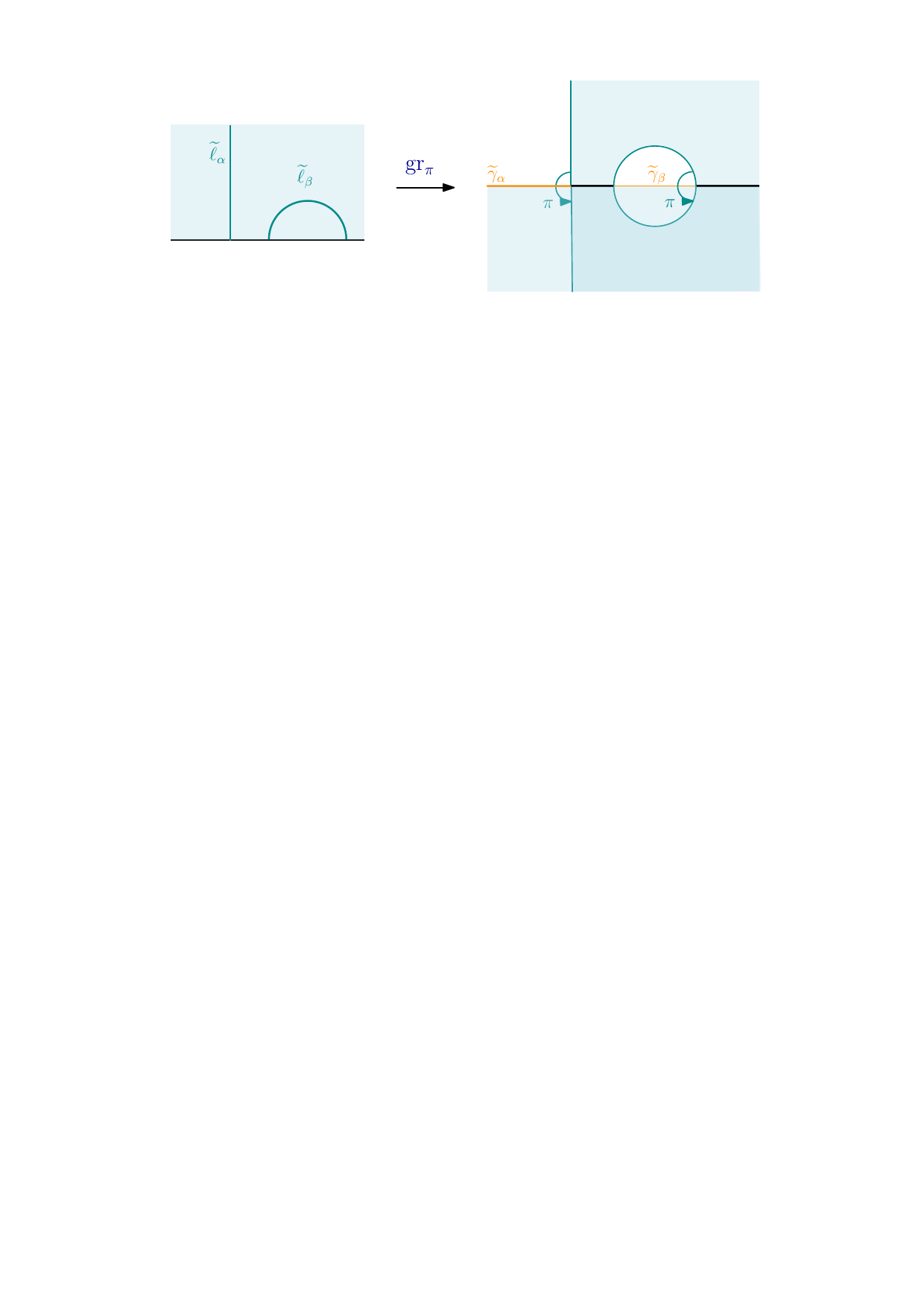}
  \caption{\label{fig:grafting2} Grafting by $\pi$ along two geodesics $\a$ and $\b$.}
\end{figure}

Let $\g'$ be a Jordan curve as in Lemma~\ref{lem:geo_puncture}. Let $\g_k'$ be the geodesic on $\Sigma'$ between $w_k$ and $w_{k+1}$ (which is part of $\g'$) and $\cover' : \m H \to \Sigma'$ be a covering map. Let $\widetilde \S$ be the Riemann surface obtained by grafting along all lifts $\widetilde{\ell}_\alpha$ of the geodesics $\g_k'$ ($k = 1, \dots, n$), inserting a $\pi$-angle wedge $\widetilde V_\alpha$. 
Let $\S = \Chat \smallsetminus \{z_1, \ldots, z_n\}$ be the corresponding $n$-punctured Riemann surface obtained by grafting into each $\g_k'$ a $\pi$-angle wedge $V_k$. This way, we obtain a covering map $\cover : \widetilde \S \to \S$.
We denote by $\widetilde \g_\alpha$ the central line of $\widetilde V_\alpha$ and $\g_k$ the central line of $V_k$, so that the image of $\widetilde \g_\a$ under $\cover$ is one of $\{\g_1, \ldots, \g_n\}$. 
See Figure~\ref{fig:grafting_sphere} illustrating the grafting map on a $4$-punctured sphere.

\begin{figure}[ht]
\centering
  \includegraphics[width=0.9\linewidth]{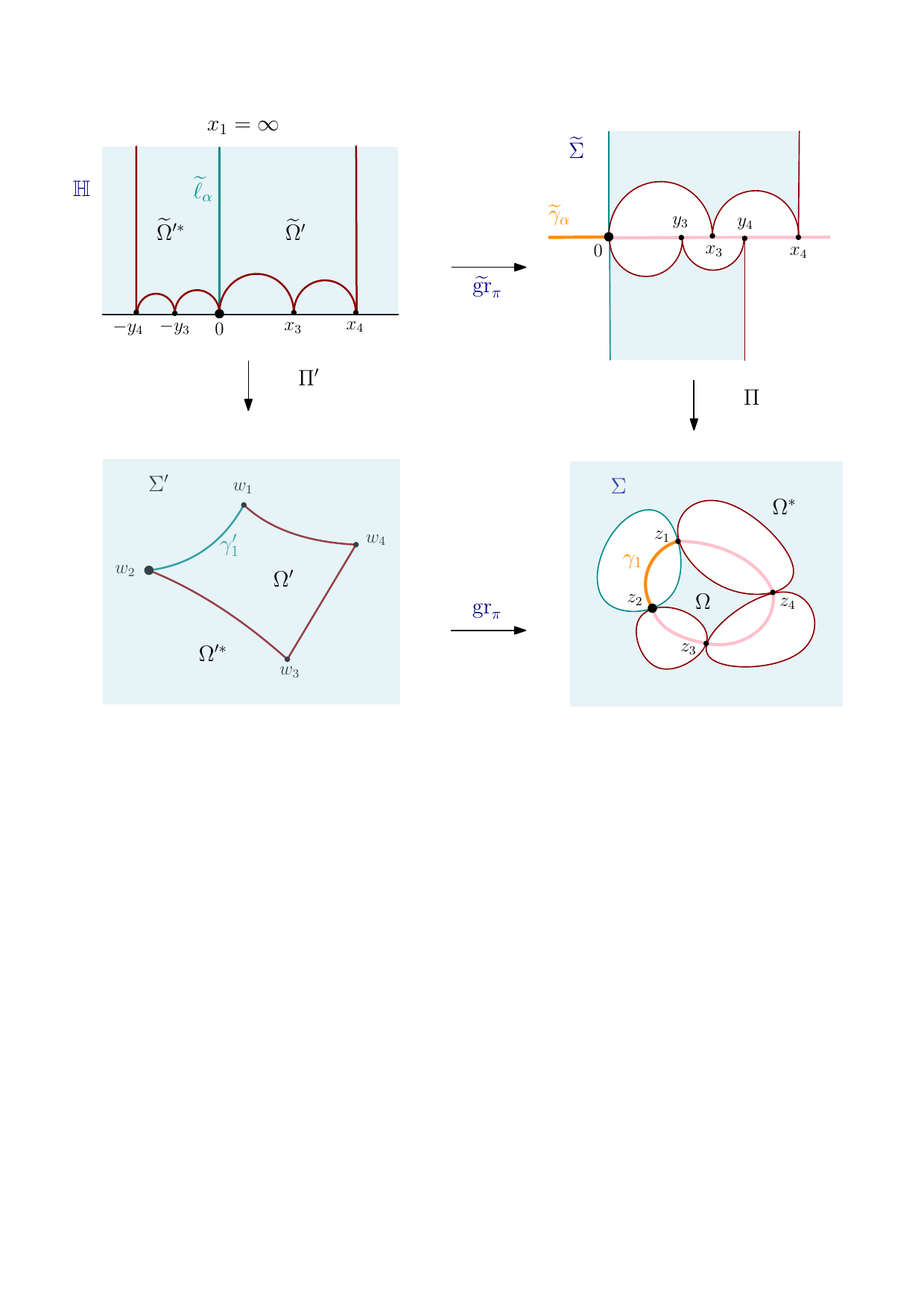}
  \caption{\label{fig:grafting_sphere} Illustration of the grafting by $\pi$ along the geodesics $\g'_k$ on a $4$-punctured sphere $\Sigma'$, for $k = 1, \ldots, 4$, in a local chart. }
\end{figure}

\begin{prop}\label{prop:central_geod}
    The central lines $\widetilde \g_\alpha$ form a graph $\widetilde G$ on $\widetilde \S$ with geodesic property. In particular, this implies that the Jordan curve $\g : = \g_1 \cup \cdots \cup \g_n$  formed by the union of the central lines of $(V_k)_{k = 1,\ldots, n}$ has the geodesic property. Moreover, the welding homeomorphism of $\g$ is $C^1$ smooth, and the inverse of $\cover$ is a projective chart of the projective structure associated with $\g$ as in Section~\ref{sec:projective_from_curve}.
    It has a holonomy representation given by
    \begin{equation}\label{eq:rho_from_rho_F}
        \rho_\gamma(\eta_k) = S_{k-1} \circ \rho_F(\eta_k') \circ S_k,
    \end{equation}
    where $\rho_F$ is a holonomy representation of the uniformizing structure on $\Sigma'$, $\eta_k$ and $\eta_k'$ are the standard generators of $\pi_1(\Sigma)$ and $\pi_1(\Sigma')$, respectively, and $S_k$ is the Möbius transform that rotates by angle $\pi$ around the two fixed points $x_k$ and $x_{k+1}$ of $\rho_F(\eta_k')$ and $\rho_F(\eta_{k+1}')$, respectively. 
\end{prop}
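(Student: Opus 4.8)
The plan is to construct by hand the developing map of the grafted projective structure in the universal cover and to read every assertion off its explicit form. Normalize so that the developing map of the Fuchsian structure on $\Sigma'$ is $h_F=\Id\colon\mathbb H\to\mathbb H$ and $\rho_F=\Gamma'$. The geodesic lifts $\tilde\ell_\alpha$ of the arcs $\gamma'_k$ have endpoints $a_\alpha,b_\alpha\in\widehat{\R}$ and cut $\mathbb H$ into ideal polygons which are the lifts of the two complementary sides of $\gamma'$, with neighboring polygons always lifts of opposite sides. For each $\alpha$ write $\mathbb D_\alpha$ for the round disc cut off by the circle through $a_\alpha,b_\alpha$ orthogonal to $\widehat{\R}$ (a half-plane if one endpoint is $\infty$, which is the ``$\pi$-angle wedge'' case), and $S_\alpha$ for the M\"obius involution fixing $a_\alpha$ and $b_\alpha$; note that $S_\alpha$ interchanges $\mathbb H$ with $\mathbb H^*$ and $\mathbb D_\alpha$ with its exterior, and has derivative $-1$ at each of its two fixed points. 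I would then define $h\colon\tilde\Sigma\to\widehat{\mathbb C}$ by declaring it, on each polygon tile, to be the M\"obius transformation obtained as the ordered product of the $S_\alpha$'s met along a chain of lunes from a fixed base tile — this is unambiguous since the adjacency graph of tiles is a tree — and, on a lune $\tilde V_\alpha$, to be the matching translate of $\mathbb D_\alpha$ glued along its two semicircular edges, so that the central line $\tilde\gamma_\alpha$ develops to the diameter, i.e. to the open sub-arc $(a_\alpha,b_\alpha)$ of $\widehat{\R}$. Being a holomorphic immersion, $h$ is a developing map of a projective structure $Z$ on $\Sigma$, and its holonomy lies in $\PSL(2,\R)$ because every deck transformation crosses an even number of lunes. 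For the holonomy formula I would trace $\eta_k$: up to homotopy its lift crosses exactly the two lunes sitting over the two tiling edges that meet at the cusp $x_k$ (the fixed point of $\rho_F(\eta'_k)$), once each, giving $\rho_\gamma(\eta_k)=S_{k-1}\circ\rho_F(\eta'_k)\circ S_k$ with the standard conventions for the generators, where $S_k$ is the involution fixing $x_k$ and $x_{k+1}$. All three factors fix $x_k$, so $\rho_\gamma(\eta_k)$ fixes $x_k$ with derivative $(-1)\cdot 1\cdot(-1)=1$; hence it is parabolic.

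The geodesic property is the heart of the matter. Fix $\alpha$ and, after post-composing $h$ with an element of $\PSL(2,\R)$, assume $a_\alpha<b_\alpha$ are finite, so $\mathbb D_\alpha$ is the Euclidean disc with diameter $[a_\alpha,b_\alpha]$. The domain $U_\alpha$ — the connected component of $\tilde\Sigma$ minus every central line other than $\tilde\gamma_\alpha$ that contains $\tilde\gamma_\alpha$ — is the union of $\tilde V_\alpha$, the two tiles $P^{\pm}$ glued to it along $\tilde\ell_\alpha$, and, over each remaining edge of $P^{\pm}$, the half of the lune over that edge lying on the side of $P^{\pm}$. The key geometric fact is that a tile together with those half-lunes develops onto a hyperbolic half-plane bounded by the geodesic over $(a_\alpha,b_\alpha)$: the half-lune over an edge spanning an interval $J$ develops to the round half-disc over $J$, and since an ideal polygon is the intersection of the hyperbolic half-planes bounded by the geodesics carrying its edges, these half-discs fill precisely the complement of the polygon up to $\widehat{\R}$. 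Concretely, with $P^{+}$ the tile lying outside $\mathbb D_\alpha$ and $P^{-}$ the one inside, $P^{+}$ and its half-lunes develop onto $\mathbb H\setminus\overline{\mathbb D_\alpha}$ while $P^{-}$ and its half-lunes develop onto $\mathbb D_\alpha\cap\mathbb H$ and hence, after the gluing transition $S_\alpha$, onto $\mathbb H^*\setminus\overline{\mathbb D_\alpha}$. Adding the lune $\tilde V_\alpha$, whose image is $\mathbb D_\alpha$, I conclude that $h$ is injective on $U_\alpha$ and that $h(U_\alpha)=\widehat{\mathbb C}\setminus(\widehat{\R}\setminus(a_\alpha,b_\alpha))$, a domain invariant under $z\mapsto\bar z$ with fixed-point set exactly $(a_\alpha,b_\alpha)$. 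Therefore $(a_\alpha,b_\alpha)$, and with it $\tilde\gamma_\alpha$, is a hyperbolic geodesic in $U_\alpha$. Since $(\widehat{\mathbb C}\setminus\gamma)\cup\gamma_k$ is simply connected, $\cover$ restricts to a biholomorphism $U_\alpha\to(\widehat{\mathbb C}\setminus\gamma)\cup\gamma_k$, so $\gamma_k$ is a hyperbolic geodesic there; this proves that $\tilde G$, and hence $\gamma$, has the geodesic property.

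Finally, running the same half-lune computation with all $n$ edges of a tile shows that $h$ restricted to a lift of a complementary component $\Omega$ (respectively $\Omega^*$) of $\gamma$ is a biholomorphism onto $\mathbb H$ (respectively $\mathbb H^*$), and $h$ is analytic across the central lines. This is precisely the characterization of a developing map of $Z_\gamma$ in Lemma~\ref{lem:geodesic_extension}, so $Z=Z_\gamma$, the inverse of $\cover$ is a projective chart of $Z_\gamma$, and $\rho_\gamma$ is the representation computed above. Since $\rho_\gamma(\eta_k)$ is parabolic for every $k$, Proposition~\ref{prop:hol} shows that $\gamma$, and therefore its welding homeomorphism, is $C^1$. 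The step I expect to be the real obstacle is the half-lune filling identity together with the careful bookkeeping of which side of each carrying geodesic each tile sits on; granted that, everything else is routine manipulation of M\"obius maps.
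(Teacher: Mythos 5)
Your proposal is correct and follows essentially the same route as the paper: develop the grafted structure explicitly, show that $U_\alpha$ develops onto a sphere slit along $\widehat\R\smallsetminus(a_\alpha,b_\alpha)$ with the central line landing on the reflection-symmetric arc $(a_\alpha,b_\alpha)$ (hence a hyperbolic geodesic), and read off the holonomy as $S_{k-1}\circ\rho_F(\eta_k')\circ S_k$ with derivative $(-1)\cdot 1\cdot(-1)=1$ at the common fixed point. The only differences are cosmetic: the paper normalizes the edge to $\ii\R_+$ so the developed $U_\alpha$ is $\C\smallsetminus\R_+$ and declares the half-lune filling ``immediate'' where you spell it out via the ideal-polygon decomposition of $\H$, and it re-derives the continuity of $\varphi'$ at the corners directly rather than invoking the parabolicity criterion of Proposition~\ref{prop:hol}.
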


\begin{proof}
  Let $\widetilde \ell_\a \subset \m H$ be a lift of the geodesic $\g_1'$ (the same proof works for $\g_k'$). We now check that the corresponding central line $\widetilde \g_\a$ is the hyperbolic geodesic in the union of the two adjacent faces of $\widetilde G$.  
  Without loss of generality, we assume that $\widetilde \ell_\a  = \ii \m R_+$, and the covering map $\cover'$ maps $\infty$ to $w_1$ and $0$ to $w_2$. 
  A fundamental domain of $\cover'$  is given by a union of two ideal $n$-gons $\widetilde \O'$ and $\widetilde \O'^*$ with $\ii \m R_+$ being one of the edges. See Figure~\ref{fig:grafting_sphere}.   It is immediate that after grafting by $\pi$ along all the edges of $\widetilde \O'$ and $\widetilde \O'^*$, the central line $\widetilde \g_\a$ obtained is $ \m R_-$, and the union of the central lines associated with the other edges of $\widetilde \O'$ (resp. of $\widetilde \O'^*$) is $\m R_+$. Since $\m R_-$ is the hyperbolic geodesic of $\m C \smallsetminus \m R_+$, this shows the geodesic property of $\widetilde G$.

  In the local chart $\m C \smallsetminus \m R_-$ around $\widetilde \g_\a = \m R_-$, the covering map $\cover : \widetilde \S \to \S$ maps respectively the upper and lower halfplanes to the two connected components of $\Chat \smallsetminus \g$. This shows that $\g_1$ is also a hyperbolic geodesic of the simply connected domain $\Chat \smallsetminus (\cup_{k\ge 2} \g_k)$, %
  proving the geodesic property of $\g$.

  Now we show that the welding homeomorphism of $\g$ is $C^1$.
  Notice that in the local chart illustrated in Figure~\ref{fig:grafting_sphere}, we may already read a welding homeomorphism $\varphi$ of $\g$.
  Indeed, we can write $\varphi : = \cover_{-}^{-1} \circ \cover_+|_{\widehat{\m R}}$, where $\cover_{+} = \cover|_{\H}$ and $\cover_{-} = \cover|_{\H^*}$ and in this chart $\cover$ is continuous along $\widetilde \g_\a = \m R_-$, so that $\varphi|_{\m R_-}$ is the identity map.
  We also notice that $\varphi (x_k) = y_k$ for $k=3,\ldots,n.$
  By Proposition~\ref{prop:hol}, $\varphi|_{[0,x_3]}$ is given by the holonomy $\rho_\gamma(\eta_2) \in \PSL(2,\m R)$ around $z_2$, which is of the form $\rho_\gamma(\eta_2) = S_1 \circ \rho_F(\eta_2') \circ S_2$. Here, $\rho_F(\eta_2') \in \PSL(2,\m R)$ is the holonomy around $w_2$ for the Fuchsian projective structure which is parabolic and fixes $0$, $S_1 (z) = - z$ realizes the $\pi$-grafting along $\widetilde \ell_\a = \ii \m R_+$, and $S_2$ realizes the $\pi$-grafting along the geodesic passing through $0$ and $x_3$ (which is a conjugate of $S_1$). In particular,  both $S_1$ and $S_2$ have derivative $-1$ at $0$. We obtain that $\rho(\eta_2)'(0) = 1$. This shows that $\varphi'$ is continuous at $0$.
  Similarly, if $S_k$ realizes the $\pi$-grafting along the geodesic between $x_k$ and $x_{k+1}$, we see that $\rho_\gamma(\eta_k) = S_{k-1} \circ \rho_F(\eta_k) \circ S_{k}$ is parabolic, and by Proposition~\ref{prop:hol} and induction we have
  \begin{equation}\label{eq:varphi_from_rho}
      \varphi|_{[x_k,x_{k+1}]} = \rho_\gamma(\eta_2) \circ \dots \circ \rho_\gamma(\eta_k).
  \end{equation}
  This completes the proof.
\end{proof}

Proposition~\ref{prop:central_geod} shows that from a marked Fuchsian projective structure and its unique geodesic Jordan curve $\g'$, the $\pi$-angle grafting along all the arcs of $\g'$ gives the projective structure associated with a Jordan curve with the geodesic property. Projecting down to $T_{0,n}$, we obtain a map $\operatorname{gr}_\pi : T_{0,n} \to T_{0,n}$.

\begin{cor}\label{cor:bijection}
    The map $\operatorname{gr}_\pi$ is a differentiable bijection $T_{0,n} \to T_{0,n}$.
\end{cor}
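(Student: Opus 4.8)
The plan is to assemble the corollary from the structural results already established, reading the grafting map $\operatorname{gr}_\pi$ as the composition of two parametrizations of $T_{0,n}$ by Jordan curves. On the one hand, Lemma~\ref{lem:geo_puncture} gives a bijection $\Phi_F\colon T_{0,n}\to\{\text{isotopy classes of curves geodesic in }\Sigma'\}$: a marked Fuchsian structure determines the unique Jordan curve $\g'$ whose arcs are hyperbolic geodesics in the punctured sphere, and conversely each such curve determines the complex structure of $\Sigma'$ hence a point of $T_{0,n}$. On the other hand, Theorem~\ref{thm:uniqueness} (together with Lemma~\ref{lem:loop_geodesic_minimizer} and Theorem~\ref{thm:differentiability}) gives a bijection $\Phi_G\colon T_{0,n}\to\{\text{isotopy classes of piecewise geodesic }C^1\text{ curves}\}$, identified via welding with the chart $\mc W_n$. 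Proposition~\ref{prop:central_geod} says precisely that $\pi$-grafting sends $\g'$ to the unique $C^1$ piecewise geodesic curve $\g$ in the same relative isotopy class, so that $\operatorname{gr}_\pi$ fits into the diagram $\operatorname{gr}_\pi=\Phi_G^{-1}\circ(\text{identity of isotopy classes})\circ\Phi_F$. Since relative isotopy classes of quasicircles through $n$ marked points are themselves in bijection with $T_{0,n}$ (the Proposition in Section~\ref{sec:Teichmuller}), the only content of bijectivity is that both $\Phi_F$ and $\Phi_G$ land in, and exhaust, the \emph{same} set of isotopy classes, namely all of them; this is exactly what Lemma~\ref{lem:geo_puncture} and Theorem~\ref{thm:uniqueness} assert. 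Hence $\operatorname{gr}_\pi$ is a bijection.

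For differentiability I would argue through the welding coordinate $\mc W_n$ on the target. By Theorem~\ref{thm:differentiability}, $\Phi_G^{-1}\colon T_{0,n}\to\mc W_n$ is $C^1$, so it suffices to show that the map $T_{0,n}\to\mc W_n$ sending a marked Fuchsian structure to the welding homeomorphism of the grafted curve $\g$ is $C^1$. Formula~\eqref{eq:varphi_from_rho} in the proof of Proposition~\ref{prop:central_geod} expresses each Möbius piece $\varphi|_{[x_k,x_{k+1}]}$ as a composition of the holonomies $\rho_\gamma(\eta_j)=S_{j-1}\circ\rho_F(\eta_j')\circ S_j$, and \eqref{eq:rho_from_rho_F} writes these in terms of the Fuchsian holonomy $\rho_F$ and the rotations $S_j$ about the cusps of $\g'$. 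The Fuchsian holonomy $\rho_F\colon\pi_1\to\PSL(2,\R)$ depends real-analytically on the point of $T_{0,n}$ (this is standard; it can be cited from the theory of Teichmüller space, e.g.\ via the real-analytic dependence of the uniformizing Fuchsian group on moduli), and the points $x_k$ (fixed points of the parabolic generators) and hence the $S_k$ are real-analytic functions of $\rho_F$. The marked points $x_{p,k}$ of the welding are likewise read off from these fixed points. Therefore $p\mapsto(\varphi_p;x_{p,1},\dots)$ is real-analytic, in particular $C^1$, into $\mc W_n$, and composing with the $C^1$ map $\Phi_G$ (i.e.\ $\mc F$ of Theorem~\ref{thm:differentiability}) shows $\operatorname{gr}_\pi$ is differentiable.

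The main obstacle is the smooth dependence of the Fuchsian data on moduli and the verification that the resulting tuple genuinely lies in $\mc W_n$ rather than merely in the space of piecewise Möbius maps: one must check that the grafted welding is always an orientation-preserving \emph{homeomorphism} of $\widehat\R$ with the correct parabolicity at each marked point. The parabolicity is handled by the induction in Proposition~\ref{prop:central_geod} (each $\rho_\gamma(\eta_k)$ is parabolic, so the jump parameters $\lambda_k$ are finite and the $C^1$ matching holds), and the homeomorphism property follows because $\g$, being the curve with the geodesic property in its isotopy class, is a quasicircle with piecewise Möbius welding by Theorem~\ref{thm:piecewise_mob}; thus the inequalities \eqref{eq:lambdaconstraint} defining the chart are automatically satisfied. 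One should also note that Proposition~\ref{prop:central_geod} is stated for a fixed normalization, so a brief remark is needed that conjugating by a Möbius transformation to place three of the punctures at $0,1,\infty$ is compatible with the equivalence relation defining $\mc W_n$ and $T_{0,n}$, so the construction descends to a well-defined map on $T_{0,n}$. With these points in place the corollary follows.
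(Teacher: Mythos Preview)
Your differentiability argument is essentially the paper's: factor $\operatorname{gr}_\pi$ as $(\text{moduli})\mapsto\rho_F\mapsto\rho_\gamma\mapsto\varphi\mapsto(\text{moduli})$, using \eqref{eq:rho_from_rho_F}, \eqref{eq:varphi_from_rho}, and Theorem~\ref{thm:differentiability}. The paper cites \cite[Lem.~3]{TZ1} for the analyticity of the first step, but otherwise the chains agree.

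Your bijectivity argument, however, rests on a misreading of what grafting does. Grafting by $\pi$ along $\g'$ \emph{changes the complex structure}: the surface $\Sigma'=\Chat\smallsetminus\{w_1,\dots,w_n\}$ becomes a \emph{different} $n$-punctured sphere $\Sigma=\Chat\smallsetminus\{z_1,\dots,z_n\}$, and the curve $\gamma$ lives on $\Sigma$, not on $\Sigma'$. So the sentence ``Proposition~\ref{prop:central_geod} says precisely that $\pi$-grafting sends $\g'$ to the unique $C^1$ piecewise geodesic curve $\g$ in the same relative isotopy class'' is false. In your notation, both $\Phi_F$ and $\Phi_G$ are (tautologically) the identification of $T_{0,n}$ with isotopy classes of marked curves, so your formula $\operatorname{gr}_\pi=\Phi_G^{-1}\circ\mathrm{id}\circ\Phi_F$ would force $\operatorname{gr}_\pi=\mathrm{id}$, which it is not (cf.\ \cite[Lem.~4.2]{MRW1}: the two curves coincide only for circles).

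What is actually needed for bijectivity is an inverse construction. The paper builds the de-grafting map explicitly: starting from the unique piecewise geodesic curve $\gamma$ on $\Sigma$, it uses the projective chart $H$ of $Z_\gamma$ to locate the inserted half-disk wedges $V_k$ around each $\gamma_k$, excises them, and glues the two sides of $\partial V_k$ via $z\mapsto -z$ in the chart. One then checks that the resulting surface $\Sigma'$ carries a complete hyperbolic metric (the holonomy of the de-grafted structure lies in $\PSL(2,\R)$ and is parabolic at each puncture, so the developing map is a covering onto $\H$), and that the images of $\partial V_k$ are hyperbolic geodesics in $\Sigma'$. This is the step your argument lacks.
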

\begin{proof}
    We first show that $\operatorname{gr}_\pi$ is bijective by identifying its inverse, namely, a de-grafting map. 

    Let $\mc L (z_1, \ldots, z_n; \tau)$ be an isotopy class of Jordan curves representing a point in $T_{0,n}$. Theorem~\ref{thm:uniqueness} shows that there exists a unique Jordan curve $\g = \g_1 \cup \cdots \cup \g_n$ in $\mc L (z_1, \ldots, z_n; \tau)$ with the geodesic property. Let $\O$ and $\O^*$ denote the two connected components of $\Chat \smallsetminus \g$.
    
    From the construction of the projective structure associated with $\g$ in Section~\ref{sec:projective_from_curve}, there is a projective chart $H : \O \cup \O^* \cup \g_1 \to \m H \cup \m H^* \cup \m R_-$, such that $H(\g_1) = \m R_-$. We note that $H$ coincides with $\cover^{-1}$ in the local chart shown in Figure~\ref{fig:grafting_sphere}. Let $V_1$ denote $H^{-1} (\{z \colon \Re (z) < 0\})$  and similarly $V_k$ for all $k = 2, \ldots, n$. We note that $\partial V_1 \cap \O$ is a hyperbolic geodesic in $\O$. This implies that $V_k$ are pairwise disjoint. Let $V = \cup_{k = 1}^n V_k$.
    
    We let $\Sigma' = \Chat \smallsetminus \{w_1, 
    \ldots, w_n\}$ be the Riemann surface obtained by  identifying $\partial V_k \cap \O$ with $\partial V_k \cap \O^*$ such that in the chart $H$, $ \ii x$ is identified with $-\ii x$. We modify the projective chart by degrafting by $\pi$, so that the new chart is given by
    $$H' (z) = \begin{cases}
        H(z), \quad & z \in \O\smallsetminus V \\
        - H(z), & z \in \O^*\smallsetminus V.
    \end{cases}$$
    In particular, it takes value in $\widetilde \O' \cup \widetilde \O'^* \subset \m H$, where $\widetilde \O'$ and $\widetilde \O'^*$ are illustrated in Figure~\ref{fig:grafting_sphere}. By induction, the analytic continuation $\widetilde H$ of $H$ to the universal cover of $\Sigma'$ takes value in $\m H$ and is injective. The same argument as in the proof of Proposition~\ref{prop:central_geod} shows that the holonomy associated with the projective structure on $\Sigma'$ is parabolic and in $\PSL(2,\m R)$ around each puncture. Therefore, $H$ defines a hyperbolic metric on $\Sigma'$ by pulling back the metric on $\m H$ for which the arcs of $\g'$ are geodesics (as the holonomies are isometries of $\m H$). 
    
    Moreover, the parabolic holonomy around $w_k$ implies that there is a neighborhood of $w_k$ that is isometric to a cusp for all $k$. 
    Hence, $\Sigma'$ is complete. We deduce that $\widetilde H$ is surjective onto $\m H$ and defines a Fuchsian projective structure on $\Sigma'$.

    To see that $\operatorname{gr}_{\pi}$ is differentiable, we note that by \cite[Lemma~3]{TZ1} the (multi-valued) map $(w_1,\dots,w_n) \mapsto \rho_F$ is complex analytic, while $\rho_F \mapsto \rho_\gamma$ is differentiable by the explicit formula \eqref{eq:rho_from_rho_F}, $\rho_\gamma \mapsto \varphi$ is differentiable by formula \eqref{eq:varphi_from_rho} and finally $\varphi \mapsto (z_1,\dots,z_n)$ is differentiable by Theorem~\ref{thm:differentiability}.
\end{proof}

\bigskip
\noindent {\bf Acknowledgments: } 
We thank Peter Lin, Donald Marshall, and Curtis McMullen for helpful discussions and the anonymous referee for the careful reading and valuable comments. M.B.\ is supported by NSF grant DMS-1808856.
J.J.\ was supported by The Finnish Centre of Excellence (CoE) in Randomness and Structures, and is also supported by The Knut and Alice Wallenberg Foundation.
S.R.\ is supported by NSF Grants DMS-1954674 and DMS-2350481. Y.W. is supported by the European Union (ERC, RaConTeich, 101116694)\footnote{Views and opinions expressed are however those of the author(s) only and do not necessarily reflect those of the European Union or the European Research Council Executive Agency. Neither the European Union nor the granting authority can be held responsible for them.}. This work is also supported by NSF Grant DMS-1928930 while the authors participated in a program hosted
by the Mathematical Sciences Research Institute in Berkeley, California, during
the Spring 2022 semester.

\bibliographystyle{abbrv}
\bibliography{ref}

\end{document}